\newcommand{\ts}{\textsuperscript}
\numberwithin{equation}{section}
\theoremstyle{plain}
\newtheorem{theorem}{Theorem}[section]
\newtheorem*{theorem*}{Theorem}
\newtheorem{lemma}{Lemma}[section]
\newtheorem{corollary}{Corollary}[section]
\theoremstyle{definition}
\newtheorem*{definition*}{Definition}
\newtheorem{remark}{Remark}[section]
\begin{document}
 
 \title{Generalizations (in the spirit of Koshliakov) of some formulas from Ramanujan's Lost Notebook}

\author{{Pedro Ribeiro,\quad    Semyon  Yakubovich} } 
\thanks{
{\textit{ Keywords}} :  {Ramanujan's Lost Notebook; Koshliakov zeta function; Watson's formula; Koshliakov's formula; Ramanujan-Guinand's formula; Modified Bessel function; Epstein zeta function; First Kronecker limit formula.}

{\textit{2020 Mathematics Subject Classification} }: {Primary: 11M06, 11M41, 11M99; Secondary: 33C10, 33E99.}

Department of Mathematics, Faculty of Sciences of University of Porto, Rua do Campo Alegre,  687; 4169-007 Porto (Portugal). 

\,\,\,\,\,E-mail of Corresponding author: pedromanelribeiro1812@gmail.com}
\date{}
 
\maketitle

\begin{center}
\textit{Dedicated to the memory of our Friend and Professor Jos\'e Carlos Petronilho}
\end{center}

\abstract{In his lost notebook, Ramanujan recorded beautiful identities. These
include earlier versions of Koshliakov's formula for the divisor function
and the transformation formula for the logarithm of Dedekind's $\eta-$function.
In this paper we establish some generalizations of these formulas
of Ramanujan in a setting that only recently reemerged in the literature and which concerns a beautiful theory due to Koshliakov.}

\tableofcontents 

\section{Introduction} \label{sec:1}

Let $\zeta(s)$ denote the Riemann zeta function. On entry 8 of Chapter
15 of his second notebook \cite{berndt_notebooks_II, berndt_evans}, Ramanujan stated the beautiful formula
for $\zeta(1/2)$, 
\\

\textbf{Entry 8:} If $x>0$, then the following identity holds
\begin{equation}
\sum_{n=1}^{\infty}\frac{1}{e^{n^{2}x}-1}=\frac{1}{4}+\frac{\pi^{2}}{6x}+\frac{1}{2}\sqrt{\frac{\pi}{x}}\,\zeta\left(\frac{1}{2}\right)+\frac{1}{2}\,\sqrt{\frac{\pi}{x}}\,\sum_{n=1}^{\infty}\frac{1}{\sqrt{n}}\left(\frac{\cos(2\pi\sqrt{\pi n/x})-\sin(2\pi\sqrt{\pi n/x})-e^{-2\pi\sqrt{\pi n/x}}}{\cosh(2\pi\sqrt{\pi n/x})-\cos(2\pi\sqrt{\pi n/x})}\right),\,\,\,\,x>0.\label{Ramanujan formula def entry 8}
\end{equation}

On Entry 8.3.1., page 332, of his lost notebook, Ramanujan restates
(\ref{Ramanujan formula def entry 8}) in two different ways, which
might be an indication that this representation for $\zeta(1/2)$ was dear to him. 

The first proof of (\ref{Ramanujan formula def entry 8}) was given
by Berndt and Evans \cite{berndt_evans}, who employed the Poisson
summation formula and the transformation formula for Jacobi's $\theta-$function.
After this, several generalizations and analogues of (\ref{Ramanujan formula def entry 8})
have been obtained, including character analogues and evaluations
of the Riemann zeta function at certain rational points. The reader
may find in [\cite{Ramanujan_Notebook_Lost}, pp. 191-193] an exhaustive account
of the activity around (\ref{Ramanujan formula def entry 8}).

Of course, (\ref{Ramanujan formula def entry 8}) coexists alongside
other beautiful formulas in Ramanujan's lost notebook. For instance, on
page 253 \cite{Ramanujan_lost, Guinand_Ramanujan, Ramanujan_Notebook_Lost}, Ramanujan
states the following formula, quoted from [\cite{Ramanujan_Notebook_Lost},
p. 94], 
\\

\textbf{Entry 3.3.1. (p. 253)} Let $\sigma_{k}(n)=\sum_{d|n}d^{k}$, and let $\zeta(s)$ denote the
Riemann zeta function. If $\alpha$ and $\beta$ are positive numbers
such that $\alpha\beta=\pi^{2}$ and if $s$ is any complex number,
then
\begin{align}
\sqrt{\alpha}\sum_{n=1}^{\infty}\sigma_{-s}(n)\,n^{s/2}K_{\frac{s}{2}}(2n\alpha) & -\sqrt{\beta}\sum_{n=1}^{\infty}\sigma_{-s}(n)\,n^{s/2}K_{\frac{s}{2}}(2n\beta)=\nonumber \\
\frac{1}{4}\Gamma\left(-\frac{s}{2}\right)\zeta(-s)\left\{ \beta^{(1+s)/2}-\alpha^{(1+s)/2}\right\}  & +\frac{1}{4}\Gamma\left(\frac{s}{2}\right)\zeta(s)\left\{ \beta^{(1-s)/2}-\alpha^{(1-s)/2}\right\} .\label{Guinand given at intro}
\end{align}

\bigskip{}

The appearance of the modified Bessel function and the divisor functions
on the infinite series on the left side of (\ref{Guinand given at intro})
is somewhat remindful of the Fourier expansion of the non-holomorphic
Eisenstein series or of the Epstein zeta function attached to a binary
quadratic form. In fact, the first proof of (\ref{Guinand given at intro}),
due to Guinand \cite{guinand_rapidly_convergent}, used a formula of Watson \cite{watson_reciprocal}, which states
that
\begin{equation}
\sum_{n=1}^{\infty}\frac{1}{\left(n^{2}+x^{2}\right)^{s}}=\frac{\sqrt{\pi}\,x^{1-2s}}{2\Gamma(s)}\Gamma\left(s-\frac{1}{2}\right)-\frac{x^{-2s}}{2}+\frac{2\,\pi^{s}x^{\frac{1}{2}-s}}{\Gamma(s)}\,\sum_{n=1}^{\infty}n^{s-\frac{1}{2}}\,K_{s-\frac{1}{2}}(2\pi nx).\label{Watson Formula intro}
\end{equation}
for every $x>0$ and $\text{Re}(s)>\frac{1}{2}$. Due to the fact
that Guinand's proof seems to be the first appearing in the literature,
(\ref{Guinand given at intro}) is usually called ``Ramanujan-Guinand
formula''.

As remarked by Berndt, Lee and Sohn {[}\cite{Guinand_Ramanujan}, p.
23{]}, Guinand's proof of (\ref{Guinand given at intro}) via (\ref{Watson Formula intro})
is ``completely independent of any considerations of nonanalytic
Eisenstein series''.

\bigskip{}

However, proving (\ref{Watson Formula intro}) is in fact the first step in
almost all the proofs of the aforementioned Fourier expansion of Epstein
zeta functions. This expansion was firstly considered by A. Selberg
and S. Chowla in 1949, who started with the Epstein zeta function \cite{Selberg_Chowla I}
\begin{equation}
\zeta_{Q}(s)=\sum_{m,n\neq0}\frac{1}{\left(am^{2}+bmn+cn^{2}\right)^{s}},\,\,\,\,\,\text{Re}(s)>1,\label{Epstein def}
\end{equation}
(where $m,\,n\neq0$ here means that only the term $m=n=0$ is omitted
from the sum) and announced the following formula, valid in the entire
complex plane,
\begin{align}
a^{s}\Gamma(s)\,\zeta_{Q}(s) & =2\Gamma(s)\zeta(2s)+2k^{1-2s}\pi^{1/2}\Gamma\left(s-\frac{1}{2}\right)\zeta(2s-1)\nonumber \\
 & \,\,\,\,\,\,+\,8k^{1/2-s}\pi^{s}\sum_{n=1}^{\infty}n^{s-1/2}\sigma_{1-2s}(n)\,\cos\left(n\pi b/a\right)K_{s-1/2}\left(2\pi k\,n\right).\label{Selberg Chowla Formula}
\end{align}
Here, $d:=b^{2}-4ac$ is the discriminant of the quadratic form, $k^{2}:=|d|/4a^{2}$
and $\sigma_{\nu}(n)=\sum_{d|n}d^{\nu}$. Also, $Q(x,y)=ax^{2}+bx\,y+cy^{2}$ denotes
a real and positive definite quadratic form.

It is now clear that the appearance of the divisor function in (\ref{Selberg Chowla Formula})
is in some form connected to the identity of Ramanujan (\ref{Guinand given at intro})
and if one analyzes the underlying structure of the proofs of (\ref{Selberg Chowla Formula}), they tend to rely in one way or another on Watson's formula (\ref{Watson Formula intro}).

\bigskip{}

By using (\ref{Selberg Chowla Formula}), Selberg and Chowla \cite{selberg_chowla}
established several new results and reproved others by a new method. For example, using their formula, 
they were able to derive a new proof of the functional equation for $\zeta_{Q}(s)$. They also obtained from (\ref{Selberg Chowla Formula}) a very simple proof of Kronecker's limit formula, 
\begin{equation}
\zeta_{Q}\left(s\right)=\frac{2\pi}{\sqrt{|d|}}\,\frac{1}{s-1}+\frac{2\pi}{\sqrt{|d|}}\,\left(2\gamma-\log\left(\frac{|d|}{a}\right)-4\log\,(|\eta(\tau)|)\right)+O(s-1),\label{Kronecker limit formula intro}
\end{equation}
where $\gamma$ is the Euler-Mascheroni constant, $\tau:=\frac{b+i\sqrt{|d|}}{2a}\in\mathbb{H}$
and $\eta(\tau)$ denotes the Dedekind $\eta-$function,
\begin{equation}
\eta(\tau)=e^{\frac{\pi i\tau}{12}}\,\prod_{m=1}^{\infty}\left(1-e^{2\pi im\tau}\right),\,\,\,\,\,\text{Im}(\tau)>0.\label{Dedekind classic}
\end{equation}

In particular, when $Q(m,n)=m^{2}+cn^{2}$, $c>0$, the particular
case of (\ref{Kronecker limit formula intro}) holds
\begin{equation}
\zeta_{Q}\left(s\right):=\zeta_{2}(s,c)=\frac{\pi}{\sqrt{c}}\,\frac{1}{s-1}+\frac{\pi}{\sqrt{c}}\,\left(2\gamma-\log\left(4c\right)-4\log\left(\left|\eta\left(i\sqrt{c}\right)\right|\right)\right)+O(s-1).\label{Kronecker limit formula diagonal form}
\end{equation}
\\

The function defined by (\ref{Dedekind classic}) also played an important
role in Ramanujan's lost notebook, right after the statement (\ref{Guinand given at intro}).
He completes page 253 with two corollaries, the first of them being
equivalent to a transformation formula for (\ref{Dedekind classic}).
\\

\textbf{Entry 3.3.2. (p. 253)} Let $\alpha$ and $\beta$ be two positive numbers such that $\alpha\beta=\pi^{2}$.
Then
\begin{equation}
\sum_{n=1}^{\infty}\sigma_{-1}(n)\,e^{-2n\alpha}-\sum_{n=1}^{\infty}\sigma_{-1}(n)\,e^{-2n\beta}=\frac{\beta-\alpha}{12}+\frac{1}{4}\log\left(\frac{\alpha}{\beta}\right).\label{transformation formula logarithm Dedekind}
\end{equation}

\bigskip{}

Finally, Ramanujan uses (\ref{Guinand given at intro}) to derive
the most famous formula attributed to Koshliakov \cite{koshliakov_Voronoi}. Preceding  Koshliakov's contributions by more than a decade, Ramanujan states: 
\\

\textbf{Entry 3.3.3 (p. 253):} Let $\alpha$ and $\beta$ denote positive numbers such that $\alpha\beta=\pi^{2}$.
Then the following identity takes place
\begin{equation}
\sqrt{\alpha}\left(\frac{\gamma}{4}-\frac{\log(4\beta)}{4}+\sum_{n=1}^{\infty}d(n)\,K_{0}(2n\alpha)\right)=\sqrt{\beta}\left(\frac{\gamma}{4}-\frac{\log(4\alpha)}{4}+\sum_{n=1}^{\infty}d(n)\,K_{0}(2n\beta)\right).\label{Koshliakov given intro}
\end{equation}

\bigskip{}

It is the purpose of the present paper to extend Ramanujan's entries (\ref{Ramanujan formula def entry 8}),
(\ref{Guinand given at intro}), (\ref{transformation formula logarithm Dedekind})
and (\ref{Koshliakov given intro}). In pursuing (\ref{Guinand given at intro})
and (\ref{transformation formula logarithm Dedekind}) we were led
to the study of generalizations of Watson's formula (\ref{Watson Formula intro}), as well as extensions of Kronecker's limit formula (\ref{Kronecker limit formula intro}) (see sections 4 and 5 below). 
\\

We now explain the nature of the generalizations here proposed. These
have to do with a manuscript of N. S. Koshliakov which has recently found a renewed interest. 
In a very long paper \cite{KOSHLIAKOV}, Koshliakov introduced the zeta
function {[}\cite{KOSHLIAKOV}, p. 6{]}
\begin{equation}
\zeta_{p}(s)=\sum_{n=1}^{\infty}\frac{p^{2}+\lambda_{n}^{2}}{p\left(p+\frac{1}{\pi}\right)+\lambda_{n}^{2}}\,\frac{1}{\lambda_{n}^{s}},\,\,\,\,\,\text{Re}(s)>1,\label{Koshliakov zeta function definition}
\end{equation}
where $\left(\lambda_{n}\right)_{n\in\mathbb{N}}$ is the sequence
defined by the positive roots of the transcendental equation
\begin{equation}
\tan(\pi y)=-\frac{y}{p},\,\,\,\,\,p>0.\label{Transcendental equation}
\end{equation}

From the fact that $n-\frac{1}{2}<\lambda_{n}<n$, the series (\ref{Koshliakov zeta function definition})
converges absolutely in the half-plane $\text{Re}(s)>1$. In particular,
$\zeta_{p}(s)$ generalizes the Riemann zeta function. From the structure
of the equation (\ref{Transcendental equation}), one may see that
\[
\lim_{p\rightarrow\infty}\zeta_{p}(s)=\zeta(s),\,\,\,\,\,\lim_{p\rightarrow0^{+}}\zeta_{p}(s)=(2^{s}-1)\,\zeta(s).
\]

Koshliakov connected the study of the zeta function with a second
generalized zeta function, namely, $\eta_{p}(s)$, being defined by
\begin{equation}
\eta_{p}(s):=\sum_{k=1}^{\infty}\frac{(s,2\pi pk)_{k}}{k^{s}},\,\,\,\,\,\text{Re}(s)>1,\label{secondary zeta function Koshliakov sense}
\end{equation}
where 
\begin{equation}
\left(s,\nu k\right)_{k}:=\frac{1}{\Gamma(s)}\,\intop_{0}^{\infty}x^{s-1}e^{-x}\left(\frac{k\nu-x}{k\nu+x}\right)^{k}\,dx.\label{Mellin definition intro}
\end{equation}

On page 20 of his paper \cite{KOSHLIAKOV}, it is possible to look at
an argument concerning the uniform and absolute convergence of the
series (\ref{secondary zeta function Koshliakov sense}) for $\text{Re}(s)>1$.
This property was also rederived on page 26, formula (57), where it
was proved that
\begin{equation}
\lim_{k\rightarrow\infty}(s,\lambda k)_{k}=\frac{1}{\left(1+\frac{2}{\lambda}\right)^{s}},\label{asymptotic kosh sequence end}
\end{equation}
for every $\lambda>0$.

\bigskip{}

Besides studying the analytic continuations of (\ref{Koshliakov zeta function definition})
and (\ref{secondary zeta function Koshliakov sense}), Koshliakov
proved that $\zeta_{p}(s)$ and $\eta_{p}(s)$ are connected through
the functional equation
\begin{equation}
\zeta_{p}(1-s)=\frac{2\,\cos\left(\frac{\pi s}{2}\right)\Gamma(s)}{(2\pi)^{s}}\eta_{p}(s).\label{functional equation Kosh}
\end{equation}

After developing an analytic structure for the functions $\zeta_{p}(s)$
and $\eta_{p}(s)$, Koshliakov enters the realm of summation formulas,
proving new analogues of Poisson's summation formula, introducing
new generalizations of the digamma function and Bernoulli numbers (building on a previous work \cite{Koshliakov_Bernoulli})
and even extending an integral formula of Ramanujan (see [\cite{KOSHLIAKOV},
p. 150, eq. (19)], where a generalization of the main formula from Ramanujan's paper \cite{ramanujan_integrals} is derived). 
\\

It is not an overstatement to declare that  this manuscript of Koshliakov is truly a masterpiece and unfortunately was
kept in obscurity since its publication in 1949. Thanks to the recent outstanding
work of A. Dixit and R. Gupta \cite{DG}, Koshliakov's main results
were examined and extended in several directions.

\bigskip{}

Besides making Koshliakov's ideas accessible to a modern mathematical
community, Dixit and Gupta also took the setting of Koshliakov in order to obtain new particular formulas. For example (among many others, which include other relations that can be found in the Lost Notebook) they extended a famous
formula of Ramanujan for $\zeta(2n+1)$ [\cite{DG}, p. 13, Theorem 4.1.] \footnote{see also  \cite{Berndt_Straub} for a very nice survey about this formula of Ramanujan.} and from this extension they could achieve new identities by taking a suitable choice of the parameter $p$. 

\bigskip{}

The remarkable work of Dixit and Gupta was then followed by a paper \cite{Ramanujan_meets} (written jointly with Berndt and Zaharescu)
where new analogues of the Abel-Plana formula were considered.

\bigskip{}

It is in the line of reasoning of the contributions given in \cite{DG} and \cite{Ramanujan_meets} that our main results are here presented. Our paper is organized as follows: in section 2 we establish a generalization (in Koshliakov's setting) of Ramanujan's formula for $\zeta(1/2)$ (\ref{Ramanujan formula def entry 8}). In sections 3 and 4 we generalize Watson's formula (\ref{Watson Formula intro}), as well as the Kronecker limit formula for 'diagonal quadratic forms', (\ref{Kronecker limit formula diagonal form}), in two directions.

\bigskip{}

At last, we devote the fifth section of our paper to the generalization of Entries 3.3.1, 3.3.2 and 3.3.3 stated above. 

\bigskip{}

Before going to the main sections of this paper, we remark some additional
facts about the zeta function (\ref{secondary zeta function Koshliakov sense}),
which will be useful in the sequel. Since $\eta_{p}(s)$ is not a
Dirichlet series in the classical sense, we need some particular properties
that can be extracted from the Mellin representation (\ref{Mellin definition intro}).
In fact, Koshliakov {[}\cite{KOSHLIAKOV}, p. 25, eq. (48){]} could
write (\ref{Mellin definition intro}) as a combination of fractional
integrals of the form
\begin{equation}
\Gamma(s)\,(s,\lambda)_{k}=\Gamma(s)(-1)^{k}+\Gamma(s)\,k^{s}e^{\lambda}\,\sum_{\ell=1}^{k}\left(\begin{array}{c}
k\\
\ell
\end{array}\right)\left(\frac{2\lambda}{k}\right)^{\ell}(-1)^{k-\ell}\,\intop_{k}^{\infty}t^{-s}e^{-\frac{\lambda}{k}t}\,\frac{(t-k)^{\ell-1}}{(\ell-1)!}\,dt,\label{representation Koshliakov given in poage 24}
\end{equation}
which, when used in the definition (\ref{secondary zeta function Koshliakov sense}),
gives the expression for $\eta_{p}(s)$
\begin{equation}
\eta_{p}(s):=\sum_{k=1}^{\infty}\frac{(s,2\pi pk)_{k}}{k^{s}}=\left(2^{1-s}-1\right)\zeta(s)+\sum_{k=1}^{\infty}e^{2\pi pk}\sum_{\ell=1}^{k}\left(\begin{array}{c}
k\\
\ell
\end{array}\right)\left(4\pi p\right)^{\ell}(-1)^{k-\ell}\,\intop_{k}^{\infty}t^{-s}e^{-2\pi pt}\,\frac{(t-k)^{\ell-1}}{(\ell-1)!}\,dt,\label{secondary zeta function representation as integrals}
\end{equation}
being valid when $\text{Re}(s)>1$. Representation (\ref{secondary zeta function representation as integrals})
will prove to be very useful in section 3 of this paper, where two generalizations of Watson's formula (\ref{Watson Formula intro}) will be given. 

\bigskip{}

Throughout this paper we shall also employ the notation introduced
by Koshliakov: if $p,p^{\prime}\in\mathbb{R}^{+}$, we define the
functions $\sigma(t)$ and $\sigma^{\prime}(t)$ as being {[}\cite{KOSHLIAKOV},
p. 6{]}
\begin{equation}
\sigma(t):=\frac{p+t}{p-t},\,\,\,\sigma^{\prime}(t):=\frac{p^{\prime}+t}{p^{\prime}-t}.\label{definition sigma p sigma p'}
\end{equation}

By [\cite{KOSHLIAKOV}, p. 17, eq. (18)], we know that the function $\frac{1}{\sigma(t)\,e^{2\pi t}-1}$ is associated with
the analytic continuation of $\eta_{p}(s)$. The analogue of this
function attached to $\zeta_{p}(s)$ is\footnote{In this paper we will try to preserve Koshliakov's notation, even when it differs from standard notation (see (\ref{Incomplete Gamma function Kosh sense}) below). So we urge the reader to not misinterpret $\sigma_{p}(z)$ as the standard divisor function, $\sigma_{k}(n)$, which plays an important role in formula (\ref{Guinand given at intro}). The only section of the paper where we will need the divisor function will be Section 5, but fortunately in this section we will not use Koshliakov's function $\sigma_{p}(z)$.} 
 [\cite{KOSHLIAKOV}, p. 44, eq. (33)],
\begin{equation}
\sigma_{p}(z)=\sum_{n=1}^{\infty}\frac{p^{2}+\lambda_{n}^{2}}{p\left(p+\frac{1}{\pi}\right)+\lambda_{n}^{2}}\,e^{-\lambda_{n}z},\,\,\,\,\,\,\text{Re}(z)>0.\label{sigma p definition on Koshliakov}
\end{equation}
\\

For additional standard facts about the zeta functions (\ref{Koshliakov zeta function definition})
and (\ref{secondary zeta function Koshliakov sense}) we refer to
Koshliakov's own manuscript \cite{KOSHLIAKOV} and to Dixit and Gupta's
paper \cite{DG}.

\section{Generalization of Entry 8.3.1., p. 332 of Ramanujan's Lost Notebook}

We are now ready to establish a general version of Entry 8.3.1.
\begin{theorem}
Let $p,p^{\prime}\in \mathbb{R}_{+}$ and  $\lambda_{n}$,
$\lambda_{n}^{\prime}$ be the sequences of numbers satisfying
 respectively  the transcendental equations
\begin{equation}
\tan(\pi y)=-\frac{y}{p},\,\,\,\,\tan(\pi y)=-\frac{y}{p^{\prime}}.\label{beginning of statement transcendental}
\end{equation}

Moreover, let $\sigma(t),\,\sigma^{\prime}(t)$ be defined by (\ref{definition sigma p sigma p'}).
Then, for every $x>0$, the following identity takes place  
\begin{align}
\sum_{n=1}^{\infty}\frac{p^{2}+\lambda_{n}^{2}}{p\left(p+\frac{1}{\pi}\right)+\lambda_{n}^{2}}\cdot\frac{1}{\sigma^{\prime}\left(\frac{\lambda_{n}^{2}x}{2\pi}\right)e^{\lambda_{n}^{2}x}-1} & =\frac{1}{4\left(1+\frac{1}{\pi p}\right)}+\frac{\pi^{2}}{6x}\,\frac{1+\frac{3}{\pi p}(1+\frac{1}{\pi p})}{\left(1+\frac{1}{\pi p^{\prime}}\right)\left(1+\frac{1}{\pi p}\right)^{2}}+\frac{1}{2}\sqrt{\frac{\pi}{x}}\zeta_{p^{\prime}}\left(\frac{1}{2}\right)+\nonumber \\
 & +\frac{1}{2}\sqrt{\frac{\pi}{x}}\,\sum_{n=1}^{\infty}\frac{p^{\prime2}+\lambda_{n}^{\prime2}}{p^{\prime}\left(p^{\prime}+\frac{1}{\pi}\right)+\lambda_{n}^{\prime2}}\,\frac{1}{\sqrt{\lambda_{n}^{\prime}}}\,G_{p}\left(\frac{2\pi\lambda_{n}^{\prime}}{x}\right),\label{Analogue final in the statement}
\end{align}
where, for $a>0$, $G_{p}(a)$ is explicitly given by
\begin{equation}
G_{p}(a)=\frac{\left(p^{2}-a\right)\left\{ \cos(\pi\sqrt{2a})-\sin(\pi\sqrt{2a})\right\} -e^{-\pi\sqrt{2a}}\left(p^{2}-\sqrt{2a}\,p+a\right)-\sqrt{2a}\,p\left\{ \cos(\pi\sqrt{2a})+\sin(\pi\sqrt{2a})\right\} }{p^{2}\left(\cosh(\pi\sqrt{2a})-\cos(\pi\sqrt{2a})\right)+\sqrt{2a}\,p\left(\sinh(\pi\sqrt{2a})+\sin(\pi\sqrt{2a})\right)+a\left(\cosh(\pi\sqrt{2a})+\cos(\pi\sqrt{2a})\right)}.\label{definition of Qp (a) intro}
\end{equation}

\end{theorem}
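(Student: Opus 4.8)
\emph{Overview and Mellin set-up.} The plan is to express the left-hand side of (\ref{Analogue final in the statement}) as an inverse Mellin transform, shift the line of integration across three poles — which will produce the first three terms on the right — and then convert the remaining integral, via Koshliakov's functional equation (\ref{functional equation Kosh}), into the series containing $G_p$; this is the natural ``Koshliakov lift'' of the classical proof of Entry~8, i.e. of (\ref{Ramanujan formula def entry 8}). Put $A_n:=\frac{p^{2}+\lambda_{n}^{2}}{p(p+\frac1\pi)+\lambda_{n}^{2}}$, $A_m':=\frac{p'^{2}+\lambda_{m}'^{2}}{p'(p'+\frac1\pi)+\lambda_{m}'^{2}}$, and let $F(x)$ denote the left-hand side of (\ref{Analogue final in the statement}). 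By [\cite{KOSHLIAKOV}, p. 17, eq. (18)] the function $\frac{1}{\sigma'(t)e^{2\pi t}-1}$ is attached to $\eta_{p'}(s)$ in the Mellin sense $\int_0^\infty t^{s-1}\frac{dt}{\sigma'(t)e^{2\pi t}-1}=(2\pi)^{-s}\Gamma(s)\eta_{p'}(s)$ for $\mathrm{Re}(s)>1$ (this degenerates to $(2\pi)^{-s}\Gamma(s)\zeta(s)$ when $p'\to\infty$). Taking the Mellin transform of $F$ in $x$ and interchanging — justified for $\mathrm{Re}(s)>1$ by absolute convergence together with $n-\tfrac12<\lambda_n<n$ — one obtains, with the help of (\ref{Koshliakov zeta function definition}),
\[
\int_0^\infty x^{s-1}F(x)\,dx=\sum_{n\ge1}A_n\lambda_n^{-2s}\int_0^\infty y^{s-1}\frac{dy}{\sigma'(\tfrac{y}{2\pi})e^{y}-1}=\Gamma(s)\,\zeta_p(2s)\,\eta_{p'}(s),
\]
so that $F(x)=\frac{1}{2\pi i}\int_{(c)}\Gamma(s)\zeta_p(2s)\eta_{p'}(s)\,x^{-s}\,ds$ for any $c>1$.

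\emph{Shifting the contour and reducing to one identity.} Move the line of integration to $\mathrm{Re}(s)=-\delta$ with $0<\delta<\tfrac12$. In the strip the integrand decays exponentially as $|\mathrm{Im}\,s|\to\infty$ (from $\Gamma$, against the polynomial growth of $\zeta_p,\eta_{p'}$), and the only poles crossed are the simple ones at $s=1$ (from $\eta_{p'}$), $s=\tfrac12$ (from $\zeta_p(2s)$) and $s=0$ (from $\Gamma$), since $\zeta_p,\eta_{p'}$ are otherwise entire [\cite{KOSHLIAKOV}, \cite{DG}]. Using the standard values $\mathrm{Res}_{s=1}\zeta_p(s)=1$, $\mathrm{Res}_{s=1}\eta_{p'}(s)=(1+\tfrac{1}{\pi p'})^{-1}$, $\zeta_p(0)=-\tfrac12(1+\tfrac1{\pi p})^{-1}$, $\eta_{p'}(0)=-\tfrac12$ and $\zeta_p(2)=\tfrac{\pi^2}{6}\bigl(1+\tfrac3{\pi p}(1+\tfrac1{\pi p})\bigr)\bigl(1+\tfrac1{\pi p}\bigr)^{-2}$ (consequences of the analytic continuation and (\ref{functional equation Kosh}); the last two can alternatively be read off from the identity below by letting $a\to0$), together with $\eta_{p'}(\tfrac12)=\zeta_{p'}(\tfrac12)$ — which is exactly (\ref{functional equation Kosh}) at $s=\tfrac12$ — one checks directly that the residues at $s=1,\tfrac12,0$ equal respectively the first, third and second terms on the right of (\ref{Analogue final in the statement}). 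For the leftover integral, rewrite (\ref{functional equation Kosh}) as $\Gamma(s)\eta_{p'}(s)=\frac{(2\pi)^{s}}{2\cos(\pi s/2)}\zeta_{p'}(1-s)$, giving $\frac{1}{2\pi i}\int_{(-\delta)}\frac{\zeta_p(2s)\zeta_{p'}(1-s)}{2\cos(\pi s/2)}\bigl(\tfrac{x}{2\pi}\bigr)^{-s}ds$; expand $\zeta_{p'}(1-s)=\sum_{m\ge1}A_m'(\lambda_m')^{s-1}$ (absolutely convergent as $\mathrm{Re}(1-s)=1+\delta>1$) and interchange, so the leftover becomes $\sum_{m\ge1}\frac{A_m'}{\lambda_m'}I_m$ with $I_m=\frac{1}{2\pi i}\int_{(-\delta)}\frac{\zeta_p(2s)}{2\cos(\pi s/2)}\,a^{s}\,ds$ and $a=2\pi\lambda_m'/x$. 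Since $\sqrt{\pi\lambda_m'/x}=\sqrt{a/2}$, comparing term by term with the $G_p$-series in (\ref{Analogue final in the statement}) reduces the whole theorem to the single identity
\[
\frac{1}{2\pi i}\int_{(-\delta)}\frac{\zeta_p(2s)}{2\cos(\pi s/2)}\,a^{s}\,ds=\frac{\sqrt a}{2\sqrt2}\,G_p(a),\qquad a>0 .
\]

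\emph{Proof of the reduced identity.} Take first $0<a<\lambda_1^{2}$ and shift the contour to the right: the integral over $\mathrm{Re}(s)=M$ tends to $0$ as $M\to\infty$ (on the lines $\mathrm{Re}(s)=2k$ one has $1/|\cos(\pi s/2)|=1/\cosh(\pi|\mathrm{Im}\,s|/2)$ while $|\zeta_p(2s)a^{s}|\ll(a/\lambda_1^{2})^{M}$), and one collects the pole at $s=\tfrac12$ (residue $\tfrac{\sqrt a}{2\sqrt2}$) and the poles of $1/\cos(\pi s/2)$ at $s=2k+1$, $k\ge0$ (residue $\tfrac{(-1)^{k+1}}{\pi}\zeta_p(4k+2)\,a^{2k+1}$). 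Thus the integral equals $-\tfrac{\sqrt a}{2\sqrt2}+\sum_{k\ge0}\tfrac{(-1)^{k}}{\pi}\zeta_p(4k+2)\,a^{2k+1}$; inserting $\zeta_p(4k+2)=\sum_n A_n\lambda_n^{-4k-2}$ and summing the geometric series in $k$ converts this into $-\tfrac{\sqrt a}{2\sqrt2}+\tfrac{a}{\pi}\sum_{n\ge1}\tfrac{A_n\lambda_n^{2}}{\lambda_n^{4}+a^{2}}$, so it remains to prove $\sum_{n\ge1}\tfrac{A_n\lambda_n^{2}}{\lambda_n^{4}+a^{2}}=\tfrac{\pi}{2\sqrt2\,\sqrt a}\bigl(1+G_p(a)\bigr)$. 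I would do this by the residue theorem. The $\lambda_n$ are the positive zeros of the odd entire function $\phi(y)=p\sin(\pi y)+y\cos(\pi y)$ (which also vanishes simply at $0$ and at the $-\lambda_n$), and $\tan(\pi\lambda_n)=-\lambda_n/p$ gives $\phi'(\lambda_n)=\tfrac{\pi\cos(\pi\lambda_n)}{p}\bigl(p(p+\tfrac1\pi)+\lambda_n^{2}\bigr)$, whence $A_n=\tfrac{\pi}{p}\cdot\tfrac{(p^{2}+\lambda_n^{2})\cos(\pi\lambda_n)}{\phi'(\lambda_n)}$; consequently $\tfrac{A_n\lambda_n^{2}}{\lambda_n^{4}+a^{2}}=\tfrac{\pi}{p}\,\mathrm{Res}_{y=\lambda_n}\Xi_a(y)$, where $\Xi_a(y)=\tfrac{(p^{2}+y^{2})\cos(\pi y)}{\phi(y)}\cdot\tfrac{y^{2}}{y^{4}+a^{2}}$ is removable at $y=0$, has residues at $\pm\lambda_n$ summing to $\tfrac{2p}{\pi}\sum_n\tfrac{A_n\lambda_n^{2}}{\lambda_n^{4}+a^{2}}$, has four simple poles at $\pm b\pm ib$ with $b=\tfrac12\sqrt{2a}$ whose residues (conjugate in pairs) combine into a real trigonometric–hyperbolic expression, and behaves like $1/y$ at infinity so that its residues sum to $1$. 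Solving for the $\lambda_n$-sum and simplifying through the half-angle substitution $\theta=\tfrac{\pi}{2}\sqrt{2a}$ (so $2\theta=\pi\sqrt{2a}$, $2b=\sqrt{2a}$, $2b^{2}=a$) reproduces exactly $\tfrac{\pi}{2\sqrt2\sqrt a}(1+G_p(a))$ with $G_p$ as in (\ref{definition of Qp (a) intro}). This gives the reduced identity for $0<a<\lambda_1^{2}$, and since both sides are holomorphic in $a$ on $\{\mathrm{Re}(a)>0\}$ — the left side because the Mellin–Barnes integral converges there, the right side because the denominator in (\ref{definition of Qp (a) intro}) is strictly positive for $a>0$ — it extends to all $a>0$, completing the proof.

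\textbf{Main obstacle.} The crux is this last residue computation — above all, checking that the contribution of the four complex poles $\pm b\pm ib$, after taking real parts and clearing denominators, collapses to precisely the nine-term ratio (\ref{definition of Qp (a) intro}); that is where essentially all the intricate trigonometric/hyperbolic algebra of the proof is concentrated. Everything else — the Mellin set-up, the three boundary residues, and the functional-equation manipulation — is routine and specializes as $p,p'\to\infty$ to the classical argument behind Entry~8; the only remaining care is with the several sum–integral interchanges and the two contour shifts, all controlled by $n-\tfrac12<\lambda_n<n$ and the decay of $\Gamma$ and of $1/\cos(\pi s/2)$ on vertical lines.
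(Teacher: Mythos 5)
Your proposal is correct, and its first half is the paper's argument in disguise: your integrand $\Gamma(s)\zeta_{p}(2s)\eta_{p^{\prime}}(s)x^{-s}$ is, by the functional equation (\ref{functional equation Kosh}), identical to the paper's $\frac{\zeta_{p^{\prime}}(1-s)\zeta_{p}(2s)}{2\cos(\pi s/2)}\left(\frac{x}{2\pi}\right)^{-s}$, and the contour shift past $s=0,\tfrac12,1$ with the same three residues is exactly (\ref{Application residue theorem})--(\ref{explicit formula residues p and p'}). (One harmless slip: the residue at $s=1$ gives the $\frac{\pi^{2}}{6x}$ term and the residue at $s=0$ gives $\frac{1}{4(1+\frac{1}{\pi p})}$, not the other way around as your ``first, third and second'' ordering suggests.) Where you genuinely diverge is in evaluating the leftover integral. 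The paper expands $\zeta_{p^{\prime}}$ as a Dirichlet series, invokes the functional equation a second time to bring in $\eta_{p}$, and uses the Mellin pair (\ref{final evaluation of fk(z,lambda)}) for Koshliakov's kernel $(s,\lambda)_{k}$ plus the geometric series (\ref{power series def}) to land on $\operatorname{Im}\bigl\{e^{i\pi/4}/(\sigma(\sqrt{a}e^{i\pi/4})e^{2\pi\sqrt{a}e^{i\pi/4}}-1)\bigr\}$, which it then massages into $G_{p}(a)/(2\sqrt{2})$. You instead shift the single Mellin--Barnes integral $I_{m}$ to the right (valid for $0<a<\lambda_{1}^{2}$, as you correctly note, because $\zeta_{p}(2s)a^{s}\sim(a/\lambda_{1}^{2})^{M}$ on $\operatorname{Re}(s)=M$), resum the resulting power series into the partial fraction $\frac{a}{\pi}\sum_{n}A_{n}\lambda_{n}^{2}/(\lambda_{n}^{4}+a^{2})$, and evaluate that by a Mittag--Leffler residue argument on $\Xi_{a}$. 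This is a legitimate alternative that keeps everything real until the final four-pole computation and never touches $\eta_{p}$ or $(s,\lambda)_{k}$; the price is that the ``residues sum to $1$'' step needs the standard but unstated care of choosing expanding contours on which $1/(p\tan(\pi y)+y)$ is controlled, and the four-pole algebra you defer is exactly as heavy as the paper's deferred simplification of the imaginary part. In fact your reduced identity is precisely Koshliakov's expansion (\ref{formula koshliakov expansion}) evaluated at the complex arguments $x=\sqrt{a}\,e^{\pm i\pi/4}$ and averaged, so you could shortcut your entire residue computation by citing that formula (Corollary 3.5, or Koshliakov's direct proof) with complex $x$ and taking real parts --- at which point your route and the paper's converge on the same trigonometric--hyperbolic endgame. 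I verified that your reduced identity specializes correctly as $p\to\infty$ to $\sum_{n}n^{2}/(n^{4}+a^{2})=\frac{\pi}{2\sqrt{2a}}\,\frac{\sinh(\pi\sqrt{2a})-\sin(\pi\sqrt{2a})}{\cosh(\pi\sqrt{2a})-\cos(\pi\sqrt{2a})}$, so the normalizations are consistent.
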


\begin{proof}
By {[}\cite{KOSHLIAKOV}, p. 32, Chapter 1, eq. (74){]}, for $\text{Re}(s)>1$
one has the representation
\begin{equation}
\zeta_{p^{\prime}}(1-s)=2\cos\left(\frac{\pi s}{2}\right)\intop_{0}^{\infty}\frac{x^{s-1}}{\sigma^{\prime}(x)\,e^{2\pi x}-1}\,dx.\label{representation as starting point}
\end{equation}

Therefore, from Mellin's inversion formula and the absolute convergence
of the series (\ref{Koshliakov zeta function definition}) for $\text{Re}(s)>\mu>1$,
\begin{equation}
\sum_{n=1}^{\infty}\frac{p^{2}+\lambda_{n}^{2}}{p\left(p+\frac{1}{\pi}\right)+\lambda_{n}^{2}}\cdot\frac{1}{\sigma^{\prime}\left(\frac{\lambda_{n}^{2}x}{2\pi}\right)e^{\lambda_{n}^{2}x}-1}=\frac{1}{2\pi i}\,\intop_{\mu-i\infty}^{\mu+i\infty}\frac{\zeta_{p^{\prime}}(1-s)\,\zeta_{p}(2s)}{2\cos\left(\frac{\pi s}{2}\right)}\,\left(\frac{x}{2\pi}\right)^{-s}\,ds.\label{Starting point series}
\end{equation}
Now we shift the line of integration in (\ref{Starting point series})
to $\text{Re}(s)=\frac{1}{2}-\mu$. To do this we just need to integrate
along a positively oriented rectangular contour $\mathscr{R}_{\mu}(T)$
with vertices $\mu\pm iT$ and $\frac{1}{2}-\mu\pm iT$: by an application
of Cauchy's residue Theorem, we have that

\begin{equation}
\left\{ \intop_{\mu-iT}^{\mu+iT}+\intop_{\mu+iT}^{\frac{1}{2}-\mu+iT}+\intop_{\frac{1}{2}-\mu+iT}^{\frac{1}{2}-\mu-iT}+\intop_{\frac{1}{2}-\mu-iT}^{\mu-iT}\right\} \,\frac{\zeta_{p^{\prime}}(1-s)\,\zeta_{p}(2s)}{2\cos\left(\frac{\pi s}{2}\right)}\,\left(\frac{x}{2\pi}\right)^{-s}\,ds=2\pi i\,R_{p,p^{\prime}}(x),\label{Application residue theorem}
\end{equation}
where $R_{p,p^{\prime}}(x)$ denotes the sum of the residues of the
integrand inside $\mathscr{R}_{\mu}(T)$. Since $\zeta_{p}(-2k)=0$
for every $k\in\mathbb{N}$ {[}\cite{KOSHLIAKOV}, p. 22, eq. (37){]},
we know that the points $s=2k+1$, $k\in\mathbb{Z}\setminus\{0\}$
are removable singularities of the integrated function. Thus, the integrand above has only three
simple poles, which are located at the points $s=0,\,\frac{1}{2},\,1$.
It is simple to see that the residual function $R_{p,p^{\prime}}(x)$
can be explicitly written as
\begin{equation}
R_{p,p^{\prime}}(x)=\frac{1}{4\left(1+\frac{1}{\pi p}\right)}+\frac{1}{2}\sqrt{\frac{\pi}{x}}\,\zeta_{p^{\prime}}\left(\frac{1}{2}\right)+\frac{\pi^{2}}{6x}\,\frac{1+\frac{3}{\pi p}(1+\frac{1}{\pi p})}{\left(1+\frac{1}{\pi p^{\prime}}\right)\left(1+\frac{1}{\pi p}\right)^{2}},\label{explicit formula residues p and p'}
\end{equation}
because $\zeta_{p}(0)=-\frac{1}{2}\,\frac{1}{1+\frac{1}{\pi p}}$
and $\zeta_{p}(2)=\frac{\pi^{2}}{6}\,\frac{1+\frac{3}{\pi p}(1+\frac{1}{\pi p})}{\left(1+\frac{1}{\pi p}\right)^{2}}$
{[}\cite{KOSHLIAKOV}, p. 22, eq. (34), (39){]}.

We now estimate in an elementary way the integrals along the horizontal
segments $\left[\frac{1}{2}-\mu\pm iT,\,\mu\pm iT\right]$ when $T\rightarrow\infty$:
indeed, 

\begin{equation}
\intop_{\frac{1}{2}-\mu\pm iT}^{\mu\pm iT}\left|\frac{\zeta_{p}(1-s)\,\zeta_{p}(2s)}{2\cos\left(\frac{\pi s}{2}\right)}\,\left(\frac{x}{2\pi}\right)^{-s}\right|\,|ds|\leq\frac{T^{A}e^{-\frac{\pi}{2}T}}{\log\left(\frac{x}{2\pi}\right)}\,\left[\left(\frac{x}{2\pi}\right)^{\mu-\frac{1}{2}}-\left(\frac{x}{2\pi}\right)^{-\mu}\right]\rightarrow0\label{behavior integrals}
\end{equation}
as $T\rightarrow\infty$. Here, $A:=A(\mu)$ is a positive number
depending on $\mu$: we can write it explicitly by using the convex
estimates for $\zeta_{p}(s)$ found by Koshliakov {[}\cite{KOSHLIAKOV},
p. 24, eq. (44){]}, 
\begin{equation}
\zeta_{p}(\sigma+it)=\begin{cases}
O\left(|t|^{1-\sigma}\log|t|\right) & \frac{1}{2}\leq\sigma\leq1,\\
O\left(|t|^{\frac{1}{2}}\log|t|\right) & 0<\sigma\leq\frac{1}{2},\\
O\left(|t|^{\frac{1}{2}-\sigma}\log|t|\right) & \sigma\leq0.
\end{cases}\,\,\,\,\,|t|\rightarrow\infty.\label{Koshliakov Phragmen Analogue}
\end{equation}

\medskip{}

Combining (\ref{Starting point series}) with (\ref{Application residue theorem})
and using the previous bounds (\ref{behavior integrals}) for the
integrals along the horizontal segments, we are able to obtain 
\begin{align}
\sum_{n=1}^{\infty}\frac{p^{2}+\lambda_{n}^{2}}{p\left(p+\frac{1}{\pi}\right)+\lambda_{n}^{2}}\cdot\frac{1}{\sigma^{\prime}\left(\frac{\lambda_{n}^{2}x}{2\pi}\right)e^{\lambda_{n}^{2}x}-1} & =\frac{1}{2\pi i}\,\intop_{\mu-i\infty}^{\mu+i\infty}\frac{\zeta_{p}(1-2s)\,\zeta_{p^{\prime}}(s+\frac{1}{2})}{\sqrt{2}\,\left[\cos\left(\frac{\pi s}{2}\right)+\sin\left(\frac{\pi s}{2}\right)\right]}\,\left(\frac{x}{2\pi}\right)^{s-\frac{1}{2}}ds+\nonumber \\
+\frac{1}{4\left(1+\frac{1}{\pi p}\right)} & +\frac{1}{2}\sqrt{\frac{\pi}{x}}\,\zeta_{p^{\prime}}\left(\frac{1}{2}\right)+\frac{\pi^{2}}{6x}\,\frac{1+\frac{3}{\pi p}(1+\frac{1}{\pi p})}{\left(1+\frac{1}{\pi p^{\prime}}\right)\left(1+\frac{1}{\pi p}\right)^{2}},\label{integral series again}
\end{align}
where in the last step we took the change of variables $s\leftrightarrow\frac{1}{2}-s$
and the explicit expression (\ref{explicit formula residues p and p'}).
Since we have chosen $\mu>1$, we can use the representation of $\zeta_{p^{\prime}}\left(s+\frac{1}{2}\right)$
as a Dirichlet series to write the integral on the right side of (\ref{integral series again})
as 
\begin{equation}
\sum_{n=1}^{\infty}\frac{p^{\prime2}+\lambda_{n}^{\prime2}}{p^{\prime}\left(p^{\prime}+\frac{1}{\pi}\right)+\lambda_{n}^{\prime2}}\,\sqrt{\frac{\pi}{\lambda_{n}^{\prime}x}}\,\frac{1}{2\pi i}\,\intop_{\mu-i\infty}^{\mu+i\infty}\frac{\zeta_{p}(1-2s)}{\cos\left(\frac{\pi s}{2}\right)+\sin\left(\frac{\pi s}{2}\right)}\,\left(\frac{x}{2\pi\lambda_{n}^{\prime}}\right)^{s}ds.\label{evaluation crucial integral after functional}
\end{equation}

Finally, we evaluate the remaining integral by using the functional
equation for $\zeta_{p}\left(s\right)$ (\ref{functional equation Kosh})
and making some elementary reductions, from which we obtain the expression
\begin{equation}
\frac{1}{2\pi i}\,\intop_{\mu-i\infty}^{\mu+i\infty}\frac{\zeta_{p}(1-2s)}{\cos\left(\frac{\pi s}{2}\right)+\sin\left(\frac{\pi s}{2}\right)}\,\left(\frac{x}{2\pi\lambda_{n}^{\prime}}\right)^{s}ds=\sum_{m=1}^{\infty}\,\frac{1}{2\pi i}\,\intop_{2\mu-i\infty}^{2\mu+i\infty}\left\{ \cos\left(\frac{\pi s}{4}\right)-\sin\left(\frac{\pi s}{4}\right)\right\} \,\Gamma(s)\,\left(s,\,2\pi pm\right)_{m}\left(\frac{x}{8\pi^{3}m^{2}\lambda_{n}^{\prime}}\right)^{s/2}\,ds\label{point we have left}
\end{equation}
in its turn being justified by the absolute convergence of $\eta_{p}(s)$ in the half-plane
$\text{Re}(s)>2\mu$. Combining (\ref{representation Koshliakov given in poage 24})
with Stirling's formula for $\Gamma(s)$, we have for $s=\sigma+it$,
$a\leq\sigma\leq b$ and $z\in\mathbb{C}$,
\[
\left|\Gamma(s)\,(s,\lambda)_{k}\,z^{-s}\right|\leq e^{-\sigma\log|z|}\,C(\sigma,k,\lambda)\,|t|^{\sigma-\frac{1}{2}}e^{-\frac{\pi}{2}|t|+\arg(z)\,t},\,\,\,\,\,\,|t|\rightarrow\infty,
\]
where $C(\sigma,k,\lambda)$ is a positive constant only depending
on these fixed parameters.

\bigskip{}

Since $\Gamma(s)\,(s,\lambda)_{k}\,z^{-s}$ defines an analytic function
for $\text{Re}(s)>0$, we have from (\ref{Mellin definition intro}),
Mellin's inversion formula and analytic continuation with respect
to $z$ in the half-plane $\text{Re}(z)>0$, that the following integral representation is valid
\begin{equation}
f_{k}(z,\lambda):=\frac{1}{2\pi i}\intop_{\sigma-i\infty}^{\sigma+i\infty}\Gamma(s)\,(s,\lambda)_{k}\,z^{-s}ds=\left(\frac{\lambda-z}{\lambda+z}\right)^{k}\,e^{-z},\,\,\,\,\sigma>0,\,\,\text{Re}(z)>0.\label{final evaluation of fk(z,lambda)}
\end{equation}

Applying (\ref{evaluation crucial integral after functional})
and using (\ref{point we have left}) and (\ref{final evaluation of fk(z,lambda)}), we immediately obtain 
\begin{align}
\sum_{m=1}^{\infty}\,\frac{1}{2\pi i}\,\intop_{2\mu-i\infty}^{2\mu+i\infty}\left\{ \cos\left(\frac{\pi s}{4}\right)-\sin\left(\frac{\pi s}{4}\right)\right\} \,\Gamma(s)\,\left(s,\,2\pi pm\right)_{m}\left(\frac{x}{8\pi^{3}m^{2}\lambda_{n}^{\prime}}\right)^{s/2} & =\nonumber \\
=\sqrt{2}\,\text{Im}\left\{ e^{i\frac{\pi}{4}}\,\sum_{m=1}^{\infty}\exp\left(-\sqrt{\frac{8\pi^{3}m^{2}\lambda_{n}^{\prime}}{x}}\,e^{i\frac{\pi}{4}}\right)\,\left(\frac{p-\sqrt{\frac{2\pi\lambda_{n}^{\prime}}{x}}\,e^{i\frac{\pi}{4}}}{p+\sqrt{\frac{2\pi\lambda_{n}^{\prime}}{x}}\,e^{i\frac{\pi}{4}}}\right)^{m}\right\} \nonumber \\
=\sqrt{2}\,\text{Im}\left\{ \frac{e^{i\frac{\pi}{4}}}{\sigma\left(\sqrt{\frac{2\pi\lambda_{n}^{\prime}}{x}}\,e^{i\frac{\pi}{4}}\right)\,e^{2\pi\sqrt{\frac{2\pi\lambda_{n}^{\prime}}{x}}\,e^{i\frac{\pi}{4}}}-1}\right\} ,\label{almost final result with Im}
\end{align}
where in the last step we have used the geometric series (recall once more (\ref{definition sigma p sigma p'}))
\begin{equation}
\frac{1}{\sigma(z)\,e^{2\pi z}-1}=\sum_{m=1}^{\infty}\left(\frac{p-z}{p+z}\right)^{m}e^{-2\pi mz},\,\,\,\,\,\text{Re}(z)>0.\label{power series def}
\end{equation}

Returning to (\ref{integral series again}) and to (\ref{evaluation crucial integral after functional})
and collecting (\ref{almost final result with Im}), we are able to
derive the identity
\begin{align}
\sum_{n=1}^{\infty}\frac{p^{2}+\lambda_{n}^{2}}{p\left(p+\frac{1}{\pi}\right)+\lambda_{n}^{2}}\cdot\frac{1}{\sigma^{\prime}\left(\frac{\lambda_{n}^{2}x}{2\pi}\right)e^{\lambda_{n}^{2}x}-1} & =\frac{1}{4\left(1+\frac{1}{\pi p}\right)}+\frac{1}{2}\sqrt{\frac{\pi}{x}}\,\zeta_{p^{\prime}}\left(\frac{1}{2}\right)+\frac{\pi^{2}}{6x}\,\frac{1+\frac{3}{\pi p}(1+\frac{1}{\pi p})}{\left(1+\frac{1}{\pi p^{\prime}}\right)\left(1+\frac{1}{\pi p}\right)^{2}}+\nonumber \\
+\sqrt{\frac{2\pi}{x}\,}\sum_{n=1}^{\infty}\frac{p^{\prime2}+\lambda_{n}^{\prime2}}{p^{\prime}\left(p^{\prime}+\frac{1}{\pi}\right)+\lambda_{n}^{\prime2}}\,\frac{1}{\sqrt{\lambda_{n}^{\prime}}} & \,\text{Im}\left\{ \frac{e^{i\frac{\pi}{4}}}{\sigma\left(\sqrt{\frac{2\pi\lambda_{n}^{\prime}}{x}}\,e^{i\frac{\pi}{4}}\right)\,e^{2\pi\sqrt{\frac{2\pi\lambda_{n}^{\prime}}{x}}\,e^{i\frac{\pi}{4}}}-1}\right\} .\label{at most compact in the end}
\end{align}

\bigskip{}

Now our proof is almost finished, with the only step remaining being to
simplify the imaginary part of the term in the braces. This comes
after straightforward manipulations:  it is very simple to show that
the denominator inside the braces above can be simplified to ($a:=\sqrt{2\pi\lambda_{n}^{\prime}/x}$)
\begin{equation}
\frac{e^{\pi\sqrt{2a}}}{p^{2}-\sqrt{2a}\,p+a}\left\{ \cos\left(\pi\sqrt{2a}\right)\left(p^{2}-a\right)-\sqrt{2a}\,p\,\sin\left(\pi\sqrt{2a}\right)+i\left[\sqrt{2a}\,p\,\cos\left(\pi\sqrt{2a}\right)+\left(p^{2}-a\right)\,\sin\left(\pi\sqrt{2a}\right)\right]\right\} -1,\label{literal expression elementary-1}
\end{equation}
and from this it is very simple to deduce the expression for $\left|\sigma\left(\sqrt{a}\,e^{i\frac{\pi}{4}}\right)\,e^{2\pi\sqrt{a}\,e^{i\frac{\pi}{4}}}-1\right|^{2}$
as
\begin{align}
\frac{e^{2\pi\sqrt{2a}}}{(p^{2}-\sqrt{2a}\,p+a)^{2}}\left(p^{4}+a^{2}\right)-2\,\frac{e^{\pi\sqrt{2a}}}{p^{2}-\sqrt{2a}\,p+a}\left\{ \cos(\pi\sqrt{2a})\,\left(p^{2}-a\right)-\sqrt{2a}\,p\,\sin(\pi\sqrt{2a})\right\} +1\label{modulus squared-1}\\
=\frac{e^{2\pi\sqrt{2a}}\left(p^{2}+\sqrt{2a}\,p+a\right)}{p^{2}-\sqrt{2a}\,p+a}-2\,\frac{e^{\pi\sqrt{2a}}}{p^{2}-\sqrt{2a}\,p+a}\left\{ \cos(\pi\sqrt{2a})\,\left(p^{2}-a\right)-\sqrt{2a}\,p\,\sin(\pi\sqrt{2a})\right\} +1
\end{align}

Taking the conjugate of (\ref{literal expression elementary-1}) and
using (\ref{modulus squared-1}), we have after simple reductions
that $F_{p}(a)$ can be written as
\[
F_{p}(a):=\text{Im}\left\{ \frac{e^{i\frac{\pi}{4}}}{\sigma\left(\sqrt{a}\,e^{i\frac{\pi}{4}}\right)\,e^{2\pi\sqrt{a}\,e^{i\frac{\pi}{4}}}-1}\right\} =\frac{G_{p}(a)}{2\sqrt{2}},
\]
where $G_{p}(a)$ is given by (\ref{definition of Qp (a) intro}).
This completes the proof of Ramanujan's formula.
\end{proof}

We now derive particular cases of Ramanujan-type formulas, recovering
some results previously known but also gaining new insight. Since
the proofs of all the corollaries are nothing but a specialization
of (\ref{Analogue final in the statement}) as $p$ or $p^{\prime}$ tend
to $0^{+}$ or $\infty$, we will omit them.

\bigskip{}

We start with the following result, which can be obtained by letting
$p^{\prime}\rightarrow\infty$ in the previous Theorem. It provides
an infinite number of representations for $\zeta(1/2)$.

\begin{corollary}
For every $p\in\mathbb{R}^{+}$ and $x>0$, we have the infinitely
many representations for $\zeta(1/2)$,
\begin{align*}
\sum_{n=1}^{\infty}\frac{p^{2}+\lambda_{n}^{2}}{p\left(p+\frac{1}{\pi}\right)+\lambda_{n}^{2}}\,\frac{1}{e^{\lambda_{n}^{2}x}-1}=\frac{1}{4\left(1+\frac{1}{\pi p}\right)}+\frac{1}{2}\,\sqrt{\frac{\pi}{x}}\,\zeta\left(\frac{1}{2}\right)+\frac{\pi^{2}}{6x}\,\frac{1+\frac{3}{\pi p}(1+\frac{1}{\pi p})}{\left(1+\frac{1}{\pi p}\right)^{2}}+\frac{1}{2}\sqrt{\frac{\pi}{x}}\,\sum_{n=1}^{\infty}\frac{1}{\sqrt{n}}\times\\
\times\frac{\left(p^{2}-\frac{2\pi n}{x}\right)\left\{ \cos(2\pi\sqrt{\frac{\pi n}{x}})-\sin(2\pi\sqrt{\frac{\pi n}{x}})\right\} -e^{-2\pi\sqrt{\frac{\pi n}{x}}}\left(p^{2}-2\sqrt{\frac{\pi n}{x}}\,p+\frac{2\pi n}{x}\right)-2\sqrt{\frac{\pi n}{x}}\,p\left\{ \cos(2\pi\sqrt{\frac{\pi n}{x}})+\sin(2\pi\sqrt{\frac{\pi n}{x}})\right\} }{p^{2}\left(\cosh(2\pi\sqrt{\frac{\pi n}{x}})-\cos(2\pi\sqrt{\frac{\pi n}{x}})\right)+2\sqrt{\frac{\pi n}{x}}\,p\left(\sinh(2\pi\sqrt{\frac{\pi n}{x}})+\sin(2\pi\sqrt{\frac{\pi n}{x}})\right)+\frac{2\pi n}{x}\left(\cosh(2\pi\sqrt{\frac{\pi n}{x}})+\cos(2\pi\sqrt{\frac{\pi n}{x}})\right)}.
\end{align*}

In particular, Ramanujan's formula (\ref{Ramanujan formula def entry 8}) holds, together with
\begin{equation}
\sum_{n=1}^{\infty}\frac{1}{e^{(2n-1)^{2}x}-1}=\frac{1}{4}\,\sqrt{\frac{\pi}{x}}\,\zeta\left(\frac{1}{2}\right)+\frac{\pi^{2}}{8x}-\frac{1}{4}\sqrt{\frac{\pi}{x}}\,\sum_{n=1}^{\infty}\frac{1}{\sqrt{n}}\,\frac{\cos\left(\pi\sqrt{\frac{\pi n}{x}}\right)-\sin\left(\pi\sqrt{\frac{\pi n}{x}}\right)+e^{-\pi\sqrt{\frac{\pi n}{x}}}}{\cosh\left(\pi\sqrt{\frac{\pi n}{x}}\right)+\,\cos\left(\pi\sqrt{\frac{\pi n}{x}}\right)},\,\,\,x>0.\label{First example with odd integers-1}
\end{equation}
\end{corollary}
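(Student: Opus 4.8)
The plan is to derive the Corollary entirely by specializing the Theorem: in the identity (\ref{Analogue final in the statement}) one lets $p'$ and then $p$ tend to the endpoints $\infty$ or $0^{+}$, exactly as announced just before the statement, taking care to justify the interchange of limit and summation at each step.

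First I would send $p'\to\infty$ in (\ref{Analogue final in the statement}). On the left the function $\sigma'$ of (\ref{definition sigma p sigma p'}), evaluated at the fixed argument $\tfrac{\lambda_{n}^{2}x}{2\pi}$, tends to $1$; the Koshliakov weights $\tfrac{p'^{2}+\lambda_{n}'^{2}}{p'(p'+1/\pi)+\lambda_{n}'^{2}}$ and the factor $\tfrac{1}{1+1/\pi p'}$ tend to $1$; and $\zeta_{p'}(1/2)\to\zeta(1/2)$ by the limit relations for $\zeta_{p}(s)$ recorded in Section~\ref{sec:1}. The only place where the structure of (\ref{Transcendental equation}) enters is that $\tan(\pi\lambda_{n}')=-\lambda_{n}'/p'\to0$ while $n-\tfrac12<\lambda_{n}'<n$, which pins $\lambda_{n}'\to n$ and hence $G_{p}\!\bigl(\tfrac{2\pi\lambda_{n}'}{x}\bigr)\to G_{p}\!\bigl(\tfrac{2\pi n}{x}\bigr)$ by continuity of (\ref{definition of Qp (a) intro}). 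Writing $G_{p}$ out with $a=2\pi n/x$ (so $\sqrt{2a}=2\sqrt{\pi n/x}$) reproduces the first displayed identity of the Corollary.

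From that identity the two ``in particular'' cases come from a second limit on $p$. For Ramanujan's (\ref{Ramanujan formula def entry 8}) I would let $p\to\infty$: the weight on the left tends to $1$, $\lambda_{n}\to n$, $\tfrac{1}{4(1+1/\pi p)}\to\tfrac14$, the bracketed quotient multiplying $\pi^{2}/6x$ tends to $1$, and dividing numerator and denominator of (\ref{definition of Qp (a) intro}) by $p^{2}$ gives $G_{p}(a)\to\dfrac{\cos(\pi\sqrt{2a})-\sin(\pi\sqrt{2a})-e^{-\pi\sqrt{2a}}}{\cosh(\pi\sqrt{2a})-\cos(\pi\sqrt{2a})}$, which is exactly the summand of (\ref{Ramanujan formula def entry 8}). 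For (\ref{First example with odd integers-1}) I would instead let $p\to0^{+}$: now $\tan(\pi\lambda_{n})=-\lambda_{n}/p\to-\infty$ with $n-\tfrac12<\lambda_{n}<n$ forces $\lambda_{n}\to n-\tfrac12$, so $\lambda_{n}^{2}\to\tfrac{(2n-1)^{2}}{4}$; the weight tends to $1$; $\tfrac{1}{4(1+1/\pi p)}\to0$; with $u=\tfrac{1}{\pi p}$ the quotient $\tfrac{1+3u(1+u)}{(1+u)^{2}}\to3$, so the third term tends to $\tfrac{\pi^{2}}{2x}$; and setting $p=0$ in (\ref{definition of Qp (a) intro}) gives $G_{0}(a)=-\dfrac{\cos(\pi\sqrt{2a})-\sin(\pi\sqrt{2a})+e^{-\pi\sqrt{2a}}}{\cosh(\pi\sqrt{2a})+\cos(\pi\sqrt{2a})}$. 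This produces $\sum_{n\ge1}\bigl(e^{(2n-1)^{2}x/4}-1\bigr)^{-1}$ equal to the corresponding right-hand side, and since the identity holds for all $x>0$, replacing $x$ by $4x$ (which turns $\tfrac12\sqrt{\pi/x}$ into $\tfrac14\sqrt{\pi/x}$, $\tfrac{\pi^{2}}{2x}$ into $\tfrac{\pi^{2}}{8x}$, and halves the trigonometric argument from $2\pi\sqrt{\pi n/x}$ to $\pi\sqrt{\pi n/x}$) yields (\ref{First example with odd integers-1}).

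The main obstacle is the legitimacy of passing each of these limits inside the infinite sums; I would settle it by dominated convergence using three uniform estimates. (i) Since $n-\tfrac12<\lambda_{n}<n$ and $n-\tfrac12<\lambda_{n}'<n$ hold for every value of $p,p'$, one has $e^{\lambda_{n}^{2}x}\ge e^{(n-1/2)^{2}x}$ and $\lambda_{n}',\lambda_{n}\ge\tfrac12$. (ii) For all $t,p>0$ with $t\ne p$ one has $\bigl|\tfrac{p+t}{p-t}\bigr|>1$, so the general term of the left series in (\ref{Analogue final in the statement}) is $\le C_{x}\,e^{-(n-1/2)^{2}x}$ uniformly in $p'$. (iii) In (\ref{definition of Qp (a) intro}) the denominator contains $p^{2}\bigl(\cosh(\pi\sqrt{2a})-\cos(\pi\sqrt{2a})\bigr)$ and $a\bigl(\cosh(\pi\sqrt{2a})+\cos(\pi\sqrt{2a})\bigr)$, each comparable to $(p^{2}+a)\,e^{\pi\sqrt{2a}}$ once $a\ge a_{0}>0$, while the numerator is $O(p^{2}+a)$; hence $|G_{p}(a)|\le C\,e^{-\pi\sqrt{2a}}$ uniformly in $p\ge0$ on $a\ge a_{0}$, and the continuity $G_{p}(a)\to G_{0}(a)$ controls the finitely many remaining small-$a$ terms as $p\to0^{+}$. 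As the Koshliakov weights themselves lie in $(0,1)$, these bounds supply summable majorants independent of the parameter, so every limit may be taken termwise, and the Corollary follows.
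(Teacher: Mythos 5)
Your proposal is correct and follows exactly the route the paper intends: the paper omits the proof, stating only that each corollary is a specialization of (\ref{Analogue final in the statement}) as $p$ or $p^{\prime}$ tend to $0^{+}$ or $\infty$, and your limits $p^{\prime}\to\infty$, then $p\to\infty$ (for (\ref{Ramanujan formula def entry 8})) and $p\to0^{+}$ followed by $x\mapsto 4x$ (for (\ref{First example with odd integers-1})) reproduce the stated identities, with the dominated-convergence majorants $|\sigma^{\prime}(t)|>1$ and $|G_{p}(a)|\le Ce^{-\pi\sqrt{2a}}$ correctly supplying the justification the paper leaves implicit. The only detail worth flagging is that $\zeta_{p^{\prime}}(1/2)\to\zeta(1/2)$ requires the limit relation for the analytic continuation rather than for the Dirichlet series itself, but this is standard and is also taken for granted by the paper.
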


\bigskip{}

If, instead, we take $p\rightarrow\infty$ and fix $p^{\prime}$,
we are able to derive the result.
\begin{corollary}
For every $p^{\prime}\in\mathbb{R}^{+}$ and any $x>0$, the following
identity of Ramanujan type holds
\begin{align*}
\sum_{n=1}^{\infty}\frac{1}{\sigma^{\prime}\left(\frac{n^{2}x}{2\pi}\right)e^{n^{2}x}-1} & =\frac{1}{4}+\frac{1}{2}\sqrt{\frac{\pi}{x}}\,\zeta_{p^{\prime}}\left(\frac{1}{2}\right)+\frac{\pi^{2}}{6x}\,\frac{1}{1+\frac{1}{\pi p^{\prime}}}+\\
+\frac{1}{2}\sqrt{\frac{\pi}{x}}\,\sum_{n=1}^{\infty}\frac{p^{\prime2}+\lambda_{n}^{\prime2}}{p^{\prime}\left(p^{\prime}+\frac{1}{\pi}\right)+\lambda_{n}^{\prime2}} & \,\frac{1}{\sqrt{\lambda_{n}^{\prime}}}\cdot\left\{ \frac{\cos\left(2\pi\sqrt{\frac{\pi\lambda_{n}^{\prime}}{x}}\right)-\sin\left(2\pi\sqrt{\frac{\pi\lambda_{n}^{\prime}}{x}}\right)-e^{-2\pi\sqrt{\frac{\pi\lambda_{n}^{\text{\ensuremath{\prime}}}}{x}}}}{\cosh\left(2\pi\sqrt{\frac{\pi\lambda_{n}^{\prime}}{x}}\right)-\cos\left(2\pi\sqrt{\frac{\pi\lambda_{n}^{\prime}}{x}}\right)}\right\} .
\end{align*}

In particular,
\[
-\sum_{n=1}^{\infty}\frac{1}{e^{n^{2}x}+1}=\frac{1}{4}+\frac{1}{2}\sqrt{\frac{\pi}{x}}\,\left(\sqrt{2}-1\right)\,\zeta\left(\frac{1}{2}\right)+\sqrt{\frac{\pi}{2x}}\,\sum_{n=1}^{\infty}\frac{1}{\sqrt{2n-1}}\cdot\left\{ \frac{\cos\left(\pi\sqrt{\frac{\pi(4n-2)}{x}}\right)-\sin\left(\pi\sqrt{\frac{\pi(4n-2)}{x}}\right)-e^{-\pi\sqrt{\frac{\pi(4n-2)}{x}}}}{\cosh\left(\pi\sqrt{\frac{\pi(4n-2)}{x}}\right)-\cos\left(\pi\sqrt{\frac{\pi(4n-2)}{x}}\right)}\right\}.
\]

\end{corollary}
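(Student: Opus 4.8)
The plan is to obtain this Corollary from Theorem 2.1 by letting $p\to\infty$ with $p'$ held fixed, and then to derive the particular case by letting $p'\to0^{+}$ in the identity so produced. I would split the argument into the pointwise limits of the various ingredients of (\ref{Analogue final in the statement}) and the dominated-convergence estimates needed to exchange limit and summation.

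For the passage $p\to\infty$ the pointwise limits are immediate: from $\tan(\pi\lambda_{n})=-\lambda_{n}/p$ together with $n-\tfrac12<\lambda_{n}<n$ one gets $\lambda_{n}(p)\to n$, hence $\tfrac{p^{2}+\lambda_{n}^{2}}{p(p+1/\pi)+\lambda_{n}^{2}}\to1$, $\tfrac{1}{4(1+1/(\pi p))}\to\tfrac14$ and $\tfrac{1+\frac{3}{\pi p}(1+\frac1{\pi p})}{(1+\frac1{\pi p'})(1+\frac1{\pi p})^{2}}\to\tfrac{1}{1+1/(\pi p')}$. Dividing numerator and denominator of $G_{p}(a)$ in (\ref{definition of Qp (a) intro}) by $p^{2}$,
\[
\lim_{p\to\infty}G_{p}(a)=\frac{\cos(\pi\sqrt{2a})-\sin(\pi\sqrt{2a})-e^{-\pi\sqrt{2a}}}{\cosh(\pi\sqrt{2a})-\cos(\pi\sqrt{2a})},
\]
and since $a=2\pi\lambda_{n}'/x$ gives $\pi\sqrt{2a}=2\pi\sqrt{\pi\lambda_{n}'/x}$, this is exactly the kernel in the braces of the statement.

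The substance of the proof is to move the limit inside the two infinite series. On the left, for each $p$ the general term is dominated, via the geometric series (\ref{power series def}) applied with $p'$ in place of $p$, by $\bigl|\sigma'(t)e^{2\pi t}-1\bigr|^{-1}\le e^{-2\pi t}/(1-e^{-2\pi t})$ with $t=\lambda_{n}(p)^{2}x/(2\pi)\ge(n-\tfrac12)^{2}x/(2\pi)$; this bound is summable in $n$ and independent of $p$, so dominated convergence returns $\sum_{n\ge1}\bigl(\sigma'(n^{2}x/2\pi)e^{n^{2}x}-1\bigr)^{-1}$ in the limit. On the right, $\lambda_{n}'$ does not depend on $p$, the factor $\tfrac{p'^{2}+\lambda_{n}'^{2}}{p'(p'+1/\pi)+\lambda_{n}'^{2}}\tfrac{1}{\sqrt{\lambda_{n}'}}$ is $O(1/\sqrt n)$, and for large $a$ the hyperbolic terms dominate the denominator of $G_{p}(a)$, giving $|G_{p}(a)|\le C e^{-\pi\sqrt{2a}}$ uniformly for $p\ge1$; for $a$ in a compact set $G_{p}(a)$ is continuous and bounded in $(p,a)$ since, as computed in the proof of Theorem 2.1, its denominator is a positive multiple of $\bigl|\sigma(\sqrt a\,e^{i\pi/4})e^{2\pi\sqrt a\,e^{i\pi/4}}-1\bigr|^{2}$, which is strictly positive because $\text{Re}(\sqrt a\,e^{i\pi/4})>0$ forces $\bigl|\sigma(\sqrt a\,e^{i\pi/4})e^{2\pi\sqrt a\,e^{i\pi/4}}\bigr|>1$. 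Hence the right-hand series is dominated uniformly in $p$ by a summable majorant, dominated convergence applies, and together with the pointwise limits above we obtain the displayed identity.

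For the particular case I would let $p'\to0^{+}$ in the identity just established. Here $\lambda_{n}'\to n-\tfrac12$ (because $\tan(\pi\lambda_{n}')=-\lambda_{n}'/p'\to-\infty$), $\sigma'(t)=\tfrac{p'+t}{p'-t}\to-1$ so that $\bigl(\sigma'(n^{2}x/2\pi)e^{n^{2}x}-1\bigr)^{-1}\to-(e^{n^{2}x}+1)^{-1}$, the weights tend to $1$, $\tfrac{\pi^{2}}{6x}\cdot\tfrac{1}{1+1/(\pi p')}\to0$, and $\zeta_{p'}(\tfrac12)\to(2^{1/2}-1)\zeta(\tfrac12)$ by the limit recorded in Section \ref{sec:1}; in the right-hand series $\tfrac{1}{\sqrt{\lambda_{n}'}}\to\sqrt2/\sqrt{2n-1}$ and $2\pi\sqrt{\pi\lambda_{n}'/x}\to\pi\sqrt{\pi(4n-2)/x}$, so the prefactor $\tfrac12\sqrt{\pi/x}$ absorbs the $\sqrt2$ and becomes $\sqrt{\pi/(2x)}$. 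The same type of dominated-convergence bound (the kernel decays like $e^{-\pi\sqrt{2a}}$, uniformly for $p'$ in a right-neighbourhood of $0$) makes the termwise passage legitimate and produces the second displayed formula. I expect the only delicate step to be the uniform majorization of the kernels $G_{p}(a)$ and $\bigl(\sigma'(t)e^{2\pi t}-1\bigr)^{-1}$ required by dominated convergence; the remaining manipulations are routine.
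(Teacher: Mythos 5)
Your proposal is correct and follows exactly the route the paper intends: the paper explicitly omits the proofs of these corollaries on the grounds that they are "nothing but a specialization of (\ref{Analogue final in the statement}) as $p$ or $p^{\prime}$ tend to $0^{+}$ or $\infty$", and you carry out precisely that specialization ($p\rightarrow\infty$ for the main identity, then $p^{\prime}\rightarrow 0^{+}$ for the particular case), supplying the uniform majorants and dominated-convergence justifications that the paper leaves implicit.
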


If we let $p^{\prime}\rightarrow0^{+}$ and fix $p\in\mathbb{R}^{+}$
in our Theorem 2.1, we are still able to find alternative representations
for $\zeta\left(\frac{1}{2}\right)$. This gives the following Corollary.

\begin{corollary}
For every $p\in\mathbb{R}^{+}$ and $x>0$, we have the representations
for $\zeta(1/2)$, 
\[
-\sum_{n=1}^{\infty}\frac{p^{2}+\lambda_{n}^{2}}{p\left(p+\frac{1}{\pi}\right)+\lambda_{n}^{2}}\cdot\frac{1}{e^{\lambda_{n}^{2}x}+1}=\frac{1}{4\left(1+\frac{1}{\pi p}\right)}+\frac{1}{2}\sqrt{\frac{\pi}{x}}\left(\sqrt{2}-1\right)\zeta\left(\frac{1}{2}\right)+\sqrt{\frac{\pi}{2x}}\,\sum_{n=1}^{\infty}\frac{G_{p}\left(\frac{\pi(2n-1)}{x}\right)}{\sqrt{2n-1}},
\]
where $G_{p}(a)$ is given by (\ref{definition of Qp (a) intro}). In particular,
the following relation takes place 
\[
\sum_{n=1}^{\infty}\frac{1}{e^{(2n-1)^{2}x}+1}=\frac{1}{4}\sqrt{\frac{\pi}{x}}\left(1-\sqrt{2}\right)\zeta\left(\frac{1}{2}\right)+\frac{1}{4}\sqrt{\frac{2\pi}{x}}\,\sum_{n=1}^{\infty}\frac{1}{\sqrt{2n-1}}\cdot\frac{\cos\left(\pi\sqrt{\frac{\pi(2n-1)}{2x}}\right)-\sin\left(\pi\sqrt{\frac{\pi(2n-1)}{2x}}\right)+e^{-\pi\sqrt{\frac{\pi(2n-1)}{2x}}}}{\cosh\left(\pi\sqrt{\frac{\pi(2n-1)}{2x}}\right)+\cos\left(\pi\sqrt{\frac{\pi(2n-1)}{2x}}\right)}.
\]

\end{corollary}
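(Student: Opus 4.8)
The plan is to obtain the first identity of the corollary by letting $p^{\prime}\to 0^{+}$ in Theorem 2.1, and then to deduce the ``in particular'' relation from it by a further passage $p\to 0^{+}$ combined with the rescaling $x\mapsto 4x$.

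First I would record how the ingredients of (\ref{Analogue final in the statement}) behave as $p^{\prime}\to 0^{+}$. Since $-y/p^{\prime}\to-\infty$, the positive root $\lambda_{n}^{\prime}$ of $\tan(\pi y)=-y/p^{\prime}$ lying in $(n-\tfrac12,n)$ is pushed against the left endpoint, so $\lambda_{n}^{\prime}\to n-\tfrac12=\tfrac{2n-1}{2}$; hence the weight $\frac{p^{\prime2}+\lambda_{n}^{\prime2}}{p^{\prime}(p^{\prime}+\frac1\pi)+\lambda_{n}^{\prime2}}\to 1$, $\sqrt{\lambda_{n}^{\prime}}\to\sqrt{(2n-1)/2}$ and $\frac{2\pi\lambda_{n}^{\prime}}{x}\to\frac{\pi(2n-1)}{x}$. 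The Koshliakov function $\sigma^{\prime}(t)=\frac{p^{\prime}+t}{p^{\prime}-t}$ in (\ref{definition sigma p sigma p'}) tends to $-1$, so the left side of (\ref{Analogue final in the statement}) goes termwise to $-\sum_{n}\frac{p^{2}+\lambda_{n}^{2}}{p(p+\frac1\pi)+\lambda_{n}^{2}}\cdot\frac{1}{e^{\lambda_{n}^{2}x}+1}$. On the right, $\bigl(1+\frac{1}{\pi p^{\prime}}\bigr)^{-1}\to 0$ kills the $\frac{\pi^{2}}{6x}$-term, $\zeta_{p^{\prime}}(\tfrac12)\to(2^{1/2}-1)\zeta(\tfrac12)=(\sqrt2-1)\zeta(\tfrac12)$ by $\lim_{p^{\prime}\to0^{+}}\zeta_{p^{\prime}}(s)=(2^{s}-1)\zeta(s)$ from Section \ref{sec:1}, and each factor $\frac{1}{\sqrt{\lambda_{n}^{\prime}}}G_{p}\!\left(\frac{2\pi\lambda_{n}^{\prime}}{x}\right)$ tends to $\sqrt{\tfrac{2}{2n-1}}\,G_{p}\!\left(\frac{\pi(2n-1)}{x}\right)$, which is exactly the $\sqrt2$ appearing in front of the $G_{p}$-sum.

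The one step that genuinely requires care is the interchange of the limit $p^{\prime}\to 0^{+}$ with the two infinite series. On the left, for $p^{\prime}$ small one has $t:=\lambda_{n}^{2}x/2\pi>p^{\prime}$ and therefore $|\sigma^{\prime}(t)|=\frac{t+p^{\prime}}{t-p^{\prime}}\ge 1$, so $\bigl|\sigma^{\prime}(t)e^{\lambda_{n}^{2}x}-1\bigr|\ge e^{\lambda_{n}^{2}x}-1$ and the $n$-th summand is bounded by $\frac{p^{2}+\lambda_{n}^{2}}{p(p+\frac1\pi)+\lambda_{n}^{2}}\cdot\frac{1}{e^{\lambda_{n}^{2}x}-1}$, a summable bound independent of $p^{\prime}$; dominated convergence applies. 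On the right, the explicit expression (\ref{definition of Qp (a) intro}) shows that $G_{p}(a)=O\!\bigl(e^{-\pi\sqrt{2a}}\bigr)$ as $a\to\infty$, uniformly for $p^{\prime}$ near $0$, which together with $n-\tfrac12<\lambda_{n}^{\prime}<n$ furnishes a $p^{\prime}$-independent summable majorant for $\frac{1}{\sqrt{\lambda_{n}^{\prime}}}G_{p}\!\left(\frac{2\pi\lambda_{n}^{\prime}}{x}\right)$, so the limit passes inside. Collecting everything gives the first displayed identity.

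Finally, starting from that identity I would let $p\to 0^{+}$ and then replace $x$ by $4x$. As $p\to 0^{+}$ again $\lambda_{n}\to\tfrac{2n-1}{2}$ and the weight $\to 1$, while $\frac{1}{4(1+1/\pi p)}\to 0$ and, in (\ref{definition of Qp (a) intro}), the $O(p)$ contributions to numerator and denominator vanish against the $O(1)$ ones, leaving $G_{p}(a)\to -\dfrac{\cos(\pi\sqrt{2a})-\sin(\pi\sqrt{2a})+e^{-\pi\sqrt{2a}}}{\cosh(\pi\sqrt{2a})+\cos(\pi\sqrt{2a})}$; the limit enters the sums by the same estimates. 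The left side becomes $-\sum_{n}\bigl(e^{(2n-1)^{2}x/4}+1\bigr)^{-1}$, and the substitution $x\mapsto 4x$ turns this into $-\sum_{n}\bigl(e^{(2n-1)^{2}x}+1\bigr)^{-1}$, $\tfrac12\sqrt{\pi/x}$ into $\tfrac14\sqrt{\pi/x}$, $\sqrt{\pi/2x}$ into $\tfrac14\sqrt{2\pi/x}$, and $\sqrt{2a}=\sqrt{2\pi(2n-1)/x}$ into $\sqrt{\pi(2n-1)/2x}$; transposing the overall minus sign then produces the stated relation. The main obstacle throughout is just the justification of these limit interchanges; the rest is bookkeeping.
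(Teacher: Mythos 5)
Your proposal is correct and takes exactly the route the paper intends: the authors state that all corollaries of Theorem 2.1 are obtained by specializing $p$ or $p^{\prime}$ to $0^{+}$ or $\infty$ and omit the details, and your limit $p^{\prime}\to 0^{+}$ (followed by $p\to 0^{+}$ together with the rescaling $x\mapsto 4x$ for the second display) is precisely that specialization. Your dominated-convergence justifications for passing the limits through the two series supply the details the paper leaves implicit.
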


Finally, we state the corollary that we obtain once we take $p\rightarrow0^{+}$
in Theorem 2.1 above.

\begin{corollary}
For every $p^{\prime}\in\mathbb{R}^{+}$ and $x>0$, the following
identity takes place
\begin{align*}
\sum_{n=1}^{\infty}\frac{1}{\sigma^{\prime}\left(\frac{(2n-1)^{2}x}{2\pi}\right)e^{(2n-1)^{2}x}-1} & =\frac{\pi^{2}}{8x}\cdot\frac{1}{1+\frac{1}{\pi p^{\prime}}}+\frac{1}{4}\sqrt{\frac{\pi}{x}}\zeta_{p^{\prime}}\left(\frac{1}{2}\right)+\\
+\frac{1}{4}\sqrt{\frac{\pi}{x}}\,\sum_{n=1}^{\infty}\frac{p^{\prime2}+\lambda_{n}^{\prime2}}{p^{\prime}\left(p^{\prime}+\frac{1}{\pi}\right)+\lambda_{n}^{\prime2}}\cdot\frac{1}{\sqrt{\lambda_{n}^{\prime}}} & \cdot\frac{\sin\left(\pi\sqrt{\frac{\pi\lambda_{n}^{\prime}}{x}}\right)-\cos\left(\pi\sqrt{\frac{\pi\lambda_{n}^{\prime}}{x}}\right)-e^{-\pi\sqrt{\frac{\pi\lambda_{n}^{\prime}}{x}}}}{\cosh\left(\pi\sqrt{\frac{\pi\lambda_{n}^{\prime}}{x}}\right)+\cos\left(\pi\sqrt{\frac{\pi\lambda_{n}^{\prime}}{x}}\right)}.
\end{align*}
   
\end{corollary}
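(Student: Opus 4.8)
The plan is to derive this identity as the boundary case $p\to 0^{+}$ of Theorem 2.1, with $p^{\prime}$ held fixed, followed by the innocuous rescaling $x\mapsto 4x$. First I would pin down the behaviour of the data of (\ref{Analogue final in the statement}) as $p\to 0^{+}$. On the branch of $\tan(\pi y)=-y/p$ that carries the roots $\lambda_{n}$ one has $\tan(\pi\lambda_{n})<0$ and $-\lambda_{n}/p\to-\infty$, so, using the bounds $n-\tfrac12<\lambda_{n}<n$, each root is squeezed onto the left endpoint: $\lambda_{n}(p)\to n-\tfrac12=\tfrac{2n-1}{2}$ as $p\to 0^{+}$, whence $\lambda_{n}^{2}\to\tfrac{(2n-1)^{2}}{4}$ and the Koshliakov weight $\frac{p^{2}+\lambda_{n}^{2}}{p(p+\frac1\pi)+\lambda_{n}^{2}}\to 1$ for every fixed $n$. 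Consequently the left side of (\ref{Analogue final in the statement}) should pass, term by term, to $\sum_{n\ge1}\bigl(\sigma^{\prime}(\tfrac{(2n-1)^{2}x}{8\pi})\,e^{(2n-1)^{2}x/4}-1\bigr)^{-1}$, which after $x\mapsto 4x$ is exactly the left side of the Corollary. (That this series is finite for every $x>0$ is easy: with $\sigma^{\prime}$ as in (\ref{definition sigma p sigma p'}), $\sigma^{\prime}(t)e^{2\pi t}-1$ is strictly increasing and positive on $(0,p^{\prime})$ and strictly negative on $(p^{\prime},\infty)$, so it has no zero on $(0,\infty)$.)

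Next I would read off the limits of the three residual terms on the right of (\ref{Analogue final in the statement}). From the values recorded in the proof of Theorem 2.1, $\zeta_{p}(0)=-\tfrac12(1+\tfrac1{\pi p})^{-1}\to 0$ and $\zeta_{p}(2)=\tfrac{\pi^{2}}{6}\,\tfrac{1+\frac{3}{\pi p}(1+\frac1{\pi p})}{(1+\frac1{\pi p})^{2}}\to\tfrac{\pi^{2}}{2}$ as $p\to 0^{+}$ (in agreement with $\lim_{p\to0^{+}}\zeta_{p}(s)=(2^{s}-1)\zeta(s)$). Hence $\frac{1}{4(1+1/\pi p)}\to 0$; the coefficient of $1/x$, which equals $\frac{1}{x}\cdot\frac{\zeta_{p}(2)}{1+1/\pi p^{\prime}}$, tends to $\frac{\pi^{2}}{2x}\cdot\frac{1}{1+1/\pi p^{\prime}}$; and $\frac12\sqrt{\pi/x}\,\zeta_{p^{\prime}}(\tfrac12)$ is unaffected since $p^{\prime}$ is fixed. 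For the last series I would use that $G_{p}(a)$ given by (\ref{definition of Qp (a) intro}) is continuous in $p$ at $p=0$, its $p=0$ denominator $a\bigl(\cosh(\pi\sqrt{2a})+\cos(\pi\sqrt{2a})\bigr)$ being strictly positive for $a>0$, with
\[\lim_{p\to0^{+}}G_{p}(a)=G_{0}(a)=\frac{\sin(\pi\sqrt{2a})-\cos(\pi\sqrt{2a})-e^{-\pi\sqrt{2a}}}{\cosh(\pi\sqrt{2a})+\cos(\pi\sqrt{2a})}.\]
Taking $a=2\pi\lambda_{n}^{\prime}/x$ (so $\sqrt{2a}=2\sqrt{\pi\lambda_{n}^{\prime}/x}$) and then applying $x\mapsto 4x$ turns $\pi\sqrt{2a}$ into $\pi\sqrt{\pi\lambda_{n}^{\prime}/x}$, so the series lands on the one in the Corollary; simultaneously the prefactor $\frac12\sqrt{\pi/x}$ becomes $\frac14\sqrt{\pi/x}$, the $1/x$ coefficient above becomes $\frac{\pi^{2}}{8x}\cdot\frac{1}{1+1/\pi p^{\prime}}$, and $\frac12\sqrt{\pi/x}\,\zeta_{p^{\prime}}(\tfrac12)$ becomes $\frac14\sqrt{\pi/x}\,\zeta_{p^{\prime}}(\tfrac12)$, matching every term on the right of the Corollary.

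The single step that requires real care — the only genuine obstacle — is justifying that $\lim_{p\to0^{+}}$ may be moved inside the two infinite sums, which I would do by dominated convergence with $p$-uniform dominants. On the left: for the finitely many $n$ with $\frac{(n-\frac12)^{2}x}{2\pi}\le p^{\prime}$, the $n$-th term is a bounded continuous function of $p\in[0,1]$ — continuity of $p\mapsto\lambda_{n}(p)$ follows from the implicit function theorem, and $\sigma^{\prime}(t)e^{2\pi t}-1$ has no zero on $(0,\infty)$, its pole at $t=p^{\prime}$ only forcing the term to vanish — while for all larger $n$ one has $\frac{\lambda_{n}^{2}x}{2\pi}>p^{\prime}$, hence $|\sigma^{\prime}(\tfrac{\lambda_{n}^{2}x}{2\pi})|\ge 1$ and the $n$-th term is bounded, uniformly in $p\in(0,1]$, by $e^{-(n-1/2)^{2}x}$. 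On the right: the $n$-th term is bounded, uniformly in $p\in[0,1]$, by a constant (independent of $n$) times $|G_{p}(\tfrac{2\pi\lambda_{n}^{\prime}}{x})|$, and since in (\ref{definition of Qp (a) intro}) the denominator grows like $a\cosh(\pi\sqrt{2a})$ while the numerator is $O(a)$, one has $|G_{p}(a)|\le C\,e^{-\pi\sqrt{2a}}$; thus the $n$-th term is dominated by $C^{\prime}e^{-2\pi\sqrt{\pi(n-1/2)/x}}$, which is summable. With both interchanges licensed, collecting the limits found above and applying $x\mapsto 4x$ yields the asserted identity.
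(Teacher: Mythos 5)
Your proposal is correct and follows exactly the route the paper intends: the paper explicitly omits the proofs of all four corollaries in Section 2, stating only that they are specializations of (\ref{Analogue final in the statement}) as $p$ or $p'$ tends to $0^{+}$ or $\infty$, and your argument is precisely that specialization ($p\to 0^{+}$, $p'$ fixed, then $x\mapsto 4x$ since $\lambda_n\to(2n-1)/2$), with the limit values of $\zeta_p(0)$, $\zeta_p(2)$ and $G_p(a)$ computed correctly. The dominated-convergence justifications you supply for interchanging $\lim_{p\to0^{+}}$ with the two sums are sound and in fact fill in details the paper leaves unstated.
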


\section{A Generalization of Watson's formula}

\subsection{First Analogue of Watson's Formula}
In order to generalize Watson's result (\ref{Watson Formula intro}) in Koshliakov's setting, we first need to generalize the series on the left side of (\ref{Watson Formula intro}). For $\text{Re}(x)>0$ and $\text{Re}(s)>\frac{1}{2}$, we shall consider
\begin{equation}
\varphi_{p}(s,x):=\sum_{n=1}^{\infty}\frac{p^{2}+\lambda_{n}^{2}}{p\left(p+\frac{1}{\pi}\right)+\lambda_{n}^{2}}\,\frac{1}{\left(\lambda_{n}^{2}+x^{2}\right)^{s}},\,\,\,\,\text{Re}(s)>\frac{1}{2}.\label{Watson series}
\end{equation}

This constitutes an analogue the infinite series appearing in (\ref{Watson Formula intro}), which is obtained in the limiting case $p\rightarrow \infty$. We now extend Watson's formula.
\begin{theorem}
Let $x>0$ and $\text{Re}(s)>\frac{1}{2}$. Then the following
generalization of Watson's formula (\ref{Watson Formula intro}) takes
place
\begin{align}
\sum_{n=1}^{\infty}\frac{p^{2}+\lambda_{n}^{2}}{p\left(p+\frac{1}{\pi}\right)+\lambda_{n}^{2}}\,\frac{1}{\left(\lambda_{n}^{2}+x^{2}\right)^{s}} & =\frac{\sqrt{\pi}\,x^{1-2s}}{2\Gamma(s)}\Gamma\left(s-\frac{1}{2}\right)-\frac{1}{2}\,\frac{x^{-2s}}{1+\frac{1}{\pi p}}+\frac{2\pi^{s}x^{\frac{1}{2}-s}}{\Gamma(s)}\,\sum_{m=1}^{\infty}(-1)^{m}m^{s-\frac{1}{2}}K_{s-\frac{1}{2}}(2\pi mx)+\nonumber \\
+\frac{2\,\pi^{s}x^{\frac{1}{2}-s}}{\Gamma(s)}\,\sum_{m=1}^{\infty}e^{2\pi pm}m^{s-\frac{1}{2}}\,\sum_{\ell=1}^{m}\left(\begin{array}{c}
m\\
\ell
\end{array}\right) & (-1)^{m-\ell}\left(4\pi mp\right)^{\ell}\intop_{1}^{\infty}t^{s-\frac{1}{2}}\,e^{-2\pi mpt}\,\frac{(t-1)^{\ell-1}}{(\ell-1)!}\,K_{s-\frac{1}{2}}\left(2\pi xmt\right)\,dt.\label{formula valid for all s}
\end{align}

In particular, for $\frac{1}{2}<\text{Re}(s)<1$, one has the integral
representation
\begin{align}
\sum_{n=1}^{\infty}\frac{p^{2}+\lambda_{n}^{2}}{p\left(p+\frac{1}{\pi}\right)+\lambda_{n}^{2}}\,\frac{1}{\left(\lambda_{n}^{2}+x^{2}\right)^{s}} & =\frac{\sqrt{\pi}\,x^{1-2s}}{2\Gamma(s)}\Gamma\left(s-\frac{1}{2}\right)-\frac{1}{2}\,\frac{x^{-2s}}{1+\frac{1}{\pi p}}+\nonumber \\
+2^{2-2s}\,x^{1-2s}\sin(\pi s)\, & \intop_{0}^{\infty}\,\frac{y^{-s}(y+1)^{-s}}{\sigma\left((2y+1)\,x\right)e^{2\pi(2y+1)x}-1}dy,\label{formula in a region compact to be extended}
\end{align}
with the right-hand side of the previous expression being the analytic
continuation of the series (\ref{Watson series}) to the half-plane
$\text{Re}(s)<\frac{1}{2}$
\end{theorem}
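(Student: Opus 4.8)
The plan is to derive both identities from the Mellin--Barnes representation of $(\lambda_{n}^{2}+x^{2})^{-s}$, in the spirit of the classical proof of (\ref{Watson Formula intro}). Starting from $(1+z)^{-s}=\frac{1}{2\pi i}\int_{(c)}\frac{\Gamma(s+w)\Gamma(-w)}{\Gamma(s)}z^{w}\,dw$ with $z=x^{2}/\lambda_{n}^{2}$ and summing against the Dirichlet series (\ref{Koshliakov zeta function definition}) --- the interchange being legitimate once $\tfrac12-\text{Re}(s)<c<0$, so that $\text{Re}(2s+2w)>1$ on the line --- one gets
\[
\varphi_{p}(s,x)=\frac{1}{2\pi i}\int_{(c)}\frac{\Gamma(s+w)\Gamma(-w)}{\Gamma(s)}\,x^{2w}\,\zeta_{p}(2s+2w)\,dw .
\]
I would then push the contour to $\text{Re}(w)=c'$ with $c'\in(-\text{Re}(s)-1,\,-\text{Re}(s))$, exactly as in the proof of Theorem~2.1: the horizontal pieces vanish by Stirling together with Koshliakov's convexity bounds (\ref{Koshliakov Phragmen Analogue}), and one crosses only the simple pole of $\zeta_{p}(2s+2w)$ at $w=\tfrac12-s$, with residue $\tfrac{\sqrt{\pi}\,\Gamma(s-\frac12)}{2\Gamma(s)}x^{1-2s}$ (using that $\zeta_{p}$ has a simple pole at $1$ with residue $1$), and the simple pole of $\Gamma(s+w)$ at $w=-s$, with residue $x^{-2s}\zeta_{p}(0)=-\tfrac12\,x^{-2s}(1+\tfrac1{\pi p})^{-1}$; the further poles of $\Gamma(s+w)$ at $w=-s-k$, $k\ge1$, are removable because $\zeta_{p}(-2k)=0$. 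These two residues are precisely the first two terms on the right of (\ref{formula valid for all s}), and there remains $I=\frac{1}{2\pi i}\int_{(c')}\frac{\Gamma(s+w)\Gamma(-w)}{\Gamma(s)}x^{2w}\zeta_{p}(2s+2w)\,dw$.

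To finish (\ref{formula valid for all s}) I would apply the functional equation (\ref{functional equation Kosh}) to replace $\zeta_{p}(2s+2w)$ by $2\sin(\pi(s+w))\,\Gamma(1-2s-2w)\,(2\pi)^{2s+2w-1}\,\eta_{p}(1-2s-2w)$, set $v=s+w$, and collapse the gamma factors by reflection and duplication via $\sin(\pi v)\Gamma(v)\Gamma(1-2v)=\sqrt{\pi}\,2^{-2v}\Gamma(\tfrac12-v)$, which turns $I$ into $\frac{1}{2\pi i}\int_{(c'')}\frac{\Gamma(\frac12-v)\Gamma(s-v)}{\Gamma(s)}\,\pi^{2v-\frac12}x^{2v-2s}\,\eta_{p}(1-2v)\,dv$ with $c''=\text{Re}(s)+c'\in(-1,0)$. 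Then I would insert the expansion (\ref{secondary zeta function representation as integrals}) of $\eta_{p}(1-2v)$, whose leading term is $\sum_{m}(-1)^{m}m^{2v-1}$ (this being $(2^{2v}-1)\zeta(1-2v)$) and whose remaining part is the iterated fractional integrals, interchange the $m$-summation with the $v$-integral, and evaluate the resulting $v$-integrals by $\frac{1}{2\pi i}\int_{(\kappa)}\Gamma(z)\Gamma(z+\nu)\,y^{-z}\,dz=2y^{\nu/2}K_{\nu}(2\sqrt{y})$ with $\nu=s-\tfrac12$ and $y=(\pi m x)^{2}$ in the leading part, $y=(\pi t x)^{2}$ inside the integrals. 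A final rescaling $t\mapsto mt$ inside those integrals produces exactly the third and fourth terms of (\ref{formula valid for all s}).

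For (\ref{formula in a region compact to be extended}) I would run the same steps up to and including the contour shift, so that $\varphi_{p}(s,x)=\tfrac{\sqrt{\pi}\,\Gamma(s-\frac12)}{2\Gamma(s)}x^{1-2s}-\tfrac12 x^{-2s}(1+\tfrac1{\pi p})^{-1}+I$; but now, since $\text{Re}(1-2s-2w)>1$ on the line, I would substitute into $I$ not the functional equation but the Fourier-type representation (\ref{representation as starting point}) read at argument $2s+2w$, namely $\zeta_{p}(2s+2w)=2\sin(\pi(s+w))\int_{0}^{\infty}\frac{u^{-2s-2w}}{\sigma(u)e^{2\pi u}-1}\,du$, and interchange the $u$- and $w$-integrations. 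Using $\Gamma(s+w)\sin(\pi(s+w))=\pi/\Gamma(1-s-w)$, the inner $w$-integral is $\pi u^{-2s}\cdot\frac{1}{2\pi i}\int_{(c')}\frac{\Gamma(-w)}{\Gamma(1-s-w)}(x^{2}/u^{2})^{w}\,dw$; closing this contour to the right when $u>x$ (the residues of $\Gamma(-w)$ assembling the binomial series for $(1-x^{2}/u^{2})^{-s}$, after $1/\Gamma(1-s-k)=(-1)^{k}(s)_{k}/\Gamma(1-s)$) and to the left when $u<x$ (no poles there), one finds it equals $\frac{\pi}{\Gamma(1-s)}(u^{2}-x^{2})^{-s}$ for $u>x$ and $0$ for $u<x$. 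Hence $I=\frac{2\pi}{\Gamma(s)\Gamma(1-s)}\int_{x}^{\infty}\frac{(u^{2}-x^{2})^{-s}}{\sigma(u)e^{2\pi u}-1}\,du=2\sin(\pi s)\int_{x}^{\infty}\frac{(u^{2}-x^{2})^{-s}}{\sigma(u)e^{2\pi u}-1}\,du$, and the substitution $u=(2y+1)x$ turns this into the stated integral. Since that integrand decays exponentially as $y\to\infty$ while $y^{-s}$ is integrable at $0$ exactly for $\text{Re}(s)<1$, the right-hand side is holomorphic throughout $\text{Re}(s)<1$ and so gives the analytic continuation of $\varphi_{p}(s,x)$ past $\text{Re}(s)=\tfrac12$.

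The main obstacles are the two Fubini steps. In (\ref{formula valid for all s}) one must justify exchanging the $m$-sum --- and, for the fractional-integral part of $\eta_{p}$, also the nested $\int_{m}^{\infty}$ --- with the $v$-integral; this rests on the uniform absolute convergence of $\eta_{p}(s)$ in $\text{Re}(s)>1$ (equivalently, the estimate from (\ref{representation Koshliakov given in poage 24}) that the $\ell$-sum inside each $m$-term is $O(m^{2v-1}e^{-2\pi pm})$) together with the exponential decay of $\Gamma(\tfrac12-v)\Gamma(s-v)$ on vertical lines. In (\ref{formula in a region compact to be extended}) the difficulty is that, after inserting (\ref{representation as starting point}), the $w$-integrand only decays like $|t|^{\text{Re}(s)-1}$ on $\text{Re}(w)=c'$, so the inner $w$-integral is merely conditionally convergent; the interchange must then be carried out on truncated contours $[c'-iT,\,c'+iT]$ with the tails estimated afterwards, and it is precisely the hypothesis $\text{Re}(s)<1$ that makes both this and the closing-of-contour argument work.
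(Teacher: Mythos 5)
Your derivation of (\ref{formula valid for all s}) is correct and is essentially the paper's own proof in different clothing: after the substitution $z=s+w$ your Barnes integral, contour shift, residue computation at $w=\tfrac12-s$ and $w=-s$, use of the functional equation (\ref{functional equation Kosh}), and term-by-term evaluation of $\eta_p$ via (\ref{representation Koshliakov given in poage 24}) and the Mellin--Barnes kernel (\ref{Mellin barnes Macdonald}) reproduce exactly the steps around (\ref{integral representation first series})--(\ref{Watson formula with explicit bessel}). Where you genuinely diverge is in the passage to (\ref{formula in a region compact to be extended}). The paper obtains it \emph{from} (\ref{formula valid for all s}) by rewriting each Macdonald function through the Laplace-type representation (\ref{as laplace traaansform}), collapsing the $\ell$-sum, and resumming the geometric series (\ref{power series def}) to produce the kernel $\left(\sigma\left((2y+1)x\right)e^{2\pi(2y+1)x}-1\right)^{-1}$; you instead return to the shifted contour integral $I$, insert Koshliakov's integral representation (\ref{representation as starting point}) of $\zeta_p(2s+2w)$ (legitimate since $\operatorname{Re}(1-2s-2w)>1$ on your line), and evaluate the inner $w$-integral as a beta-type Barnes kernel equal to $\Gamma(1-s)^{-1}(u^2-x^2)^{-s}$ for $u>x$ and $0$ for $u<x$; the substitution $u=(2y+1)x$ then lands directly on the stated integral. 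This is closer in spirit to the paper's ``2nd proof'' in Section 3.3 (your intermediate form $2\sin(\pi s)\int_x^\infty (u^2-x^2)^{-s}\left(\sigma(u)e^{2\pi u}-1\right)^{-1}du$ is precisely (\ref{final expression tird proof first analogue}) after $u=xt$), though your mechanism --- a conditionally convergent Barnes integral handled on truncated contours --- differs from the Basset-representation argument used there. What your route buys is that (\ref{formula in a region compact to be extended}) is obtained without ever passing through the fractional-integral/Bessel form, at the cost of the more delicate Fubini step you correctly flag; what the paper's route buys is that all interchanges are absolutely convergent, and that the claim that the Bessel series in (\ref{formula valid for all s}) is entire in $s$ falls out of the same computation via the bound (\ref{bound proof entire part Watson}). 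Both arguments are sound.
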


\begin{proof}
Let $\mu>\frac{1}{2}$ and consider a fixed $s\in\mathbb{C}$ such
that $\text{Re}(s)>\mu$. Under this condition, the following integral
representation holds {[}\cite{ERDELIY}, p. 348, eq. 7.3.15{]}
\begin{equation}
\frac{1}{2\pi i}\intop_{\mu-i\infty}^{\mu+i\infty}\Gamma(z)\,\Gamma(s-z)\,x^{-2z}dz=\frac{\Gamma(s)}{(1+x^{2})^{s}},\,\,\,\,x>0.\label{Beta integral beginning}
\end{equation}

Using this integral representation in the series (\ref{Watson series}),
we have that 
\begin{equation}
\sum_{n=1}^{\infty}\frac{p^{2}+\lambda_{n}^{2}}{p\left(p+\frac{1}{\pi}\right)+\lambda_{n}^{2}}\,\frac{1}{\left(\lambda_{n}^{2}+x^{2}\right)^{s}}=\frac{x^{-2s}}{\Gamma(s)}\,\frac{1}{2\pi i}\,\intop_{\mu-i\infty}^{\mu+i\infty}\Gamma(z)\,\Gamma(s-z)\,\zeta_{p}(2z)\,x^{2z}dz\label{integral representation first series}
\end{equation}
where the interchange of the infinite series with the contour integral
(\ref{Beta integral beginning}) is justified by absolute convergence
of $\zeta_{p}(2z)$ in the half-plane $\text{Re}(z)>\mu>\frac{1}{2}$
and Stirling's formula $|\left|\Gamma\left(\mu+it\right)\Gamma\left(s-\mu-it\right)\right|=O\left(|t|^{\text{Re}(s)-1}e^{-\pi|t|}\right)$,
as $|t|\rightarrow\infty$. 

\bigskip{}

As in the proof of Theorem 2.1., let us now move the line of integration
to $\text{Re}(z)=\frac{1}{2}-\mu$ and integrate along a positively
oriented rectangular contour $\mathscr{R}_{\mu}(T)$ containing the
vertices $\mu\pm iT$ and $\frac{1}{2}-\mu\pm iT$, $T>0$. Due to the trivial zeros of $\zeta_{p}(2z)$
and the choice $\text{Re}(s)>\mu>\frac{1}{2}$, the only singularities
of the integrand inside $\mathscr{R}_{\mu}(T)$ are located at $z=\frac{1}{2}$
and at $z=0$. Like in (\ref{behavior integrals}), we can estimate trivially the integrals along
the horizontal segments $\left[\frac{1}{2}-\mu\pm iT,\,\mu\pm iT\right]$
when $T\rightarrow\infty$ and we can show that they vanish. An application of Cauchy's residue theorem gives
\begin{equation}
\frac{1}{2\pi i}\,\intop_{\mu-i\infty}^{\mu+i\infty}\Gamma(z)\,\Gamma(s-z)\,\zeta_{p}(2z)\,x^{2z}dz=\frac{1}{2\pi i}\,\intop_{\frac{1}{2}-\mu-i\infty}^{\frac{1}{2}-\mu+i\infty}\Gamma(z)\,\Gamma(s-z)\,\zeta_{p}(2z)\,x^{2z}dz-\frac{1}{2}\,\frac{\Gamma(s)}{1+\frac{1}{\pi p}}+\frac{\sqrt{\pi}\,x}{2}\Gamma\left(s-\frac{1}{2}\right).\label{formula after Cauchy Watson}
\end{equation}

Invoking the functional equation for $\zeta_{p}(z)$ (\ref{functional equation Kosh}),
we can simplify the integral on the right-hand side of (\ref{formula after Cauchy Watson})
in the form
\[
\frac{1}{2\pi i}\,\intop_{\frac{1}{2}-\mu-i\infty}^{\frac{1}{2}-\mu+i\infty}\Gamma(z)\,\Gamma(s-z)\,\zeta_{p}(2z)\,x^{2z}dz=\sqrt{\pi}\,x\sum_{m=1}^{\infty}\,\frac{1}{2\pi i}\,\intop_{\mu-i\infty}^{\mu+i\infty}\,\Gamma\left(z\right)\,\Gamma\left(s+z-\frac{1}{2}\right)\,\left(2z,2\pi pm\right)_{m}\,(\pi xm)^{-2z}\,dz,
\]
because the representation of $\eta_{p}(2z)$, (\ref{secondary zeta function Koshliakov sense}), 
converges absolutely in the half-plane $\text{Re}(z)>\frac{1}{2}$.
\\

For each fixed $m\in\mathbb{N}$, we now evaluate the integral
\[
I_{m,p}(s,x):=\frac{1}{2\pi i}\,\intop_{\mu-i\infty}^{\mu+i\infty}\left(2z,2\pi pm\right)_{m}\,\Gamma\left(z\right)\,\Gamma\left(s+z-\frac{1}{2}\right)\,(\pi\,x\,m)^{-2z}\,dz.
\]

This is possible by using representation (\ref{representation Koshliakov given in poage 24}),
which gives
\[
I_{m,p}(s,x)=I_{m,p}^{(1)}(s,x)+I_{m,p}^{(2)}(s,x),
\]
with 
\begin{equation}
I_{m,p}^{(1)}(s,x)=\frac{(-1)^{m}}{2\pi i}\,\intop_{\mu-i\infty}^{\mu+i\infty}\Gamma\left(z\right)\,\Gamma\left(s+z-\frac{1}{2}\right)\,(\pi\,x\,m)^{-2z}\,dz=2\,(-1)^{m}\,\left(\pi x\,m\right)^{s-\frac{1}{2}}\,K_{s-\frac{1}{2}}\left(2\pi x\,m\right) \label{first partition in the proof}
\end{equation}
and
\begin{align*}
I_{m,p}^{(2)}(s,x) & =\frac{1}{2\pi i}\,\intop_{\mu-i\infty}^{\mu+i\infty}e^{2\pi pm}\,\sum_{\ell=1}^{m}\left(\begin{array}{c}
m\\
\ell
\end{array}\right)\left(4\pi p\right)^{\ell}(-1)^{m-\ell}\,\intop_{m}^{\infty}t^{-2z}e^{-2\pi p\,t}\,\frac{(t-m)^{\ell-1}}{(\ell-1)!}\,\Gamma\left(z\right)\,\Gamma\left(s+z-\frac{1}{2}\right)\,\left(\pi x\right)^{-2z}\,dt\,dz\\
 & =e^{2\pi pm}\,\sum_{\ell=1}^{m}\left(\begin{array}{c}
m\\
\ell
\end{array}\right)\left(4\pi p\right)^{\ell}(-1)^{m-\ell}\intop_{m}^{\infty}e^{-2\pi pt}\,\frac{(t-m)^{\ell-1}}{(\ell-1)!}\,\frac{1}{2\pi i}\,\intop_{\mu-i\infty}^{\mu+i\infty}\Gamma\left(z\right)\,\Gamma\left(s+z-\frac{1}{2}\right)\,\left(\pi xt\right)^{-2z}dz\,dt\\
 & =2\,(\pi x)^{s-\frac{1}{2}}e^{2\pi pm}m^{s-\frac{1}{2}}\,\sum_{\ell=1}^{m}\left(\begin{array}{c}
m\\
\ell
\end{array}\right)(-1)^{m-\ell}\,\left(4\pi pm\right)^{\ell}\intop_{1}^{\infty}t^{s-\frac{1}{2}}\,e^{-2\pi mpt}\,\frac{(t-1)^{\ell-1}}{(\ell-1)!}\,K_{s-\frac{1}{2}}\left(2\pi xmt\right)\,dt,
\end{align*}
where the interchange of the operations is possible due to absolute
convergence, which in its turn is justified by Stirling's formula for the product $\Gamma(z)\,\Gamma(s+z-\frac{1}{2})$. On the third equality above, as well as in (\ref{first partition in the proof}),
we have used the well-known integral representation for the Macdonald
function [\cite{ERDELIY}, p. 349, 7.3 (17)]
\begin{equation}
K_{\nu}(x)=\frac{1}{2\pi i}\,\intop_{\mu-i\infty}^{\mu+i\infty}2^{s-2}\Gamma\left(\frac{s-\nu}{2}\right)\Gamma\left(\frac{s+\nu}{2}\right)\,x^{-s}ds,\,\,\,\,\,x>0,\,\,\,\mu:=\text{Re}(s)>\max\{0,\text{Re}(\nu)\}. \label{Mellin barnes Macdonald}
\end{equation}
Combining all the expressions given above yields the formula  
\begin{align}
\sum_{n=1}^{\infty}\frac{p^{2}+\lambda_{n}^{2}}{p\left(p+\frac{1}{\pi}\right)+\lambda_{n}^{2}}\,\frac{1}{\left(\lambda_{n}^{2}+x^{2}\right)^{s}} & =\frac{\sqrt{\pi}\,x^{1-2s}}{2\Gamma(s)}\Gamma\left(s-\frac{1}{2}\right)-\frac{1}{2}\,\frac{x^{-2s}}{1+\frac{1}{\pi p}}+\frac{2\pi^{s}\,x^{\frac{1}{2}-s}}{\Gamma(s)}\,\sum_{m=1}^{\infty}(-1)^{m}\,m^{s-\frac{1}{2}}\,K_{s-\frac{1}{2}}\left(2\pi xm\right)\nonumber \\
+\frac{2\,\pi^{s}\,x^{\frac{1}{2}-s}}{\Gamma(s)}\,\sum_{m=1}^{\infty}e^{2\pi pm}m^{s-\frac{1}{2}}\,\sum_{\ell=1}^{m}\left(\begin{array}{c}
m\\
\ell
\end{array}\right) & (-1)^{m-\ell}\left(4\pi mp\right)^{\ell}\intop_{1}^{\infty}t^{s-\frac{1}{2}}\,e^{-2\pi mpt}\,\frac{(t-1)^{\ell-1}}{(\ell-1)!}\,K_{s-\frac{1}{2}}\left(2\pi xmt\right)\,dt.\label{Watson formula with explicit bessel}
\end{align}

\bigskip{}

Although (\ref{Watson formula with explicit bessel}) constitutes
already a generalization of the classical case (\ref{Watson Formula intro}), we will now write
the latter series in a more elegant form, which is well-defined in the half-plane $\text{Re}(s)\leq\frac{1}{2}$. We now claim that the
right-hand side of (\ref{Watson formula with explicit bessel}) constitutes
the analytic continuation of the series $\varphi_{p}(s,x)$, (\ref{Watson series}),
to the entire complex plane. We prove this claim by showing that the
series on the right defines an entire function of $s\in\mathbb{C}$.
Indeed, it is enough to check this for any $s$ in the half-plane
$\text{Re}(s)\leq\frac{1}{2}$ because for $\text{Re}(s)>\frac{1}{2}$,
the absolute convergence of the left-hand side of (\ref{Watson formula with explicit bessel})
already assures this.

\bigskip{}

From now on, we analyze the infinite series on the left of (\ref{Watson formula with explicit bessel})
under the hypothesis that $\text{Re}(s)<1$, which obviously includes
the remaining case $\text{Re}(s)\leq\frac{1}{2}$. To that end, recall
the well-known integral formula {[}\cite{NIST}, p. 252, relation 10.32.8{]}
\begin{equation}
K_{\nu}(z)=\frac{\sqrt{\pi}z^{\nu}}{2^{\nu}\Gamma\left(\nu+\frac{1}{2}\right)}\,\intop_{1}^{\infty}e^{-zt}\left(t^{2}-1\right)^{\nu-\frac{1}{2}}dt,\,\,\,\,\,\text{Re}(\nu)>-\frac{1}{2},\,\,\,|\arg(z)|<\frac{\pi}{2}.\label{First representation Re-1}
\end{equation}

From the fact that $K_{-\nu}(z)=K_{\nu}(z)$, (\ref{First representation Re-1})
can be rewritten in the equivalent form
\begin{equation}
\left(\frac{z}{2}\right)^{\nu}\,K_{\nu}(z)=\frac{\sqrt{\pi}}{\Gamma\left(\frac{1}{2}-\nu\right)}\,\intop_{1}^{\infty}e^{-zt}\left(t^{2}-1\right)^{-\nu-\frac{1}{2}}dt,\,\,\,\text{Re}(\nu)<\frac{1}{2},\,\,\,|\arg(z)|<\frac{\pi}{2},\label{Mehler type-1}
\end{equation}
that can be simplified to
\begin{equation}
\left(\frac{z}{2}\right)^{\nu}K_{\nu}(z)=\frac{2^{-2\nu}\,\sqrt{\pi}\,e^{-z}}{\Gamma\left(\frac{1}{2}-\nu\right)}\,\intop_{0}^{\infty}e^{-2zt}t^{-\nu-\frac{1}{2}}(t+1)^{-\nu-\frac{1}{2}}dt,\,\,\,\,\text{Re}(\nu)<\frac{1}{2},\,\,\,\,|\arg(z)|<\frac{\pi}{2}.\label{as laplace traaansform}
\end{equation}

Assuming that $s$ lies in the region $\text{Re}(s)\leq\frac{1}{2}$ and using
(\ref{as laplace traaansform}), we see that the fractional integral
appearing on the right-hand side of (\ref{Watson formula with explicit bessel})
can be simplified in the following manner, 
\begin{align*}
J_{p,\ell}(m,x) & :=\intop_{1}^{\infty}t^{s-\frac{1}{2}}\,e^{-2\pi mpt}\,\frac{(t-1)^{\ell-1}}{(\ell-1)!}\,K_{s-\frac{1}{2}}\left(2\pi xmt\right)\,dt\\
=\frac{2^{1-2s}\,\sqrt{\pi}\,}{\Gamma\left(1-s\right)}\,\left(\pi xm\right)^{\frac{1}{2}-s} & \intop_{0}^{\infty}\exp\left(-2\pi m\left(p+(2y+1)\,x\right)\right)\left(2\pi m\left(p+(2y+1)\,x\right)\right)^{-\ell}\frac{dy}{y^{s}\left(y+1\right)^{s}}
\end{align*}
where the interchange of the orders of integration comes from the
absolute convergence of both integrals under the hypothesis $\text{Re}(s)\leq\frac{1}{2}<1$.
Note now that the integral in the last expression converges absolutely, which allows
us to take the finite sum over $\ell\in\{0,...,m\}$, resulting in 
\begin{align}
\sum_{m=1}^{\infty}e^{2\pi pm}m^{s-\frac{1}{2}}\,\sum_{\ell=1}^{m}\left(\begin{array}{c}
m\\
\ell
\end{array}\right)(-1)^{m-\ell}\left(4\pi mp\right)^{\ell}\,J_{p,\ell}(m,x) & =\nonumber \\
=\frac{2^{1-2s}\,\pi^{1-s}x^{\frac{1}{2}-s}}{\Gamma\left(1-s\right)}\,\sum_{m=1}^{\infty}e^{2\pi pm}\,\intop_{0}^{\infty}\left\{ \left(\frac{p-(2y+1)x}{p+(2y+1)x}\right)^{m}+(-1)^{m-1}\right\}  & \frac{e^{-2\pi m\left(p+(2y+1)x\right)}}{y^{s}\left(y+1\right)^{s}}\,dy\nonumber \\
=\frac{2^{1-2s}\,\pi^{1-s}x^{\frac{1}{2}-s}}{\Gamma\left(1-s\right)}\intop_{0}^{\infty}\frac{\,y^{-s}(y+1)^{-s}}{\sigma\left((2y+1)\,x\right)e^{2\pi(2y+1)x}-1}dy+\frac{2^{1-2s}\,\pi^{1-s}x^{\frac{1}{2}-s}}{\Gamma\left(1-s\right)} & \,\sum_{m=1}^{\infty}(-1)^{m-1}\intop_{0}^{\infty}\frac{e^{-2\pi m(2y+1)x}}{y^{s}(y+1)^{s}}\,dy\label{computations leading to final expreeeessssssiiiiooooon}
\end{align}
where in the last step we have used once more absolute convergence and
the geometric series (\ref{power series def}). Clearly, the former 
integral converges absolutely and uniformly for every $s$ in the
half-plane $\text{Re}(s)<1$, as the elementary bound shows
\begin{equation}
\left|\intop_{0}^{\infty}\frac{y^{-s}(y+1)^{-s}}{\sigma\left((2y+1)\,x\right)e^{2\pi(2y+1)x}-1}\,dy\right|\leq\intop_{0}^{\infty}\frac{y^{-\sigma}(y+1)^{-\sigma}}{e^{2\pi(2y+1)x}-1}dy.\label{bound proof entire part Watson}
\end{equation}

This proves that the infinite series on the right of (\ref{Watson formula with explicit bessel})
defines an entire function of $s\in\mathbb{C}$ {[}\cite{whittaker_watson},
p. 92{]} and then (\ref{formula valid for all s}) constitutes the
analytic continuation of the series to the half-plane $\text{Re}(s)<\frac{1}{2}$.
Finally, recalling once more (\ref{as laplace traaansform}), we note
that the second series on the last equation of (\ref{computations leading to final expreeeessssssiiiiooooon})
is precisely the symmetric of the infinite series appearing on the
top of the right-hand side of (\ref{Watson formula with explicit bessel}).
Joining (\ref{computations leading to final expreeeessssssiiiiooooon})
with (\ref{Watson formula with explicit bessel}) leads to (\ref{formula in a region compact to be extended})
as an equivalent continuation to $\text{Re}(s)<\frac{1}{2}$. 
\end{proof}

\begin{remark}
It is also possible to write the series on the right-hand side (\ref{formula valid for all s})
as involving integrals of the Whittaker function. Expressions like
this are also helpful to have a grasp on the analytic continuation
of the series (\ref{Watson series}). We could also replace the integral
in (\ref{formula in a region compact to be extended}) by an integral
with respect to a Hankel contour. This would allow a representation
valid for every $s\in\mathbb{C}$. For our purposes, however, it suffices to use the above representation (\ref{formula in a region compact to be extended}). 
\end{remark}

By letting $p\rightarrow\infty$ in (\ref{formula valid for all s})
or in (\ref{formula in a region compact to be extended}), we can
recover Watson's formula (\ref{Watson Formula intro}) and by letting
$p\rightarrow0^{+}$, we can derive an analogue of it. We remark that our next
formula (\ref{Watson for odd sequence}) could be also achieved by
the general methods given in {[}\cite{rysc_I}, Example 5.1, eq. (5.2){]}.
\begin{corollary}
Let $s$ be any complex number such that $\text{Re}(s)>\frac{1}{2}$
and $x>0$. Then Watson's formula (\ref{Watson Formula intro}) holds. Moreover, one has the companion identity 
\begin{equation}
\sum_{n=1}^{\infty}\frac{1}{\left(\left(2n-1\right)^{2}+x^{2}\right)^{s}}=\frac{\sqrt{\pi}\,x^{1-2s}}{4\Gamma(s)}\Gamma\left(s-\frac{1}{2}\right)+\frac{2^{\frac{1}{2}-s}\,x^{\frac{1}{2}-s}\,\pi^{s}}{\Gamma(s)}\,\sum_{m=1}^{\infty}(-1)^{m}m^{s-\frac{1}{2}}K_{s-\frac{1}{2}}(\pi xm).\label{Watson for odd sequence}
\end{equation}
\end{corollary}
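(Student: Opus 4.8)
The plan is to derive both identities by specializing Theorem 3.1: letting $p\to\infty$ will recover Watson's formula (\ref{Watson Formula intro}), while letting $p\to0^{+}$ will produce the companion identity (\ref{Watson for odd sequence}). First I would dispose of the left-hand side. Because the roots of $\tan(\pi y)=-y/p$ obey $n-\tfrac12<\lambda_{n}<n$ and the Koshliakov weight satisfies $0<\frac{p^{2}+\lambda_{n}^{2}}{p(p+1/\pi)+\lambda_{n}^{2}}<1$ for all $p>0$, the estimate $|(\lambda_{n}^{2}+x^{2})^{-s}|\le((n-\tfrac12)^{2}+x^{2})^{-\text{Re}(s)}$ lets me pass to the limit under the sum in (\ref{Watson series}) by dominated convergence whenever $\text{Re}(s)>\tfrac12$. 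As $p\to\infty$ one has $\lambda_{n}\to n$ and the weight tends to $1$, so $\varphi_{p}(s,x)\to\sum_{n\ge1}(n^{2}+x^{2})^{-s}$; as $p\to0^{+}$ one has $\lambda_{n}\to n-\tfrac12$ and the weight again tends to $1$, so $\varphi_{p}(s,x)\to\sum_{n\ge1}\bigl(\tfrac14(2n-1)^{2}+x^{2}\bigr)^{-s}=4^{s}\sum_{n\ge1}\bigl((2n-1)^{2}+(2x)^{2}\bigr)^{-s}$.

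For the right-hand side I would not take limits in the Bessel series of (\ref{formula valid for all s}) directly, but rather in the integral representation (\ref{formula in a region compact to be extended}), valid for $\tfrac12<\text{Re}(s)<1$. There the only $p$-dependence sits in the term $-\tfrac12 x^{-2s}/(1+\tfrac1{\pi p})$ and in the kernel $\bigl(\sigma((2y+1)x)\,e^{2\pi(2y+1)x}-1\bigr)^{-1}$, with $\sigma(t)=(p+t)/(p-t)$. As $p\to\infty$ the first term tends to $-\tfrac12 x^{-2s}$ and $\sigma(t)\to1$ pointwise; as $p\to0^{+}$ the first term tends to $0$ and $\sigma(t)\to-1$ pointwise. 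In either regime one has the $p$-uniform lower bound $|\sigma(t)e^{2\pi t}-1|\ge\min\{1,\,e^{2\pi t}-1\}$ (considering separately $p<t$ and $p>t$), so an elementary majorant of the type (\ref{bound proof entire part Watson}) justifies dominated convergence and yields, for $\tfrac12<\text{Re}(s)<1$,
\begin{align*}
\sum_{n\ge1}\frac{1}{(n^{2}+x^{2})^{s}}&=\frac{\sqrt{\pi}\,x^{1-2s}}{2\Gamma(s)}\Gamma\!\left(s-\tfrac12\right)-\frac12 x^{-2s}+2^{2-2s}x^{1-2s}\sin(\pi s)\int_{0}^{\infty}\frac{y^{-s}(y+1)^{-s}}{e^{2\pi(2y+1)x}-1}\,dy,\\
4^{s}\sum_{n\ge1}\frac{1}{((2n-1)^{2}+(2x)^{2})^{s}}&=\frac{\sqrt{\pi}\,x^{1-2s}}{2\Gamma(s)}\Gamma\!\left(s-\tfrac12\right)-2^{2-2s}x^{1-2s}\sin(\pi s)\int_{0}^{\infty}\frac{y^{-s}(y+1)^{-s}}{e^{2\pi(2y+1)x}+1}\,dy.
\end{align*}

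It then remains to recast these integrals back into Bessel form by running the computation (\ref{computations leading to final expreeeessssssiiiiooooon}) in reverse: expand $\tfrac1{e^{u}-1}=\sum_{m\ge1}e^{-mu}$ (respectively $\tfrac1{e^{u}+1}=\sum_{m\ge1}(-1)^{m-1}e^{-mu}$), factor $e^{-2\pi mx}$ out of each term, and apply the Mehler-type representation (\ref{as laplace traaansform}) with $\nu=s-\tfrac12$ and $z=2\pi mx$; the reflection formula $\Gamma(1-s)\sin(\pi s)=\pi/\Gamma(s)$ then absorbs all the constants. The first display becomes Watson's formula (\ref{Watson Formula intro}) verbatim; the second becomes $4^{s}\sum_{n\ge1}((2n-1)^{2}+(2x)^{2})^{-s}=\tfrac{\sqrt{\pi}\,x^{1-2s}}{2\Gamma(s)}\Gamma(s-\tfrac12)+\tfrac{2\pi^{s}x^{1/2-s}}{\Gamma(s)}\sum_{m\ge1}(-1)^{m}m^{s-1/2}K_{s-1/2}(2\pi mx)$, and the substitution $x\mapsto x/2$ (that is, renaming $2x$ as the variable of (\ref{Watson for odd sequence})) turns this precisely into (\ref{Watson for odd sequence}). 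Finally, since both sides of (\ref{Watson for odd sequence}) are holomorphic for $\text{Re}(s)>\tfrac12$, analytic continuation extends the identity from the strip $\tfrac12<\text{Re}(s)<1$ to the whole half-plane.

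The step I expect to be the main (purely technical) obstacle is securing the $p$-uniform dominating function needed to interchange the limits $p\to\infty$ and $p\to0^{+}$ with the integral over $y$ (and, on the left, with the sum over $n$); the bound (\ref{bound proof entire part Watson}) handles $p\to\infty$ immediately, and for $p\to0^{+}$ the kernel can be small only near $y=0$, where it is controlled by the fixed quantity $e^{2\pi x}-1$, so the same type of majorant works after the case split above. Alternatively, one could take the limits directly in (\ref{formula valid for all s}) via a Watson's-lemma analysis of the fractional integrals — the substitution $t=1+u/(2\pi mp)$ shows that the $\ell$-th integral contributes $2^{\ell}K_{s-1/2}(2\pi mx)$ as $p\to\infty$ and $0$ as $p\to0^{+}$, and $\sum_{\ell=1}^{m}\binom{m}{\ell}(-1)^{m-\ell}2^{\ell}=1-(-1)^{m}$ then recombines with the $(-1)^{m}$-series into the full Bessel sum — but this route demands an extra summable-in-$m$ bound uniform in $p$, which is why starting from (\ref{formula in a region compact to be extended}) is the more economical choice.
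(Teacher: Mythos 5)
Your proposal follows essentially the same route as the paper's own proof: pass to the limits $p\rightarrow\infty$ and $p\rightarrow0^{+}$ in the integral representation (\ref{formula in a region compact to be extended}) by dominated convergence, expand the resulting kernel as a geometric (resp.\ alternating) series, convert back to Bessel functions via (\ref{as laplace traaansform}) and the reflection formula, and finish by analytic continuation to $\text{Re}(s)>\tfrac{1}{2}$. The paper writes out only the $p\rightarrow\infty$ case and declares the odd case analogous; your explicit treatment of the $p\rightarrow0^{+}$ case (including the rescaling $x\mapsto x/2$) and the uniform-in-$p$ lower bound on $|\sigma(t)e^{2\pi t}-1|$ are correct and fill in exactly what the paper leaves implicit.
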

\begin{proof}
Since the proofs of (\ref{Watson for odd sequence}) and (\ref{Watson Formula intro}) are analogous, we will only see that (\ref{formula in a region compact to be extended})
reduces to (\ref{Watson Formula intro}) as $p\rightarrow\infty$. For this, it suffices to analyze the continuation of the integral
in (\ref{formula in a region compact to be extended}) to the region
$\text{Re}(s)>1$ in this limit. Assume that $\text{Re}(s)<1$: by
an application of the dominated convergence Theorem, we can take $p\rightarrow\infty$
on the right-hand side of (\ref{formula in a region compact to be extended}),
giving
\begin{align}
\frac{\sqrt{\pi}\,x^{1-2s}}{2\Gamma(s)}\Gamma\left(s-\frac{1}{2}\right)-\frac{x^{-2s}}{2}+2^{1-2s}\,x^{1-2s}\sin(\pi s)\,\intop_{0}^{\infty}\frac{y^{-s}(y+1)^{-s}}{e^{2\pi(2y+1)x}-1}dy\nonumber \\
=\frac{\sqrt{\pi}\,x^{1-2s}}{2\Gamma(s)}\Gamma\left(s-\frac{1}{2}\right)-\frac{x^{-2s}}{2}+2^{1-2s}\,x^{1-2s}\sin(\pi s)\sum_{n=1}^{\infty}e^{-2\pi nx}\intop_{0}^{\infty}y^{-s}(y+1)^{-s}e^{-4\pi nx\,y}dy\nonumber \\
=\frac{\sqrt{\pi}\,x^{1-2s}}{2\Gamma(s)}\Gamma\left(s-\frac{1}{2}\right)-\frac{x^{-2s}}{2}+\frac{\pi^{s}x^{\frac{1}{2}-s}}{\Gamma(s)}\,\sum_{n=1}^{\infty}n^{s-\frac{1}{2}}\,K_{s-\frac{1}{2}}(2\pi nx),\,\,\,\,\text{Re}(s)<\frac{1}{2},\label{series obtained}
\end{align}
where the second step is justified by invoking the geometric series
and arguing by absolute convergence (recall that $K_\nu(x)=O(e^{-x}/\sqrt{x}),\,\,x\rightarrow\infty$) and the last equality comes from
the integral representation (\ref{as laplace traaansform}). Since the series on the right-hand side of (\ref{series obtained})
converges absolutely and uniformly for every $s\in\mathbb{C}$ and
its summands are entire functions of $s$, we see that the expression
(\ref{series obtained}) is valid for any $s\in\mathbb{C}\setminus\{\frac{1}{2}-k,\,k\in\mathbb{N}_{0}\}$.
Therefore, (\ref{series obtained}) and the series (\ref{Watson series})
must coincide for $\text{Re}(s)>\frac{1}{2}$. 
\end{proof}

We now obtain a result in which the restriction $\frac{1}{2}<\text{Re}(s)<1$ in our Theorem 3.1.
is relaxed. Our proof is completely analogous to Watson's {[}\cite{watson_reciprocal},
p. 300{]}.

\begin{corollary}
Let $N>0$ be an integer. For every $x>0$ and every $s$ satisfying
the condition $-N+\frac{1}{2}<\text{Re}(s)<1$, we have the identity
\begin{align}
\sum_{n=1}^{\infty}\frac{p^{2}+\lambda_{n}^{2}}{p\left(p+\frac{1}{\pi}\right)+\lambda_{n}^{2}}\,\left\{ \frac{1}{\left(\lambda_{n}^{2}+x^{2}\right)^{s}}-\sum_{m=0}^{N-1}\left(\begin{array}{c}
-s\\
m
\end{array}\right)\,\frac{x^{2m}}{\lambda_{n}^{2s+2m}}\right\} +\sum_{m=0}^{N-1}\left(\begin{array}{c}
-s\\
m
\end{array}\right)x^{2m}\zeta_{p}\left(2s+2m\right)=\nonumber \\
=\frac{\sqrt{\pi}\,x^{1-2s}}{2\Gamma(s)}\Gamma\left(s-\frac{1}{2}\right)-\frac{1}{2}\,\frac{x^{-2s}}{1+\frac{1}{\pi p}}+2^{2-2s}\,x^{1-2s}\sin(\pi s)\,\intop_{0}^{\infty}\frac{y^{-s}(y+1)^{-s}}{\sigma\left((2y+1)\,x\right)e^{2\pi(2y+1)x}-1}dy.\label{identity concerning analytic continuation}
\end{align}
\end{corollary}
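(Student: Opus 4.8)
The plan is to imitate Watson's classical argument [\cite{watson_reciprocal}, p. 300]: the right-hand side of \eqref{identity concerning analytic continuation} is exactly the function that I will call $\Phi_{p}(s,x)$, namely the right-hand side of \eqref{formula in a region compact to be extended}, which by Theorem 3.1 furnishes the analytic continuation of $\varphi_{p}(s,x)$ to $\text{Re}(s)<1$. One then checks that the left-hand side of \eqref{identity concerning analytic continuation} is meromorphic on the wider strip $\tfrac12-N<\text{Re}(s)<1$ and coincides with $\Phi_{p}(s,x)$ on the substrip $\tfrac12<\text{Re}(s)<1$, whence the identity propagates to the whole strip by analytic continuation. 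Write $a_{n}:=\frac{p^{2}+\lambda_{n}^{2}}{p(p+1/\pi)+\lambda_{n}^{2}}$. First note that $\Phi_{p}(s,x)$ is meromorphic for $\text{Re}(s)<1$, its only singularities there being the simple poles of $\Gamma(s-\tfrac12)$ at $s=\tfrac12-k$, $k\in\mathbb{N}_{0}$: indeed the integral in \eqref{formula in a region compact to be extended} converges absolutely and locally uniformly there by the bound \eqref{bound proof entire part Watson}, and $\sin(\pi s)$, $x^{1-2s}$ and $1/\Gamma(s)$ are entire.

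The engine of the argument is the binomial expansion
\[
\frac{1}{(\lambda_{n}^{2}+x^{2})^{s}}=\lambda_{n}^{-2s}\Bigl(1+\tfrac{x^{2}}{\lambda_{n}^{2}}\Bigr)^{-s}=\sum_{m=0}^{\infty}\binom{-s}{m}\frac{x^{2m}}{\lambda_{n}^{2s+2m}},
\]
valid whenever $\lambda_{n}>x$. Since $n-\tfrac12<\lambda_{n}<n$, this holds for every $n$ beyond a fixed index $n_{0}=n_{0}(x)$, and for those $n$ the bracket in \eqref{identity concerning analytic continuation} equals the tail $\sum_{m\ge N}\binom{-s}{m}x^{2m}\lambda_{n}^{-2s-2m}$, which is $O\bigl(x^{2N}\lambda_{n}^{-2\text{Re}(s)-2N}\bigr)$ with the implied constant bounded on compact subsets of $s$ (because for $n\ge n_{0}$ the quantity $x^{2}/\lambda_{n}^{2}$ stays in a fixed interval $[0,\rho]$ with $\rho<1$, so the residual factor $\sum_{k\ge0}\binom{-s}{N+k}(x^{2}/\lambda_{n}^{2})^{k}$ is bounded locally uniformly in $s$). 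Consequently the first series in \eqref{identity concerning analytic continuation} converges absolutely and locally uniformly for $\text{Re}(s)>\tfrac12-N$ — the finitely many terms with $n\le n_{0}$ being entire in $s$ — and hence defines an analytic function there; adjoining the finite sum $\sum_{m=0}^{N-1}\binom{-s}{m}x^{2m}\zeta_{p}(2s+2m)$, whose summands are meromorphic, shows that the left-hand side $F_{N}(s)$ of \eqref{identity concerning analytic continuation} is meromorphic on $\tfrac12-N<\text{Re}(s)<1$.

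Finally, for $\tfrac12<\text{Re}(s)<1$ every series in sight converges absolutely, so one may legitimately separate
\[
\sum_{n=1}^{\infty}a_{n}\Bigl\{(\lambda_{n}^{2}+x^{2})^{-s}-\sum_{m=0}^{N-1}\binom{-s}{m}x^{2m}\lambda_{n}^{-2s-2m}\Bigr\}=\varphi_{p}(s,x)-\sum_{m=0}^{N-1}\binom{-s}{m}x^{2m}\sum_{n=1}^{\infty}a_{n}\lambda_{n}^{-2s-2m},
\]
and since $\sum_{n}a_{n}\lambda_{n}^{-w}=\zeta_{p}(w)$ for $\text{Re}(w)>1$ by \eqref{Koshliakov zeta function definition} — which covers $w=2s+2m$, $0\le m\le N-1$, in this range — the inner sums are precisely $\zeta_{p}(2s+2m)$ and exactly cancel the last finite sum in \eqref{identity concerning analytic continuation}. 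Hence $F_{N}(s)=\varphi_{p}(s,x)=\Phi_{p}(s,x)$ on $\tfrac12<\text{Re}(s)<1$ by \eqref{formula in a region compact to be extended}, and, both sides being meromorphic on $\tfrac12-N<\text{Re}(s)<1$, the identity theorem yields \eqref{identity concerning analytic continuation} throughout that strip. The main point requiring care is the uniform control of the binomial tail on compact $s$-sets; once that is in hand, the term-by-term rearrangement and the local uniform convergence of the subtracted series are rigorous, and the rest is bookkeeping.
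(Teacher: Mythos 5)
Your proposal is correct and follows essentially the same route as the paper: binomial expansion of $(\lambda_n^2+x^2)^{-s}$ for $n$ large enough, locally uniform control of the tail $\sum_{m\ge N}$ to get analyticity of the left side on $\text{Re}(s)>\tfrac12-N$, verification of the identity on the common strip $\tfrac12<\text{Re}(s)<1$ via Theorem 3.1, and analytic continuation. If anything you are slightly more careful than the paper in distinguishing the cutoff index $n_0(x)$ from the integer $N$ of the statement and in calling both sides meromorphic (rather than analytic) to account for the poles of $\zeta_p(2s+2m)$ and $\Gamma(s-\tfrac12)$.
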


\begin{proof}
Note that, for $\text{Re}(s)>\frac{1}{2}$, we can write the left-hand
side of (\ref{formula in a region compact to be extended}) in the
following form
\begin{equation}
\sum_{n=1}^{\infty}\frac{p^{2}+\lambda_{n}^{2}}{p\left(p+\frac{1}{\pi}\right)+\lambda_{n}^{2}}\,\left\{ \frac{1}{\left(\lambda_{n}^{2}+x^{2}\right)^{s}}-\sum_{m=0}^{N-1}\left(\begin{array}{c}
-s\\
m
\end{array}\right)\,\frac{x^{2m}}{\lambda_{n}^{2s+2m}}\right\} +\sum_{m=0}^{N-1}\left(\begin{array}{c}
-s\\
m
\end{array}\right)x^{2m}\zeta_{p}\left(2s+2m\right).\label{second extension of Watson series}
\end{equation}

Let $N$ be a positive such that, for $n>N$ the $x/\lambda_{n}<1$.
From the generalized binomial theorem and $n>N$, we have 
\[
\frac{1}{\left(\lambda_{n}^{2}+x^{2}\right)^{s}}=\sum_{m=0}^{\infty}\left(\begin{array}{c}
-s\\
m
\end{array}\right)\,\frac{x^{2m}}{\lambda_{n}^{2m+2s}}.
\]

Hence, (\ref{second extension of Watson series}) can be actually
expressed as
\begin{align}
\sum_{n=1}^{N}\frac{p^{2}+\lambda_{n}^{2}}{p\left(p+\frac{1}{\pi}\right)+\lambda_{n}^{2}}\,\left\{ \frac{1}{\left(\lambda_{n}^{2}+x^{2}\right)^{s}}-\sum_{m=0}^{N-1}\left(\begin{array}{c}
-s\\
m
\end{array}\right)\,\frac{x^{2m}}{\lambda_{n}^{2s+2m}}\right\} +\nonumber \\
\sum_{n=N+1}^{\infty}\frac{p^{2}+\lambda_{n}^{2}}{p\left(p+\frac{1}{\pi}\right)+\lambda_{n}^{2}}\left\{ \sum_{m=N}^{\infty}\left(\begin{array}{c}
-s\\
m
\end{array}\right)\,\frac{x^{2m}}{\lambda_{n}^{2s+2m}}\right\} +\sum_{m=0}^{N-1}\left(\begin{array}{c}
-s\\
m
\end{array}\right)x^{2m}\zeta_{p}\left(2s+2m\right).\label{rewriting once more}
\end{align}

Note that the general term of the infinite series $\sum_{m\geq N+1}$
is $O\left(\lambda_{n}^{-2\sigma-2N}\right)$ for large $n$. This
means that this series converges absolutely and uniformly with respect
to $s$ in the region $\text{Re}(s)>-N+\frac{1}{2}$ (because $\lambda_{n}\geq n-\frac{1}{2}$
for every $p\in\mathbb{R}_{+}$). Since any of its terms are analytic
functions of $s$, an application of Weierstrass' M-test shows that
(\ref{second extension of Watson series}) represents an analytic
function in the region $\text{Re}(s)>-N+\frac{1}{2}$. By our argument
above, we know that the right-hand side of (\ref{identity concerning analytic continuation})
represents also an analytic function in the half-plane $\text{Re}(s)<1$.
Thus, by the principle of analytic continuation, we have that (\ref{identity concerning analytic continuation})
must hold.
\end{proof}

\bigskip{}

From the previous result, we can obtain the following identity.

\begin{corollary}
For every $p\in\mathbb{R}_{+}$ and $x>0$, the following formula holds
\begin{equation*}
\sum_{n=1}^{\infty}\frac{p^{2}+\lambda_{n}^{2}}{p\left(p+\frac{1}{\pi}\right)+\lambda_{n}^{2}}\left\{ \frac{1}{\sqrt{\lambda_{n}^{2}+x^{2}}}-\frac{1}{\lambda_{n}}\right\} +C_{p}^{(1)}+\log\left(\frac{x}{2}\right)+\frac{1}{2x}\cdot\frac{1}{1+\frac{1}{\pi p}} =2\,\intop_{0}^{\infty}\frac{1}{\sqrt{y^{2}+y}}\cdot\frac{dy}{\sigma\left((2y+1)\,x\right)e^{2\pi(2y+1)x}-1},\label{meromorphic expansion continuation Kosh}
\end{equation*}
where $C_{p}^{(1)}$ denotes Koshliakov's generalization of Euler's
constant $\gamma$ {[}\cite{KOSHLIAKOV}, p. 46, eq. (46){]},
\begin{equation}
C_{p}^{(1)}:=\lim_{n\rightarrow\infty}\left\{ \sum_{j=1}^{n-1}\frac{p^{2}+\lambda_{j}^{2}}{p\left(p+\frac{1}{\pi}\right)+\lambda_{j}^{2}}\cdot\frac{1}{\lambda_{j}}-\log\left(\lambda_{n}\right)\right\} .\label{Euler Mascheroni Koshliakov sense}
\end{equation}
\end{corollary}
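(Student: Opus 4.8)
The plan is to obtain the identity as the limiting case $N=1$, $s=\tfrac12$ of the preceding Corollary, the limit being needed because $s=\tfrac12$ sits exactly on a pole of $\zeta_{p}(2s)$.

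First I would set $N=1$ in (\ref{identity concerning analytic continuation}). Since $\binom{-s}{0}=1$, the two finite $m$-sums each reduce to their $m=0$ term, and for $s$ in the strip $-\tfrac12<\mathrm{Re}(s)<1$, $s\neq\tfrac12$, the identity reads
\begin{equation*}
\sum_{n=1}^{\infty}\frac{p^{2}+\lambda_{n}^{2}}{p\left(p+\frac{1}{\pi}\right)+\lambda_{n}^{2}}\left\{\frac{1}{(\lambda_{n}^{2}+x^{2})^{s}}-\frac{1}{\lambda_{n}^{2s}}\right\}+\zeta_{p}(2s)=\frac{\sqrt{\pi}\,x^{1-2s}}{2\Gamma(s)}\Gamma\!\left(s-\tfrac12\right)-\frac{1}{2}\,\frac{x^{-2s}}{1+\frac{1}{\pi p}}+2^{2-2s}x^{1-2s}\sin(\pi s)\int_{0}^{\infty}\frac{y^{-s}(y+1)^{-s}\,dy}{\sigma\!\left((2y+1)x\right)e^{2\pi(2y+1)x}-1}.
\end{equation*}
The series on the left has general term $O\!\left(\lambda_{n}^{-2\mathrm{Re}(s)-2}\right)$, hence converges uniformly on compact subsets of $\{\mathrm{Re}(s)>-\tfrac12\}$ and is analytic there; likewise every term on the right except $\tfrac{\sqrt{\pi}\,x^{1-2s}}{2\Gamma(s)}\Gamma(s-\tfrac12)$ is analytic at $s=\tfrac12$. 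Rearranging, this forces $\zeta_{p}(2s)-\tfrac{\sqrt{\pi}\,x^{1-2s}}{2\Gamma(s)}\Gamma(s-\tfrac12)$ to extend analytically across $s=\tfrac12$ (the two simple poles cancel), so it is legitimate to let $s\to\tfrac12$ throughout the displayed identity.

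Next I would evaluate that limit. Writing $u=s-\tfrac12$ and inserting the expansions $\Gamma(u)=\tfrac1u-\gamma+O(u)$, $x^{1-2s}=1-2u\log x+O(u^{2})$ and $\Gamma(\tfrac12)/\Gamma(\tfrac12+u)=1+u(\gamma+2\log2)+O(u^{2})$ (the latter from $\psi(\tfrac12)=-\gamma-2\log2$), one finds
\begin{equation*}
\frac{\sqrt{\pi}\,x^{1-2s}}{2\Gamma(s)}\,\Gamma\!\left(s-\tfrac12\right)=\frac{1}{2u}+\log\!\left(\frac{2}{x}\right)+O(u).
\end{equation*}
On the other side, $\zeta_{p}(w)$ has a simple pole at $w=1$ with residue $1$ and constant Laurent coefficient $C_{p}^{(1)}$ — Koshliakov's analogue of $\zeta(s)=\tfrac{1}{s-1}+\gamma+O(s-1)$ — which I would either quote from [\cite{KOSHLIAKOV}, p.~46] or re-derive by Abel summation of $\sum_{n}\lambda_{n}^{-s}$ against the counting function of the $\lambda_{n}$, together with the absolutely convergent correction series $\sum_{n}\big(\tfrac{p^{2}+\lambda_{n}^{2}}{p(p+1/\pi)+\lambda_{n}^{2}}-1\big)\lambda_{n}^{-s}$. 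Hence $\zeta_{p}(2s)=\tfrac{1}{2u}+C_{p}^{(1)}+O(u)$, the $\tfrac1{2u}$ terms cancel, and
\begin{equation*}
\lim_{s\to\frac12}\left[\zeta_{p}(2s)-\frac{\sqrt{\pi}\,x^{1-2s}}{2\Gamma(s)}\,\Gamma\!\left(s-\tfrac12\right)\right]=C_{p}^{(1)}+\log\!\left(\frac{x}{2}\right).
\end{equation*}

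Finally I would read off the three remaining pieces at $s=\tfrac12$: the series becomes $\sum_{n}\tfrac{p^{2}+\lambda_{n}^{2}}{p(p+1/\pi)+\lambda_{n}^{2}}\big(\tfrac{1}{\sqrt{\lambda_{n}^{2}+x^{2}}}-\tfrac{1}{\lambda_{n}}\big)$; the rational term becomes $-\tfrac{1}{2x}\cdot\tfrac{1}{1+1/(\pi p)}$; and since $2^{2-2s}x^{1-2s}\sin(\pi s)\to 2$ while $y^{-1/2}(y+1)^{-1/2}=(y^{2}+y)^{-1/2}$, the integral term becomes $2\int_{0}^{\infty}\tfrac{1}{\sqrt{y^{2}+y}}\cdot\tfrac{dy}{\sigma((2y+1)x)e^{2\pi(2y+1)x}-1}$. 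Assembling these and transposing $-\tfrac{1}{2x}\cdot\tfrac{1}{1+1/(\pi p)}$ to the left-hand side gives exactly the asserted identity. The only step carrying real content is the Laurent data of $\zeta_{p}$ at $s=1$; the passage to the limit is covered by the uniform convergence noted above, and everything else is direct substitution.
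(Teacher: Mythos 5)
Your proposal is correct and follows essentially the same route as the paper: set $N=1$ in the preceding corollary, let $s\to\tfrac12$, and cancel the simple poles of $\Gamma(s-\tfrac12)$ and $\zeta_{p}(2s)$ using the Laurent data $\zeta_{p}(2s)=\tfrac{1}{2s-1}+C_{p}^{(1)}+O(s-\tfrac12)$ from Koshliakov together with the standard expansions of $\Gamma$ and $x^{-2s}$ at $s=\tfrac12$. Your limit computation $\lim_{s\to 1/2}\bigl[\zeta_{p}(2s)-\tfrac{\sqrt{\pi}\,x^{1-2s}}{2\Gamma(s)}\Gamma(s-\tfrac12)\bigr]=C_{p}^{(1)}+\log(x/2)$ and the evaluation of the remaining terms match the paper's argument exactly.
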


\begin{proof}

Let us take $N=1$ in the previous Corollary and take the limit $s\rightarrow\frac{1}{2}$
on both sides of (\ref{identity concerning analytic continuation}): this yields
\begin{align*}
\sum_{n=1}^{\infty}\frac{p^{2}+\lambda_{n}^{2}}{p\left(p+\frac{1}{\pi}\right)+\lambda_{n}^{2}}\left\{ \frac{1}{\sqrt{\lambda_{n}^{2}+x^{2}}}-\frac{1}{\lambda_{n}}\right\} +\lim_{s\rightarrow\frac{1}{2}}\left[\zeta_{p}(2s)-\frac{\sqrt{\pi}\,x^{1-2s}}{2\Gamma(s)}\Gamma\left(s-\frac{1}{2}\right)\right]\\
=-\frac{1}{2x}\cdot\frac{1}{1+\frac{1}{\pi p}}+2\,\intop_{0}^{\infty}\frac{1}{\sqrt{y^{2}+y}}\cdot\frac{dy}{\sigma\left((2y+1)\,x\right)e^{2\pi(2y+1)x}-1}.
\end{align*}

To find the limit as $s\rightarrow\frac{1}{2}$, we use the well-known Laurent
expansions around this point,
\begin{equation}
\frac{1}{\Gamma(s)}=\frac{1}{\sqrt{\pi}}+\frac{\gamma+2\log(2)}{\sqrt{\pi}}\left(s-\frac{1}{2}\right)+O\left(\left(s-\frac{1}{2}\right)^{2}\right),\label{first take meromorphic}
\end{equation}
\begin{equation}
x^{-2s}=x^{-1}\left(1-2\log(x)\,\left(s-\frac{1}{2}\right)+O\left(\left(s-\frac{1}{2}\right)^{2}\right)\right),\label{meromorphic x}
\end{equation}
\begin{equation}
\Gamma\left(s-\frac{1}{2}\right)=\frac{1}{s-\frac{1}{2}}-\gamma+O\left(s-\frac{1}{2}\right)\label{meromorphic Gamma}
\end{equation}
and finally,
\begin{equation}
\zeta_{p}(2s)=\frac{1}{2s-1}+C_{p}^{(1)}+O\left(s-\frac{1}{2}\right),\label{Koshliakov meromorphic expansion}
\end{equation}
with the last expression coming from {[}\cite{KOSHLIAKOV}, p. 48, eq. (51){]}.
Deducing our corollary from (\ref{first take meromorphic})-(\ref{Koshliakov meromorphic expansion})
is now immediate.
\end{proof}

\bigskip{}

By letting $p\rightarrow\infty$ and $p\rightarrow0^{+}$ we obtain a formula also obtained by Watson and an analogue of it, which seems to be new.
\begin{corollary}
For $x>0$, the following formulas are valid
\begin{equation}
\sum_{n=1}^{\infty}\left\{ \frac{1}{\sqrt{n^{2}+x^{2}}}-\frac{1}{n}\right\} +\frac{1}{2x}+\gamma+\log\left(\frac{x}{2}\right)=2\,\sum_{n=1}^{\infty}K_{0}(2\pi nx),\label{Analytic continuation Watson formula p infinity case}
\end{equation}
\begin{equation}
\sum_{n=1}^{\infty}\left\{ \frac{1}{\sqrt{\left(2n-1\right)^{2}+x^{2}}}-\frac{1}{2n-1}\right\} +\frac{\gamma}{2}+\frac{\log\left(x\right)}{2}=\sum_{n=1}^{\infty}(-1)^{n}\,K_{0}(\pi nx).\label{analytic continuation Watson p zero particular case}
\end{equation}
\end{corollary}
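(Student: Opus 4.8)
The plan is to obtain both identities simply by specializing the previous corollary, letting $p\to\infty$ for \eqref{Analytic continuation Watson formula p infinity case} and $p\to0^{+}$ for \eqref{analytic continuation Watson p zero particular case}, as the remark preceding the statement suggests. The first thing I would do is justify passing to the limit term by term in the series on the left and under the integral on the right of that corollary. For the series this is dominated convergence: the factor $\tfrac{p^{2}+\lambda_{n}^{2}}{p(p+\frac1\pi)+\lambda_{n}^{2}}$ is $\le1$ for every $p>0$, while $\bigl|(\lambda_{n}^{2}+x^{2})^{-1/2}-\lambda_{n}^{-1}\bigr|\le\tfrac{x^{2}}{2\lambda_{n}^{3}}=O(n^{-3})$ uniformly in $p$ since $\lambda_{n}\ge n-\tfrac12$. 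For the integral I would use the bound \eqref{bound proof entire part Watson} with $s=\tfrac12$, which holds uniformly in $p$ because $|\sigma(t)|\ge1$ for all $t>0$ and all $p>0$ (for $p$ small enough we may also assume $p<x$, so the pole of $\sigma$ is avoided on the range of integration), giving the $p$-independent dominating function $y^{-1/2}(y+1)^{-1/2}\bigl(e^{2\pi(2y+1)x}-1\bigr)^{-1}$, which is integrable. It then remains to identify the limits of the individual ingredients and to compute the two resulting integrals, both of which collapse to one Laplace transform of the Macdonald function.

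For $p\to\infty$: $\lambda_{n}\to n$, both $\tfrac{p^{2}+\lambda_{n}^{2}}{p(p+\frac1\pi)+\lambda_{n}^{2}}$ and $\tfrac{1}{1+\frac{1}{\pi p}}$ tend to $1$, Koshliakov's constant $C_{p}^{(1)}$ from \eqref{Euler Mascheroni Koshliakov sense} tends to $\gamma$ (immediate from its definition, with $\lambda_{j}\to j$ and weights $\to1$), and by \eqref{definition sigma p sigma p'} $\sigma(t)\to1$, so the right-hand integrand becomes $\bigl(\sqrt{y^{2}+y}\bigr)^{-1}\bigl(e^{2\pi(2y+1)x}-1\bigr)^{-1}$. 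Specializing \eqref{as laplace traaansform} to $\nu=0$ gives $K_{0}(z)=e^{-z}\int_{0}^{\infty}e^{-2zt}(t^{2}+t)^{-1/2}\,dt$; expanding $\tfrac{1}{e^{u}-1}=\sum_{m\ge1}e^{-mu}$ and integrating term by term yields
\[
2\intop_{0}^{\infty}\frac{dy}{\sqrt{y^{2}+y}\,\bigl(e^{2\pi(2y+1)x}-1\bigr)}=2\sum_{m=1}^{\infty}e^{-2\pi mx}\intop_{0}^{\infty}\frac{e^{-4\pi mxy}}{\sqrt{y^{2}+y}}\,dy=2\sum_{m=1}^{\infty}K_{0}(2\pi mx),
\]
which together with the limiting left-hand side is exactly \eqref{Analytic continuation Watson formula p infinity case}.

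For $p\to0^{+}$: now $\lambda_{n}\to n-\tfrac12$, the weight still tends to $1$, but $\tfrac{1}{1+\frac{1}{\pi p}}\to0$ so the $\tfrac{1}{2x}$-term disappears, $\sigma(t)\to-1$ so the right-hand integrand becomes $-\bigl(\sqrt{y^{2}+y}\bigr)^{-1}\bigl(e^{2\pi(2y+1)x}+1\bigr)^{-1}$, and $C_{p}^{(1)}\to\gamma+\log4$, a value read off from \eqref{Euler Mascheroni Koshliakov sense} via $2\sum_{j=1}^{n-1}\tfrac{1}{2j-1}-\log(n-\tfrac12)\to\gamma+\log4$ (using $\sum_{j=1}^{m}\tfrac{1}{2j-1}=H_{2m}-\tfrac12H_{m}$ and $H_{k}=\log k+\gamma+o(1)$). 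Hence the previous corollary degenerates into
\[
\sum_{n=1}^{\infty}\left\{\frac{1}{\sqrt{(n-\frac12)^{2}+x^{2}}}-\frac{1}{n-\frac12}\right\}+\gamma+\log4+\log\!\Bigl(\tfrac{x}{2}\Bigr)=-2\intop_{0}^{\infty}\frac{dy}{\sqrt{y^{2}+y}\,\bigl(e^{2\pi(2y+1)x}+1\bigr)} .
\]
I would then replace $x$ by $x/2$, so that $(n-\tfrac12)^{2}+\tfrac{x^{2}}{4}=\tfrac14\bigl((2n-1)^{2}+x^{2}\bigr)$, and divide through by $2$: the left side becomes $\sum_{n\ge1}\bigl\{((2n-1)^{2}+x^{2})^{-1/2}-(2n-1)^{-1}\bigr\}+\tfrac12\bigl(\gamma+\log4+\log(x/4)\bigr)$, and the constants collapse to $\tfrac\gamma2+\tfrac{\log x}{2}$; on the right, expanding $\tfrac{1}{e^{u}+1}=\sum_{m\ge1}(-1)^{m-1}e^{-mu}$ and applying the same $\nu=0$ case of \eqref{as laplace traaansform} gives $-2\sum_{m\ge1}(-1)^{m-1}K_{0}(\pi mx)=\sum_{m\ge1}(-1)^{m}K_{0}(\pi mx)$, which is \eqref{analytic continuation Watson p zero particular case}.

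The only step I expect to demand real care is this last $p\to0^{+}$ reduction: one must track simultaneously the rescaling $x\mapsto x/2$ forced once $\lambda_{n}\to n-\tfrac12$, the vanishing of the $\tfrac{1}{2x}$-term, and the exact limiting value $\gamma+\log4$ of Koshliakov's constant, since it is precisely the conspiracy of these three that produces the clean constants $\tfrac\gamma2$ and $\tfrac{\log x}{2}$ on the left of \eqref{analytic continuation Watson p zero particular case}. Everything else — the dominated-convergence interchanges and the geometric-series/Laplace-transform evaluation of the integrals — is routine and parallels computations already carried out in the preceding proofs.
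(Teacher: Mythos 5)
Your proposal is correct and follows exactly the route the paper intends: the paper states this corollary without a written proof, as a direct $p\to\infty$ / $p\to0^{+}$ specialization of the preceding corollary, and your limit computations (the dominated-convergence justifications, $C_{p}^{(1)}\to\gamma$ and $C_{p}^{(1)}\to\gamma+\log 4$, the rescaling $x\mapsto x/2$, and the Laplace-transform evaluation of the integrals) all check out. The only blemish is the displayed chain $-2\sum_{m\ge1}(-1)^{m-1}K_{0}(\pi mx)=\sum_{m\ge1}(-1)^{m}K_{0}(\pi mx)$, which as written is off by a factor of $2$; after dividing the identity through by $2$ the integral contributes $-\sum_{m\ge1}(-1)^{m-1}K_{0}(\pi mx)$, which does equal $\sum_{m\ge1}(-1)^{m}K_{0}(\pi mx)$, so the final formula is unaffected.
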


\bigskip{}

As a particular case of our generalization of Watson's result, we
can evaluate the series (\ref{Watson series}) for integer argument
$s$. We will do this for $s=1,\,2$. We remark that there are other
ways of proving both formulas of our next corollary, for instance involving residue theory. In fact,
Koshliakov gave a direct proof of this result on page 34 of his paper
\cite{KOSHLIAKOV}. We only give a proof of the first formula, the second one 
being analogous.
\begin{corollary}
Let $p\in\mathbb{R}_{+}$ and $x>0$. Then the following identities take
place 
\begin{equation}
\sum_{n=1}^{\infty}\frac{p^{2}+\lambda_{n}^{2}}{p\left(p+\frac{1}{\pi}\right)+\lambda_{n}^{2}}\,\frac{1}{x^{2}+\lambda_{n}^{2}}=\frac{\pi}{2x}-\frac{1}{1+\frac{1}{\pi p}}\cdot\frac{1}{2x^{2}}+\frac{\pi}{x}\,\frac{1}{\sigma(x)\,e^{2\pi x}-1},\label{formula koshliakov expansion}
\end{equation}
\begin{align}
\sum_{n=1}^{\infty}\frac{p^{2}+\lambda_{n}^{2}}{p\left(p+\frac{1}{\pi}\right)+\lambda_{n}^{2}}\,\frac{1}{\left(\lambda_{n}^{2}+x^{2}\right)^{2}} & =\frac{\pi}{4x^{3}}-\frac{1}{2x^{4}}\cdot\frac{1}{1+\frac{1}{\pi p}}+\nonumber \\
+\frac{\pi^{2}}{x^{2}\left(\sigma(x)\,e^{2\pi x}-1\right)}\cdot & \left[\frac{1}{2\pi x}+\left(1+\frac{p}{\pi\left(p^{2}-x^{2}\right)}\right)\,\frac{\sigma(x)\,e^{2\pi x}}{\sigma(x)\,e^{2\pi x}-1}\right].\label{formula even integer 2 Koshliakov Epstein}
\end{align}
\end{corollary}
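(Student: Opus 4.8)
The plan is to derive both identities by specializing the explicit Bessel form (\ref{formula valid for all s}) of the generalized Watson formula at $s=1$ and $s=2$, exploiting that the Macdonald function $K_{\nu}$ is elementary for half-integer order $\nu$. Since the series $\varphi_{p}(s,x)$ of (\ref{Watson series}) converges absolutely at $s=1$ and $s=2$, the right-hand side of (\ref{formula valid for all s}) has no singularity there and we may substitute directly; all the interchanges of $\ell$-sum, $m$-sum and $t$-integral that arise are licit by absolute convergence, exactly as in the proof of Theorem 3.1.

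First I would set $s=1$. Then $\tfrac{\sqrt{\pi}\,x^{1-2s}}{2\Gamma(s)}\Gamma(s-\tfrac12)=\tfrac{\pi}{2x}$ and the second summand of (\ref{formula valid for all s}) equals $-\tfrac12 x^{-2}/(1+\tfrac{1}{\pi p})$. From $K_{1/2}(z)=\sqrt{\pi/(2z)}\,e^{-z}$ one has $m^{1/2}K_{1/2}(2\pi m x)=\tfrac{1}{2\sqrt{x}}\,e^{-2\pi m x}$, so the first Bessel series collapses to $\tfrac{\pi}{x}\sum_{m\ge 1}(-1)^{m}e^{-2\pi m x}$. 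The same substitution turns each inner integral in the fractional-integral term into $\int_{1}^{\infty}e^{-2\pi m(p+x)t}\tfrac{(t-1)^{\ell-1}}{(\ell-1)!}\,dt=e^{-2\pi m(p+x)}\bigl(2\pi m(p+x)\bigr)^{-\ell}$; after cancelling the powers of $m$ and of $\pi x$, the inner sum over $\ell$ is, by the binomial theorem, $\sum_{\ell=1}^{m}\binom{m}{\ell}(-1)^{m-\ell}\bigl(\tfrac{2p}{p+x}\bigr)^{\ell}=\bigl(\tfrac{p-x}{p+x}\bigr)^{m}-(-1)^{m}$. Adding this to the first Bessel series, the $(-1)^{m}$ terms cancel and one is left with $\tfrac{\pi}{x}\sum_{m\ge1}\bigl(\tfrac{p-x}{p+x}\bigr)^{m}e^{-2\pi m x}=\tfrac{\pi}{x}\cdot\frac{1}{\sigma(x)e^{2\pi x}-1}$ by the geometric series (\ref{power series def}); collecting the three contributions yields (\ref{formula koshliakov expansion}). (An alternative is to take $N=1$ in the corollary containing (\ref{identity concerning analytic continuation}) and let $s\to1^{-}$, the term with $\sin(\pi s)$ having limit $\tfrac{\pi}{x}(\sigma(x)e^{2\pi x}-1)^{-1}$ because the integral develops a simple $y^{-1}$ singularity at $y=0$ while $\sin(\pi s)\to 0$.)

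For the second identity I would differentiate rather than recompute: since $\partial_{x}(\lambda_{n}^{2}+x^{2})^{-1}=-2x(\lambda_{n}^{2}+x^{2})^{-2}$ and $\sum_{n}\tfrac{p^{2}+\lambda_{n}^{2}}{p(p+1/\pi)+\lambda_{n}^{2}}\,\tfrac{2x}{(\lambda_{n}^{2}+x^{2})^{2}}$ converges locally uniformly for $x>0$ (using $\lambda_{n}>n-\tfrac12$), term-by-term differentiation gives $\varphi_{p}(2,x)=-\tfrac{1}{2x}\,\partial_{x}\varphi_{p}(1,x)$. Differentiating the right side of (\ref{formula koshliakov expansion}), using $\sigma'(x)/\sigma(x)=2p/(p^{2}-x^{2})$ and hence $\tfrac{d}{dx}\bigl(\sigma(x)e^{2\pi x}\bigr)=2\pi\bigl(1+\tfrac{p}{\pi(p^{2}-x^{2})}\bigr)\sigma(x)e^{2\pi x}$, and then multiplying by $-\tfrac{1}{2x}$ reproduces (\ref{formula even integer 2 Koshliakov Epstein}), the $(\sigma(x)e^{2\pi x}-1)^{-2}$ term arising from differentiating $1/(\sigma(x)e^{2\pi x}-1)$. (One may instead put $s=2$ in (\ref{formula valid for all s}) using $K_{3/2}(z)=\sqrt{\pi/(2z)}\,e^{-z}(1+z^{-1})$; the only new ingredients are the weighted binomial sum $\sum_{\ell=1}^{m}\ell\binom{m}{\ell}(-1)^{m-\ell}w^{\ell}=mw(w-1)^{m-1}$ and the identity $\sum_{m\ge1}m\,r^{m}=r/(1-r)^{2}$.) There is no genuine obstacle; the step most prone to slips is matching the coefficient of the double-pole term in (\ref{formula even integer 2 Koshliakov Epstein}), which the differentiation route renders transparent.
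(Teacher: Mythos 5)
Your derivation of (\ref{formula koshliakov expansion}) is exactly the paper's: set $s=1$ in (\ref{formula valid for all s}), use $K_{1/2}(z)=\sqrt{\pi/(2z)}\,e^{-z}$ to collapse the Bessel series, evaluate the fractional integral as $e^{-2\pi m(p+x)}(2\pi m(p+x))^{-\ell}$, apply the binomial theorem to get $\bigl(\tfrac{p-x}{p+x}\bigr)^{m}-(-1)^{m}$, and finish with the geometric series (\ref{power series def}); the paper performs precisely these steps in (\ref{almost in the step})--(\ref{proof expression of Koshliakov}).

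For (\ref{formula even integer 2 Koshliakov Epstein}) the paper gives no computation, stating only that the second formula is ``analogous,'' i.e.\ obtained by putting $s=2$ in (\ref{formula valid for all s}) and using $K_{3/2}$. Your primary route is different and cleaner: you observe $\varphi_{p}(2,x)=-\tfrac{1}{2x}\,\partial_{x}\varphi_{p}(1,x)$, justify term-by-term differentiation by the locally uniform convergence of the differentiated series (valid since $\lambda_{n}>n-\tfrac12$ and the coefficients are bounded), and differentiate the closed form (\ref{formula koshliakov expansion}) using $\sigma'(x)/\sigma(x)=2p/(p^{2}-x^{2})$. I checked the resulting logarithmic-derivative computation and it reproduces (\ref{formula even integer 2 Koshliakov Epstein}) exactly, including the coefficient of the $(\sigma(x)e^{2\pi x}-1)^{-2}$ term. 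What the differentiation buys is that the double-pole structure and the factor $1+\tfrac{p}{\pi(p^{2}-x^{2})}$ appear automatically, whereas the direct $s=2$ substitution requires the weighted binomial sum and a derivative of the geometric series, which is where sign and coefficient errors creep in; the direct route, on the other hand, generalizes mechanically to all integer $s$ via the Bessel polynomials, as the paper's Remark 3.2 indicates. Both of your routes are sound; only a minor caveat applies to your parenthetical alternative for the first identity via (\ref{identity concerning analytic continuation}), where $s=1$ sits on the boundary of the stated range and one must genuinely take the limit $s\to1^{-}$ and extract the $\tfrac{1}{1-s}$ blow-up of the integral at $y=0$ against the vanishing of $\sin(\pi s)$, as you correctly indicate.
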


\begin{proof}
Taking $s=1$ in formula (\ref{formula valid for all s}) and use
the special value for the Macdonald function,
\begin{equation}
K_{\frac{1}{2}}(x)=\sqrt{\frac{\pi}{2x}}\,e^{-x},\,\,\,\,\,x>0,\label{particular case of bessel!}
\end{equation}
we deduce  
\begin{align}
\sum_{n=1}^{\infty}\frac{p^{2}+\lambda_{n}^{2}}{p\left(p+\frac{1}{\pi}\right)+\lambda_{n}^{2}}\,\frac{1}{\lambda_{n}^{2}+x^{2}} & =\frac{\pi}{2x}-\frac{1}{1+\frac{1}{\pi p}}\cdot\frac{1}{2x^{2}}+\frac{\pi}{x}\,\sum_{m=1}^{\infty}(-1)^{m}\,e^{-2\pi mx}+\nonumber \\
+\frac{\pi}{x}\,\sum_{m=1}^{\infty}e^{2\pi pm}\,\sum_{\ell=1}^{m}\left(\begin{array}{c}
m\\
\ell
\end{array}\right) & (-1)^{m-\ell}\left(4\pi mp\right)^{\ell}\intop_{1}^{\infty}\,e^{-2\pi mt(x+p)}\,\frac{(t-1)^{\ell-1}}{(\ell-1)!}\,dt=\nonumber \\
=\frac{\pi}{2x}-\frac{1}{1+\frac{1}{\pi p}}\cdot\frac{1}{2x^{2}}+\frac{\pi}{x}\,\sum_{m=1}^{\infty} & e^{-2\pi mx}\,\sum_{\ell=0}^{m}\left(\begin{array}{c}
m\\
\ell
\end{array}\right)(-1)^{m-\ell}\left(4\pi mp\right)^{\ell}\left(2\pi m(x+p)\right)^{-\ell}\label{almost in the step}\\
=\frac{\pi}{2x}-\frac{1}{1+\frac{1}{\pi p}}\cdot\frac{1}{2x^{2}}+\frac{\pi}{x}\,\sum_{m=1}^{\infty}\left(\frac{p-x}{p+x}\right)^{m} & e^{-2\pi mx}=\frac{\pi}{2x}-\frac{1}{1+\frac{1}{\pi p}}\cdot\frac{1}{2x^{2}}+\frac{\pi}{x}\,\frac{1}{\sigma(x)\,e^{2\pi x}-1}.\label{proof expression of Koshliakov}
\end{align}
\end{proof}

\begin{remark}
By invoking (\ref{formula valid for all s}) and to the representation
via the Bessel polynomials
\[
K_{n-\frac{1}{2}}(x)=\sqrt{\frac{\pi}{2x}}\,e^{-x}\sum_{k=0}^{n-1}\frac{(n-1+k)!}{k!\,(n-1-k)!\,(2x)^{k}},\,\,\,\,n\in\mathbb{N},
\]
it is possible to compute the values of (\ref{Watson series}) for
every integer $s$. Moreover, we may connect the resulting values 
with identities involving $\zeta_{p}(2n-1)$, reminiscent of Terras'
representations \cite{terras_zeta(2n+1)}  representations (c.f. \cite{kanemitsu_rapidly}).
\end{remark} 

\subsection{Second Analogue of Watson's formula}
We now develop a second analogue of Watson's formula which will be
useful to derive the functional equation for a generalized Epstein
zeta function. Another way of considering an analogue of (\ref{Watson Formula intro}) in Koshliakov's setting is by starting with its right-hand side, this is, to consider
\[
\tilde{\varphi}_{p}(x):=\sum_{n=1}^{\infty}\frac{p^{2}+\lambda_{n}^{2}}{p\left(p+\frac{1}{\pi}\right)+\lambda_{n}^{2}}\,\lambda_{n}^{s-\frac{1}{2}}\,K_{s-\frac{1}{2}}(2\pi\lambda_{n}x),\,\,\,\,x>0.
\]

We will now establish the following result, which acts as a complementary case to Theorem 3.1.
\begin{theorem}
Let $\text{Re}(s)>1$ and $x>0$.
Then the following generalization of Watson's formula takes place
\begin{align}
\sum_{n=1}^{\infty}\frac{p^{2}+\lambda_{n}^{2}}{p\left(p+\frac{1}{\pi}\right)+\lambda_{n}^{2}}\,\lambda_{n}^{s-\frac{1}{2}}\,K_{s-\frac{1}{2}}(2\pi\lambda_{n}x)\nonumber \\
=-\frac{\pi^{\frac{1}{2}-s}x^{\frac{1}{2}-s}}{4}\,\frac{\Gamma\left(s-\frac{1}{2}\right)}{1+\frac{1}{\pi p}}+\frac{\Gamma(s)\pi^{-s}x^{-s-\frac{1}{2}}}{4}+\pi\,\intop_{0}^{\infty}\,\frac{y^{s-\frac{1}{2}}J_{s-\frac{1}{2}}(2\pi xy)}{\sigma\left(y\right)e^{2\pi y}-1}\,dy,\label{second analogue Watson}
\end{align}
where $J_{\nu}(x)$ denotes the Bessel function of the first kind. 

\end{theorem}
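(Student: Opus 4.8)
The plan is to imitate the Mellin--Barnes argument used for Theorem 3.1, but now starting from the Mellin--Barnes representation of the Macdonald function itself. Write $\nu:=s-\tfrac12$, so $\text{Re}(\nu)>\tfrac12$, and fix $\mu>\text{Re}(s)+\tfrac12$. Inserting (\ref{Mellin barnes Macdonald}) with $z=2\pi\lambda_{n}x$ into the series defining $\tilde{\varphi}_{p}(x)$ and interchanging summation and integration --- legitimate by the absolute convergence of $\zeta_{p}$ on $\text{Re}(w)>\text{Re}(s)+\tfrac12$ and the Stirling bound $|\Gamma(\tfrac{w-\nu}{2})\Gamma(\tfrac{w+\nu}{2})|=O(|t|^{A}e^{-\pi|t|/2})$ --- one gets
\[
\sum_{n=1}^{\infty}\frac{p^{2}+\lambda_{n}^{2}}{p\left(p+\frac{1}{\pi}\right)+\lambda_{n}^{2}}\,\lambda_{n}^{s-\frac12}\,K_{s-\frac12}(2\pi\lambda_{n}x)=\frac{1}{2\pi i}\int_{\mu-i\infty}^{\mu+i\infty}2^{w-2}\,\Gamma\!\left(\tfrac{w-\nu}{2}\right)\Gamma\!\left(\tfrac{w+\nu}{2}\right)(2\pi x)^{-w}\,\zeta_{p}\!\left(w-\nu\right)dw .
\]

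Next I would push the line of integration leftward to $\text{Re}(w)=c'$ with $0<c'<\tfrac12$, integrating around a positively oriented rectangle with vertices $\mu\pm iT,\ c'\pm iT$ and letting $T\to\infty$; the horizontal segments vanish by the same Stirling estimate together with Koshliakov's convexity bounds (\ref{Koshliakov Phragmen Analogue}). The integrand has a pole at $w=\nu+1=s+\tfrac12$ (from the simple pole of $\zeta_{p}$ at $1$, whose residue is $1$ by (\ref{Koshliakov meromorphic expansion})) and a pole at $w=\nu=s-\tfrac12$ (from $\Gamma(\tfrac{w-\nu}{2})$); all the further poles $w=\nu-2k$, $k\ge1$, are removable because $\zeta_{p}(-2k)=0$. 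Evaluating the two residues, with $\Gamma(\tfrac12)=\sqrt{\pi}$ and $\zeta_{p}(0)=-\tfrac12\,(1+\tfrac{1}{\pi p})^{-1}$, reproduces exactly the terms $\tfrac14\Gamma(s)\pi^{-s}x^{-s-1/2}$ and $-\tfrac14\pi^{1/2-s}x^{1/2-s}\,\Gamma(s-\tfrac12)(1+\tfrac{1}{\pi p})^{-1}$ of (\ref{second analogue Watson}), so the task reduces to identifying the shifted integral with $\pi\int_{0}^{\infty}y^{s-1/2}J_{s-1/2}(2\pi xy)/(\sigma(y)e^{2\pi y}-1)\,dy$.

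On $\text{Re}(w)=c'$ we have $\text{Re}(1-w+\nu)>1$, so I would apply the functional equation (\ref{functional equation Kosh}) in the form $\zeta_{p}(w-\nu)=\tfrac{2}{(2\pi)^{1-w+\nu}}\sin\!\big(\tfrac{\pi(w-\nu)}{2}\big)\Gamma(1-w+\nu)\,\eta_{p}(1-w+\nu)$ and expand $\eta_{p}(1-w+\nu)$ via (\ref{secondary zeta function Koshliakov sense}). The $k$-th summand carries the factor $\Gamma(1-w+\nu)(1-w+\nu,2\pi pk)_{k}$, which by the Mellin representation (\ref{Mellin definition intro}) and the change of variable $x\mapsto 2\pi k y$ equals $(2\pi k)^{1-w+\nu}\int_{0}^{\infty}y^{\nu-w}e^{-2\pi ky}\big(\tfrac{p-y}{p+y}\big)^{k}dy$. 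Substituting this, using the reflection formula $\Gamma(\tfrac{w-\nu}{2})\sin\!\big(\tfrac{\pi(w-\nu)}{2}\big)=\pi/\Gamma(\tfrac{\nu-w}{2}+1)$, and recognising
\[
\frac{1}{2\pi i}\int_{(c')}\frac{2^{w-1}\,\Gamma\!\left(\tfrac{w+\nu}{2}\right)}{\Gamma\!\left(\tfrac{\nu-w}{2}+1\right)}\,(2\pi xy)^{-w}\,dw=J_{\nu}(2\pi xy)
\]
as the Mellin--Barnes representation of $J_{\nu}$, Parseval's formula for Mellin transforms converts the $k$-th summand into $\pi\int_{0}^{\infty}y^{s-1/2}J_{s-1/2}(2\pi xy)\big(\tfrac{p-y}{p+y}\big)^{k}e^{-2\pi ky}dy$; summing over $k$ and invoking the geometric series (\ref{power series def}) with $z=y>0$ to collapse $\sum_{k\ge1}\big(\tfrac{p-y}{p+y}\big)^{k}e^{-2\pi ky}=(\sigma(y)e^{2\pi y}-1)^{-1}$ then gives (\ref{second analogue Watson}).

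I expect the technical heart of the argument to be the convergence bookkeeping around the Bessel--$J$ kernel, exactly as in Watson's original proof: on the line $\text{Re}(w)=c'$ the integrand $2^{w-1}\Gamma(\tfrac{w-\nu}{2})\Gamma(\tfrac{w+\nu}{2})\sin(\tfrac{\pi(w-\nu)}{2})(2\pi x)^{-w}$ only decays like $|t|^{c'-1}$, so neither the interchange $\sum_{k}\leftrightarrow\int_{(c')}$ after the functional equation nor the interchange $\int_{(c')}\leftrightarrow\int_{0}^{\infty}dy$ that sets up Parseval is absolutely convergent. This is precisely where the hypothesis $\text{Re}(s)>1$ is used: it permits the choice $c'\in(0,\tfrac12)$, which places $J_{s-1/2}(2\pi x\,\cdot)\in L^{2}(y^{2c'-1}dy)$ and the Koshliakov kernel $y^{s-1/2}e^{-2\pi ky}(\tfrac{p-y}{p+y})^{k}\in L^{2}(y^{1-2c'}dy)$, so the Plancherel/Parseval theory for Mellin transforms applies; equivalently, one integrates by parts once in $w$ before interchanging, as Watson does. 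The remaining steps --- the vanishing of the horizontal segments, the residue computation, and the final $\sum_{k}\leftrightarrow\int_{0}^{\infty}dy$ (which \emph{is} absolutely convergent since $|\tfrac{p-y}{p+y}|<1$ for $y>0$) --- are routine.
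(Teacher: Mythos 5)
Your proposal is correct, and its first half coincides with the paper's own proof of this theorem: the paper likewise starts from the Mellin--Barnes representation (\ref{Mellin barnes Macdonald}), interchanges with the sum over $n$ to obtain a contour integral against $\zeta_p$, shifts the contour past exactly the two poles you identify (in the paper's variable $z=\tfrac{w-\nu}{2}$ they sit at $z=\tfrac12$ and $z=0$), and computes the same two residues. Where you genuinely diverge is in converting the shifted integral into the Bessel-$J$ integral. The paper reflects the contour back so as to keep the exponentially decaying product $\Gamma(z)\Gamma(s-z)$ in the integrand, expands $(2z,2\pi pm)_m$ via the fractional-integral formula (\ref{representation Koshliakov given in poage 24}), evaluates the resulting beta integrals to produce $(x^2+t^2)^{-s}$, introduces $J_{s-\frac12}$ only afterwards through the Laplace representation (\ref{Integral representation Bessel laplace}), and recombines the finite sum over $\ell$ into $\bigl(\tfrac{2\pi p-y}{2\pi p+y}\bigr)^m$ before summing the geometric series. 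You instead feed the raw Mellin definition (\ref{Mellin definition intro}) of $(1-w+\nu,2\pi pk)_k$ (after the substitution $x=2\pi ky$) directly into a Mellin--Parseval pairing with the Mellin--Barnes kernel of $J_\nu$, which is shorter and bypasses the $\ell$-sum entirely; the price is that your contour carries only polynomial decay $|t|^{c'-1}$, so the Parseval step is conditionally convergent and must be justified by $L^2$ Mellin--Plancherel theory or an integration by parts in $w$ --- your check that $c'\in(0,\tfrac12)$ together with $\text{Re}(s)>1$ places both factors in the appropriate weighted $L^2$ spaces is the right way to close this, whereas the paper's longer route stays absolutely convergent throughout. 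Both arguments terminate in the same geometric-series identity (\ref{power series def}) and yield (\ref{second analogue Watson}).
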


\begin{proof}
Let $s\in\mathbb{C}$ and choose $\mu>\text{Re}(s)+\frac{1}{2}$.
Using the integral representation (\ref{Mellin barnes Macdonald}), arguing by absolute convergence of the Dirichlet series $\zeta_{p}(s)$ for $\text{Re}(s)>1$ and taking into account our choice of $\mu$, we have 
\begin{align}
\sum_{n=1}^{\infty}\frac{p^{2}+\lambda_{n}^{2}}{p\left(p+\frac{1}{\pi}\right)+\lambda_{n}^{2}}\,\lambda_{n}^{s-\frac{1}{2}}K_{s-\frac{1}{2}}(2\pi\lambda_{n}x) & =\frac{1}{8\pi i}\,\intop_{\mu-i\infty}^{\mu+i\infty}\Gamma\left(\frac{w+\frac{1}{2}-s}{2}\right)\Gamma\left(\frac{w+s-\frac{1}{2}}{2}\right)\,\zeta_{p}\left(w+\frac{1}{2}-s\right)\,(\pi x)^{-w}\,dw\nonumber \\
 & =\frac{(\pi x)^{\frac{1}{2}-s}}{4\pi i}\,\intop_{\sigma-i\infty}^{\sigma+i\infty}\Gamma\left(z\right)\Gamma\left(s+z-\frac{1}{2}\right)\,\zeta_{p}\left(2z\right)\,(\pi x)^{-2z}\,dz\nonumber \\
 & =\frac{(\pi x)^{\frac{1}{2}-s}}{2}\,\frac{1}{2\pi i}\,\intop_{\sigma-i\infty}^{\sigma+i\infty}\Gamma\left(z\right)\Gamma\left(s+z-\frac{1}{2}\right)\,\zeta_{p}\left(2z\right)\,(\pi x)^{-2z}\,dz.\label{starting with Mellin representation}
\end{align}
where $\sigma>\frac{1}{2}$. We will now assume that $\text{Re}(s)>\sigma>\frac{1}{2}$ and change the line of integration in (\ref{starting with Mellin representation})
to $\text{Re}(z)=\frac{1}{2}-\sigma$: doing so we pass by two poles
located at $z=\frac{1}{2}$ and $z=0$, since $\text{Re}(s)>\sigma$
by hypothesis. An application of Cauchy's residue Theorem, combined with the convex estimates (\ref{Koshliakov Phragmen Analogue}), gives
\begin{align}
\frac{1}{2\pi i}\,\intop_{\sigma-i\infty}^{\sigma+i\infty}\Gamma\left(z\right)\Gamma\left(s+z-\frac{1}{2}\right)\,\zeta_{p}\left(2z\right)\,(\pi x)^{-2z}\,dz & =\frac{1}{2\pi i}\intop_{\frac{1}{2}-\sigma-i\infty}^{\frac{1}{2}-\sigma+i\infty}\Gamma\left(z\right)\Gamma\left(s+z-\frac{1}{2}\right)\,\zeta_{p}\left(2z\right)\,(\pi x)^{-2z}\,dz\nonumber \\
 & -\frac{1}{2}\cdot\frac{\Gamma\left(s-\frac{1}{2}\right)}{1+\frac{1}{\pi p}}+\frac{\Gamma\left(s\right)}{2x\sqrt{\pi}}.\label{using thing at the end}
\end{align}

Finally, we may use the functional equation for the Koshliakov zeta
function (\ref{functional equation Kosh}) to deduce that
\begin{align}
\frac{1}{2\pi i}\intop_{\frac{1}{2}-\sigma-i\infty}^{\frac{1}{2}-\sigma+i\infty}\Gamma\left(z\right)\Gamma\left(s+z-\frac{1}{2}\right)\,\zeta_{p}\left(2z\right)\,(\pi x)^{-2z}\,dz=\frac{1}{2\pi i}\,\frac{x^{-1}}{\sqrt{\pi}}\,\intop_{\sigma-i\infty}^{\sigma+i\infty}x^{2z}\,\Gamma(z)\,\eta_{p}(2z)\,\Gamma(s-z)\,dz\nonumber \\
=\frac{x^{-1}}{\sqrt{\pi}}\sum_{m=1}^{\infty}\frac{1}{2\pi i}\,\intop_{\sigma-i\infty}^{\sigma+i\infty}\Gamma(z)\,\Gamma(s-z)\,(2z,2\pi pm)_{m}\,\left(\frac{x}{m}\right)^{2z}\,dz=\frac{x^{-1}}{\sqrt{\pi}}\,\sum_{m=1}^{\infty}I_{m,p}^{\star}(s,x).\label{after using the functional equation}
\end{align}

As in the proof of the first analogue of Watson's formula (see Theorem 3.1. above), if we use
the representation (\ref{representation Koshliakov given in poage 24}),
we see that
\begin{equation}
I_{m,p}^{\star}(s,x):=\frac{1}{2\pi i}\,\intop_{\sigma-i\infty}^{\sigma+i\infty}\Gamma(z)\,\Gamma(s-z)\,(2z,2\pi pm)_{m}\,\left(\frac{x}{m}\right)^{2z}\,dz=I_{m,p}^{\star(1)}(s,x)+I_{m,p}^{\star(2)}(s,x),\label{deifnition Im,p}
\end{equation}
where, due to the hypothesis $\text{Re}(s)>\sigma>\frac{1}{2}$ and the integral representation (\ref{Beta integral beginning}),
\begin{equation}
I_{m,p}^{\star(1)}(s,x)=\frac{(-1)^{m}}{2\pi i}\intop_{\sigma-i\infty}^{\sigma+i\infty}\Gamma(z)\,\Gamma(s-z)\,\left(\frac{x}{m}\right)^{2z}dz=\frac{(-1)^{m}\Gamma(s)x^{2s}}{\left(x^{2}+m^{2}\right)^{s}}.\label{Im,p(1) in the second analogue}
\end{equation}

The second integral,
$I_{m,p}^{\star(2)}(s,x)$, has a similar representation, given as 
\begin{align}
I_{m,p}^{\star(2)}(s,x) & =\frac{1}{2\pi i}\,\intop_{\sigma-i\infty}^{\sigma+i\infty}e^{2\pi pm}\,\sum_{\ell=1}^{m}\left(\begin{array}{c}
m\\
\ell
\end{array}\right)\left(4\pi p\right)^{\ell}(-1)^{m-\ell}\,\intop_{m}^{\infty}t^{-2z}e^{-2\pi p\,t}\,\frac{(t-m)^{\ell-1}}{(\ell-1)!}\,\Gamma(z)\,\Gamma(s-z)\,x^{2z}\,dt\,dz\nonumber \\
 & =e^{2\pi pm}\sum_{\ell=1}^{m}\left(\begin{array}{c}
m\\
\ell
\end{array}\right)\left(4\pi p\right)^{\ell}(-1)^{m-\ell}\,\intop_{m}^{\infty}e^{-2\pi p\,t}\,\frac{(t-m)^{\ell-1}}{(\ell-1)!}\,\frac{1}{2\pi i}\,\intop_{\sigma-i\infty}^{\sigma+i\infty}\Gamma(z)\,\Gamma(s-z)\,t^{-2z}\,x^{2z}dz\,dt\nonumber \\
 & =e^{2\pi pm}\sum_{\ell=1}^{m}\left(\begin{array}{c}
m\\
\ell
\end{array}\right)\left(4\pi p\right)^{\ell}(-1)^{m-\ell}\,\intop_{m}^{\infty}e^{-2\pi p\,t}\,\frac{(t-m)^{\ell-1}}{(\ell-1)!}\,\frac{\Gamma(s)x^{2s}}{\left(x^{2}+t^{2}\right)^{s}}\,dt,\label{evaluation Imp2}
\end{align}
where all the steps are justified via absolute convergence. Now, we use
the integral representation [\cite{ryzhik}, p. 702, eq. (6.623.1){]}
\begin{equation}
\frac{1}{\left(x^{2}+t^{2}\right)^{s}}=\frac{\sqrt{\pi}\,2^{\frac{1}{2}-s}}{\Gamma(s)x^{s-\frac{1}{2}}}\intop_{0}^{\infty}y^{s-\frac{1}{2}}J_{s-\frac{1}{2}}(xy)\,e^{-ty}\,dy,\label{Integral representation Bessel laplace}
\end{equation}
valid for every $x,t>0$ and $\text{Re}(s)>0$. We find an alternative
representation for $I_{m,p}^{\star(1)}(s,x)$ in the following form
\begin{equation}
I_{m,p}^{\star(1)}(s,x)=(-1)^{m}\sqrt{\pi}2^{\frac{1}{2}-s}x^{s+\frac{1}{2}}\,\intop_{0}^{\infty}y^{s-\frac{1}{2}}J_{s-\frac{1}{2}}(xy)\,e^{-my}dy\label{alternative representaion Im,p (1) star}
\end{equation}

It is simple to argue via absolute convergence and the hypothesis $\text{Re}(s)>\frac{1}{2}$ that the orders of
integration with respect to $t$ and $y$ can be reversed
and this gives the representation for $I_{m,p}^{\star(2)}(s,x)$,
\begin{align}
I_{m,p}^{\star(2)}(s,x) & =\sqrt{\pi}2^{\frac{1}{2}-s}x^{s+\frac{1}{2}}\,e^{2\pi pm}\sum_{\ell=1}^{m}\left(\begin{array}{c}
m\\
\ell
\end{array}\right)\left(4\pi p\right)^{\ell}(-1)^{m-\ell}\intop_{0}^{\infty}y^{s-\frac{1}{2}}J_{s-\frac{1}{2}}(xy)\,\intop_{m}^{\infty}e^{-2\pi pt}\,\frac{(t-m)^{\ell-1}}{(\ell-1)!}\,e^{-ty}\,dt\,dy\nonumber \\
 & =\sqrt{\pi}2^{\frac{1}{2}-s}x^{s+\frac{1}{2}}\intop_{0}^{\infty}y^{s-\frac{1}{2}}J_{s-\frac{1}{2}}(xy)\,\sum_{\ell=1}^{m}\left(\begin{array}{c}
m\\
\ell
\end{array}\right)\left(4\pi p\right)^{\ell}(-1)^{m-\ell}\,\left(2\pi p+y\right)^{-\ell}e^{-m\,y}\,dy\nonumber \\
 & =\sqrt{\pi}2^{\frac{1}{2}-s}x^{s+\frac{1}{2}}\intop_{0}^{\infty}y^{s-\frac{1}{2}}J_{s-\frac{1}{2}}(xy)\,\left\{ \left(\frac{4\pi p}{2\pi p+y}-1\right)^{m}-(-1)^{m}\right\} e^{-my}\,dy\nonumber \\
 & =\sqrt{\pi}2^{\frac{1}{2}-s}x^{s+\frac{1}{2}}\intop_{0}^{\infty}y^{s-\frac{1}{2}}J_{s-\frac{1}{2}}(xy)\,\left\{ \left(\frac{2\pi p-y}{2\pi p+y}\right)^{m}-(-1)^{m}\right\} e^{-my}\,dy.\label{Im, p (2) second analogue Koshliakov}
\end{align}

Finally, a combination of (\ref{Im,p(1) in the second analogue}) and (\ref{Im, p (2) second analogue Koshliakov})
yields
\begin{equation}
I_{m,p}^{\star}(s,x)=\sqrt{\pi}2^{\frac{1}{2}-s}x^{s+\frac{1}{2}}\intop_{0}^{\infty}y^{s-\frac{1}{2}}J_{s-\frac{1}{2}}(xy)\,\left(\frac{2\pi p-y}{2\pi p+y}\right)^{m}\,e^{-my}dy\label{one of the final rperesentarions almost similar lips}
\end{equation}

Summing over $m$, using the well-known uniform bound $|J_\nu(x)|\leq C_{\nu}/\sqrt{x}$, $x>0$, and the hypothesis that $\text{Re}(s)>1$, we 
find from (\ref{one of the final rperesentarions almost similar lips}) and (\ref{power series def}) that
\begin{equation}
\sum_{m=1}^{\infty}I_{m,p}^{\star}(s,x)=\sqrt{\pi}2^{\frac{1}{2}-s}x^{s+\frac{1}{2}}\intop_{0}^{\infty}\,\frac{y^{s-\frac{1}{2}}J_{s-\frac{1}{2}}(xy)}{\sigma\left(\frac{y}{2\pi}\right)e^{y}-1}\,dy.\label{evaluation of sum of Imp}
\end{equation}

Returning to (\ref{after using the functional equation}) and (\ref{starting with Mellin representation})
we obtain immediately the second analogue of Watson's formula (\ref{second analogue Watson}).

\end{proof}

As before, by letting $p\rightarrow\infty$ or $p\rightarrow 0^{+}$ in (\ref{second analogue Watson}), we shall obtain Watson's formula (\ref{Watson Formula intro}) and an analogue of it. 
\begin{corollary}
For every $x>0$ and $\text{Re}(s)>\frac{1}{2}$, the classical Watson's
formula (\ref{Watson Formula intro}) holds. Moreover, an analogue
of (\ref{Watson for odd sequence}) takes place
\begin{equation}
\sum_{n=1}^{\infty}\frac{(-1)^{n}}{(x^{2}+n^{2})^{s}}=-\frac{x^{-2s}}{2}+\frac{2^{\frac{3}{2}-s}\pi^{s}x^{\frac{1}{2}-s}}{\Gamma(s)}\sum_{n=1}^{\infty}\left(2n-1\right)^{s-\frac{1}{2}}\,K_{s-\frac{1}{2}}\left(\pi(2n-1)x\right).\label{yet another analogue Watson from second}
\end{equation}
\end{corollary}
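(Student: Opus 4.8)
The plan is to obtain both identities by specialising the second analogue of Watson's formula (\ref{second analogue Watson}) to the two limiting regimes $p\to\infty$ and $p\to0^{+}$. Fix $x>0$ and take, provisionally, $\text{Re}(s)>1$, so that (\ref{second analogue Watson}) applies. On its left-hand side, the Koshliakov weight obeys $0<\frac{p^{2}+\lambda_{n}^{2}}{p(p+\frac1\pi)+\lambda_{n}^{2}}\le1$ for all $n$ and all $p>0$ and tends to $1$ in each regime, while the $n$-th root of $\tan(\pi y)=-y/p$, which lies in $(n-\tfrac12,n)$, moves to $n$ as $p\to\infty$ and to $n-\tfrac12$ as $p\to0^{+}$. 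Since $n-\tfrac12<\lambda_{n}<n$ and $K_{\nu}(t)=O(e^{-t}/\sqrt t)$, the general term of that series is bounded, for $n$ large, by $C\,n^{\text{Re}(s)-1}e^{-2\pi(n-1/2)x}$ uniformly in $p$, so dominated convergence for series gives the limit $\sum_{n\ge1}n^{s-\frac12}K_{s-\frac12}(2\pi nx)$ when $p\to\infty$, and $2^{\frac12-s}\sum_{n\ge1}(2n-1)^{s-\frac12}K_{s-\frac12}(\pi(2n-1)x)$ when $p\to0^{+}$.

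On the right-hand side of (\ref{second analogue Watson}) one has $\frac{1}{1+\frac1{\pi p}}\to1$ (resp.\ $\to0$), and since $\sigma(y)=\frac{p+y}{p-y}$ we get, pointwise in $y>0$, $\frac{1}{\sigma(y)e^{2\pi y}-1}\to\frac{1}{e^{2\pi y}-1}$ when $p\to\infty$ and $\to-\frac{1}{e^{2\pi y}+1}$ when $p\to0^{+}$. To interchange the limit with the $y$-integral I would invoke dominated convergence; the essential point is the $p$-uniform bound $\bigl|\frac{1}{\sigma(y)e^{2\pi y}-1}\bigr|\le\frac{1}{e^{2\pi y}-1}$ for all $y>0$, which follows by splitting at $y=p$ (for $0<y<p$ one has $\sigma(y)\ge1$, so $\sigma(y)e^{2\pi y}-1\ge e^{2\pi y}-1$; for $y>p$ one has $|\sigma(y)|\ge1$, so $|\sigma(y)e^{2\pi y}-1|=1+|\sigma(y)|e^{2\pi y}\ge e^{2\pi y}-1$). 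Combined with $|J_{\nu}(t)|\le C_{\nu}\min\{t^{\text{Re}(\nu)},t^{-1/2}\}$, this makes $\frac{y^{\text{Re}(s)-1/2}|J_{s-1/2}(2\pi xy)|}{e^{2\pi y}-1}$ an integrable majorant when $\text{Re}(s)>1$, so the integral term of (\ref{second analogue Watson}) tends to $\pi\int_{0}^{\infty}\frac{y^{s-1/2}J_{s-1/2}(2\pi xy)}{e^{2\pi y}-1}\,dy$ (resp.\ $-\pi\int_{0}^{\infty}\frac{y^{s-1/2}J_{s-1/2}(2\pi xy)}{e^{2\pi y}+1}\,dy$).

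It remains to identify these limiting integrals. Expanding $\frac1{e^{2\pi y}-1}=\sum_{m\ge1}e^{-2\pi my}$ (resp.\ $\frac1{e^{2\pi y}+1}=\sum_{m\ge1}(-1)^{m-1}e^{-2\pi my}$), interchanging sum and integral by absolute convergence, and evaluating each term by the rearranged form of (\ref{Integral representation Bessel laplace}) (with Bessel argument $2\pi x$ and Laplace parameter $2\pi m$) gives $\int_{0}^{\infty}y^{s-1/2}J_{s-1/2}(2\pi xy)\,e^{-2\pi my}\,dy=\frac{\Gamma(s)\pi^{-s-1}x^{s-1/2}}{2}\,(x^{2}+m^{2})^{-s}$; summing over $m$ then converts the two limiting integrals into $\frac{\Gamma(s)\pi^{-s}x^{s-1/2}}{2}\sum_{m\ge1}(x^{2}+m^{2})^{-s}$ and $\frac{\Gamma(s)\pi^{-s}x^{s-1/2}}{2}\sum_{m\ge1}(-1)^{m}(x^{2}+m^{2})^{-s}$, respectively. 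Substituting these back into the limiting forms of (\ref{second analogue Watson}), solving for the Epstein-type series and reducing the elementary powers of $2$ and $\pi$, the regime $p\to\infty$ reproduces Watson's formula (\ref{Watson Formula intro}), while the regime $p\to0^{+}$ gives (\ref{yet another analogue Watson from second}), initially for $\text{Re}(s)>1$. Finally, since in each identity the right-hand side is analytic for $\text{Re}(s)>\tfrac12$ (in fact entire in $s$ in the case of (\ref{yet another analogue Watson from second})) and the left-hand side converges absolutely there, the principle of analytic continuation extends both identities to $\text{Re}(s)>\tfrac12$. The only genuinely delicate steps are the two interchanges of limit---one under the $y$-integral, one under the sum over $n$---both of which, as indicated, reduce to exhibiting the $p$-uniform majorants above; the rest is routine.
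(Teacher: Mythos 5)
Your proposal is correct and follows essentially the same route as the paper: specialize the second analogue of Watson's formula (\ref{second analogue Watson}) to the limits $p\to\infty$ and $p\to0^{+}$ for $\text{Re}(s)>1$, expand $\left(e^{2\pi y}\mp1\right)^{-1}$ geometrically, evaluate each term with the Laplace-type integral (\ref{Integral representation Bessel laplace}), and finish by analytic continuation to $\text{Re}(s)>\tfrac12$. The only difference is that you make explicit the $p$-uniform majorant $\bigl|\sigma(y)e^{2\pi y}-1\bigr|^{-1}\le\bigl(e^{2\pi y}-1\bigr)^{-1}$ justifying the passage to the limit under the integral, a detail the paper leaves implicit; this is a welcome but inessential refinement.
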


\begin{proof}
Assume first that $\text{Re}(s)>1$: letting $p\rightarrow\infty$ in (\ref{second analogue Watson}),
we find that
\begin{align*}
\sum_{n=1}^{\infty}n^{s-\frac{1}{2}}\,K_{s-\frac{1}{2}}(2\pi nx) & =-\frac{\pi^{\frac{1}{2}-s}x^{\frac{1}{2}-s}}{4}\,\Gamma\left(s-\frac{1}{2}\right)+\frac{\Gamma(s)\pi^{-s}x^{-s-\frac{1}{2}}}{4}+\pi\intop_{0}^{\infty}\,\frac{y^{s-\frac{1}{2}}J_{s-\frac{1}{2}}(2\pi xy)}{e^{2\pi y}-1}\,dy\\
 & =-\frac{\pi^{\frac{1}{2}-s}x^{\frac{1}{2}-s}}{4}\,\Gamma\left(s-\frac{1}{2}\right)+\frac{\Gamma(s)\pi^{-s}x^{-s-\frac{1}{2}}}{4}+\pi\sum_{n=1}^{\infty}\intop_{0}^{\infty}\,y^{s-\frac{1}{2}}J_{s-\frac{1}{2}}(2\pi xy)\,e^{-2\pi ny}\,dy,
\end{align*}
where the interchange of the orders of the integral and the series
comes from the well-known bound for $J_{\nu}(x)$, $|J_{\nu}(x)|\leq C_{\nu}/\sqrt{x}$, $x>0$,
which gives
\begin{equation}
\sum_{n=1}^{\infty}\intop_{0}^{\infty}\,y^{\text{Re}(s)-\frac{1}{2}}|J_{s-\frac{1}{2}}(2\pi xy)|\,e^{-2\pi ny}\,dy\leq C_{s}\,\sum_{n=1}^{\infty}\intop_{0}^{\infty}\,y^{\text{Re}(s)-1}\,e^{-2\pi ny}\,dy\leq D_{s}\,\sum_{n=1}^{\infty}\frac{1}{n^{\text{Re}(s)}}<\infty,\label{set of bounds justification}
\end{equation}
by the hypothesis $\text{Re}(s)>1$.

Therefore, using (\ref{Integral representation Bessel laplace}) with
$x$ replaced by $2\pi x$ and $t$ by $2\pi n$, we deduce immediately
Watson's formula (\ref{Watson Formula intro}) when $\text{Re}(s)>1$.
Since both sides of (\ref{Watson Formula intro}) represent analytic
functions on the half-plane $\text{Re}(s)>\frac{1}{2}$ (note the
uniform and absolute convergence of the series on the left in this
region), Watson's formula can be proved for every $\text{Re}(s)>\frac{1}{2}$
by the principle of analytic continuation.
\\

Letting $p\rightarrow0^{+}$ and using the same justification as in (\ref{set of bounds justification}),
we find that (\ref{second analogue Watson}) implies
\begin{equation*}
2^{\frac{1}{2}-s}\sum_{n=1}^{\infty}\left(2n-1\right)^{s-\frac{1}{2}}\,K_{s-\frac{1}{2}}\left(\pi(2n-1)x\right)
=\frac{\Gamma(s)\pi^{-s}x^{-s-\frac{1}{2}}}{4}+\pi\sum_{n=1}^{\infty}(-1)^{n}\,\intop_{0}^{\infty}\,y^{s-\frac{1}{2}}J_{s-\frac{1}{2}}(2\pi xy)\,e^{-2\pi ny}\,dy,\,\,\,\,\text{Re}(s)>1.
\end{equation*}

Thus, the use of the integral representation (\ref{Integral representation Bessel laplace})
also proves the desired formula (\ref{yet another analogue Watson from second})
for $\text{Re}(s)>1$. Finally, the extension of (\ref{yet another analogue Watson from second})
to the region $\text{Re}(s)>\frac{1}{2}$ follows from analytic continuation.

\end{proof}

\bigskip{}

By using an integration by parts on the integral (\ref{one of the final rperesentarions almost similar lips}), it can be actually proved that Watson's formula (\ref{second analogue Watson}) holds for $\text{Re}(s)>\frac{1}{2}$ (see (\ref{justifying the passage}) below for a justification of a particular case of this). Like in Corollary 3.2., we now establish the analytic continuation
of the second analogue of Watson's formula (\ref{second analogue Watson}).

\begin{corollary}
Let $N>0$ be an integer. For every $x>0$ and every $s$ satisfying the condition, $\text{Re}(s)>-N-\frac{1}{2}$, one has the identity
\begin{align}
\sum_{n=1}^{\infty}\frac{p^{2}+\lambda_{n}^{2}}{p\left(p+\frac{1}{\pi}\right)+\lambda_{n}^{2}}\,\lambda_{n}^{s-\frac{1}{2}}\,K_{s-\frac{1}{2}}(2\pi\lambda_{n}x) & =-\frac{\pi^{\frac{1}{2}-s}x^{\frac{1}{2}-s}}{4}\,\frac{\Gamma\left(s-\frac{1}{2}\right)}{1+\frac{1}{\pi p}}+\frac{\Gamma(s)\pi^{-s}x^{-s-\frac{1}{2}}}{4}\nonumber \\
+\frac{x^{s-\frac{1}{2}}}{2\pi^{s}}\sum_{k=0}^{N}\frac{\left(-1\right)^{k}x^{2k}}{k!}\,\Gamma(s+k)\,\eta_{p}(2s+2k)+\pi\intop_{0}^{\infty} & \frac{y^{s-\frac{1}{2}}}{\sigma\left(y\right)e^{2\pi y}-1}\left\{ J_{s-\frac{1}{2}}(2\pi xy)-\sum_{k=0}^{N}\frac{\left(-1\right)^{k}\left(\pi xy\right)^{s+2k-\frac{1}{2}}}{k!\Gamma\left(s+k+\frac{1}{2}\right)}\right\} \,dy.\label{Analytic continuation for every s arbitrary N}
\end{align}   
\end{corollary}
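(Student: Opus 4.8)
The plan is to start from the second analogue of Watson's formula (\ref{second analogue Watson}), which by Theorem 3.2 together with the integration-by-parts remark preceding this Corollary is valid for $\text{Re}(s)>\tfrac12$, and to peel off from its integral the first $N+1$ terms of the Maclaurin expansion of $J_{s-\frac12}(2\pi xy)$ in the variable $y$. Using $J_{\nu}(z)=\sum_{k\geq0}\frac{(-1)^{k}}{k!\,\Gamma(\nu+k+1)}\bigl(\tfrac z2\bigr)^{\nu+2k}$ one has
\[
y^{s-\frac12}J_{s-\frac12}(2\pi xy)=\sum_{k\geq0}\frac{(-1)^{k}(\pi x)^{s+2k-\frac12}}{k!\,\Gamma\!\left(s+k+\tfrac12\right)}\,y^{2s+2k-1},
\]
so, writing $P_{N}(y):=\sum_{k=0}^{N}\frac{(-1)^{k}(\pi xy)^{s+2k-\frac12}}{k!\,\Gamma(s+k+\frac12)}$ for the relevant partial sum, I split the integral in (\ref{second analogue Watson}) as
\[
\pi\int_{0}^{\infty}\frac{y^{s-\frac12}J_{s-\frac12}(2\pi xy)}{\sigma(y)e^{2\pi y}-1}\,dy=\pi\int_{0}^{\infty}\frac{y^{s-\frac12}\{J_{s-\frac12}(2\pi xy)-P_{N}(y)\}}{\sigma(y)e^{2\pi y}-1}\,dy+\pi\int_{0}^{\infty}\frac{y^{s-\frac12}P_{N}(y)}{\sigma(y)e^{2\pi y}-1}\,dy .
\]
On $\text{Re}(s)>\tfrac12$ this split is legitimate: since $\sigma(y)e^{2\pi y}-1\sim 2(\pi+\tfrac1p)\,y$ as $y\to0^{+}$, the $k$-th monomial of $P_{N}$ produces an integrand $O(y^{2s+2k-2})$, integrable at the origin when $\text{Re}(s)>\tfrac12-k$, while $(\sigma(y)e^{2\pi y}-1)^{-1}=O(e^{-2\pi y})$ gives convergence at $y=\infty$; the same estimates control the bracketed integral as well.

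Next I would evaluate the ``polynomial'' integral term by term. The crucial auxiliary identity is
\[
\int_{0}^{\infty}\frac{y^{w-1}}{\sigma(y)e^{2\pi y}-1}\,dy=\frac{\Gamma(w)}{(2\pi)^{w}}\,\eta_{p}(w),\qquad \text{Re}(w)>1,
\]
which follows at once by comparing Koshliakov's representation (\ref{representation as starting point}) with the functional equation (\ref{functional equation Kosh}) and cancelling the common factor $2\cos(\pi w/2)$ (alternatively, expand by the geometric series (\ref{power series def}) and recognise (\ref{Mellin definition intro}) after the substitution $u=2\pi m y$). Applying this with $w=2s+2k$ (legitimate since $\text{Re}(2s+2k)>1$ on $\text{Re}(s)>\tfrac12$) and then Legendre's duplication formula $\Gamma(2s+2k)=\frac{2^{2s+2k-1}}{\sqrt\pi}\,\Gamma(s+k)\,\Gamma\!\left(s+k+\tfrac12\right)$ to cancel the $\Gamma(s+k+\tfrac12)$ in the denominator, a routine collection of the powers of $2$, $\pi$ and $x$ collapses the polynomial integral into precisely
\[
\frac{x^{s-\frac12}}{2\pi^{s}}\sum_{k=0}^{N}\frac{(-1)^{k}x^{2k}}{k!}\,\Gamma(s+k)\,\eta_{p}(2s+2k).
\]
Substituting this back into (\ref{second analogue Watson}) establishes (\ref{Analytic continuation for every s arbitrary N}) for $\text{Re}(s)>\tfrac12$.

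It then remains to push the identity down to $\text{Re}(s)>-N-\tfrac12$ by analytic continuation, exactly as in the earlier continuation arguments of this section. The left-hand side of (\ref{Analytic continuation for every s arbitrary N}) is entire in $s$, because $K_{s-\frac12}(2\pi\lambda_{n}x)=O\!\bigl(e^{-2\pi\lambda_{n}x}/\sqrt{\lambda_{n}x}\bigr)$ makes the series converge uniformly on compact $s$-sets; on the right-hand side the first three terms are meromorphic on $\mathbb{C}$ (invoking Koshliakov's analytic continuation of $\eta_{p}$), while the bracketed integral is holomorphic on $\text{Re}(s)>-N-\tfrac12$ — indeed $J_{s-\frac12}(2\pi xy)-P_{N}(y)=\sum_{k\geq N+1}\frac{(-1)^{k}(\pi xy)^{s+2k-\frac12}}{k!\,\Gamma(s+k+\frac12)}=O\bigl(y^{\,s+2N+\frac32}\bigr)$ as $y\to0^{+}$, so the integrand is $O\bigl(y^{\,2s+2N}\bigr)$ near the origin, integrable precisely when $\text{Re}(s)>-N-\tfrac12$, and decays exponentially at $\infty$, the convergence being locally uniform by Weierstrass' $M$-test. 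Since the two sides agree on $\text{Re}(s)>\tfrac12$ and the entire left-hand side forces the apparent poles on the right to be removable, the principle of analytic continuation delivers (\ref{Analytic continuation for every s arbitrary N}) throughout $\text{Re}(s)>-N-\tfrac12$.

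The step I expect to be the real work is the middle one: establishing the Mellin transform $\int_{0}^{\infty}(\sigma(y)e^{2\pi y}-1)^{-1}y^{w-1}\,dy=\frac{\Gamma(w)}{(2\pi)^{w}}\eta_{p}(w)$ correctly and then marshalling the duplication formula together with the scattered powers of $2$, $\pi$ and $x$ so that the polynomial integral telescopes exactly into the $\sum_{k=0}^{N}$ term displayed in (\ref{Analytic continuation for every s arbitrary N}); the remaining analytic-continuation scaffolding is essentially bookkeeping of the kind already carried out in Corollaries 3.2 and 3.3.
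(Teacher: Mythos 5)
Your proposal is correct and follows essentially the same route as the paper: subtract the first $N+1$ Maclaurin terms of $J_{s-\frac{1}{2}}(2\pi xy)$ inside the integral of (\ref{second analogue Watson}), evaluate the resulting monomial integrals via $\intop_{0}^{\infty}y^{w-1}\left(\sigma(y)e^{2\pi y}-1\right)^{-1}dy=\Gamma(w)(2\pi)^{-w}\eta_{p}(w)$ (obtained from (\ref{representation as starting point}) and the functional equation (\ref{functional equation Kosh})) together with Legendre's duplication formula, and then extend to $\text{Re}(s)>-N-\frac{1}{2}$ by analytic continuation of the remainder integral. Your bookkeeping of the powers of $2$, $\pi$ and $x$ reproduces the displayed coefficient $\frac{x^{s-1/2}}{2\pi^{s}}$ exactly, so nothing is missing.
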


\begin{proof}
For every $x>0$ and $N\in\mathbb{N}$, let us consider the integral
\[
\mathcal{J}_{N}(x,s):=\intop_{0}^{\infty}\,\frac{y^{s-\frac{1}{2}}\left\{ J_{s-\frac{1}{2}}(2\pi xy)-\sum_{k=0}^{N}\frac{\left(-1\right)^{k}\left(\pi xy\right)^{s+2k-\frac{1}{2}}}{k!\Gamma\left(s+k+\frac{1}{2}\right)}\right\} }{\sigma\left(y\right)e^{2\pi y}-1}\,dy
\]
which converges absolutely and uniformly for every $\text{Re}(s)>-N-\frac{1}{2}$.
By the power series of the Bessel function of the first kind $J_{\nu}(z)$
{[}\cite{NIST}, p. 217, eq. 10.2.2.{]}, for every $\text{Re}(s)>\frac{1}{2}$
we know that the following equality must take place 
\begin{align}
\intop_{0}^{\infty}\,\frac{y^{s-\frac{1}{2}}J_{s-\frac{1}{2}}(2\pi xy)}{\sigma\left(y\right)e^{2\pi y}-1}\,dy & =\mathcal{J}_{N}(x,s)+\sum_{k=0}^{N}\frac{\left(-1\right)^{k}\left(\pi x\right)^{s+2k-\frac{1}{2}}}{k!\Gamma\left(s+k+\frac{1}{2}\right)}\,\intop_{0}^{\infty}\frac{y^{2s+2k-1}}{\sigma\left(y\right)e^{2\pi y}-1}dy\nonumber \\
 & =\mathcal{J}_{N}(x,s)+\frac{x^{s-\frac{1}{2}}}{2\pi^{s+1}}\sum_{k=0}^{N}\frac{\left(-1\right)^{k}x^{2k}}{k!}\,\Gamma(s+k)\,\eta_{p}(2s+2k)\label{combination final}
\end{align}
where we have used the integral representation (\ref{representation as starting point}),
together with the functional equation for $\zeta_{p}(s)$ as well
as Legendre's duplication formula for the Gamma function. Taking now $\text{Re}(s)>1$ in 
(\ref{combination final}) and combining it with (\ref{second analogue Watson}), we may now invoke the principle of analytic continuation to deduce (\ref{Analytic continuation for every s arbitrary N}),
completing the proof.

\end{proof}

\bigskip{}

From the previous result, we can obtain a nice generalization of Watson's
identity (\ref{Analytic continuation Watson formula p infinity case}),
which is analogous to (\ref{meromorphic expansion continuation Kosh}).

\begin{corollary}
For every $p\in\mathbb{R}_{+}$ and $x>0$, the following identity
takes place
\begin{equation}
\sum_{n=1}^{\infty}\frac{p^{2}+\lambda_{n}^{2}}{p\left(p+\frac{1}{\pi}\right)+\lambda_{n}^{2}}\,K_{0}(2\pi\lambda_{n}x)=\frac{1}{4x}+\frac{C_{p}^{(2)}}{2}-\frac{e^{2\pi p}Q_{2\pi p}(0)}{1+\frac{1}{\pi p}}+\frac{\log\left(\frac{x}{2}\right)}{2\left(1+\frac{1}{\pi p}\right)}+\pi\,\intop_{0}^{\infty}\,\frac{J_{0}(2\pi xy)-1}{\sigma\left(y\right)e^{2\pi y}-1}\,dy.\label{Dont know which one is true}
\end{equation}
\end{corollary}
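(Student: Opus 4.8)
The plan is to specialise the identity (\ref{Analytic continuation for every s arbitrary N}) at $N=0$ and then let $s\to\tfrac12$. For fixed $x>0$ the left-hand side of (\ref{Analytic continuation for every s arbitrary N}) is an entire function of $s$: since $K_{s-\frac12}(2\pi\lambda_n x)=O(e^{-2\pi\lambda_n x}/\sqrt{\lambda_n})$ uniformly for $s$ in compact sets, the series converges absolutely and locally uniformly, so it is continuous at $s=\tfrac12$, where its value is $\sum_{n\ge1}\frac{p^2+\lambda_n^2}{p(p+\frac1\pi)+\lambda_n^2}\,K_0(2\pi\lambda_n x)$, the quantity we wish to evaluate. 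On the right-hand side with $N=0$, the term $\tfrac14\Gamma(s)\pi^{-s}x^{-s-\frac12}$ tends to $\tfrac14\Gamma(\tfrac12)\pi^{-1/2}x^{-1}=\tfrac1{4x}$, while the integral tends to $\pi\int_0^\infty\frac{J_0(2\pi xy)-1}{\sigma(y)e^{2\pi y}-1}\,dy$ because $\Gamma(1)=1$; the passage to the limit is legitimate since $\mathcal J_0(x,s)$ converges absolutely and uniformly for $\text{Re}(s)>-\tfrac12$, as established when proving (\ref{Analytic continuation for every s arbitrary N}). Everything therefore reduces to the two remaining terms $T_1(s):=-\tfrac{\pi^{1/2-s}x^{1/2-s}}{4}\,\tfrac{\Gamma(s-\frac12)}{1+\frac1{\pi p}}$ and $T_3(s):=\tfrac{x^{s-1/2}}{2\pi^s}\Gamma(s)\,\eta_p(2s)$, each of which is meromorphic near $s=\tfrac12$ with a simple pole there ($T_1$ from $\Gamma(s-\tfrac12)$, $T_3$ from the simple pole of $\eta_p$ at $1$); since (\ref{Analytic continuation for every s arbitrary N}) is an identity between functions holomorphic at $s=\tfrac12$, the principal parts of $T_1$ and $T_3$ must cancel, and it suffices to compute the finite part of $T_1+T_3$.

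First I would obtain the Laurent expansion of $\eta_p$ at $s=1$ from the functional equation (\ref{functional equation Kosh}), written as $\eta_p(w)=\frac{(2\pi)^w\zeta_p(1-w)}{2\cos(\pi w/2)\Gamma(w)}$. Near $w=1$ one has $\cos(\pi w/2)=-\tfrac\pi2(w-1)+O((w-1)^3)$, $\Gamma(w)=1-\gamma(w-1)+O((w-1)^2)$, $(2\pi)^w=2\pi\bigl(1+\log(2\pi)(w-1)+\cdots\bigr)$, and $\zeta_p(1-w)=\zeta_p(0)-\zeta_p'(0)(w-1)+O((w-1)^2)$, where $\zeta_p(0)=-\tfrac12\bigl(1+\tfrac1{\pi p}\bigr)^{-1}$ by [\cite{KOSHLIAKOV}, p. 22, eq. (34)]. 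Multiplying these out gives
\[
\eta_p(w)=\frac{1}{\bigl(1+\frac1{\pi p}\bigr)(w-1)}+\Bigl(2\zeta_p'(0)-2\zeta_p(0)\bigl(\log(2\pi)+\gamma\bigr)\Bigr)+O(w-1),\qquad w\to1;
\]
in particular the residue of $\eta_p$ at $1$ is $\bigl(1+\tfrac1{\pi p}\bigr)^{-1}$, which will exactly match the pole coming from $T_1$.

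Inserting this expansion into $T_3$, together with $\Gamma(s-\tfrac12)=\tfrac1{s-1/2}-\gamma+O(s-\tfrac12)$, $\Gamma'(\tfrac12)/\Gamma(\tfrac12)=-\gamma-2\log2$, and the elementary expansions $x^{\pm(s-1/2)}=1\pm(s-\tfrac12)\log x+\cdots$, $\pi^{\pm(s-1/2)}=1\pm(s-\tfrac12)\log\pi+\cdots$, a short computation shows that the poles of $T_1$ and $T_3$ at $s=\tfrac12$ are $-\frac1{4(1+1/\pi p)}\cdot\frac1{s-1/2}$ and $+\frac1{4(1+1/\pi p)}\cdot\frac1{s-1/2}$ respectively, hence cancel, and that the finite part of $T_1(s)+T_3(s)$ at $s=\tfrac12$ equals $\frac{\log(x/2)}{2(1+1/\pi p)}+\tfrac12\bigl(2\zeta_p'(0)-2\zeta_p(0)(\log(2\pi)+\gamma)\bigr)$. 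Assembling the four limits, the asserted identity follows once one identifies the finite part of $\eta_p$ at $s=1$, namely $2\zeta_p'(0)-2\zeta_p(0)(\log(2\pi)+\gamma)$, with $C_p^{(2)}-\frac{2e^{2\pi p}Q_{2\pi p}(0)}{1+\frac1{\pi p}}$; this is precisely Koshliakov's evaluation of the constant term of $\eta_p(s)$ at $s=1$, the $\eta_p$-counterpart of the expansion $\zeta_p(2s)=\frac1{2s-1}+C_p^{(1)}+O(s-\tfrac12)$ used earlier (cf. [\cite{KOSHLIAKOV}, p. 48]), and it is what makes $C_p^{(2)}$ and $Q_{2\pi p}(0)$ enter the final formula.

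The step I expect to be the main obstacle is exactly this last identification: one must carry the logarithmic and Euler-constant contributions faithfully through the Laurent expansion of $\eta_p$ and the two nested limits, and match them against Koshliakov's precise normalisations of $C_p^{(2)}$ and $Q_{2\pi p}(0)$ without sign or factor slips. A useful independent check is to let $p\to\infty$ in the resulting formula: the left-hand side degenerates to $\sum_{n\ge1}K_0(2\pi nx)$ and the right-hand side should collapse to the classical closed form for this series, which pins down the normalisation unambiguously.
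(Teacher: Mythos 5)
Your overall strategy is the paper's: specialise (\ref{Analytic continuation for every s arbitrary N}) and let $s\to\frac12$, cancel the simple poles of $\Gamma(s-\frac12)$ and $\eta_p(2s)$ against each other, and collect the finite parts. (The paper takes $N=1$ where you take $N=0$; since the integral in (\ref{Dont know which one is true}) is exactly $\mathcal{J}_0(x,\tfrac12)$ and the condition $\text{Re}(s)>-N-\frac12$ with $N=0$ already covers $s=\frac12$, your choice is if anything the more natural one.) Your pole cancellation and the extraction of the term $\frac{\log(x/2)}{2(1+\frac{1}{\pi p})}$ are correct.

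The gap sits exactly where you predicted it: the constant term of $\eta_p$ at $1$. Your functional-equation computation correctly yields $\eta_p(w)=\frac{-2\zeta_p(0)}{w-1}+2\zeta_p'(0)-2\zeta_p(0)(\gamma+\log 2\pi)+O(w-1)$, but the asserted identification of $2\zeta_p'(0)-2\zeta_p(0)(\gamma+\log 2\pi)$ with $C_p^{(2)}-\frac{2e^{2\pi p}Q_{2\pi p}(0)}{1+\frac1{\pi p}}$ does not go through with the data available in the paper. Substituting the values the paper itself quotes from Koshliakov, namely $\zeta_p(0)=-\frac12\bigl(1+\frac1{\pi p}\bigr)^{-1}$ and $\zeta_p'(0)=-\frac{\log(2\pi)}{2}+\frac12 C_p^{(2)}-\frac{\gamma}{2}-\frac{e^{2\pi p}}{1+\frac1{\pi p}}Q_{2\pi p}(0)$ (see (\ref{at beginning but will resurface on Koshliakoc})), gives $C_p^{(2)}-\frac{2e^{2\pi p}Q_{2\pi p}(0)}{1+\frac1{\pi p}}-(\gamma+\log 2\pi)\bigl(1-\frac{1}{1+\frac{1}{\pi p}}\bigr)$, which misses the required constant by a term that is nonzero for every finite $p$. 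So either one of the two transcribed Koshliakov expansions is off, or your route genuinely produces a different constant; in no case is the identification a formality that can be waved through. The paper avoids the issue by quoting Koshliakov's Laurent expansion of $\eta_p$ at $s=1$ directly, i.e.\ (\ref{Meromorphic expansion around zero Koshliakov sense}), and you should do the same rather than re-derive it through the functional equation. Note finally that your proposed sanity check $p\to\infty$ cannot detect the problem, because the discrepant term $(\gamma+\log 2\pi)\frac{1/(\pi p)}{1+1/(\pi p)}$ vanishes in precisely that limit.
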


\begin{proof}
We prove (\ref{Dont know which one is true}) by taking $N=1$ and letting $s\rightarrow\frac{1}{2}$ in (\ref{Analytic continuation for every s arbitrary N}). The conclusion of the proof follows after invoking the
Laurent expansion {[}\cite{KOSHLIAKOV}, p. 49, eq. (53){]}, 
\begin{equation}
\eta_{p}(s)=\frac{1}{1+\frac{1}{\pi p}}\,\frac{1}{s-1}+C_{p}^{(2)}-\frac{2e^{2\pi p}Q_{2\pi p}(0)}{1+\frac{1}{\pi p}}+O\left(s-1\right),\label{Meromorphic expansion around zero Koshliakov sense}
\end{equation}
together with (\ref{meromorphic x}), (\ref{meromorphic Gamma}),
as well as
\begin{equation}
\Gamma\left(s\right)=\sqrt{\pi}-\sqrt{\pi}\left(2\log(2)+\gamma\right)\left(s-\frac{1}{2}\right)+O\left(s-\frac{1}{2}\right)^{2}.\label{another Laurent expansion}
\end{equation}
\end{proof}

\begin{corollary}
Watson's formula (\ref{Analytic continuation Watson formula p infinity case})
holds. Furthermore, we have the identity
\begin{equation}
\sum_{n=1}^{\infty}\frac{(-1)^{n}}{\sqrt{x^{2}+n^{2}}}=-\frac{1}{2x}+2\sum_{n=1}^{\infty}K_{0}\left(\pi(2n-1)x\right),\,\,\,\,\,x>0,\label{Watson formula limit case p zero}
\end{equation}
which is analogous to (\ref{analytic continuation Watson p zero particular case}).
\end{corollary}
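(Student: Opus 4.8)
The plan is to deduce both identities by specializing the identity (\ref{Dont know which one is true}) of the preceding corollary: letting $p\to\infty$ produces (\ref{Analytic continuation Watson formula p infinity case}), while letting $p\to0^{+}$ produces (\ref{Watson formula limit case p zero}). This mirrors the limiting arguments already used in the earlier corollaries of this section. In all cases the passages to the limit inside the series on the left of (\ref{Dont know which one is true}) and inside the integral on its right are justified by dominated convergence with $p$-free majorants: one has $0<\frac{p^{2}+\lambda_{n}^{2}}{p(p+\frac{1}{\pi})+\lambda_{n}^{2}}<1$ and $n-\tfrac12<\lambda_{n}<n$ for every $p\in\mathbb{R}_{+}$, one has $\left|\frac{1}{\sigma(y)e^{2\pi y}-1}\right|\le\frac{1}{e^{2\pi y}-1}$ as an immediate consequence of the geometric series (\ref{power series def}), and $J_{0}(2\pi xy)-1=O(y^{2})$ as $y\to0^{+}$ while staying bounded as $y\to\infty$, so that $\frac{|J_{0}(2\pi xy)-1|}{e^{2\pi y}-1}$ is integrable on $(0,\infty)$.

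First I would carry out $p\to\infty$. On the left of (\ref{Dont know which one is true}) the coefficients tend to $1$ and $\lambda_{n}\to n$, so the left side tends to $\sum_{n=1}^{\infty}K_{0}(2\pi nx)$. On the right, $\frac{1}{1+\frac{1}{\pi p}}\to1$ and $\sigma(y)e^{2\pi y}-1\to e^{2\pi y}-1$, so the integral tends to $\pi\int_{0}^{\infty}\frac{J_{0}(2\pi xy)-1}{e^{2\pi y}-1}\,dy$; expanding $\frac{1}{e^{2\pi y}-1}=\sum_{n\ge1}e^{-2\pi ny}$ and using the Laplace transform $\int_{0}^{\infty}J_{0}(2\pi xy)e^{-2\pi ny}\,dy=\frac{1}{2\pi\sqrt{x^{2}+n^{2}}}$, this integral equals $\tfrac12\sum_{n\ge1}\bigl(\frac{1}{\sqrt{x^{2}+n^{2}}}-\frac{1}{n}\bigr)$. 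It remains to identify the limit of the constant $\frac{C_{p}^{(2)}}{2}-\frac{e^{2\pi p}Q_{2\pi p}(0)}{1+\frac{1}{\pi p}}$: from $\lim_{p\to\infty}\zeta_{p}(s)=\zeta(s)$ and the functional equation (\ref{functional equation Kosh}) one gets $\lim_{p\to\infty}\eta_{p}(s)=\zeta(s)$, and comparing the Laurent expansion (\ref{Meromorphic expansion around zero Koshliakov sense}) near $s=1$ with $\zeta(s)=\frac{1}{s-1}+\gamma+O(s-1)$ forces this combination to tend to $\frac{\gamma}{2}$. Collecting the three pieces $\frac{1}{4x}+\frac{\gamma}{2}+\frac{\log(x/2)}{2}$ and $\tfrac12\sum_{n\ge1}\bigl(\frac{1}{\sqrt{x^{2}+n^{2}}}-\frac1n\bigr)$, equating with $\sum_{n\ge1}K_{0}(2\pi nx)$ and multiplying by $2$ yields (\ref{Analytic continuation Watson formula p infinity case}).

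Next I would let $p\to0^{+}$. Here $\lambda_{n}\to n-\tfrac12$ and the coefficients again tend to $1$, so the left side of (\ref{Dont know which one is true}) tends to $\sum_{n=1}^{\infty}K_{0}(\pi(2n-1)x)$. On the right, $\frac{1}{1+\frac{1}{\pi p}}\to0$, which kills both the $\frac{\log(x/2)}{2(1+\frac{1}{\pi p})}$ term and the $\frac{e^{2\pi p}Q_{2\pi p}(0)}{1+\frac{1}{\pi p}}$ term; and since $\lim_{p\to0^{+}}\zeta_{p}(s)=(2^{s}-1)\zeta(s)$, the functional equation gives $\lim_{p\to0^{+}}\eta_{p}(s)=(2^{1-s}-1)\zeta(s)$, which is regular at $s=1$ with value $-\log2$, so by (\ref{Meromorphic expansion around zero Koshliakov sense}) one gets $C_{p}^{(2)}\to-\log2$ and the constant tends to $\frac{1}{4x}-\frac{\log2}{2}$. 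Finally $\sigma(y)=\frac{p+y}{p-y}\to-1$, so $\sigma(y)e^{2\pi y}-1\to-(e^{2\pi y}+1)$ and the integral tends to $-\pi\int_{0}^{\infty}\frac{J_{0}(2\pi xy)-1}{e^{2\pi y}+1}\,dy$; expanding $\frac{1}{e^{2\pi y}+1}=\sum_{n\ge1}(-1)^{n-1}e^{-2\pi ny}$ and using $\sum_{n\ge1}\frac{(-1)^{n-1}}{n}=\log2$ together with the same Laplace transform, this equals $\tfrac12\sum_{n\ge1}\frac{(-1)^{n}}{\sqrt{x^{2}+n^{2}}}+\frac{\log2}{2}$. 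The two occurrences of $\pm\frac{\log2}{2}$ cancel, and equating $\sum_{n\ge1}K_{0}(\pi(2n-1)x)$ with $\frac{1}{4x}+\tfrac12\sum_{n\ge1}\frac{(-1)^{n}}{\sqrt{x^{2}+n^{2}}}$, then multiplying by $2$ and rearranging, gives (\ref{Watson formula limit case p zero}).

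The one delicate point is the determination of the limiting constants: one cannot invoke separate limits of $C_{p}^{(2)}$ and of $e^{2\pi p}Q_{2\pi p}(0)$, but must instead read off the constant term of the Laurent expansion of the \emph{limiting} zeta function at $s=1$ (namely $\zeta(s)$, resp. $(2^{1-s}-1)\zeta(s)$) and match it against (\ref{Meromorphic expansion around zero Koshliakov sense}). Everything else is the routine bookkeeping of dominated convergence with the $p$-independent majorants noted in the first paragraph.
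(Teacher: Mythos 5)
Your proposal is correct and follows essentially the same route as the paper: both identities are obtained by letting $p\to\infty$ and $p\to0^{+}$ in (\ref{Dont know which one is true}), expanding $\frac{1}{e^{2\pi y}\mp(-1)}$ as a geometric series, and evaluating the resulting Laplace transforms of $J_{0}$ via (\ref{Integral representation Bessel laplace}). Your extra care in reading the limiting constant off the Laurent expansion of the limiting zeta function (rather than taking separate limits of $C_{p}^{(2)}$ and $e^{2\pi p}Q_{2\pi p}(0)$) is a sensible refinement of a point the paper leaves implicit, but it does not change the structure of the argument.
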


\begin{proof}
Taking $p\rightarrow\infty$ on (\ref{Dont know which one is true}), 
we obtain
\begin{align*}
\sum_{n=1}^{\infty}K_{0}(2\pi nx) & =\frac{1}{4x}+\frac{\gamma}{2}+\frac{\log\left(\frac{x}{2}\right)}{2}+\pi\,\intop_{0}^{\infty}\,\frac{J_{0}(2\pi xy)-1}{e^{2\pi y}-1}\,dy=\frac{1}{4x}+\frac{\gamma}{2}+\frac{\log\left(\frac{x}{2}\right)}{2}+\pi\,\sum_{n=1}^{\infty}\intop_{0}^{\infty}\,\left(J_{0}(2\pi xy)-1\right)e^{-2\pi ny}\,dy\\
 & =\frac{1}{4x}+\frac{\gamma}{2}+\frac{\log\left(\frac{x}{2}\right)}{2}+\frac{1}{2}\,\sum_{n=1}^{\infty}\left\{ \frac{1}{\sqrt{n^{2}+x^{2}}}-\frac{1}{n}\right\} ,
\end{align*}
where the last step comes from (\ref{Integral representation Bessel laplace}). In the second step, the interchange of the orders of the integral
and the geometric series can be justified by absolute convergence: indeed, it follows from an integration by parts that
\begin{equation}
\sum_{n=1}^{\infty}\intop_{0}^{\infty}\,\left(J_{0}(2\pi xy)-1\right)e^{-2\pi ny}\,dy=-\,\sum_{n=1}^{\infty}\frac{x}{n}\,\intop_{0}^{\infty}e^{-2\pi ny}\,J_{1}(2\pi xy)\,dy,\label{integration by parts}
\end{equation}
so that, by using once more the uniform bound $|J_{\nu}(x)|\leq C_{\nu}/\sqrt{x}$, $x>0$, 
we have for absolute constants $D$ and $D^{\text{\ensuremath{\prime}}}$, 
\begin{equation}
\sum_{n=1}^{\infty}\frac{1}{n}\,\intop_{0}^{\infty}e^{-2\pi ny}\,|J_{1}(2\pi xy)|\,dy\leq\frac{D}{\sqrt{2\pi x}}\,\sum_{n=1}^{\infty}\frac{1}{n}\,\intop_{0}^{\infty}\frac{e^{-2\pi ny}}{\sqrt{y}}\,dy\leq\frac{D^{\prime}}{\sqrt{x}}\sum_{n=1}^{\infty}\frac{1}{n^{3/2}},\label{justifying the passage}
\end{equation}
justifying the passage. To prove (\ref{Watson formula limit case p zero}),
we let $p\rightarrow0^{+}$ on (\ref{Dont know which one is true})
and use the same kind of justifications as in (\ref{justifying the passage}) in order to get
\begin{align*}
\sum_{n=1}^{\infty}K_{0}\left(\pi\left(2n-1\right)x\right) & =\frac{1}{4x}-\frac{\log(2)}{2}+\pi\,\sum_{n=1}^{\infty}(-1)^{n}\intop_{0}^{\infty}\left(J_{0}(2\pi xy)-1\right)\,e^{-2\pi ny}dy\\
=\frac{1}{4x}-\frac{\log(2)}{2} & +\frac{1}{2}\,\sum_{n=1}^{\infty}\left\{ \frac{(-1)^{n}}{\sqrt{n^{2}+x^{2}}}-\frac{(-1)^{n}}{n}\right\} =\frac{1}{4x}+\frac{1}{2}\sum_{n=1}^{\infty}\frac{(-1)^{n}}{\sqrt{n^{2}+x^{2}}}.
\end{align*}
\end{proof}

\bigskip{}

Like in Corollary 3.5., it is now possible to derive as a particular
case from our analogue of Watson's formula (\ref{second analogue Watson}) a formula obtained by Koshliakov himself {[}\cite{KOSHLIAKOV}, p. 44, eq.
(36){]}, which truly complements (\ref{formula koshliakov expansion}).

\begin{corollary}
Let $\sigma_{p}(z)$ be defined by
\begin{equation}
\sigma_{p}(z)=\sum_{n=1}^{\infty}\frac{p^{2}+\lambda_{n}^{2}}{p\left(p+\frac{1}{\pi}\right)+\lambda_{n}^{2}}\,e^{-\lambda_{n}z},\,\,\,\,\,\,\text{Re}(z)>0,\label{sigma p Koshliakov setting}
\end{equation}
(c.f. (\ref{sigma p definition on Koshliakov}) above). Then the following
formula holds
\begin{equation}
\sigma_{p}(2\pi x)=-\frac{1}{2}\,\frac{1}{1+\frac{1}{\pi p}}+\frac{1}{2\pi x}+2\,\intop_{0}^{\infty}\frac{\sin(xy)}{\sigma(y)\,e^{2\pi y}-1}\,dy.\label{analogue abel plana koshliakov sense}
\end{equation}
\end{corollary}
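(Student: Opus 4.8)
The plan is to obtain (\ref{analogue abel plana koshliakov sense}) simply as the value $s=1$ of the second analogue of Watson's formula (\ref{second analogue Watson}); this is exactly the counterpart, for the series defining $\sigma_{p}$, of the way Corollary 3.5 extracts (\ref{formula koshliakov expansion}) from the case $s=1$ of the \emph{first} analogue. One preliminary point needs attention: (\ref{second analogue Watson}) was proved for $\mathrm{Re}(s)>1$, whereas we want $s=1$, which sits on the boundary. This is settled either by invoking the extension of (\ref{second analogue Watson}) to the strip $\mathrm{Re}(s)>\tfrac12$ remarked on in the text (obtained there by an integration by parts in the Bessel integral), or, staying inside $\mathrm{Re}(s)>1$, by letting $s\to 1^{+}$: on the left the Macdonald series converges uniformly for $s$ near $1$, its general term being $\ll e^{-\pi\lambda_{n}x}$ (use $K_{\nu}(t)=O(e^{-t}/\sqrt{t})$), and on the right the Gamma factors are continuous at $s=1$ while the integral has a limit by dominated convergence, using $|J_{\nu}(t)|\le C_{\nu}/\sqrt{t}$ and the exponential decay of $(\sigma(y)e^{2\pi y}-1)^{-1}$; these are precisely the estimates used throughout Section 3.

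With $s=1$ fixed, I would first simplify the left-hand side of (\ref{second analogue Watson}). Since $s-\tfrac12=\tfrac12$, the explicit value (\ref{particular case of bessel!}) gives $K_{1/2}(2\pi\lambda_{n}x)=\tfrac12(\lambda_{n}x)^{-1/2}e^{-2\pi\lambda_{n}x}$, so the weight $\lambda_{n}^{s-1/2}=\lambda_{n}^{1/2}$ cancels the $\lambda_{n}^{-1/2}$ and the left-hand side collapses to $\frac{1}{2\sqrt{x}}\sum_{n=1}^{\infty}\frac{p^{2}+\lambda_{n}^{2}}{p(p+\frac{1}{\pi})+\lambda_{n}^{2}}\,e^{-2\pi\lambda_{n}x}=\frac{1}{2\sqrt{x}}\,\sigma_{p}(2\pi x)$ by the definition (\ref{sigma p Koshliakov setting}).

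It remains to simplify the right-hand side at $s=1$. There $\Gamma(s-\tfrac12)=\Gamma(\tfrac12)=\sqrt{\pi}$ and $\Gamma(s)=1$, so the two explicit terms become $-\frac{1}{4\sqrt{x}}\,\frac{1}{1+\frac{1}{\pi p}}$ and $\frac{1}{4\pi}\,x^{-3/2}$; and the closed form $J_{1/2}(z)=\sqrt{2/(\pi z)}\,\sin z$ turns $y^{1/2}J_{1/2}(2\pi xy)$ into an elementary multiple of a sine, so that, after gathering the resulting powers of $\pi$ and $x$, the Bessel integral in (\ref{second analogue Watson}) becomes $\frac{1}{\sqrt{x}}$ times the sine integral appearing in (\ref{analogue abel plana koshliakov sense}). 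Equating the two sides and multiplying through by $2\sqrt{x}$ then produces (\ref{analogue abel plana koshliakov sense}). Indeed, there is no genuine obstacle in this argument: the only point requiring a word of care is the admissibility of $s=1$, handled at the outset, and the rest is the routine bookkeeping of the half-integer powers of $\pi$ and $x$ that the two Bessel-function reductions generate.
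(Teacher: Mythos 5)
Your proposal is correct and follows essentially the same route as the paper: the paper's proof likewise sets $s=1$ in the second analogue of Watson's formula (\ref{second analogue Watson}) and uses $K_{1/2}(t)=\sqrt{\pi/(2t)}\,e^{-t}$ together with $J_{1/2}(t)=\sqrt{2/(\pi t)}\,\sin t$ to reduce both sides to (\ref{analogue abel plana koshliakov sense}). Your preliminary discussion of why $s=1$ is admissible (the theorem being stated for $\mathrm{Re}(s)>1$) is a small point the paper glosses over, relying on its earlier remark that an integration by parts extends (\ref{second analogue Watson}) to $\mathrm{Re}(s)>\tfrac12$, so it is a welcome but not essentially different addition.
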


\begin{proof}
Take $s=1$ in our second analogue of Watson's formula (\ref{second analogue Watson}).
By appealing to the well-known formula (\ref{particular case of bessel!})
and to $J_{1/2}(x)=\sqrt{\frac{2}{\pi x}}\,\sin(x)$, we see that 
\begin{align*}
\frac{1}{2\sqrt{x}}\sigma_{p}(2\pi x)&=\frac{1}{2\sqrt{x}}\,\sum_{n=1}^{\infty}\frac{p^{2}+\lambda_{n}^{2}}{p\left(p+\frac{1}{\pi}\right)+\lambda_{n}^{2}}\,e^{-2\pi\lambda_{n}x}=-\frac{1}{4\sqrt{x}}\,\frac{1}{1+\frac{1}{\pi p}}+\frac{1}{4\pi x^{3/2}}+\pi\,\intop_{0}^{\infty}\,\frac{y^{1/2}J_{1/2}(2\pi xy)}{\sigma\left(y\right)e^{2\pi y}-1}\,dy\\
 &=-\frac{1}{4\sqrt{x}}\,\frac{1}{1+\frac{1}{\pi p}}+\frac{1}{4\pi x^{3/2}}+\frac{1}{\sqrt{x}}\intop_{0}^{\infty}\frac{\sin(2\pi xy)}{\sigma(y)\,e^{2\pi y}-1}\,dy,
\end{align*}
which is equivalent to (\ref{analogue abel plana koshliakov sense}).
\end{proof}

\subsection{Another proof and a generalization of Watson's formula}

In this subsection we give an alternative proof and a new expression respectively for (\ref{formula in a region compact to be extended}) and (\ref{second analogue Watson}). We start with a new proof of (\ref{formula in a region compact to be extended}). 

\begin{proof}[2\ts{nd} Proof of (\ref{formula in a region compact to be extended})]
We evaluate
\begin{equation}
\varphi_{p}(s,x):=\sum_{n=1}^{\infty}\frac{p^{2}+\lambda_{n}^{2}}{p\left(p+\frac{1}{\pi}\right)+\lambda_{n}^{2}}\,\frac{1}{\left(\lambda_{n}^{2}+x^{2}\right)^{s}},\,\,\,\,\text{Re}(s)>\frac{1}{2},\,\,\,\,x>0,\label{varphi p definition once again}
\end{equation}
by appealing to the representation (\ref{Integral representation Bessel laplace}). By absolute convergence, we have 
\begin{equation}
\sum_{n=1}^{\infty}\frac{p^{2}+\lambda_{n}^{2}}{p\left(p+\frac{1}{\pi}\right)+\lambda_{n}^{2}}\,\frac{1}{\left(\lambda_{n}^{2}+x^{2}\right)^{s}}=\frac{\sqrt{\pi}\,2^{\frac{1}{2}-s}}{\Gamma(s)x^{s-\frac{1}{2}}}\intop_{0}^{\infty}y^{s-\frac{1}{2}}J_{s-\frac{1}{2}}(xy)\,\sigma_{p}(y)\,dy,\,\,\,\,\text{Re}(s)>\frac{1}{2}.\label{intermediate expression}
\end{equation}
The justification of the previous formula is similar to (\ref{justifying the passage})
and we may use an integration by parts and Fubini's Theorem to complete it. 
Invoking the Basset type representation for $J_{\nu}(z)$ {[}\cite{NIST},
p. 224, eq. (10.9.12){]},
\begin{equation}
J_{\nu}(x)=\frac{2^{\nu+1}x^{-\nu}}{\sqrt{\pi}\Gamma\left(\frac{1}{2}-\nu\right)}\,\intop_{1}^{\infty}\frac{\sin(xt)}{(t^{2}-1)^{\nu+\frac{1}{2}}}\,dt,\,\,\,\,\,|\text{Re}(\nu)|<\frac{1}{2},\,x>0,\label{Mehler Basset type for Bessel 1st kind}
\end{equation}
we find that (\ref{intermediate expression}) implies
\begin{align}
\sum_{n=1}^{\infty}\frac{p^{2}+\lambda_{n}^{2}}{p\left(p+\frac{1}{\pi}\right)+\lambda_{n}^{2}}\,\frac{1}{\left(\lambda_{n}^{2}+x^{2}\right)^{s}} & =\frac{2\sin(\pi s)}{\pi x^{2s-1}}\,\intop_{0}^{\infty}\intop_{1}^{\infty}\sigma_{p}(y)\,\sin(xyt)\,\frac{dt\,dy}{(t^{2}-1)^{s}}\nonumber \\
 & =\frac{2\sin(\pi s)}{\pi x^{2s-1}}\,\intop_{1}^{\infty}\intop_{0}^{\infty}\sigma_{p}(y)\,\sin(xyt)\,dy\,\frac{dt}{(t^{2}-1)^{s}},\,\,\,\,\,\frac{1}{2}<\text{Re}(s)<1\label{integral representation on the way}
\end{align}
with the last step being justified by Fubini's theorem and the hypothesis
that $\frac{1}{2}<\text{Re}(s)<1$. 

Now, by {[}\cite{KOSHLIAKOV}, p. 45, eq. (40){]}\footnote{this formula is a corollary of Koshliakov's generalizations of the
Abel-Plana formula. See \cite{Ramanujan_meets} for even more generalizations.
This formula is in fact equivalent to (\ref{formula koshliakov expansion}),
also due to Koshliakov {[}\cite{KOSHLIAKOV}, p. 34, eq. (76){]}.}, we know that
\begin{equation}
\frac{1}{\sigma(x)\,e^{2\pi x}-1}=-\frac{1}{2}+\frac{1}{2\pi x}\cdot\frac{1}{1+\frac{1}{\pi p}}+\frac{1}{\pi}\,\intop_{0}^{\infty}\sin(xt)\,\sigma_{p}(t)\,dt.
\end{equation}

Using this formula, we are able to evaluate the first integral on
(\ref{integral representation on the way}) and we can derive that
\[
\sum_{n=1}^{\infty}\frac{p^{2}+\lambda_{n}^{2}}{p\left(p+\frac{1}{\pi}\right)+\lambda_{n}^{2}}\,\frac{1}{\left(\lambda_{n}^{2}+x^{2}\right)^{s}}=\frac{2\sin(\pi s)}{x^{2s-1}}\,\intop_{1}^{\infty}\left\{ \frac{1}{\sigma(xt)e^{2\pi xt}-1}+\frac{1}{2}-\frac{1}{2\pi xt}\cdot\frac{1}{1+\frac{1}{\pi p}}\right\} \,\frac{dt}{(t^{2}-1)^{s}}.
\]

By using some elementary relations for Euler's beta function,
\begin{equation}
\intop_{1}^{\infty}\frac{1}{(t^{2}-1)^{s}}\,\frac{dt}{t}=\frac{\pi}{2\sin(\pi s)},\,\,\,\,\,\,0<\text{Re}(s)<1,\label{First evaluation beta type}
\end{equation}
\begin{equation}
\intop_{1}^{\infty}\frac{dt}{(t^{2}-1)^{s}}=\frac{\Gamma\left(1-s\right)\Gamma\left(s-\frac{1}{2}\right)}{2\sqrt{\pi}},\,\,\,\frac{1}{2}<\text{Re}(s)<1,\label{second evaluation beta type}
\end{equation}
we find, for $\frac{1}{2}<\text{Re}(s)<1$ and $x>0$,
\begin{equation}
\sum_{n=1}^{\infty}\frac{p^{2}+\lambda_{n}^{2}}{p\left(p+\frac{1}{\pi}\right)+\lambda_{n}^{2}}\,\frac{1}{\left(\lambda_{n}^{2}+x^{2}\right)^{s}}=\frac{\sqrt{\pi}x^{1-2s}}{2\Gamma(s)}\Gamma\left(s-\frac{1}{2}\right)-\frac{1}{2}\,\frac{x^{-2s}}{1+\frac{1}{\pi p}}+2\sin(\pi s)x^{1-2s}\,\intop_{1}^{\infty}\frac{(t^{2}-1)^{-s}}{\sigma(xt)e^{2\pi xt}-1}dt,\label{final expression tird proof first analogue}
\end{equation}
from which (\ref{formula in a region compact to be extended}) can
now be easily derived, after taking a simple change of variable in
the integral on the right side of (\ref{final expression tird proof first analogue}).
\end{proof}
\bigskip{}
\bigskip{}
\bigskip{}

There is still a plethora of different scenarios where we may generalize
Watson's formula (\ref{Watson Formula intro}). Here we give yet another generalization that is connected to Koshliakov's first analogue of Poisson's summation formula [\cite{KOSHLIAKOV}, p. 58, eq. (V)].
\begin{theorem}
Let $\text{Re}(s)>\frac{1}{2}$ and $x>0$.
Then the following generalization of Watson's formula (\ref{Watson Formula intro})
holds
\begin{align}
\frac{2^{1-2s}x^{s-\frac{1}{2}}\pi^{-s}}{\Gamma(s)}\sum_{m=1}^{\infty}\intop_{0}^{\infty}\intop_{0}^{1}y^{2s-1}e^{-xy}\left(1-u^{2}\right)^{s-1}\cos\left(myu+2m\,\arctan\left(\frac{yu}{2\pi p}\right)\right)\,du\,dy\nonumber \\
=\sum_{n=1}^{\infty}\frac{p^{2}+\lambda_{n}^{2}}{p\left(p+\frac{1}{\pi}\right)+\lambda_{n}^{2}}\,\lambda_{n}^{s-\frac{1}{2}}K_{s-\frac{1}{2}}(2\pi\lambda_{n}x)+\frac{(\pi x)^{\frac{1}{2}-s}}{4}\cdot\frac{\Gamma\left(s-\frac{1}{2}\right)}{1+\frac{1}{\pi p}}-\frac{\pi^{-s}x^{-s-\frac{1}{2}}\Gamma(s)}{4}.\label{second of second analogue of Watson formula}
\end{align}
\end{theorem}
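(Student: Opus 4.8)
The plan is to collapse the double sum of integrals on the left of (\ref{second of second analogue of Watson formula}) into a single cosine integral against a Macdonald kernel and then to evaluate it by a Fourier localization argument which is, in essence, Koshliakov's first analogue of Poisson's summation formula [\cite{KOSHLIAKOV}, p.~58, eq.~(V)]. Fix $s$ with $\text{Re}(s)>\frac12$ and $x>0$, and write $I_{m}$ for the $m$-th double integral on the left-hand side. In $I_{m}$ I first substitute $\tau=yu$ in the integral over $u\in(0,1)$: using $(1-u^{2})^{s-1}=y^{2-2s}(y^{2}-\tau^{2})^{s-1}$ and noting that the cosine becomes $\cos\big(m\tau+2m\arctan(\tau/2\pi p)\big)$, the summand takes the form $I_{m}=\intop_{0}^{\infty}e^{-xy}\intop_{0}^{y}(y^{2}-\tau^{2})^{s-1}\cos\big(m\,\theta(\tau)\big)\,d\tau\,dy$, where $\theta(\tau):=\tau+2\arctan(\tau/2\pi p)$. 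For $\text{Re}(s)>0$ this integral converges absolutely, so by Fubini's theorem I may reverse the order of integration and then compute the inner integral over $y\in(\tau,\infty)$ by the substitution $y=\tau t$ and the Macdonald representation (\ref{First representation Re-1}) (with $\nu=s-\frac12$), obtaining
\[
\intop_{\tau}^{\infty}e^{-xy}(y^{2}-\tau^{2})^{s-1}\,dy=\frac{2^{s-\frac12}\,\Gamma(s)}{\sqrt{\pi}}\,x^{\frac12-s}\,\tau^{s-\frac12}\,K_{s-\frac12}(x\tau).
\]
Consequently, setting $g(\tau):=\tau^{s-\frac12}K_{s-\frac12}(x\tau)$, the $N$-th partial sum of the series on the left of (\ref{second of second analogue of Watson formula}), after dividing by the prefactor $2^{1-2s}x^{s-\frac12}\pi^{-s}/\Gamma(s)$, equals $\frac{2^{s-\frac12}\Gamma(s)x^{\frac12-s}}{\sqrt{\pi}}\intop_{0}^{\infty}\big(\sum_{m=1}^{N}\cos(m\theta(\tau))\big)g(\tau)\,d\tau$.

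The second step is to let $N\to\infty$ using the Dirichlet kernel identity $\sum_{m=1}^{N}\cos(m\theta)=\tfrac12\big(\tfrac{\sin((N+\frac12)\theta)}{\sin(\theta/2)}-1\big)$. The contribution of the constant $-\tfrac12$ is $-\tfrac12\intop_{0}^{\infty}g(\tau)\,d\tau=-2^{s-\frac52}\sqrt{\pi}\,\Gamma(s)\,x^{-s-\frac12}$, where I use the elementary Mellin evaluation $\intop_{0}^{\infty}v^{s-\frac12}K_{s-\frac12}(v)\,dv=2^{s-\frac32}\sqrt{\pi}\,\Gamma(s)$; restoring the prefactor this gives precisely $-\frac{\pi^{-s}x^{-s-\frac12}\Gamma(s)}{4}$, the last term on the right of (\ref{second of second analogue of Watson formula}). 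For the Dirichlet-kernel part I invoke the localization principle for Fourier series: since $g$ is continuous, of bounded variation on $[0,\infty)$ (here $\text{Re}(s)>\frac12$ enters, via the expansion $g(\tau)=\text{const}+O(\tau^{2s-1})$ near $\tau=0$), and exponentially small as $\tau\to\infty$, the integral $\intop_{0}^{\infty}\tfrac{\sin((N+\frac12)\theta(\tau))}{\sin(\theta(\tau)/2)}g(\tau)\,d\tau$ converges, as $N\to\infty$, to $2\pi$ times the sum of $g(\tau_{j})/\theta'(\tau_{j})$ over the solutions $\tau_{j}\ge0$ of $\theta(\tau)\in2\pi\mathbb{Z}$, the endpoint $\tau=0$ being counted with weight $\tfrac12$. (This is exactly Koshliakov's first analogue of Poisson's summation formula applied to $g$.)

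Finally I identify these solutions. Since $\theta$ is strictly increasing from $\theta(0)=0$ to $+\infty$, there is one solution at $\tau=0$ and, for each $n\ge1$, a unique solution of $\theta(\tau)=2\pi n$, namely $\tau=2\pi\lambda_{n}$: the transcendental equation (\ref{Transcendental equation}) for $\lambda_{n}$ is equivalent to $\pi\lambda_{n}+\arctan(\lambda_{n}/p)=n\pi$, hence $\theta(2\pi\lambda_{n})=2\pi\lambda_{n}+2\arctan(\lambda_{n}/p)=2\pi n$. Differentiating, $\theta'(\tau)=1+\frac{1/(\pi p)}{1+(\tau/2\pi p)^{2}}$, so $\theta'(0)=1+\frac1{\pi p}$ and — the key point — $\dfrac{1}{\theta'(2\pi\lambda_{n})}=\dfrac{p^{2}+\lambda_{n}^{2}}{p(p+\frac1{\pi})+\lambda_{n}^{2}}$, precisely the Koshliakov weights in (\ref{second of second analogue of Watson formula}). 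Using $g(0)=\lim_{\tau\to0^{+}}\tau^{s-\frac12}K_{s-\frac12}(x\tau)=2^{s-\frac32}\Gamma\big(s-\tfrac12\big)x^{\frac12-s}$, the endpoint term yields (after the prefactor) $\frac{(\pi x)^{\frac12-s}}{4}\cdot\frac{\Gamma(s-\frac12)}{1+1/\pi p}$, while the terms at $\tau=2\pi\lambda_{n}$ yield $\sum_{n\ge1}\frac{p^{2}+\lambda_{n}^{2}}{p(p+\frac1{\pi})+\lambda_{n}^{2}}\,\lambda_{n}^{s-\frac12}K_{s-\frac12}(2\pi\lambda_{n}x)$; adding the constant term computed above reproduces the right-hand side of (\ref{second of second analogue of Watson formula}) verbatim (and is consistent with the second analogue of Watson's formula (\ref{second analogue Watson}) once the two elementary terms there cancel). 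The main obstacle is making the limit $N\to\infty$ rigorous: one must bound the tail $\tau\to\infty$, where the points $2\pi\lambda_{n}$ accumulate with spacing $\approx2\pi$, uniformly in $N$ via the exponential decay of $g$; handle the non-smoothness of $g$ at $\tau=0$ through Jordan's test rather than repeated integration by parts; and account correctly for the factor $\tfrac12$ at the endpoint $\tau=0$ — all of which is what gets packaged by citing [\cite{KOSHLIAKOV}, p.~58, eq.~(V)].
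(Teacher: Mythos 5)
Your computations all check out — the change of variables $\tau=yu$, the evaluation of $\int_{\tau}^{\infty}e^{-xy}(y^{2}-\tau^{2})^{s-1}dy$ via (\ref{First representation Re-1}), the Mellin integral $\int_{0}^{\infty}v^{s-\frac12}K_{s-\frac12}(v)\,dv=2^{s-\frac32}\sqrt{\pi}\,\Gamma(s)$, the identity $\pi\lambda_{n}+\arctan(\lambda_{n}/p)=n\pi$ giving $\theta(2\pi\lambda_{n})=2\pi n$, and the crucial observation $1/\theta'(2\pi\lambda_{n})=\frac{p^{2}+\lambda_{n}^{2}}{p(p+\frac1\pi)+\lambda_{n}^{2}}$, $\theta'(0)=1+\frac{1}{\pi p}$ — and the three terms on the right of (\ref{second of second analogue of Watson formula}) are reproduced exactly. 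But your route is genuinely different from the paper's. The paper never touches the left-hand side directly as you do: it re-enters the Mellin--Barnes framework of Theorem 3.2, writing the $K$-Bessel series as a contour integral, shifting the contour past the poles at $z=0$ and $z=\frac12$ (which is where the two elementary terms come from), converting $\zeta_{p}$ into $\eta_{p}$ via the functional equation (\ref{functional equation Kosh}), and then evaluating the integrals $I_{m,p}^{\star}(s,x)$ with the Poisson representation (\ref{Poisson representation for first kind bessel}) and the Laplace representation (\ref{Integral representation Bessel laplace}) with the roles of $x$ and $t$ reversed; the cosine only appears at the last step through $\text{Re}\left[\sigma^{m}(-ix)e^{-2\pi imx}\right]=\cos\left(2\pi mx+2m\arctan(x/p)\right)$. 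You instead read the left-hand side as the Dirichlet-kernel partial sum of Koshliakov's Poisson-type summation formula applied to $g(\tau)=\tau^{s-\frac12}K_{s-\frac12}(x\tau)$, which makes the structural content of the theorem transparent (the Koshliakov weights are literally $1/\theta'$ at the resonance points) and is self-contained relative to the contour machinery of Section 3. What your route costs is that the entire analytic burden is concentrated in the single localization step $N\to\infty$ over the unbounded interval with infinitely many zeros of $\sin(\theta/2)$; you correctly flag this and delegate it to [\cite{KOSHLIAKOV}, p.~58, eq.~(V)], but for the argument to be complete you must verify that $g$ satisfies the hypotheses of that summation formula (your remarks on the expansion $g(\tau)=\mathrm{const}+O(\tau^{2s-1})$ near $0$, bounded variation, and exponential decay indicate how, and they do use $\text{Re}(s)>\frac12$ exactly where the theorem requires it), whereas the paper inherits its justifications from the already-proved Theorem 3.2.
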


\begin{proof}
In the proof Theorem 3.2., we have used the representation (\ref{Integral representation Bessel laplace})
in order to evaluate the integrals $I_{m,p}^{\star(1)}(s,x)$ and
$I_{m,p}^{\star(2)}(s,x)$. Since we can reverse the roles of the
variables $x$ and $t$ in the representation (\ref{Integral representation Bessel laplace}),
we may write an alternative version
\[
\frac{1}{\left(x^{2}+t^{2}\right)^{s}}=\frac{\sqrt{\pi}\,2^{\frac{1}{2}-s}}{\Gamma(s)t^{s-\frac{1}{2}}}\intop_{0}^{\infty}y^{s-\frac{1}{2}}J_{s-\frac{1}{2}}(ty)\,e^{-xy}\,dy,
\]
and so, returning to the expression (\ref{evaluation Imp2}), we obtain
for $I_{m,p}^{\star(2)}(s,x)$, 
\begin{align*}
I_{m,p}^{\star(2)}(s,x) & =e^{2\pi pm}\sum_{\ell=1}^{m}\left(\begin{array}{c}
m\\
\ell
\end{array}\right)\left(4\pi p\right)^{\ell}(-1)^{m-\ell}\,\intop_{m}^{\infty}e^{-2\pi p\,t}\,\frac{(t-m)^{\ell-1}}{(\ell-1)!}\,\frac{\Gamma(s)x^{2s}}{\left(x^{2}+t^{2}\right)^{s}}\,dt\\
=e^{2\pi pm}\sum_{\ell=1}^{m} & \left(\begin{array}{c}
m\\
\ell
\end{array}\right)\left(4\pi p\right)^{\ell}(-1)^{m-\ell}\,\intop_{m}^{\infty}e^{-2\pi p\,t}\,\frac{(t-m)^{\ell-1}}{(\ell-1)!}\,\frac{\sqrt{\pi}2^{\frac{1}{2}-s}x^{2s}}{t^{s-\frac{1}{2}}}\,\intop_{0}^{\infty}y^{s-\frac{1}{2}}J_{s-\frac{1}{2}}(ty)\,e^{-xy}\,dy\,dt\\
=e^{2\pi pm}\sqrt{\pi}2^{\frac{1}{2}-s} & x^{2s}\,\sum_{\ell=1}^{m}\,\left(\begin{array}{c}
m\\
\ell
\end{array}\right)\left(4\pi p\right)^{\ell}(-1)^{m-\ell}\intop_{0}^{\infty}y^{s-\frac{1}{2}}\,e^{-xy}\,\intop_{m}^{\infty}e^{-2\pi p\,t}\,\frac{(t-m)^{\ell-1}}{(\ell-1)!}\,\frac{J_{s-\frac{1}{2}}(ty)}{t^{s-\frac{1}{2}}}\,dt\,dy.
\end{align*}

Using the Poisson representation for the Bessel function of the first
kind {[}\cite{NIST}, p. 224, eq. (10.9.4){]}
\begin{equation}
z^{-\nu}J_{\nu}(z)=\frac{2^{1-\nu}}{\sqrt{\pi}\Gamma(\nu+\frac{1}{2})}\,\intop_{0}^{1}\left(1-u^{2}\right)^{\nu-\frac{1}{2}}\,\cos(zu)\,\,du,\,\,\,\,\,\,\text{Re}(\nu)>-\frac{1}{2},\,\,\,z\in\mathbb{C},\label{Poisson representation for first kind bessel}
\end{equation}
we have from absolute convergence that
\begin{align*}
\intop_{m}^{\infty}e^{-2\pi p\,t}\,\frac{(t-m)^{\ell-1}}{(\ell-1)!}\,\frac{J_{s-\frac{1}{2}}(ty)}{t^{s-\frac{1}{2}}}\,dt & =\frac{y^{s-\frac{1}{2}}2^{\frac{3}{2}-s}}{\sqrt{\pi}\Gamma(s)}\,\intop_{0}^{1}\left(1-u^{2}\right)^{s-1}\intop_{m}^{\infty}e^{-2\pi p\,t}\,\frac{(t-m)^{\ell-1}}{(\ell-1)!}\,\cos(yu\,t)\,dt\,du\\
=\frac{2^{\frac{3}{2}-s}y^{s-\frac{1}{2}}}{\sqrt{\pi}\Gamma\left(s\right)}\intop_{0}^{1}\left(1-u^{2}\right)^{s-1} & \text{Re}\left[\left(2\pi p+iyu\right)^{-\ell}e^{-2\pi mp-imyu}\right]\,du.
\end{align*}

Therefore, the summation over $\ell$ yields the result
\begin{align*}
I_{m,p}^{\star(2)}(s,x) & =\sqrt{\pi}2^{\frac{1}{2}-s}\,x^{2s}\,\intop_{0}^{\infty}y^{s-\frac{1}{2}}\,e^{-xy}\frac{2^{\frac{3}{2}-s}y^{s-\frac{1}{2}}}{\sqrt{\pi}\Gamma\left(s\right)}\,\intop_{0}^{1}\left(1-u^{2}\right)^{s-1}\text{Re}\left[\sum_{\ell=1}^{m}\left(\begin{array}{c}
m\\
\ell
\end{array}\right)\left(\frac{4\pi p}{2\pi p+iyu}\right)^{\ell}(-1)^{m-\ell}e^{-imyu}\right]\,du\,dy\\
 & =\sqrt{\pi}2^{\frac{1}{2}-s}\,x^{2s}\,\intop_{0}^{\infty}y^{s-\frac{1}{2}}\,e^{-xy}\,\frac{2^{\frac{3}{2}-s}y^{s-\frac{1}{2}}}{\sqrt{\pi}\Gamma\left(s\right)}\,\intop_{0}^{1}\left(1-u^{2}\right)^{s-1}\text{Re}\left[\left(\frac{p-iy\frac{u}{2\pi}}{p+iy\frac{u}{2\pi}}\right)^{m}e^{-imyu}-(-1)^{m}e^{-imyu}\right]\,du\,dy\\
 & =\text{\ensuremath{\frac{2^{2-2s}}{\Gamma(s)}}}\,x^{2s}\,\intop_{0}^{\infty}y^{2s-1}\,e^{-xy}\,\intop_{0}^{1}\left(1-u^{2}\right)^{s-1}\,\text{Re}\left[\sigma^{m}\left(-\frac{iyu}{2\pi}\right)e^{-imyu}-(-1)^{m}e^{-imyu}\right]\,du\,dy,
\end{align*}
which proves that
\[
\text{\ensuremath{I_{m,p}^{\star}(s,x)=\frac{2^{2-2s}}{\Gamma(s)}}}\,x^{2s}\,\intop_{0}^{\infty}\intop_{0}^{1}\,y^{2s-1}\,e^{-xy}\left(1-u^{2}\right)^{s-1}\,\text{Re}\left[\sigma^{m}\left(-\frac{iyu}{2\pi}\right)e^{-imyu}\right]\,du\,dy.
\]

Finally, returning to (\ref{after using the functional equation})
and to (\ref{using thing at the end}), we obtain 
\begin{align*}
\frac{x^{-1}}{\sqrt{\pi}}\,\sum_{m=1}^{\infty}I_{m,p}^{\star}(s,x) & =\frac{2^{2-2s}x^{2s-1}}{\sqrt{\pi}\Gamma(s)}\sum_{m=1}^{\infty}\intop_{0}^{\infty}\intop_{0}^{1}y^{2s-1}e^{-xy}\left(1-u^{2}\right)^{s-1}\text{Re}\left[\sigma^{m}\left(-\frac{iyu}{2\pi}\right)e^{-imyu}\right]\,du\,dy\\
 & =\frac{2}{(\pi x)^{\frac{1}{2}-s}}\sum_{n=1}^{\infty}\frac{p^{2}+\lambda_{n}^{2}}{p\left(p+\frac{1}{\pi}\right)+\lambda_{n}^{2}}\,\lambda_{n}^{s-\frac{1}{2}}K_{s-\frac{1}{2}}(2\pi\lambda_{n}x)+\frac{1}{2}\cdot\frac{\Gamma\left(s-\frac{1}{2}\right)}{1+\frac{1}{\pi p}}-\frac{\Gamma\left(s\right)}{2x\sqrt{\pi}},
\end{align*}
which implies that
\begin{align*}
\frac{2^{1-2s}x^{s-\frac{1}{2}}\pi^{-s}}{\Gamma(s)}\sum_{m=1}^{\infty}\intop_{0}^{\infty}\intop_{0}^{1}y^{2s-1}e^{-xy}\left(1-u^{2}\right)^{s-1}\text{Re}\left[\sigma^{m}\left(-\frac{iyu}{2\pi}\right)e^{-imyu}\right]\,du\,dy\\
=\sum_{n=1}^{\infty}\frac{p^{2}+\lambda_{n}^{2}}{p\left(p+\frac{1}{\pi}\right)+\lambda_{n}^{2}}\,\lambda_{n}^{s-\frac{1}{2}}K_{s-\frac{1}{2}}(2\pi\lambda_{n}x)+\frac{(\pi x)^{\frac{1}{2}-s}}{4}\cdot\frac{\Gamma\left(s-\frac{1}{2}\right)}{1+\frac{1}{\pi p}}-\frac{\Gamma\left(s\right)(\pi x)^{\frac{1}{2}-s}}{4x\sqrt{\pi}}.
\end{align*}

Watson's formula (\ref{second of second analogue of Watson formula}) is now achieved after using the elementary identity
\begin{equation}
\text{Re}\left[\sigma^{m}\left(-ix\right)e^{-2\pi imx}\right]=\cos\left(2\pi mx+2m\arctan\left(\frac{x}{p}\right)\right).\label{Euler representation almost at the end of the proof of Lemma}
\end{equation} 
\end{proof}

\begin{corollary}
Watson's formulas (\ref{Watson Formula intro}) and (\ref{yet another analogue Watson from second})
hold. 
\end{corollary}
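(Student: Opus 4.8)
The plan is to obtain (\ref{Watson Formula intro}) and (\ref{yet another analogue Watson from second}) as the limiting cases $p\to\infty$ and $p\to0^{+}$, respectively, of the preceding theorem, i.e.\ of identity (\ref{second of second analogue of Watson formula}). I will run the argument first under the stronger hypothesis $\text{Re}(s)>1$ and then invoke analytic continuation, exactly as in the proof of Corollary 3.6: both sides of (\ref{Watson Formula intro}) and of (\ref{yet another analogue Watson from second}) are analytic for $\text{Re}(s)>\frac{1}{2}$, the Bessel series on the right converging absolutely and locally uniformly there because $K_{\nu}(z)=O(e^{-z}/\sqrt{z})$.

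For the right-hand side of (\ref{second of second analogue of Watson formula}) I would pass to the limit termwise in $\sum_{n}$: as $p\to\infty$ one has $\lambda_{n}\to n$, $\frac{p^{2}+\lambda_{n}^{2}}{p(p+\frac{1}{\pi})+\lambda_{n}^{2}}\to1$ and $\frac{1}{1+\frac{1}{\pi p}}\to1$, whereas as $p\to0^{+}$ one has $\lambda_{n}\to n-\frac{1}{2}$, the same weight $\to1$, but $\frac{1}{1+\frac{1}{\pi p}}\to0$. In either case the interchange of the limit with the sum is justified by a $p$-uniform dominating series arising from $|\lambda_{n}^{s-\frac{1}{2}}K_{s-\frac{1}{2}}(2\pi\lambda_{n}x)|=O(e^{-\pi n x})$. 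Hence the right side of (\ref{second of second analogue of Watson formula}) tends to $\sum_{n}n^{s-\frac{1}{2}}K_{s-\frac{1}{2}}(2\pi nx)+\frac{(\pi x)^{\frac{1}{2}-s}}{4}\Gamma(s-\frac{1}{2})-\frac{\pi^{-s}x^{-s-\frac{1}{2}}\Gamma(s)}{4}$ as $p\to\infty$, and to $2^{\frac{1}{2}-s}\sum_{n}(2n-1)^{s-\frac{1}{2}}K_{s-\frac{1}{2}}(\pi(2n-1)x)-\frac{\pi^{-s}x^{-s-\frac{1}{2}}\Gamma(s)}{4}$ as $p\to0^{+}$.

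For the left-hand side the key point is that the $m$-th summand $T_{m}(p)$ of the double integral in (\ref{second of second analogue of Watson formula}) equals, up to the $m$- and $p$-independent factor $\frac{\pi^{-s}x^{-s-\frac{1}{2}}}{2}$, the quantity $I^{\star}_{m,p}(s,x)$ appearing in the proofs of Theorems 3.2 and 3.3 (one uses (\ref{Euler representation almost at the end of the proof of Lemma}) to identify the cosine in (\ref{second of second analogue of Watson formula}) with the double integral defining $I^{\star}_{m,p}(s,x)$ in the proof of Theorem 3.3), and $I^{\star}_{m,p}(s,x)$ carries the Bessel representation (\ref{one of the final rperesentarions almost similar lips}), $I^{\star}_{m,p}(s,x)=\sqrt{\pi}\,2^{\frac{1}{2}-s}x^{s+\frac{1}{2}}\int_{0}^{\infty}y^{s-\frac{1}{2}}J_{s-\frac{1}{2}}(xy)\left(\frac{2\pi p-y}{2\pi p+y}\right)^{m}e^{-my}\,dy$. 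Since $\left|\frac{2\pi p-y}{2\pi p+y}\right|\le1$ for every $y>0$ and $|J_{\nu}(z)|\le C_{\nu}/\sqrt{z}$, one obtains $|T_{m}(p)|\le C_{s,x}\,m^{-\text{Re}(s)}$ uniformly in $p>0$, a bound that is summable for $\text{Re}(s)>1$; thus $\lim_{p}$ may be moved inside $\sum_{m}$. For each fixed $m$ the factor $e^{-my}$ supplies a dominating function, so $\lim_{p\to\infty}I^{\star}_{m,p}(s,x)=\sqrt{\pi}\,2^{\frac{1}{2}-s}x^{s+\frac{1}{2}}\int_{0}^{\infty}y^{s-\frac{1}{2}}J_{s-\frac{1}{2}}(xy)e^{-my}\,dy=\frac{\Gamma(s)x^{2s}}{(x^{2}+m^{2})^{s}}$ by (\ref{Integral representation Bessel laplace}) with $t=m$, whereas $\lim_{p\to0^{+}}I^{\star}_{m,p}(s,x)=(-1)^{m}\frac{\Gamma(s)x^{2s}}{(x^{2}+m^{2})^{s}}$ since $\frac{2\pi p-y}{2\pi p+y}\to-1$. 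Therefore the left side of (\ref{second of second analogue of Watson formula}) tends to $\frac{\Gamma(s)\pi^{-s}x^{s-\frac{1}{2}}}{2}\sum_{m=1}^{\infty}\frac{1}{(m^{2}+x^{2})^{s}}$ as $p\to\infty$ and to $\frac{\Gamma(s)\pi^{-s}x^{s-\frac{1}{2}}}{2}\sum_{m=1}^{\infty}\frac{(-1)^{m}}{(m^{2}+x^{2})^{s}}$ as $p\to0^{+}$.

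Equating the two one-sided limits and multiplying through by $\frac{2\pi^{s}x^{\frac{1}{2}-s}}{\Gamma(s)}$ then reduces the case $p\to\infty$ to exactly Watson's formula (\ref{Watson Formula intro}) and the case $p\to0^{+}$ to its companion (\ref{yet another analogue Watson from second}), valid first for $\text{Re}(s)>1$ and then for every $\text{Re}(s)>\frac{1}{2}$ by the principle of analytic continuation. The step I expect to be the main obstacle is the interchange of $\lim_{p}$ with $\sum_{m}$: in its original form the $m$-th double integral $\int_{0}^{\infty}\!\!\int_{0}^{1}y^{2s-1}e^{-xy}(1-u^{2})^{s-1}\cos\!\left(myu+2m\arctan\frac{yu}{2\pi p}\right)du\,dy$ exhibits no visible decay in $m$, so the argument genuinely relies on the alternative Bessel representation (\ref{one of the final rperesentarions almost similar lips}) of $I^{\star}_{m,p}$ together with the uniform estimate $\left|\frac{2\pi p-y}{2\pi p+y}\right|\le1$; a minor additional point is that $\arctan\frac{yu}{2\pi p}\to\frac{\pi}{2}$ only for a.e.\ $(u,y)$ as $p\to0^{+}$, which nevertheless suffices for the dominated convergence in each fixed $m$.
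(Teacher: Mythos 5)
Your proposal is correct and follows the paper's own route: both obtain (\ref{Watson Formula intro}) and (\ref{yet another analogue Watson from second}) by letting $p\to\infty$ and $p\to0^{+}$ in (\ref{second of second analogue of Watson formula}) and recognizing the limiting left-hand side as the Watson series via the Bessel integral representations from the proofs of Theorems 3.2 and 3.3. The only real difference is one of rigor: the paper simply rewrites the limiting double integral using the Poisson representation (\ref{Poisson representation for first kind bessel}) together with (\ref{Integral representation Bessel laplace}), whereas you justify the interchange of $\lim_{p}$ with $\sum_{m}$ by passing to the representation (\ref{one of the final rperesentarions almost similar lips}) and the uniform bound $\left|\frac{2\pi p-y}{2\pi p+y}\right|\le 1$, working first for $\mathrm{Re}(s)>1$ and then extending by analytic continuation --- a step the paper leaves implicit.
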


\begin{proof}
We only give details when $p\rightarrow\infty$. In fact, for $\text{Re}(s)>\frac{1}{2}$
and $x>0$, the right-hand side of (\ref{second of second analogue of Watson formula})
reduces to
\[
\sum_{n=1}^{\infty}n^{s-\frac{1}{2}}K_{s-\frac{1}{2}}(2\pi n\,x)+\frac{(\pi x)^{\frac{1}{2}-s}}{4}\Gamma\left(s-\frac{1}{2}\right)-\frac{\pi^{-s}x^{-s-\frac{1}{2}}\Gamma(s)}{4}.
\]

At the same time, if we use the Poisson representation (\ref{Poisson representation for first kind bessel})
and (\ref{Integral representation Bessel laplace}), we see that the left-hand side can be further simplified to
\begin{align*}
\frac{2^{1-2s}x^{s-\frac{1}{2}}\pi^{-s}}{\Gamma(s)}\sum_{m=1}^{\infty}\intop_{0}^{\infty}\intop_{0}^{1}y^{2s-1}e^{-xy}\left(1-u^{2}\right)^{s-1}\cos\left(myu\right)\,du\,dy&=2^{-s-\frac{1}{2}}x^{s-\frac{1}{2}}\pi^{\frac{1}{2}-s}\sum_{m=1}^{\infty}\frac{1}{m^{s-\frac{1}{2}}}\intop_{0}^{\infty}y^{s-\frac{1}{2}}e^{-xy}\,J_{s-\frac{1}{2}}(my)\,dy\\
&=\frac{\Gamma(s)x^{s-\frac{1}{2}}\pi^{-s}}{2}\sum_{m=1}^{\infty}\frac{1}{\left(m^{2}+x^{2}\right)^{s}},\,\,\,\,\,\text{Re}(s)>\frac{1}{2}.
\end{align*}
\end{proof}

\section{Generalizations of the Epstein zeta function}
\subsection{First Analogue of Epstein's zeta function}
Here we will consider a generalization of the Epstein zeta function
(\ref{Epstein def}) when the summation indices are replaced by two
Koshliakov sequences $(\lambda_{m},\lambda_{n})_{m,n\in\mathbb{Z}}$.
Note that if $y$ is a root of the equation (\ref{Transcendental equation})
then $-y$ also is. Therefore, we extend Koshliakov's sequence $\lambda_{n}$
to the nonpositive integers by simply setting $\lambda_{-n}:=-\lambda_{n}$
and $\lambda_{0}:=0$.

\bigskip{}

Although we may proceed with any positive quadratic form, say $Q(x,y)=ax^{2}+bxy+cy^{2}$, 
it is enough for the purposes of our paper to consider the following analogue of
(\ref{Epstein def}),
\begin{equation}
\zeta_{p,p^{\prime}}(s,c):=\sum_{m,n\neq0}\frac{\left(p^{2}+\lambda_{n}^{2}\right)\cdot\left(p^{\prime2}+\lambda_{n}^{\prime2}\right)}{\left(p(p+\frac{1}{\pi})+\lambda_{n}^{2}\right)\cdot\left(p^{\prime}(p^{\prime}+\frac{1}{\pi})+\lambda_{n}^{\prime2}\right)}\,\frac{1}{\left(\lambda_{m}^{2}+c\lambda_{n}^{\prime2}\right)^{s}},\,\,\,\,\text{Re}(s)>1,\,\,c>0,\label{definition Epstein in statement}
\end{equation}
where $m,\,n\neq0$ here means that only the term $m=n=0$ is omitted
from the sum above.

\bigskip{}

It is simple to see that the Dirichlet series defining $\zeta_{p,p^{\prime}}(s,c)$
converges absolutely in the half-plane $\text{Re}(s)>1$. In the next
result we give a formula which provides the continuation of (\ref{definition Epstein in statement})
to $\text{Re}(s)<$1.

\begin{theorem}
For any $c>0$ and $p,p^{\prime}\in\mathbb{R}_{+}$, consider the
analogue of Epstein's zeta function given by (\ref{definition Epstein in statement}).

Then $\zeta_{p,p^{\prime}}(s,c)$ can be continued to the half-plane $\text{Re}(s)<1$
by the following integral formula,
\begin{align}
\left(\frac{\pi}{\sqrt{c}}\right)^{-s}\Gamma(s)\,\zeta_{p,p^{\prime}}(s,c) & =\frac{2c^{s/2}\,\pi^{-s}\Gamma(s)}{1+\frac{1}{\pi p^{\prime}}}\,\zeta_{p}(2s)+2\,c^{\frac{1-s}{2}}\,\pi^{-\left(s-\frac{1}{2}\right)}\Gamma\left(s-\frac{1}{2}\right)\zeta_{p^{\prime}}(2s-1)+\nonumber \\
+\frac{2^{4-2s}\,\pi^{1-s}\,c^{\frac{1-s}{2}}}{\Gamma(1-s)} & \sum_{n=1}^{\infty}\frac{p^{\prime2}+\lambda_{n}^{\prime2}}{p^{\prime}\left(p^{\prime}+\frac{1}{\pi}\right)+\lambda_{n}^{\prime2}}\,\lambda_{n}^{\prime1-2s}\,\intop_{0}^{\infty}\,\frac{y^{-s}(y+1)^{-s}}{\sigma\left(\lambda_{n}^{\prime}\,(2y+1)\,\sqrt{c}\right)e^{2\pi\lambda_{n}^{\prime}(2y+1)\sqrt{c}}-1}dy.\label{representation Re(s)<1}
\end{align}

Furthermore, (\ref{representation Re(s)<1}) provides the continuation of $\zeta_{p,p^{\prime}}(s,c)$ to the entire complex
plane as an analytic function everywhere except at a simple pole located at $s=1$, whose residue is
\begin{equation}
\text{Res}_{s=1}\zeta_{p,p^\prime}(s,c)=\frac{\pi}{\sqrt{c}}. \label{residue First analogue Epstein}
\end{equation}
\end{theorem}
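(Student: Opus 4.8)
The plan is to mimic the proofs of the two analogues of Watson's formula (Theorems 3.1 and 3.2) but now in two variables, splitting the double sum over the $\lambda_m$ and the $\lambda_n'$ and treating each sum with the one-variable machinery already developed. First I would use the Mellin–Barnes representation of $(\lambda_m^2+c\lambda_n'^2)^{-s}$, i.e.\ the Beta-integral identity (\ref{Beta integral beginning}), to write, for $\mathrm{Re}(s)>1$,
\[
\zeta_{p,p'}(s,c)=\frac{(\pi/\sqrt c)^{s}}{\Gamma(s)}\cdot\frac{1}{2\pi i}\intop_{\mu-i\infty}^{\mu+i\infty}\Gamma(z)\,\Gamma(s-z)\,\zeta_p(2z)\,\zeta_{p'}(2s-2z)\,\Big(\tfrac{\pi^2}{c}\Big)^{-z}\,dz,
\]
after summing the geometric-type Koshliakov coefficients over $m$ and $n$ separately (the coefficient attached to $\lambda_n'$ enters through $\zeta_{p'}$, the one attached to $\lambda_m$ through $\zeta_p$), where $\mu$ is chosen with $\tfrac12<\mu<\mathrm{Re}(s)-\tfrac12$ so that both Dirichlet series converge on the vertical line and on its mirror image. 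The interchange of the double series with the contour integral is justified exactly as in (\ref{integral representation first series}), via Stirling's estimate for $\Gamma(z)\Gamma(s-z)$ together with absolute convergence of both $\zeta_p$ and $\zeta_{p'}$.

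Next I would shift the contour from $\mathrm{Re}(z)=\mu$ to $\mathrm{Re}(z)=s-1-\mu$ (which lies to the left of all relevant poles), picking up residues. Since $\zeta_p(2z)$ kills the trivial zeros, the only poles crossed are: $z=\tfrac12$ (pole of $\zeta_p(2z)$), which produces the term with $\zeta_{p'}(2s-1)$ and $\Gamma(s-\tfrac12)$; $z=0$ (pole of $\Gamma(z)$), which gives $\zeta_p(0)\,\zeta_{p'}(2s)$ with $\zeta_p(0)=-\tfrac12(1+\tfrac1{\pi p})^{-1}$, matching the first term of (\ref{representation Re(s)<1}); and $z=s-\tfrac12$ (pole of $\zeta_{p'}(2s-2z)$), $z=s$ (pole of $\Gamma(s-z)$) — but these last two, after the reflection $z\mapsto s-z$, merge into the symmetric companions and are absorbed by the Bessel/integral side. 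The horizontal segments vanish as $T\to\infty$ by the convex bounds (\ref{Koshliakov Phragmen Analogue}) applied to \emph{both} zeta factors, just as in (\ref{behavior integrals}). On the shifted line I apply the functional equation (\ref{functional equation Kosh}) to $\zeta_{p'}$, expand $\eta_{p'}$ as its Dirichlet-type series, and for each $\lambda_n'$ recognize the remaining $z$-integral as precisely the object evaluated in Theorem 3.1: this is where the integral $\intop_0^\infty y^{-s}(y+1)^{-s}\big/\big(\sigma(\lambda_n'(2y+1)\sqrt c)\,e^{2\pi\lambda_n'(2y+1)\sqrt c}-1\big)\,dy$ and the prefactor $2^{4-2s}\pi^{1-s}c^{(1-s)/2}/\Gamma(1-s)$ emerge, after using $\sin(\pi s)=\pi/(\Gamma(s)\Gamma(1-s))$ and Legendre's duplication formula to reconcile the constants. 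Assembling the residues with this reduced integral gives (\ref{representation Re(s)<1}) for $\tfrac12<\mathrm{Re}(s)<1$, and the right-hand side, being a sum of a locally uniformly convergent series of entire-in-$s$ integrals (bounded as in (\ref{bound proof entire part Watson})) plus explicit meromorphic terms, furnishes the continuation to all of $\mathbb{C}$.

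Finally, for the residue at $s=1$: on the right-hand side of (\ref{representation Re(s)<1}), the integral-series term is analytic at $s=1$ because $1/\Gamma(1-s)$ has a zero there killing any polar behavior of $\zeta_{p'}(2s-1)$ hidden inside; the term $2c^{s/2}\pi^{-s}\Gamma(s)(1+\tfrac1{\pi p'})^{-1}\zeta_p(2s)$ is also analytic at $s=1$. The pole comes solely from $2c^{(1-s)/2}\pi^{-(s-1/2)}\Gamma(s-\tfrac12)\,\zeta_{p'}(2s-1)$, and using the Laurent expansion $\zeta_{p'}(w)=\tfrac1{w-1}+C_{p'}^{(1)}+\cdots$ at $w=1$ (the analogue of (\ref{Koshliakov meromorphic expansion})), so $\zeta_{p'}(2s-1)\sim \tfrac1{2(s-1)}$, together with $\Gamma(\tfrac12)=\sqrt\pi$ and $c^{0}=\pi^{-1/2}\cdot\pi^{1/2}$, one reads off
\[
\mathrm{Res}_{s=1}\Big[(\pi/\sqrt c)^{-s}\Gamma(s)\,\zeta_{p,p'}(s,c)\Big]=2\cdot\pi^{-1/2}\cdot\sqrt\pi\cdot\tfrac12=1,
\]
and dividing by $(\pi/\sqrt c)^{-1}\Gamma(1)=\sqrt c/\pi$ yields $\mathrm{Res}_{s=1}\zeta_{p,p'}(s,c)=\pi/\sqrt c$, which is (\ref{residue First analogue Epstein}). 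The main obstacle I anticipate is bookkeeping the residues at the four poles correctly after the reflection $z\mapsto s-z$ and checking that the two "symmetric" poles genuinely recombine into the Bessel/integral side rather than contributing extra explicit terms — that is, verifying that the single integral over $y$ in (\ref{representation Re(s)<1}) already encodes what in the one-variable Watson proof appeared as two separate Bessel sums; once that identification is made via (\ref{as laplace traaansform}) and (\ref{computations leading to final expreeeessssssiiiiooooon}), the constant-chasing with $\Gamma$-duplication and $\sin(\pi s)$ is routine.
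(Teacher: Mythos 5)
Your overall strategy (a two-variable Mellin--Barnes representation, a single contour shift, the functional equation on the shifted line, and recognition of the Watson-type integral) is genuinely different from the paper's proof, which is far more economical: the paper simply splits off the $m=0$ and $n=0$ terms as in (\ref{definition Epstein at beginning}) and applies the already-established Theorem 3.1 (i.e.\ (\ref{formula valid for all s}) and (\ref{formula in a region compact to be extended}) with $x=\sqrt{c}\,\lambda_n^{\prime}$) to the inner sum over $m$, after which the $-\tfrac12 x^{-2s}/(1+\tfrac{1}{\pi p})$ term of Watson's formula cancels the $m=0$ diagonal and everything else is bookkeeping. Your route could in principle be made to work, but as written it contains several concrete errors that prevent it from closing.

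First, your opening identity is false as stated: since $\lambda_0=\lambda_0^{\prime}=0$, the terms with $m=0$ or $n=0$ cannot be fed through the Beta integral (\ref{Beta integral beginning}), so your contour integral represents only $4\sum_{m,n\geq 1}$ and omits the two diagonal contributions $\tfrac{2}{1+\frac{1}{\pi p^{\prime}}}\zeta_p(2s)+\tfrac{2c^{-s}}{1+\frac{1}{\pi p}}\zeta_{p^{\prime}}(2s)$. Second, the residue of your integrand at $z=0$ is $\Gamma(s)\,\zeta_p(0)\,c^{-s}\zeta_{p^{\prime}}(2s)$, i.e.\ (after the prefactor) $-\tfrac{2c^{-s}}{1+\frac{1}{\pi p}}\zeta_{p^{\prime}}(2s)$: its role is to cancel the omitted $m=0$ diagonal term, and it is \emph{not} the first term of (\ref{representation Re(s)<1}), which is proportional to $\zeta_p(2s)/(1+\tfrac{1}{\pi p^{\prime}})$ and is nothing but the $n=0$ diagonal term that never enters your accounting. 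Third, with $\tfrac12<\mu<\mathrm{Re}(s)-\tfrac12$ and a leftward shift, the poles at $z=s-\tfrac12$ and $z=s$ lie to the \emph{right} of the original contour and are never crossed; there is nothing for the integral side to ``absorb,'' and leaving this as an unresolved ``obstacle'' is exactly where the argument is incomplete. Fourth, on the shifted line $\mathrm{Re}(z)=\tfrac12-\mu$ it is $\zeta_p(2z)$ that has left its half-plane of convergence and requires the functional equation (\ref{functional equation Kosh}), while $\zeta_{p^{\prime}}(2s-2z)$ remains absolutely convergent and supplies the outer sum over $\lambda_n^{\prime}$; you apply the functional equation to $\zeta_{p^{\prime}}$ instead, and since (\ref{representation Re(s)<1}) is not symmetric in $p$ and $p^{\prime}$ (the $\sigma$ inside the integral is attached to $p$, the outer coefficients to $p^{\prime}$), this interchange produces the wrong formula. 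Your final residue computation at $s=1$ is correct and agrees with the paper's.
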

\begin{proof}
The proof comes immediately from our first analogue of Watson's formula (\ref{formula in a region compact to be extended}). We start
by choosing $\text{Re}(s)>\mu>1$ and by writing the generalized Epstein
zeta function (\ref{definition Epstein in statement}) in the following form
\begin{equation}
\zeta_{p,p^{\prime}}(s,c)=\frac{2}{1+\frac{1}{\pi p^{\prime}}}\,\zeta_{p}(2s)+\frac{2c^{-s}}{1+\frac{1}{\pi p}}\,\zeta_{p^{\prime}}(2s)+4\sum_{m,n=1}^{\infty}\frac{\left(p^{2}+\lambda_{n}^{2}\right)\left(p^{\prime2}+\lambda_{n}^{\prime2}\right)}{\left(p(p+\frac{1}{\pi})+\lambda_{n}^{2}\right)\left(p^{\prime}(p^{\prime}+\frac{1}{\pi})+\lambda_{n}^{\prime2}\right)}\,\frac{1}{\left(\lambda_{m}^{2}+c\lambda_{n}^{\prime2}\right)^{s}}.\label{definition Epstein at beginning}
\end{equation}

\bigskip{}

If, on the double series in (\ref{definition Epstein at beginning}),
we fix the variable of summation $n$ and sum over $m$ by using (\ref{formula valid for all s})
with $x$ being replaced by $\sqrt{c}\lambda_{n}^{\prime}$, we are
able to deduce that 

\begin{equation}
\zeta_{p,p^{\prime}}(s,c)=\frac{2}{1+\frac{1}{\pi p^{\prime}}}\,\zeta_{p}(2s)+\frac{2\sqrt{\pi}\,c^{\frac{1}{2}-s}\Gamma\left(s-\frac{1}{2}\right)}{\Gamma(s)}\zeta_{p^{\prime}}(2s-1)+H_{p,p^{\prime}}(s,c),\,\,\,\,\,\text{Re}(s)>1,\label{Analytic Continuation}
\end{equation}
where 
\begin{align}
\frac{\Gamma(s)\,H_{p,p^{\prime}}(s,c)}{8\pi^{s}c^{\frac{1}{4}-\frac{s}{2}}} & =\sum_{m,n=1}^{\infty}\frac{\left(p^{\prime2}+\lambda_{n}^{\prime2}\right)\,(-1)^{m}}{p^{\prime}\left(p^{\prime}+\frac{1}{\pi}\right)+\lambda_{n}^{\prime2}}\,\left(\frac{m}{\lambda_{n}^{\prime}}\right)^{s-\frac{1}{2}}\,K_{s-\frac{1}{2}}\left(2\pi m\,\lambda_{n}^{\prime}\sqrt{c}\right)+\nonumber \\
+\sum_{m,n=1}^{\infty}\frac{p^{\prime2}+\lambda_{n}^{\prime2}}{p^{\prime}\left(p^{\prime}+\frac{1}{\pi}\right)+\lambda_{n}^{\prime2}} \,\left(\frac{m}{\lambda_{n}^{\prime}}\right)^{s-\frac{1}{2}}e^{2\pi pm}\,&\sum_{\ell=1}^{m}\left(\begin{array}{c}
m\\
\ell
\end{array}\right)\,(-1)^{m-\ell}\left(4\pi mp\right)^{\ell}\intop_{1}^{\infty}t^{s-\frac{1}{2}}\,e^{-2\pi mpt}\,\frac{(t-1)^{\ell-1}}{(\ell-1)!}\,K_{s-\frac{1}{2}}\left(2\pi m\sqrt{c}\lambda_{n}^{\prime}t\right)\,dt.\label{Explicit rep entire part}
\end{align}

\bigskip{}

We now claim that the right-hand side of (\ref{Analytic Continuation})
constitutes the analytic continuation of $\zeta_{p,p^{\prime}}(s,\,c)$
to the entire complex plane and when $\text{Re}(s)<1$, it is reduced
to (\ref{representation Re(s)<1}). Since the continuations of $\zeta_{p}(s)$
and $\zeta_{p^{\prime}}(s)$ are assured by Koshliakov's paper \cite{KOSHLIAKOV},
we only need to focus on the continuation of $H_{p,p^{\prime}}(s,c)$.
This is now analogous to the case where we have treated (\ref{Watson formula with explicit bessel})
and it is possible to show that it defines an entire function
of $s\in\mathbb{C}$.

\bigskip{}

Take now the expression defining $H_{p,p^{\prime}}(s,c)$ (\ref{Explicit rep entire part})
and assume that $\text{Re}(s)<1$. We see from the computations leading
to (\ref{computations leading to final expreeeessssssiiiiooooon})
that it can be written in the form
\[
2^{4-2s}c^{\frac{1}{2}-s}\sin(\pi s)\sum_{n=1}^{\infty}\frac{p^{\prime2}+\lambda_{n}^{\prime2}}{p^{\prime}\left(p^{\prime}+\frac{1}{\pi}\right)+\lambda_{n}^{\prime2}}\,\lambda_{n}^{\prime1-2s}\,\intop_{0}^{\infty}\,\frac{y^{-s}(y+1)^{-s}}{\sigma\left(\lambda_{n}^{\prime}\,(2y+1)\,\sqrt{c}\right)e^{2\pi\lambda_{n}^{\prime}(2y+1)\sqrt{c}}-1}dy,
\]
which gives (\ref{representation Re(s)<1}).

\bigskip{}

By a standard verification of the right-hand side of (\ref{Analytic Continuation}), it is easily seen that $\zeta_{p,p^{\prime}}(s,c)$
has removable singularities located at $s=\frac{1}{2}-k$, $k\in\mathbb{N}_{0}$.
Since $\zeta_{p}(2s)$ and $\Gamma\left(s-\frac{1}{2}\right)$ are
analytic in a neighborhood of the point $s=1$, we can conclude that
$\zeta_{p,p^{\prime}}(s,c)$ must have a pole located at $s=1$ (coming
from the function $\zeta_{p^{\prime}}(2s-1)$) with residue $\pi/\sqrt{c}$.
\end{proof}

\bigskip{}

\begin{remark}
It is possible to deform the path of integration on the right-hand
side of (\ref{representation Re(s)<1}) in order to make it valid
for every $s\in\mathbb{C}$. See Remark 3.1. above.
\end{remark}

\begin{remark}
It is clear from the proof of Corollary 3.1. (see the steps leading
to (\ref{series obtained})) that (\ref{representation Re(s)<1})
and (\ref{Explicit rep entire part}) are generalizations of the Selberg-Chowla
formula (\ref{Selberg Chowla Formula}) when $Q(m,n)=m^{2}+cn^{2}$,
$c>0$.
\end{remark}

Having proved that $\zeta_{p,p^{\prime}}(s,c)$ possesses a pole at
$s=1$, we now study how it behaves around this singularity, generalizing
a classical formula due to Kronecker \cite{Siegel_Analytic Number Theory} (see (\ref{Kronecker limit formula diagonal form}) above). 

\begin{corollary}
Let $p,p^{\prime}\in\mathbb{R}_{+}$ and let $\zeta_{p,p^{\prime}}(s,c)$
be the generalized Epstein $\zeta-$function (\ref{definition Epstein in statement}). Moreover, let $\sigma(t)$ be defined by (\ref{definition sigma p sigma p'}). 
Then $\zeta_{p,p^{\prime}}(s,c)$ admits the meromorphic expansion around $s=1$, 
\begin{align}
\zeta_{p,p^{\prime}}(s,\,c) & =\frac{\pi}{\sqrt{c}}\,\frac{1}{s-1}+\frac{\pi^{2}}{3}\,\frac{1+\frac{3}{\pi p}(1+\frac{1}{\pi p})}{\left(1+\frac{1}{\pi p^{\prime}}\right)\left(1+\frac{1}{\pi p}\right)^{2}}+\nonumber \\
+\frac{\pi}{\sqrt{c}} & \left(2C_{p^{\prime}}^{(1)}-\log\left(4c\right)+4\,\sum_{n=1}^{\infty}\frac{p^{\prime2}+\lambda_{n}^{\prime2}}{p^{\prime}\left(p^{\prime}+\frac{1}{\pi}\right)+\lambda_{n}^{\prime2}}\cdot\frac{\lambda_{n}^{\prime-1}}{\sigma\left(\sqrt{c}\lambda_{n}^{\prime}\right)e^{2\pi\sqrt{c}\lambda_{n}^{\prime}}-1}\right)+O(s-1),\label{corrected Kronecker Limit formula}
\end{align}
where $C_{p}^{(1)}$ is Koshliakov's analogue of the Euler-Mascheroni
constant (\ref{Euler Mascheroni Koshliakov sense}).
\end{corollary}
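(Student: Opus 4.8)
The plan is to start from the representation of $\zeta_{p,p^{\prime}}(s,c)$ obtained in Theorem 4.1, namely (\ref{Analytic Continuation}),
\[
\zeta_{p,p^{\prime}}(s,c)=\frac{2}{1+\frac{1}{\pi p^{\prime}}}\,\zeta_{p}(2s)+\frac{2\sqrt{\pi}\,c^{\frac{1}{2}-s}\,\Gamma\!\left(s-\tfrac{1}{2}\right)}{\Gamma(s)}\,\zeta_{p^{\prime}}(2s-1)+H_{p,p^{\prime}}(s,c),
\]
in which $H_{p,p^{\prime}}(s,c)$ is entire (as was proved in Theorem 4.1); thus the only singularity near $s=1$ is carried by $\zeta_{p^{\prime}}(2s-1)$. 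I would then expand the three summands separately around $s=1$ and collect the Laurent coefficients up to the constant term. For the first summand, $\zeta_{p}(2s)$ is analytic at $s=1$, so it contributes the constant $\frac{2}{1+\frac{1}{\pi p^{\prime}}}\,\zeta_{p}(2)$; inserting the value $\zeta_{p}(2)=\frac{\pi^{2}}{6}\,\frac{1+\frac{3}{\pi p}(1+\frac{1}{\pi p})}{(1+\frac{1}{\pi p})^{2}}$ recorded in Section 1 yields exactly the second term of (\ref{corrected Kronecker Limit formula}).

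For the middle summand I would use the Laurent expansion (\ref{Koshliakov meromorphic expansion}) rewritten around $s=1$ as $\zeta_{p^{\prime}}(2s-1)=\frac{1}{2(s-1)}+C_{p^{\prime}}^{(1)}+O(s-1)$, together with the Taylor expansions at $s=1$ of $c^{\frac{1}{2}-s}=c^{-1/2}(1-\log c\,(s-1)+\cdots)$, of $\Gamma\!\left(s-\tfrac12\right)=\sqrt{\pi}\bigl(1-(\gamma+2\log 2)(s-1)+\cdots\bigr)$ (this is (\ref{another Laurent expansion}) with argument shifted), and of $1/\Gamma(s)=1+\gamma(s-1)+\cdots$. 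The one point demanding care is the bookkeeping: the product of the three analytic factors simplifies to $\frac{2\pi}{\sqrt c}\bigl(1-\log(4c)\,(s-1)+O((s-1)^{2})\bigr)$, the $\gamma$ coming from $\Gamma'(1)$ cancelling the $\gamma$ inside $\psi(1/2)=-\gamma-2\log 2$ and leaving only $-\log c-2\log 2=-\log(4c)$. Multiplying by $\zeta_{p^{\prime}}(2s-1)$ then produces $\frac{\pi}{\sqrt c}\,\frac{1}{s-1}+\frac{\pi}{\sqrt c}\bigl(2C_{p^{\prime}}^{(1)}-\log(4c)\bigr)+O(s-1)$, supplying the pole together with the $2C_{p^{\prime}}^{(1)}$ and $-\log(4c)$ pieces of (\ref{corrected Kronecker Limit formula}) and confirming incidentally that $\mathrm{Res}_{s=1}\zeta_{p,p^{\prime}}(s,c)=\pi/\sqrt c$.

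It then remains to evaluate $H_{p,p^{\prime}}(1,c)$, for which I would use the explicit Bessel expansion (\ref{Explicit rep entire part}) and substitute $K_{1/2}(z)=\sqrt{\pi/(2z)}\,e^{-z}$ from (\ref{particular case of bessel!}). Every Macdonald function in both double series collapses to an exponential, each factor $\bigl(m/\lambda_{n}^{\prime}\bigr)^{1/2}$ (and $t^{1/2}$ in the integral) disappears against the $\sqrt{z}$ in $K_{1/2}$, and one is left with the elementary integrals $\int_{1}^{\infty}e^{-at}\frac{(t-1)^{\ell-1}}{(\ell-1)!}\,dt=a^{-\ell}e^{-a}$ followed by the finite binomial sums over $\ell$ — precisely the computation already performed in the proof of Corollary 3.5. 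After that reduction the alternating sum coming from the $(-1)^{m}$ in the first series of (\ref{Explicit rep entire part}) cancels against the spurious $(-1)^{m}$ term produced when the $\ell$-sum is completed to range over $\ell=0,\dots,m$, and what survives is a single geometric series in $m$; summing it by (\ref{power series def}) with $z=\sqrt c\,\lambda_{n}^{\prime}$ gives
\[
H_{p,p^{\prime}}(1,c)=\frac{4\pi}{\sqrt c}\sum_{n=1}^{\infty}\frac{p^{\prime2}+\lambda_{n}^{\prime2}}{p^{\prime}\!\left(p^{\prime}+\frac{1}{\pi}\right)+\lambda_{n}^{\prime2}}\cdot\frac{\lambda_{n}^{\prime-1}}{\sigma\!\left(\sqrt c\,\lambda_{n}^{\prime}\right)e^{2\pi\sqrt c\,\lambda_{n}^{\prime}}-1},
\]
which is the last term of (\ref{corrected Kronecker Limit formula}). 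Adding the three contributions gives the claimed expansion.

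I do not expect a genuine obstacle here: once Theorem 4.1 is available the proof is a routine Laurent-expansion exercise, the only delicate spots being the cancellation of $\gamma$ among the three Gamma-and-power factors in the middle term, and, for $H_{p,p^{\prime}}(1,c)$, the bookkeeping of signs in the $\ell$-summation that makes the alternating terms cancel. One could alternatively compute $H_{p,p^{\prime}}(1,c)$ by letting $s\to 1^{-}$ in the integral formula (\ref{representation Re(s)<1}), but that is messier: the prefactor $\sin(\pi s)/\Gamma(1-s)$ vanishes like $(s-1)^{2}$ while the $y$-integral diverges like $(s-1)^{-1}$, so one must track the remaining $(s-1)^{-1}$ behaviour term by term in $n$; the Bessel route via (\ref{Explicit rep entire part}) avoids this entirely and is therefore the one I would follow.
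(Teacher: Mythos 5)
Your proposal is correct and follows essentially the same route as the paper: expand the right-hand side of (\ref{Analytic Continuation}) about $s=1$, using the value of $\zeta_{p}(2)$ for the first term, the expansion (\ref{Second meromorphic Kronecker limit}) together with the combined factor $\sqrt{\pi}\,c^{\frac{1}{2}-s}\Gamma(s-\tfrac{1}{2})/\Gamma(s)=\pi c^{-1/2}\bigl(1-\log(4c)(s-1)+O((s-1)^{2})\bigr)$ for the middle term, and the $K_{1/2}$ collapse of (\ref{Explicit rep entire part}) (exactly the computation of (\ref{almost in the step})) to evaluate $H_{p,p^{\prime}}(1,c)$. Your bookkeeping of the $\gamma$-cancellation and of the $(-1)^{m}$ cancellation in the $\ell$-summation matches the paper's (\ref{First meromorphic Kronecker limit}) and (\ref{expression entire part almost end Kronecker.}) respectively.
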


\begin{proof}
The proof consists in evaluating the right-hand side of (\ref{Analytic Continuation})
when $s\rightarrow1$. Let us recall the following expansions around $s=1$,
\begin{equation}
\frac{\sqrt{\pi}\,\Gamma(s-\frac{1}{2})}{\Gamma(s)}=\pi-2\pi\log(2)\,(s-1)+O\left(s-1\right)^{2}\label{First meromorphic Kronecker limit}
\end{equation}
and
\begin{equation}
\zeta_{p^{\prime}}(2s-1)=\frac{1}{2(s-1)}+C_{p^{\prime}}^{(1)}+O\left(s-1\right),\label{Second meromorphic Kronecker limit}
\end{equation}
where $C_{p^{\prime}}^{(1)}$ is given by (\ref{Euler Mascheroni Koshliakov sense}).
  Combining (\ref{First meromorphic Kronecker limit}) and (\ref{Second meromorphic Kronecker limit}),
we see that the second term in (\ref{Analytic Continuation}) can
be written as 
\[
\frac{2\sqrt{\pi}\,c^{\frac{1}{2}-s}\Gamma\left(s-\frac{1}{2}\right)}{\Gamma(s)}\zeta_{p^{\prime}}(2s-1)=\frac{\pi}{\sqrt{c}}\,\frac{1}{s-1}+\frac{\pi}{\sqrt{c}}\left(2C_{p^{\prime}}^{(1)}-\log\left(4c\right)\right)+O\left(s-1\right).
\]
which gives, after the use of (\ref{Analytic Continuation}), 
\begin{equation}
\zeta_{p,p^{\prime}}(s,\,c)=\frac{\pi}{\sqrt{c}}\,\frac{1}{s-1}+\frac{\pi}{\sqrt{c}}\left(2C_{p^{\prime}}^{(1)}-\log\left(4c\right)\right)+\frac{\pi^{2}}{3}\,\frac{1+\frac{3}{\pi p}(1+\frac{1}{\pi p})}{\left(1+\frac{1}{\pi p^{\prime}}\right)\left(1+\frac{1}{\pi p}\right)^{2}}+H_{p,p^{\prime}}(1,c)+O\left(s-1\right)\label{first approximation Kronecker}
\end{equation}
where we have employed the identity $\zeta_{p}(2)=\frac{\pi^{2}}{6}\,\frac{1+\frac{3}{\pi p}(1+\frac{1}{\pi p})}{(1+\frac{1}{\pi p})^{2}}$
{[}\cite{KOSHLIAKOV}, p. 22, eq. (39){]}.   Using once more formula
(\ref{particular case of bessel!}), and doing no more than the
calculations made in (\ref{almost in the step}), we can derive that
\begin{equation}
H_{p,p^{\prime}}(1,c)=\frac{4\pi}{\sqrt{c}}\,\sum_{n=1}^{\infty}\frac{p^{\prime2}+\lambda_{n}^{\prime2}}{p^{\prime}\left(p^{\prime}+\frac{1}{\pi}\right)+\lambda_{n}^{\prime2}}\cdot\frac{\lambda_{n}^{\prime-1}}{\sigma\left(\sqrt{c}\lambda_{n}^{\prime}\right)e^{2\pi\sqrt{c}\lambda_{n}^{\prime}}-1}\label{expression entire part almost end Kronecker.}
\end{equation}
which proves (\ref{corrected Kronecker Limit formula}). 
\end{proof}

\begin{remark}
The infinite
series appearing in (\ref{corrected Kronecker Limit formula}) can be regarded as a generalization of the logarithm of Dedekind's $\eta-$function (\ref{Dedekind classic}).
Taking $p,p^{\prime}\rightarrow\infty$ in our formula above, the classical limit formula
of Kronecker (\ref{Kronecker limit formula intro}) can be derived.
Analogously to section 2 above, we can obtain several new analogues of Kronecker's
limit formula by fixing one of the parameters and varying the other.
Of course, when $p,p^{\prime}$ are both tending to zero or infinity
(not necessary the same limit), the formulas resulting from (\ref{corrected Kronecker Limit formula})
may also be achieved through Kronecker's second limit formula [\cite{Siegel_Analytic Number Theory}, p. 22].
\end{remark}

\bigskip{}
Our first corollary is given when we take $p^{\prime}\rightarrow0^{+}$
while $p$ is kept fixed.

\begin{corollary}
For every $p\in\mathbb{R}_{+}$ and $c>0$, consider the Epstein zeta
function defined by
\[
\zeta_{p,0}(s,c):=\sum_{m,n\neq0}\frac{p^{2}+\lambda_{m}^{2}}{p(p+\frac{1}{\pi})+\lambda_{m}^{2}}\,\frac{1}{\left(\lambda_{m}^{2}+c\,\left(n-\frac{1}{2}\right)^{2}\right)^{s}},\,\,\,\,\text{Re}(s)>1.
\]

Then $\zeta_{p,0}(s,c)$ admits the meromorphic expansion 
\[
\zeta_{p,0}(s,c)=\frac{\pi}{\sqrt{c}}\,\frac{1}{s-1}+\frac{\pi}{\sqrt{c}}\left(2\gamma-\log\left(\frac{c}{4}\right)+8\,\sum_{n=1}^{\infty}\frac{1}{2n-1}\cdot\frac{1}{\sigma\left(\sqrt{c}\lambda_{n}^{\prime}\right)e^{(2n-1)\pi\sqrt{c}}-1}\right)+O(s-1).
\]

In particular, the standard expansions holds
\[
\zeta_{0,0}(s,c)=\frac{\pi}{\sqrt{c}}\,\frac{1}{s-1}+\frac{\pi}{\sqrt{c}}\left(2\gamma-\log\left(\frac{c}{4}\right)-8\,\sum_{n=1}^{\infty}\frac{1}{2n-1}\cdot\frac{1}{e^{(2n-1)\pi\sqrt{c}}+1}\right)+O(s-1).
\]
\end{corollary}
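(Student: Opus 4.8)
The plan is to derive both expansions as specializations of the generalized Kronecker limit formula (\ref{corrected Kronecker Limit formula}): the first expansion by letting $p^{\prime}\to 0^{+}$ with $p$ fixed, and the ``in particular'' statement by then also letting $p\to 0^{+}$ (equivalently, by letting both parameters tend to $0$ in (\ref{corrected Kronecker Limit formula}), noting that the $\frac{\pi^{2}}{3}$-term remains negligible in that regime as well). Since (\ref{corrected Kronecker Limit formula}) already displays the complete behaviour of $\zeta_{p,p^{\prime}}(s,c)$ near $s=1$, and the polar part $\frac{\pi}{\sqrt{c}}(s-1)^{-1}$ is independent of $p,p^{\prime}$, the whole task reduces to passing the constant term (the coefficient of $(s-1)^{0}$) to these limits.

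First I would record the elementary limits as $p^{\prime}\to 0^{+}$. From the transcendental equation (\ref{beginning of statement transcendental}) one checks that $\lambda_{n}^{\prime}\to n-\tfrac12$: for small $p^{\prime}$ the quantity $-y/p^{\prime}$ is large and negative, which forces $\pi\lambda_{n}^{\prime}$ down towards $(n-\tfrac12)\pi^{+}$; simultaneously the Koshliakov weight $\frac{p^{\prime2}+\lambda_{n}^{\prime2}}{p^{\prime}(p^{\prime}+\frac1\pi)+\lambda_{n}^{\prime2}}\to 1$. In particular $\frac{1}{1+\frac1{\pi p^{\prime}}}\to 0$, so the term $\frac{\pi^{2}}{3}\cdot\frac{1+\frac{3}{\pi p}(1+\frac1{\pi p})}{(1+\frac1{\pi p^{\prime}})(1+\frac1{\pi p})^{2}}$ in (\ref{corrected Kronecker Limit formula}) drops out. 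The delicate ingredient is $\lim_{p^{\prime}\to 0^{+}}C_{p^{\prime}}^{(1)}$. One route is to pass to the limit in the definition (\ref{Euler Mascheroni Koshliakov sense}) — which requires justifying an interchange of $\lim_{p^{\prime}\to0^{+}}$ with $\lim_{n\to\infty}$, underwritten by the local uniform convergence $\zeta_{p^{\prime}}(s)\to(2^{s}-1)\zeta(s)$ — obtaining $\lim_{p^{\prime}\to0^{+}}C_{p^{\prime}}^{(1)}=\lim_{n\to\infty}\big(\sum_{j=1}^{n-1}\tfrac{1}{j-1/2}-\log(n-\tfrac12)\big)$; evaluating this via $\sum_{j=1}^{n-1}\tfrac{1}{j-1/2}=2H_{2n-2}-H_{n-1}=\log(n-1)+\gamma+2\log 2+o(1)$ yields the value $\gamma+\log 4$. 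Alternatively, one reads off the same value as the constant term at $s=1$ of the Laurent expansion of $(2^{2s-1}-1)\zeta(2s-1)$, consistently with (\ref{Koshliakov meromorphic expansion}). Either way, $2C_{p^{\prime}}^{(1)}-\log(4c)\to 2\gamma+\log16-\log(4c)=2\gamma-\log(c/4)$, the constant in the statement.

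It then remains to move the limit through the infinite series in (\ref{corrected Kronecker Limit formula}). Since $\lambda_{n}^{\prime}\ge n-\tfrac12$ and the denominator $\sigma(\sqrt c\,\lambda_{n}^{\prime})e^{2\pi\sqrt c\,\lambda_{n}^{\prime}}-1$ stays bounded away from $0$ for all large $n$ uniformly in small $p^{\prime}$, each summand is dominated by a constant multiple of $n^{-1}e^{-\pi\sqrt c\,(2n-1)}$, so dominated convergence applies; in the limit the series becomes $4\sum_{n\ge1}\frac{1}{n-1/2}\big(\sigma(\sqrt c\,(n-\tfrac12))e^{2\pi\sqrt c\,(n-1/2)}-1\big)^{-1}$, and rewriting $\frac{1}{n-1/2}=\frac{2}{2n-1}$ and $2\pi(n-\tfrac12)=\pi(2n-1)$ turns this into $8\sum_{n\ge1}\frac{1}{2n-1}\big(\sigma(\sqrt c\,\lambda_{n}^{\prime})e^{(2n-1)\pi\sqrt c}-1\big)^{-1}$, with $\lambda_{n}^{\prime}$ now understood as $n-\tfrac12$. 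This is precisely the first displayed expansion of $\zeta_{p,0}(s,c)$. Finally, for the ``in particular'' part I would let $p\to 0^{+}$ in the expansion just obtained: for each fixed $t>0$ one has $\sigma(t)=\frac{p+t}{p-t}\to -1$, so $\big(\sigma(\sqrt c\,\lambda_{n}^{\prime})e^{(2n-1)\pi\sqrt c}-1\big)^{-1}\to -\big(e^{(2n-1)\pi\sqrt c}+1\big)^{-1}$; the same majorant legitimizes the interchange of limit and sum, while the constant $2\gamma-\log(c/4)$ is unaffected, giving the claimed expansion of $\zeta_{0,0}(s,c)$. The only genuine obstacle in the argument is the careful determination of $\lim_{p^{\prime}\to0^{+}}C_{p^{\prime}}^{(1)}=\gamma+\log 4$ together with the bookkeeping that recombines the constants into $2\gamma-\log(c/4)$; all the remaining steps are routine passages to the limit guaranteed by dominated convergence.
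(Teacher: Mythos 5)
Your proposal is correct and follows exactly the route the paper intends: the corollary is stated there without proof as the specialization $p^{\prime}\to 0^{+}$ of the generalized Kronecker limit formula (\ref{corrected Kronecker Limit formula}), and you have simply supplied the omitted details. In particular your evaluation $\lim_{p^{\prime}\to 0^{+}}C_{p^{\prime}}^{(1)}=\gamma+\log 4$ (consistent with the constant term of $(2^{2s-1}-1)\zeta(2s-1)$ at $s=1$), the vanishing of the $\tfrac{\pi^{2}}{3}$-term, and the dominated-convergence passage using the majorant $\bigl|\sigma(t)e^{2\pi t}-1\bigr|\ge e^{2\pi t}-1$ are all sound and recover the stated constants.
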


\bigskip{}

Now, letting $p^{\prime}\rightarrow\infty$ and fixing $p\in\mathbb{R}_{+}$, we deduce the following result.
\begin{corollary}
For every $p\in\mathbb{R}_{+}$ and $c>0$, consider the Epstein zeta
function defined by
\[
\zeta_{p,\infty}(s,c)=\sum_{m,n\neq0}\frac{p^{2}+\lambda_{n}^{2}}{p(p+\frac{1}{\pi})+\lambda_{n}^{2}}\,\frac{1}{\left(\lambda_{m}^{2}+cn^{2}\right)^{s}},\,\,\,\,\,\text{Re}(s)>1.
\]

Then $\zeta_{p,\infty}(s,c)$ has the following meromorphic expansion around $s=1$,
\begin{equation*}
\zeta_{p,\infty}(s,\,c)=\frac{\pi}{\sqrt{c}}\,\frac{1}{s-1}+\frac{\pi^{2}}{3}\,\frac{1+\frac{3}{\pi p}(1+\frac{1}{\pi p})}{\left(1+\frac{1}{\pi p}\right)^{2}}
+\frac{\pi}{\sqrt{c}} \left(2\gamma-\log\left(4c\right)+4\,\sum_{n=1}^{\infty}\frac{1}{n}\cdot\frac{1}{\sigma\left(\sqrt{c}n\right)e^{2\pi\sqrt{c}n}-1}\right)+O(s-1).
\end{equation*}

In particular, the usual Kronecker
limit formula (\ref{Kronecker limit formula diagonal form}) takes place. Moreover,
\[
\zeta_{0,\infty}(s,c)=\frac{\pi}{\sqrt{c}}\,\frac{1}{s-1}+\pi^{2}+\frac{\pi}{\sqrt{c}}\left(2\gamma-\log\left(4c\right)-4\,\sum_{n=1}^{\infty}\frac{1}{n}\cdot\frac{1}{e^{2\pi\sqrt{c}n}+1}\right)+O(s-1)
\]
\end{corollary}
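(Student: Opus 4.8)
The plan is to derive this statement as the limiting case $p' \to \infty$ (with $p \in \mathbb{R}_+$ and $c > 0$ held fixed) of the generalized Kronecker limit formula (\ref{corrected Kronecker Limit formula}), and then to specialise once more in $p$ to recover the two particular identities.

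First I would collect the elementary limits. Since $\lambda_n'$ is the $n$-th positive root of $\tan(\pi y) = -y/p'$ and obeys $n - \tfrac12 < \lambda_n' < n$, one has $\lambda_n' \to n$ for each fixed $n$ as $p' \to \infty$, while $\dfrac{p'^2 + \lambda_n'^2}{p'(p'+\frac{1}{\pi}) + \lambda_n'^2} \to 1$ and $\dfrac{1}{1 + \frac{1}{\pi p'}} \to 1$. For the constant term, the convergence $C_{p'}^{(1)} \to \gamma$ of Koshliakov's analogue (\ref{Euler Mascheroni Koshliakov sense}) of the Euler--Mascheroni constant follows from the Laurent expansion (\ref{Koshliakov meromorphic expansion}) together with $\lim_{p'\to\infty}\zeta_{p'}(w) = \zeta(w)$: this convergence is locally uniform, so the constant terms of the two Laurent expansions about $w=1$ agree in the limit, and $\zeta(w) = \frac{1}{w-1} + \gamma + O(w-1)$. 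Consequently the middle term of (\ref{corrected Kronecker Limit formula}) tends to $\frac{\pi^2}{3}\,\frac{1+\frac{3}{\pi p}(1+\frac{1}{\pi p})}{(1+\frac{1}{\pi p})^2}$ and the term $2C_{p'}^{(1)}$ tends to $2\gamma$.

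The only step needing care is the interchange of $\lim_{p'\to\infty}$ with the infinite series $\sum_{n\ge1}\frac{p'^2+\lambda_n'^2}{p'(p'+\frac1\pi)+\lambda_n'^2}\cdot\frac{\lambda_n'^{-1}}{\sigma(\sqrt c\lambda_n')e^{2\pi\sqrt c\lambda_n'}-1}$ in (\ref{corrected Kronecker Limit formula}) (note that $\sigma$ is the \emph{fixed} $p$-function (\ref{definition sigma p sigma p'}), so only $\lambda_n'$ varies). For this I would invoke dominated convergence: the weights are at most $1$, $\lambda_n'^{-1} \le (n-\tfrac12)^{-1}$, and from the geometric series (\ref{power series def}) one has, for real $y>0$, $\bigl|\tfrac{1}{\sigma(y)e^{2\pi y}-1}\bigr| \le \sum_{m\ge1}e^{-2\pi m y} = \tfrac{1}{e^{2\pi y}-1}$, so the $n$-th summand is bounded, uniformly in $p'$, by $\frac{(n-\frac12)^{-1}}{e^{2\pi\sqrt c(n-\frac12)}-1}$, which is summable. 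Passing to the limit thus replaces that series by $4\sum_{n\ge1}\frac1n\cdot\frac{1}{\sigma(\sqrt c\,n)e^{2\pi\sqrt c\,n}-1}$, and together with the preceding paragraph this yields the stated expansion of $\zeta_{p,\infty}(s,c)$.

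Finally I would obtain the two ``in particular'' assertions by letting $p\to\infty$, resp.\ $p\to0^+$, in the formula just proved (again justified by the same summable majorant). As $p\to\infty$ one has $\sigma(t)=\frac{p+t}{p-t}\to1$ and $\frac{1+\frac{3}{\pi p}(1+\frac{1}{\pi p})}{(1+\frac{1}{\pi p})^2}\to1$, so the constant part becomes $\frac{\pi}{\sqrt c}(2\gamma-\log(4c)) + \frac{\pi^2}{3} + \frac{4\pi}{\sqrt c}\sum_{n\ge1}\frac1n\frac{1}{e^{2\pi\sqrt c\,n}-1}$; writing $q=e^{-2\pi\sqrt c}$ and using $\sum_{n\ge1}\frac1n\frac{q^n}{1-q^n} = -\sum_{m\ge1}\log(1-q^m)$ together with (\ref{Dedekind classic}) gives $\frac{\pi^2}{3} + \frac{4\pi}{\sqrt c}\sum_{n\ge1}\frac1n\frac{1}{e^{2\pi\sqrt c\,n}-1} = -\frac{4\pi}{\sqrt c}\log|\eta(i\sqrt c)|$, i.e.\ the classical Kronecker limit formula (\ref{Kronecker limit formula diagonal form}). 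As $p\to0^+$ one has $\sigma(t)\to-1$, hence $\frac{1}{\sigma(\sqrt c\,n)e^{2\pi\sqrt c\,n}-1}\to-\frac{1}{e^{2\pi\sqrt c\,n}+1}$, while $\frac{1+\frac{3}{\pi p}(1+\frac{1}{\pi p})}{(1+\frac{1}{\pi p})^2}\to3$, producing the displayed expansion of $\zeta_{0,\infty}(s,c)$. Apart from bookkeeping the domination uniformly in the varying parameter, the argument is a direct substitution, so there is no real obstacle.
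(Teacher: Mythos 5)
Your proposal is correct and follows exactly the route the paper intends: the paper states this corollary as the $p'\to\infty$ specialization of the generalized Kronecker limit formula (\ref{corrected Kronecker Limit formula}) and omits the details, which you supply (the dominated-convergence majorant $(n-\tfrac12)^{-1}/(e^{2\pi\sqrt{c}(n-1/2)}-1)$, the limit $C_{p'}^{(1)}\to\gamma$, and the further specializations in $p$). The computations in the two particular cases, including the Dedekind $\eta$ identification recovering (\ref{Kronecker limit formula diagonal form}) and the limit $\tfrac{1+\frac{3}{\pi p}(1+\frac{1}{\pi p})}{(1+\frac{1}{\pi p})^{2}}\to 3$ as $p\to0^{+}$, all check out.
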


\bigskip{}

If we now take $p\rightarrow\infty$ and fix $p^{\prime}\in\mathbb{R}_{+}$, we can derive the following formula.
\begin{corollary}
For every $p^{\prime}\in\mathbb{R}_{+}$ and $c>0$, consider the
Epstein zeta function defined by
\begin{equation*}
\zeta_{\infty,p^{\prime}}(s,c):=\sum_{m,n\neq0}\frac{p^{\prime2}+\lambda_{n}^{\prime2}}{p^{\prime}(p^{\prime}+\frac{1}{\pi})+\lambda_{n}^{\prime2}}\,\frac{1}{\left(m^{2}+c\lambda_{n}^{\prime2}\right)^{s}},\,\,\,\,\text{Re}(s)>1.
\end{equation*}

Then $\zeta_{\infty,p^{\prime}}(s,c)$ has a Laurent expansion around
$s=1$ given by 
\[
\zeta_{\infty,p^{\prime}}(s,\,c)=\frac{\pi}{\sqrt{c}}\,\frac{1}{s-1}+\frac{\pi^{2}}{3}\cdot\frac{1}{1+\frac{1}{\pi p^{\prime}}}+\frac{\pi}{\sqrt{c}}\left(2C_{p^{\prime}}^{(1)}-\log\left(4c\right)+4\,\sum_{n=1}^{\infty}\frac{p^{\prime2}+\lambda_{n}^{\prime2}}{p^{\prime}\left(p^{\prime}+\frac{1}{\pi}\right)+\lambda_{n}^{\prime2}}\cdot\frac{\lambda_{n}^{\prime-1}}{e^{2\pi\sqrt{c}\lambda_{n}^{\prime}}-1}\right)+O\left(s-1\right)
\]

In particular, we have the expansion
\[
\zeta_{\infty,0}(s,\,c)=\frac{\pi}{\sqrt{c}}\,\frac{1}{s-1}+\frac{\pi}{\sqrt{c}}\left(2\gamma-\log\left(\frac{c}{4}\right)+8\,\sum_{n=1}^{\infty}\frac{1}{2n-1}\cdot\frac{1}{e^{\pi\sqrt{c}(2n-1)}-1}\right)+O\left(s-1\right).
\]
\end{corollary}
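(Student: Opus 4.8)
The plan is to read off the displayed Laurent expansion as the specialization of the Kronecker-type limit formula \eqref{corrected Kronecker Limit formula} of Corollary 4.1 obtained by letting $p\to\infty$ with $p^{\prime}$ kept fixed, and then to deduce the concluding expansion for $\zeta_{\infty,0}(s,c)$ by the further passage $p^{\prime}\to 0^{+}$. The first point to settle is that $\zeta_{\infty,p^{\prime}}(s,c)=\lim_{p\to\infty}\zeta_{p,p^{\prime}}(s,c)$, not merely for $\text{Re}(s)>1$ but as an identity of meromorphic functions. Rather than commute the limit with the Laurent expansion directly in \eqref{corrected Kronecker Limit formula}, I would pass to the limit in the global representation \eqref{Analytic Continuation}: there the middle term $\frac{2\sqrt{\pi}\,c^{1/2-s}\Gamma(s-\frac12)}{\Gamma(s)}\zeta_{p^{\prime}}(2s-1)$ is independent of $p$, one has $\zeta_{p}(2s)\to\zeta(2s)$, and $H_{p,p^{\prime}}(s,c)$ — written explicitly in \eqref{Explicit rep entire part} — converges to the analogous entire function in which the Koshliakov Macdonald-sums over $m$ degenerate, exactly as in the proof of Corollary 3.1, to $\sum_{m\ge1}(m/\lambda_n^{\prime})^{s-1/2}K_{s-1/2}(2\pi m\lambda_n^{\prime}\sqrt{c})$. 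This yields a Selberg--Chowla-type identity for $\zeta_{\infty,p^{\prime}}(s,c)$ whose singular part at $s=1$ sits entirely in the $p$-free middle term.

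With this identity in hand I would extract the expansion at $s=1$ exactly as in the proof of Corollary 4.1: the term $\frac{2}{1+\frac{1}{\pi p^{\prime}}}\,\zeta(2s)$ contributes $\frac{2}{1+\frac{1}{\pi p^{\prime}}}\,\zeta(2)=\frac{\pi^{2}}{3}\cdot\frac{1}{1+\frac{1}{\pi p^{\prime}}}$; the middle term contributes, via \eqref{First meromorphic Kronecker limit} and \eqref{Second meromorphic Kronecker limit}, $\frac{\pi}{\sqrt{c}}\,\frac{1}{s-1}+\frac{\pi}{\sqrt{c}}\bigl(2C_{p^{\prime}}^{(1)}-\log(4c)\bigr)$; and $H_{\infty,p^{\prime}}(1,c)$, computed with $K_{1/2}(z)=\sqrt{\pi/2z}\,e^{-z}$ (formula \eqref{particular case of bessel!}) and the geometric series, equals $\frac{4\pi}{\sqrt{c}}\sum_{n}\frac{p^{\prime2}+\lambda_n^{\prime2}}{p^{\prime}(p^{\prime}+\frac1\pi)+\lambda_n^{\prime2}}\,\frac{\lambda_n^{\prime-1}}{e^{2\pi\sqrt{c}\lambda_n^{\prime}}-1}$. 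Collecting these three contributions reproduces the stated expansion. If one prefers instead to work straight from \eqref{corrected Kronecker Limit formula}, letting $p\to\infty$ there is also legitimate: the rational factor $\frac{1+\frac{3}{\pi p}(1+\frac{1}{\pi p})}{(1+\frac{1}{\pi p^{\prime}})(1+\frac{1}{\pi p})^{2}}\to\frac{1}{1+\frac{1}{\pi p^{\prime}}}$ by inspection, while in the series $\sigma(\sqrt{c}\lambda_n^{\prime})\to1$ for each fixed $n$ and, by \eqref{power series def},
\[
\left|\frac{1}{\sigma(\sqrt{c}\lambda_n^{\prime})e^{2\pi\sqrt{c}\lambda_n^{\prime}}-1}\right|=\left|\sum_{m=1}^{\infty}\left(\frac{p-\sqrt{c}\lambda_n^{\prime}}{p+\sqrt{c}\lambda_n^{\prime}}\right)^{m}e^{-2\pi m\sqrt{c}\lambda_n^{\prime}}\right|\le\frac{1}{e^{2\pi\sqrt{c}\lambda_n^{\prime}}-1},
\]
uniformly in $p$, so dominated convergence applies to the sum.

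For the concluding expansion of $\zeta_{\infty,0}(s,c)$ I would let $p^{\prime}\to 0^{+}$ in the formula just obtained. Here $\lambda_n^{\prime}\to n-\frac{1}{2}$, the weights $\frac{p^{\prime2}+\lambda_n^{\prime2}}{p^{\prime}(p^{\prime}+\frac1\pi)+\lambda_n^{\prime2}}\to1$, and $\frac{\pi^{2}}{3}\cdot\frac{1}{1+\frac{1}{\pi p^{\prime}}}\to0$; the only non-routine limit is $\lim_{p^{\prime}\to0^{+}}C_{p^{\prime}}^{(1)}$, which I would read off from $\lim_{p^{\prime}\to0^{+}}\zeta_{p^{\prime}}(w)=(2^{w}-1)\zeta(w)$ by expanding $(2^{2s-1}-1)\zeta(2s-1)$ about $s=1$ and comparing with \eqref{Second meromorphic Kronecker limit}: this gives $\lim_{p^{\prime}\to0^{+}}C_{p^{\prime}}^{(1)}=\gamma+\log 4$, hence $2C_{p^{\prime}}^{(1)}-\log(4c)\to2\gamma-\log(c/4)$. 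Since $\lambda_n^{\prime-1}\to\frac{2}{2n-1}$ and $2\pi\sqrt{c}\lambda_n^{\prime}\to\pi\sqrt{c}(2n-1)$, the series limit equals $4\sum_{n}\frac{2}{(2n-1)\bigl(e^{\pi\sqrt{c}(2n-1)}-1\bigr)}=8\sum_{n}\frac{1}{(2n-1)\bigl(e^{\pi\sqrt{c}(2n-1)}-1\bigr)}$, with a dominated-convergence argument like the one above legitimising the interchange.

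The step I expect to demand the most care is the first one: ensuring that $\lim_{p\to\infty}\zeta_{p,p^{\prime}}(s,c)=\zeta_{\infty,p^{\prime}}(s,c)$ holds uniformly enough near $s=1$ for the constant term of the Laurent series to be recovered correctly. Routing the argument through \eqref{Analytic Continuation}, whose singular part at $s=1$ is carried solely by the $p$-independent factor $\zeta_{p^{\prime}}(2s-1)$ while every other summand on the right is analytic at $s=1$ and converges there as $p\to\infty$ — by the very estimates underlying Corollary 3.1 — sidesteps this difficulty, after which all remaining manipulations are the routine specializations used repeatedly above.
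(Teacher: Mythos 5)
Your proposal is correct and follows the paper's intended route: the corollary is stated there without proof as the specialization of the generalized Kronecker limit formula (Corollary 4.1, equation (\ref{corrected Kronecker Limit formula})) obtained by letting $p\rightarrow\infty$ with $p^{\prime}$ fixed, followed by $p^{\prime}\rightarrow0^{+}$ for the concluding expansion, which is exactly what you do. The extra care you supply — routing the limit through (\ref{Analytic Continuation}) so the singular part at $s=1$ is carried by the $p$-independent term, the dominated-convergence bound $\left|\sigma(\sqrt{c}\lambda_{n}^{\prime})e^{2\pi\sqrt{c}\lambda_{n}^{\prime}}-1\right|^{-1}\leq\left(e^{2\pi\sqrt{c}\lambda_{n}^{\prime}}-1\right)^{-1}$, and the evaluation $\lim_{p^{\prime}\rightarrow0^{+}}C_{p^{\prime}}^{(1)}=\gamma+\log 4$ — is consistent with the paper's conventions and fills in details the paper leaves implicit.
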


\bigskip{}

Our final corollary of the Kronecker limit formula now comes from taking $p\rightarrow0^{+}$ on
our generalized Kronecker's limit formula.
\begin{corollary}
For every $p^{\prime}\in\mathbb{R}_{+}$ and $c>0$, consider the
Epstein zeta function defined by
\[
\zeta_{0,p^{\prime}}(s,c):=\sum_{m,n\neq0}\frac{p^{\prime2}+\lambda_{n}^{\prime2}}{p^{\prime}\left(p^{\prime}+\frac{1}{\pi}\right)+\lambda_{n}^{\prime2}}\,\frac{1}{\left(\left(m-\frac{1}{2}\right)^{2}+c\,\lambda_{n}^{\prime2}\right)^{s}},\,\,\,\,\text{Re}(s)>1.
\]
Then $\zeta_{0,p^{\prime}}(s,c)$ admits the meromorphic expansion
around $s=1$,
\[
\zeta_{0,p^{\prime}}(s,\,c)=\frac{\pi}{\sqrt{c}}\,\frac{1}{s-1}+\frac{\pi^{2}}{1+\frac{1}{\pi p^{\prime}}}+\frac{\pi}{\sqrt{c}}\,\left(2C_{p^{\prime}}^{(1)}-\log\left(4c\right)-4\,\sum_{n=1}^{\infty}\frac{p^{\prime2}+\lambda_{n}^{\prime2}}{p^{\prime}\left(p^{\prime}+\frac{1}{\pi}\right)+\lambda_{n}^{\prime2}}\cdot\frac{\lambda_{n}^{\prime-1}}{e^{2\pi\sqrt{c}\lambda_{n}^{\prime}}+1}\right)+O(s-1).
\]
\end{corollary}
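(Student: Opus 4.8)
The plan is to derive this expansion by letting $p\to 0^{+}$ in the generalized Kronecker limit formula (\ref{corrected Kronecker Limit formula}) for $\zeta_{p,p^{\prime}}(s,c)$, identifying the limit of each of its four constituents. First I would record the degeneration of the basic objects as $p\to0^{+}$: the roots of (\ref{Transcendental equation}) satisfy $\lambda_{n}\to n-\tfrac12$ (since $-y/p\to-\infty$ forces $\pi y$ toward the asymptote $(n-\tfrac12)\pi$), while $n-\tfrac12<\lambda_{n}<n$ holds uniformly in $p>0$; the weights $\tfrac{p^{2}+\lambda_{n}^{2}}{p(p+\frac1\pi)+\lambda_{n}^{2}}$ tend to $1$; Koshliakov's auxiliary function degenerates, $\sigma(t)=\tfrac{p+t}{p-t}\to-1$, so that $\sigma(\sqrt{c}\,\lambda_{n}^{\prime})\,e^{2\pi\sqrt{c}\,\lambda_{n}^{\prime}}-1\to-\big(e^{2\pi\sqrt{c}\,\lambda_{n}^{\prime}}+1\big)$; and, as recalled in the introduction, $\zeta_{p}(s)\to(2^{s}-1)\zeta(s)$. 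Consequently the summation index $\lambda_{m}$ in (\ref{definition Epstein in statement}) degenerates into $m-\tfrac12$, and a dominated-convergence argument (using the uniform bound $\lambda_{m}>m-\tfrac12$, which also controls the $m$-weights for $p\in(0,1]$) shows $\zeta_{p,p^{\prime}}(s,c)\to\zeta_{0,p^{\prime}}(s,c)$ for every $s$ with $\text{Re}(s)>1$.

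To turn this into a statement about the Laurent expansion near $s=1$, I would pass to the limit in the continuation formula (\ref{Analytic Continuation}), namely $\zeta_{p,p^{\prime}}(s,c)=\tfrac{2}{1+\frac1{\pi p^{\prime}}}\zeta_{p}(2s)+\tfrac{2\sqrt{\pi}\,c^{\frac12-s}\Gamma(s-\frac12)}{\Gamma(s)}\zeta_{p^{\prime}}(2s-1)+H_{p,p^{\prime}}(s,c)$, and re-run the expansion exactly as in the proof of (\ref{corrected Kronecker Limit formula}). The middle term is $p$-independent and furnishes the pole $\tfrac{\pi}{\sqrt c}\,\tfrac1{s-1}$ and the constant $\tfrac{\pi}{\sqrt c}\big(2C_{p^{\prime}}^{(1)}-\log(4c)\big)$. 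The first term tends to $\tfrac{2}{1+\frac1{\pi p^{\prime}}}(2^{2s}-1)\zeta(2s)$, analytic at $s=1$ with value $\tfrac{2}{1+\frac1{\pi p^{\prime}}}(2^{2}-1)\zeta(2)=\tfrac{\pi^{2}}{1+\frac1{\pi p^{\prime}}}$ (equivalently, this is the $p\to0^{+}$ limit of the explicit constant $\tfrac{\pi^{2}}{3}\,\tfrac{1+\frac{3}{\pi p}(1+\frac1{\pi p})}{(1+\frac1{\pi p^{\prime}})(1+\frac1{\pi p})^{2}}$, whose numerator and denominator both behave like a constant times $(\pi p)^{-2}$). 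Finally, evaluating $H_{p,p^{\prime}}(1,c)$ by the very computation leading to (\ref{expression entire part almost end Kronecker.}) — use $K_{1/2}(z)=\sqrt{\pi/2z}\,e^{-z}$ from (\ref{particular case of bessel!}), collapse the inner $\ell$-sum by the binomial theorem and then the $m$-sum by the geometric series (\ref{power series def}) — produces $\tfrac{4\pi}{\sqrt c}\sum_{n\ge1}\tfrac{p^{\prime2}+\lambda_{n}^{\prime2}}{p^{\prime}(p^{\prime}+\frac1\pi)+\lambda_{n}^{\prime2}}\,\tfrac{\lambda_{n}^{\prime-1}}{\sigma(\sqrt c\,\lambda_{n}^{\prime})e^{2\pi\sqrt c\,\lambda_{n}^{\prime}}-1}$, and the degeneration $\sigma\to-1$ turns this into $-\tfrac{4\pi}{\sqrt c}\sum_{n\ge1}\tfrac{p^{\prime2}+\lambda_{n}^{\prime2}}{p^{\prime}(p^{\prime}+\frac1\pi)+\lambda_{n}^{\prime2}}\,\tfrac{\lambda_{n}^{\prime-1}}{e^{2\pi\sqrt c\,\lambda_{n}^{\prime}}+1}$. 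Collecting the four contributions gives the asserted expansion.

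The only genuine obstacle is justifying that $\lim_{p\to0^{+}}$ commutes with the analytic continuation, i.e.\ that the residue and constant term of $\zeta_{p,p^{\prime}}(\cdot,c)$ converge to those of $\zeta_{0,p^{\prime}}(\cdot,c)$. I would settle this by showing that, for $p\in(0,1]$ and $s$ in a fixed compact punctured neighbourhood of $1$, every summand of $H_{p,p^{\prime}}(s,c)$ in (\ref{Explicit rep entire part}) is dominated by a $p$-independent summable majorant: after the $\ell$-summation the alternating binomial sum collapses (as in (\ref{proof expression of Koshliakov})) to a term of size $\big|\tfrac{p-x}{p+x}\big|^{m}e^{-2\pi m x}\le e^{-2\pi m x}$ with $x=\sqrt c\,\lambda_{n}^{\prime}$, and combining this with $\lambda_{n}^{\prime}\ge n-\tfrac12$ and the decay $K_{\nu}(z)=O(e^{-z}/\sqrt z)$ yields uniform convergence of $H_{p,p^{\prime}}(s,c)$ on that neighbourhood. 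Together with Koshliakov's continuations of $\zeta_{p},\zeta_{p^{\prime}}$ and the limit $\zeta_{p}(2s)\to(2^{2s}-1)\zeta(2s)$, Cauchy's integral formula on a small circle about $s=1$ then transfers the convergence to the Laurent coefficients, completing the argument; everything else is the bookkeeping of the four limits computed above, the sign reversal in the $\eta$-type series being exactly the imprint of $\sigma(t)\to-1$.
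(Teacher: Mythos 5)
Your proposal is correct and follows exactly the route the paper intends: Corollary 4.5 is obtained by letting $p\rightarrow0^{+}$ in the generalized Kronecker limit formula (\ref{corrected Kronecker Limit formula}), with the degenerations $\lambda_{m}\rightarrow m-\tfrac{1}{2}$, the $p$-weights tending to $1$, $\sigma(t)\rightarrow-1$ (producing the sign flip and the $e^{2\pi\sqrt{c}\lambda_{n}^{\prime}}+1$ denominator), and the constant $\tfrac{\pi^{2}}{3}\,\tfrac{1+\frac{3}{\pi p}(1+\frac{1}{\pi p})}{(1+\frac{1}{\pi p^{\prime}})(1+\frac{1}{\pi p})^{2}}\rightarrow\tfrac{\pi^{2}}{1+\frac{1}{\pi p^{\prime}}}$, all of which you compute correctly. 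The paper omits the justification that the limit commutes with the Laurent expansion at $s=1$; your uniform-majorant argument for $H_{p,p^{\prime}}(s,c)$ combined with Cauchy's integral formula supplies exactly the detail the paper leaves implicit.
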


\bigskip{}

\bigskip{}

Although it is possible to write a functional equation for (\ref{definition Epstein in statement})
using the general Selberg-Chowla formula (\ref{representation Re(s)<1}),
we shall omit it for this case because there is an asymmetry between
the zeta functions on both sides of it. This asymmetry is due to the
drastic differences between Koshliakov's zeta functions $\zeta_{p}(s)$
and $\eta_{p}(s)$.

\bigskip{}

Nevertheless, we end this section with an identity that $\zeta_{p,p^{\prime}}\left(s,c\right)$
shares with the classical Epstein zeta function (\ref{Epstein def}). This identity
concerns its central value, $\zeta_{p,p^{\prime}}\left(\frac{1}{2},c\right)$
and may be seen as an analogue of our extension of Ramanujan's formula (\ref{Analogue final in the statement}).

\bigskip{}

By using an idea of Selberg and Chowla \cite{selberg_chowla II}, but
closely following the proof by Bateman and Grosswald \cite{Bateman_Epstein}, 
we shall use this formula to prove that, under the condition of large $c$, $\zeta_{p,p^{\prime}}(s,c)$
will have a real zero lying on the interval $\left(\frac{1}{2},1\right)$.

\begin{corollary}
Let $\sigma(t)$ be defined
by (\ref{definition sigma p sigma p'}). Then the following identity
takes place 
\begin{align}
\zeta_{p,p^{\prime}}\left(\frac{1}{2},c\right) & =\frac{2C_{p}^{(1)}+\log\left(\frac{c}{4}\right)-4e^{2\pi p^{\prime}}Q_{2\pi p^{\prime}}(0)}{1+\frac{1}{\pi p^{\prime}}}+2C_{p^{\prime}}^{(2)}-2\log(2\pi)-2\gamma+\nonumber \\
+8\,\sum_{n=1}^{\infty} & \frac{p^{\prime2}+\lambda_{n}^{\prime2}}{p^{\prime}\left(p^{\prime}+\frac{1}{\pi}\right)+\lambda_{n}^{\prime2}}\,\intop_{0}^{\infty}\,\frac{1}{\sigma\left(\lambda_{n}^{\prime}\,(2y+1)\,\sqrt{c}\right)e^{2\pi\lambda_{n}^{\prime}(2y+1)\sqrt{c}}-1}\,\frac{dy}{\sqrt{y^{2}+y}},\label{Identity central value general Epstein!}
\end{align}
where $C_{p}^{(2)}$ denotes Koshliakov's second analogue of the Euler-Mascheroni
constant [\cite{KOSHLIAKOV}, p. 46, eq. (47)],
\begin{equation}
C_{p}^{(2)}:=\lim_{n\rightarrow\infty}\left\{ \sum_{k=1}^{n-1}\frac{(1,2\pi pk)_{k}}{k}-\frac{\log(n)}{1+\frac{1}{\pi p}}\right\} \label{second analogue Euler Mascheroni}
\end{equation}
and (in Koshliakov's notation), 
\begin{equation}
Q_{\mu}(s)=\intop_{\mu}^{\infty}t^{s-1}e^{-t}dt\label{Incomplete Gamma function Kosh sense}
\end{equation}
denotes the incomplete gamma function $\Gamma(s,\mu)$ [\cite{KOSHLIAKOV}, p. 25, eq. (49)]. 
\end{corollary}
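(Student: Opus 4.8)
The plan is to obtain (\ref{Identity central value general Epstein!}) by specializing the continuation formula (\ref{representation Re(s)<1}) of Theorem~4.1 to the central point $s=\tfrac12$, which is a removable singularity of its right-hand side. First I would record that at $s=\tfrac12$ the prefactor on the left becomes $\left(\tfrac{\pi}{\sqrt c}\right)^{-1/2}\Gamma\!\left(\tfrac12\right)=c^{1/4}$, so that the left-hand side of (\ref{representation Re(s)<1}) is $c^{1/4}\,\zeta_{p,p^{\prime}}\!\left(\tfrac12,c\right)$; one divides by $c^{1/4}$ at the very end. On the right, the third (Bessel/integral) term is regular at $s=\tfrac12$, while the first term has a simple pole there coming from $\zeta_p(2s)$ and the second term has a simple pole coming from $\Gamma\!\left(s-\tfrac12\right)$. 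I would begin by checking the cancellation of the polar parts: from $\zeta_p(2s)=\tfrac{1}{2s-1}+C_p^{(1)}+O(s-\tfrac12)$ in (\ref{Koshliakov meromorphic expansion}), $\Gamma\!\left(s-\tfrac12\right)=\tfrac{1}{s-1/2}-\gamma+O(s-\tfrac12)$, and $\zeta_{p^{\prime}}(0)=-\tfrac12\big(1+\tfrac{1}{\pi p^{\prime}}\big)^{-1}$, the two residues are $+c^{1/4}\big(1+\tfrac{1}{\pi p^{\prime}}\big)^{-1}$ and $-c^{1/4}\big(1+\tfrac{1}{\pi p^{\prime}}\big)^{-1}$ and cancel, so the limit $s\to\tfrac12$ exists and equals the sum of the three constant terms.

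The substance of the proof is the computation of those constant terms. For the first term I would expand $c^{s/2}\pi^{-s}\Gamma(s)$ about $s=\tfrac12$ using $\psi(\tfrac12)=-\gamma-2\log 2$ and multiply by the Laurent tail of $\zeta_p(2s)$; this yields the $2C_p^{(1)}$ piece together with part of the logarithmic and Euler contributions. For the second term the key new ingredient is the Taylor coefficient $\zeta_{p^{\prime}}^{\,\prime}(0)$, which I would extract from the functional equation (\ref{functional equation Kosh}): writing $\zeta_{p^{\prime}}(w)=\frac{2\cos(\pi(1-w)/2)\,\Gamma(1-w)}{(2\pi)^{1-w}}\,\eta_{p^{\prime}}(1-w)=\frac{2\sin(\pi w/2)\,\Gamma(1-w)}{(2\pi)^{1-w}}\,\eta_{p^{\prime}}(1-w)$ and letting $w\to0$, the prefactor $\sin(\pi w/2)$ vanishes to first order while $\eta_{p^{\prime}}(1-w)$ has a simple pole, so this is a $0\cdot\infty$ indeterminacy that must be unwound to second order. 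Substituting the Laurent expansion (\ref{Meromorphic expansion around zero Koshliakov sense}) of $\eta_{p^{\prime}}$ at $s=1$ then reproduces $\zeta_{p^{\prime}}(0)$ and, at the next order, produces $\zeta_{p^{\prime}}^{\,\prime}(0)$ in terms of $C_{p^{\prime}}^{(2)}$ and $e^{2\pi p^{\prime}}Q_{2\pi p^{\prime}}(0)$ --- this is exactly how those two constants enter (\ref{Identity central value general Epstein!}). Multiplying by the Taylor expansion of $2c^{(1-s)/2}\pi^{-(s-1/2)}\Gamma\!\left(s-\tfrac12\right)$ and collecting the $O(1)$ term finishes the second term.

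The third term is immediate: at $s=\tfrac12$ one has $2^{4-2s}=8$, $c^{(1-s)/2}=c^{1/4}$, $\pi^{1-s}/\Gamma(1-s)=\sqrt\pi/\sqrt\pi=1$, $\lambda_n^{\prime\,1-2s}=1$ and $y^{-s}(y+1)^{-s}=(y^2+y)^{-1/2}$, so it contributes $8c^{1/4}\sum_{n\ge1}\frac{p^{\prime2}+\lambda_n^{\prime2}}{p^{\prime}(p^{\prime}+\frac{1}{\pi})+\lambda_n^{\prime2}}\int_0^\infty\frac{dy}{\sqrt{y^2+y}\,\big(\sigma(\lambda_n^{\prime}(2y+1)\sqrt c)\,e^{2\pi\lambda_n^{\prime}(2y+1)\sqrt c}-1\big)}$, which is precisely $c^{1/4}$ times the last term of (\ref{Identity central value general Epstein!}). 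Adding the three pieces and dividing by $c^{1/4}$, after a routine reorganisation of the $\log(4\pi)$, $\log 2$ and $\gamma$ terms, gives the stated identity; the definitions (\ref{second analogue Euler Mascheroni}) of $C_{p^{\prime}}^{(2)}$ and (\ref{Incomplete Gamma function Kosh sense}) of $Q_{2\pi p^{\prime}}(0)$ are then invoked only for notation. The step I expect to be the genuine obstacle is the second one: keeping faithful track of all the $O(s-\tfrac12)$ corrections in the product $2c^{(1-s)/2}\pi^{-(s-1/2)}\Gamma\!\left(s-\tfrac12\right)\zeta_{p^{\prime}}(2s-1)$, and in particular resolving the $0\cdot\infty$ indeterminacy in the functional-equation computation of $\zeta_{p^{\prime}}^{\,\prime}(0)$ so that the $\eta_{p^{\prime}}$-Laurent constants attach to the correct monomials.
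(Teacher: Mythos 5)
Your overall strategy coincides with the paper's: set $s=\tfrac12$ in (\ref{representation Re(s)<1}), verify that the simple poles of the first two terms cancel (your residue computation, $\pm c^{1/4}\bigl(1+\tfrac{1}{\pi p^{\prime}}\bigr)^{-1}$, is correct), and collect the constant terms, with the third term evaluated exactly as you describe. The one place you genuinely depart from the paper is the second term: the paper does not derive $\zeta_{p^{\prime}}^{\prime}(0)$ from the functional equation but quotes it directly from Koshliakov's eq.~(54), namely $\zeta_{p^{\prime}}^{\prime}(0)=\tfrac12 C_{p^{\prime}}^{(2)}-\tfrac{\gamma}{2}-\tfrac{\log(2\pi)}{2}-\tfrac{e^{2\pi p^{\prime}}}{1+\frac{1}{\pi p^{\prime}}}Q_{2\pi p^{\prime}}(0)$, as recorded in (\ref{at beginning but will resurface on Koshliakoc}).

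Here is the concrete problem with your route. Writing $\zeta_{p^{\prime}}(w)=\frac{2\sin(\pi w/2)\,\Gamma(1-w)}{(2\pi)^{1-w}}\,\eta_{p^{\prime}}(1-w)$ and inserting the Laurent expansion (\ref{Meromorphic expansion around zero Koshliakov sense}) of $\eta_{p^{\prime}}$ at $1$, the $0\cdot\infty$ computation you describe gives
\[
\zeta_{p^{\prime}}^{\prime}(0)=\frac{1}{2}\,C_{p^{\prime}}^{(2)}-\frac{e^{2\pi p^{\prime}}Q_{2\pi p^{\prime}}(0)}{1+\frac{1}{\pi p^{\prime}}}-\frac{\gamma+\log(2\pi)}{2\left(1+\frac{1}{\pi p^{\prime}}\right)},
\]
because the $-\gamma$ from $\Gamma(1-w)$ and the $-\log(2\pi)$ from $(2\pi)^{w-1}$ multiply the \emph{polar} part of $\eta_{p^{\prime}}(1-w)$, which carries the factor $\bigl(1+\tfrac{1}{\pi p^{\prime}}\bigr)^{-1}$. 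This differs from the value the paper uses by $\frac{\gamma+\log(2\pi)}{2}\cdot\frac{1/(\pi p^{\prime})}{1+1/(\pi p^{\prime})}$, which vanishes only as $p^{\prime}\rightarrow\infty$; consequently your final constant term would not be the one in (\ref{Identity central value general Epstein!}). Note that the two pieces of data cannot both be right as printed: the functional equation (\ref{functional equation Kosh}) forces a unique relation between the constant term of $\eta_{p^{\prime}}$ at $s=1$ and $\zeta_{p^{\prime}}^{\prime}(0)$, and (\ref{Meromorphic expansion around zero Koshliakov sense}) and (\ref{at beginning but will resurface on Koshliakoc}) violate it. So before running your argument you must either verify Koshliakov's eq.~(54) independently and use it as the paper does, or re-derive the constant term of $\eta_{p^{\prime}}$ at $s=1$; as your plan stands, it would land on a formula that disagrees with the stated corollary in the $p^{\prime}$-dependent constants.
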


\begin{proof}
We use our generalization of the Selberg-Chowla formula (\ref{representation Re(s)<1})
and let $s=\frac{1}{2}$. We obtain,
\begin{align}
\zeta_{p,p^{\prime}}\left(\frac{1}{2},c\right) & =c^{-1/4}\,\lim_{s\rightarrow\frac{1}{2}}\left\{ \frac{2c^{s/2}\,\pi^{-s}\Gamma(s)}{1+\frac{1}{\pi p^{\prime}}}\,\zeta_{p}(2s)+2\,c^{\frac{1-s}{2}}\,\pi^{-\left(s-\frac{1}{2}\right)}\Gamma\left(s-\frac{1}{2}\right)\zeta_{p^{\prime}}(2s-1)\right\} +\nonumber \\
 & +8\,\sum_{n=1}^{\infty}\frac{p^{\prime2}+\lambda_{n}^{\prime2}}{p^{\prime}\left(p^{\prime}+\frac{1}{\pi}\right)+\lambda_{n}^{\prime2}}\,\intop_{0}^{\infty}\,\frac{1}{\sigma\left(\lambda_{n}^{\prime}\,(2y+1)\,\sqrt{c}\right)e^{2\pi\lambda_{n}^{\prime}(2y+1)\sqrt{c}}-1}\,\frac{dy}{\sqrt{y^{2}+y}}.\label{Simple expression at beginning central value}
\end{align}

In order to compute the Laurent expansion of the first term in the braces, we
invoke the well-known expansions around $s=\frac{1}{2}$, (\ref{meromorphic x})
and (\ref{meromorphic Gamma}), as well as 
\begin{equation}
\Gamma\left(s\right)=\sqrt{\pi}-\sqrt{\pi}\left(2\log(2)+\gamma\right)\left(s-\frac{1}{2}\right)+O\left(s-\frac{1}{2}\right)^{2}.\label{expansion gamma function atound 1/2 onde mocre}
\end{equation}

Furthermore, by {[}\cite{KOSHLIAKOV}, pp. 48 and 49, eq. (51) and (54){]},
we have
\begin{equation}
\zeta_{p}(2s)=\frac{1}{2s-1}+C_{p}^{(1)}+O\left(s-\frac{1}{2}\right)
\end{equation}
and
\begin{align}
\zeta_{p}(2s-1) & =\zeta_{p}(0)+2\zeta_{p}^{\prime}\left(0\right)\,\left(s-\frac{1}{2}\right)+O\left(s-\frac{1}{2}\right)^{2}=\nonumber \\
=-\frac{1}{2}\,\frac{1}{1+\frac{1}{\pi p}} & +2\,\left\{ -\frac{\log(2\pi)}{2}+\frac{1}{2}C_{p}^{(2)}-\frac{\gamma}{2}-\frac{e^{2\pi p}}{1+\frac{1}{\pi p}}Q_{2\pi p}(0)\right\} \left(s-\frac{1}{2}\right)+O\left(s-\frac{1}{2}\right)^{2}\label{at beginning but will resurface on Koshliakoc}
\end{align}
where $Q_{\mu}(s)$ is given by (\ref{Incomplete Gamma function Kosh sense}).

\bigskip{}

We start by simplifying the first term in the braces: as one easily checks, 
\begin{align}
\frac{2}{1+\frac{1}{\pi p^{\prime}}}\,\left(\frac{\pi}{\sqrt{c}}\right)^{-s}\Gamma(s)\,\zeta_{p}(2s) & =\frac{2c^{1/4}}{1+\frac{1}{\pi p^{\prime}}}\left[1-\left(\gamma+\log\left(\frac{4\pi}{\sqrt{c}}\right)\right)\left(s-\frac{1}{2}\right)+O\left(s-\frac{1}{2}\right)^{2}\right]\times\nonumber \\
\times\left[\frac{1}{2s-1}+C_{p}^{(1)}+O\left(s-\frac{1}{2}\right)\right] & =\frac{2c^{1/4}}{1+\frac{1}{\pi p^{\prime}}}\left[\frac{1}{2s-1}+C_{p}^{(1)}-\frac{1}{2}\left(\gamma+\log\left(\frac{4\pi}{\sqrt{c}}\right)\right)+O\left(s-\frac{1}{2}\right)\right].\label{Simplification first term zeta (1/2) some details}
\end{align}

Analogously, we find that the second term on the braces can be written as
\begin{align}
2\,c^{\frac{1-s}{2}}\,\pi^{-\left(s-\frac{1}{2}\right)}\Gamma\left(s-\frac{1}{2}\right)\zeta_{p^{\prime}}(2s-1) & =2c^{1/4}\left[\frac{1}{s-\frac{1}{2}}-\gamma-\log\left(\pi\sqrt{c}\right)+O\left(s-\frac{1}{2}\right)\right]\times\nonumber \\
\times\left[-\frac{1}{2}\,\frac{1}{1+\frac{1}{\pi p^{\prime}}}+2\zeta_{p^{\prime}}^{\prime}(0)\,\left(s-\frac{1}{2}\right)+O\left(s-\frac{1}{2}\right)^{2}\right] & =2c^{1/4}\,\left[\frac{\zeta_{p^{\prime}}(0)}{s-\frac{1}{2}}+2\zeta_{p^{\prime}}^{\prime}(0)-\gamma\zeta_{p^{\prime}}(0)-\log(\pi\sqrt{c})\,\zeta_{p^{\prime}}(0)+O\left(s-\frac{1}{2}\right)\right].\label{Simplifications second term central value}
\end{align}

Finally, using the evaluation of $\zeta_{p^{\prime}}^{\prime}(0)$ given in (\ref{at beginning but will resurface on Koshliakoc}) and bringing (\ref{Simplification first term zeta (1/2) some details}) and (\ref{Simplifications second term central value}) together, we find (\ref{Identity central value general Epstein!}) after some straightforward manipulations.
    
\end{proof}

\bigskip{}

\begin{corollary}
For $p^{\prime}\rightarrow\infty$ and arbitrary $p\in \mathbb{R}_{+}$, consider the particular case of the
Epstein zeta function (\ref{definition Epstein in statement})
\[
\zeta_{p,\infty}(s,c):=\sum_{m,n\neq0}\frac{p^{2}+\lambda_{n}^{2}}{p(p+\frac{1}{\pi})+\lambda_{n}^{2}}\cdot\frac{1}{\left(\lambda_{m}^{2}+c\,n^{2}\right)^{s}},\,\,\,\,\text{Re}(s)>1.
\]

Then there exists some $c_{0}>16\pi^{2}e^{-2C_{p}^{(1)}}$ such that,
for every $c\geq c_{0}$, $\zeta_{p,\infty}(s,c)$ has a real zero on the interval $\left(\frac{1}{2},\,1\right)$.
\end{corollary}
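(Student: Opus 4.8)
The plan is to derive the result from three facts already established: the explicit formula for the central value $\zeta_{p,\infty}\!\left(\tfrac12,c\right)$, the fact that $\zeta_{p,\infty}(s,c)$ has a simple pole at $s=1$ with \emph{positive} residue $\pi/\sqrt{c}$, and the intermediate value theorem on $\left(\tfrac12,1\right)$, where $\zeta_{p,\infty}(\cdot,c)$ is analytic and takes real values (it is real on $(1,\infty)$, hence by the reflection principle on all of $\mathbb{R}\setminus\{1\}$, and its only pole is at $s=1$).

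First I would specialize our central value identity (\ref{Identity central value general Epstein!}) to $\zeta_{p,\infty}$ by letting $p^{\prime}\to\infty$. Using $\tfrac1{1+1/\pi p^{\prime}}\to1$, $e^{2\pi p^{\prime}}Q_{2\pi p^{\prime}}(0)\to0$ (from $Q_{\mu}(0)\le e^{-\mu}/\mu$, recall (\ref{Incomplete Gamma function Kosh sense})), $\lambda_{n}^{\prime}\to n$, and $C_{p^{\prime}}^{(2)}\to\gamma$ — the last following from the Laurent expansion (\ref{Meromorphic expansion around zero Koshliakov sense}) together with $\eta_{p^{\prime}}(s)\to\zeta(s)$ (a consequence of the functional equation (\ref{functional equation Kosh}) and $\zeta_{p^{\prime}}(s)\to\zeta(s)$) — and dominated convergence in the series, one obtains
\[
\zeta_{p,\infty}\!\left(\tfrac12,c\right)=2C_{p}^{(1)}+\log\!\left(\frac{c}{16\pi^{2}}\right)+R(c),\qquad R(c):=8\sum_{n=1}^{\infty}\intop_{0}^{\infty}\frac{1}{\sigma\!\left(n(2y+1)\sqrt{c}\,\right)e^{2\pi n(2y+1)\sqrt{c}}-1}\,\frac{dy}{\sqrt{y^{2}+y}},
\]
with $C_{p}^{(1)}$ as in (\ref{Euler Mascheroni Koshliakov sense}) and $\sigma$ as in (\ref{definition sigma p sigma p'}). (Equivalently, this identity can be re-derived directly from the generalized Selberg--Chowla formula (\ref{representation Re(s)<1}) with $p^{\prime}\to\infty$ by computing the Laurent expansions at $s=\tfrac12$ exactly as in the derivation of (\ref{Identity central value general Epstein!}).)

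Next I would bound $R(c)$. Once $\sqrt{c}>p$, we have $t:=n(2y+1)\sqrt{c}>p$ for all $n\ge1$ and $y>0$, so $\sigma(t)\le-1$ and therefore $\bigl|\sigma(t)e^{2\pi t}-1\bigr|\ge e^{2\pi t}$; using also $\sqrt{y^{2}+y}\ge\sqrt{y}$,
\[
|R(c)|\le 8\sum_{n=1}^{\infty}\intop_{0}^{\infty}\frac{e^{-2\pi n(2y+1)\sqrt{c}}}{\sqrt{y}}\,dy=8\sum_{n=1}^{\infty}e^{-2\pi n\sqrt{c}}\intop_{0}^{\infty}\frac{e^{-4\pi n\sqrt{c}\,y}}{\sqrt{y}}\,dy=O\!\left(c^{-1/4}e^{-2\pi\sqrt{c}}\right),
\]
so $R(c)\to0$ as $c\to\infty$. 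Since $c\mapsto2C_{p}^{(1)}+\log\!\left(c/16\pi^{2}\right)$ is strictly increasing and vanishes at $c_{*}:=16\pi^{2}e^{-2C_{p}^{(1)}}$, and $R(c)\to0$, there is some $c_{0}>c_{*}$ with $\zeta_{p,\infty}\!\left(\tfrac12,c\right)>0$ for all $c\ge c_{0}$.

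Finally, fix $c\ge c_{0}$. By (\ref{residue First analogue Epstein}) we have $\zeta_{p,\infty}(s,c)=\dfrac{\pi/\sqrt{c}}{s-1}+O(1)$ near $s=1$, so $\zeta_{p,\infty}(s,c)\to-\infty$ as $s\to1^{-}$; in particular $\zeta_{p,\infty}(s,c)<0$ for $s$ close to $1$ from the left. Together with $\zeta_{p,\infty}\!\left(\tfrac12,c\right)>0$ and the continuity of $\zeta_{p,\infty}(\cdot,c)$ on $\left[\tfrac12,1\right)$, the intermediate value theorem yields a zero of $\zeta_{p,\infty}(\cdot,c)$ in $\left(\tfrac12,1\right)$, proving the corollary. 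The main obstacle is the first step — justifying the termwise passage $p^{\prime}\to\infty$ in (\ref{Identity central value general Epstein!}) (or redoing the residue computation from scratch) and, more importantly, controlling the tail $R(c)$ sharply enough that the elementary part $2C_{p}^{(1)}+\log(c/16\pi^{2})$ genuinely dictates the sign of $\zeta_{p,\infty}\!\left(\tfrac12,c\right)$ for large $c$; once these estimates are in hand the sign change follows from the positive residue at $s=1$ and the intermediate value theorem, in the spirit of Bateman and Grosswald \cite{Bateman_Epstein}.
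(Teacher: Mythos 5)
Your proposal is correct and takes essentially the same approach as the paper: both rest on the central value identity (\ref{Identity central value general Epstein!}), a tail bound showing the series contribution is $O(c^{-1/4}e^{-\pi\sqrt{c}})$ so that $2C_{p}^{(1)}+\log\bigl(c/16\pi^{2}\bigr)$ dictates the sign of $\zeta_{p,\infty}\bigl(\tfrac12,c\bigr)$ for large $c$, and then the positive residue $\pi/\sqrt{c}$ at $s=1$ together with the intermediate value theorem. The only cosmetic difference is that you let $p^{\prime}\to\infty$ first and bound the remaining single series by elementary estimates, whereas the paper bounds the double series for general $p^{\prime}$ via $8\sum_{n\ge1}d(n)K_{0}(\pi n\sqrt{c})<2^{5/2}c^{-1/4}e^{-\pi\sqrt{c}}$ in the manner of Bateman and Grosswald and only then passes to the limit.
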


\begin{proof}
Like Bateman and Grosswald \cite{Bateman_Epstein}, we bound
in a trivial manner the ``double series'' appearing as the last term on the expression (\ref{Simple expression at beginning central value}) defining $\zeta_{p,p^{\prime}}\left(\frac{1}{2},c\right)$. Indeed, since $\frac{p-t}{p+t}<1$ and $\lambda_{n}^{\prime}>n-\frac{1}{2}$ for every $n\in\mathbb{N}$, we have
\begin{align}
8\sum_{n=1}^{\infty}\left|\frac{p^{\prime2}+\lambda_{n}^{\prime2}}{p^{\prime}\left(p^{\prime}+\frac{1}{\pi}\right)+\lambda_{n}^{\prime2}}\,\intop_{0}^{\infty}\,\frac{1}{\sigma\left(\lambda_{n}^{\prime}\,(2y+1)\,\sqrt{c}\right)e^{2\pi\lambda_{n}^{\prime}(2y+1)\sqrt{c}}-1}\,\frac{dy}{\sqrt{y^{2}+y}}\right| & \leq\nonumber \\
\leq8\sum_{n=1}^{\infty}\intop_{0}^{\infty}\,\frac{1}{e^{2\pi\lambda_{n}^{\prime}(2y+1)\sqrt{c}}-1}\,\frac{dy}{\sqrt{y^{2}+y}}=8\,\sum_{m,n=1}^{\infty}\intop_{0}^{\infty}\frac{e^{-2\pi m\lambda_{n}^{\prime}\left(2y+1\right)\sqrt{c}}}{\sqrt{y^{2}+y}}\,dy\leq\nonumber \\
\leq8\sum_{m,n=1}^{\infty}K_{0}\left(\pi m\left(2n-1\right)\sqrt{c}\right)\leq8\sum_{n=1}^{\infty}d(n)\,K_{0}\left(\pi n\,\sqrt{c}\right).\label{bound for central value essenvial}
\end{align}

Using now a similar argument as in {[}\cite{Bateman_Epstein}, p.
371, eq. (20){]}, we get the simple bound
\begin{equation}
8\sum_{n=1}^{\infty}d(n)\,K_{0}\left(\pi n\,\sqrt{c}\right)<\frac{2^{5/2}\,e^{-\pi\sqrt{c}}}{c^{1/4}}.\label{bound just like Bateman and Grosswald for first part}
\end{equation}

Therefore, for some $-1<\theta<1$, it follows from (\ref{Identity central value general Epstein!})
and (\ref{bound just like Bateman and Grosswald for first part})
that
\[
\zeta_{p,p^{\prime}}\left(\frac{1}{2},c\right)=\frac{2C_{p}^{(1)}+\log\left(\frac{c}{4}\right)-4e^{2\pi p^{\prime}}Q_{2\pi p^{\prime}}(0)}{1+\frac{1}{\pi p^{\prime}}}+2C_{p^{\prime}}^{(2)}-2\log(2\pi)-2\gamma+\frac{2^{5/2}\theta e^{-\pi\sqrt{c}}}{c^{1/4}}.
\]

Thus, if $p^{\prime}\rightarrow\infty$ we see from the previous expression
that
\[
\zeta_{p,\infty}\left(\frac{1}{2},c\right)=2C_{p}^{(1)}+\log\left(\frac{c}{4}\right)-2\log(2\pi)+\theta\,\frac{2^{5/2}e^{-\pi\sqrt{c}}}{c^{1/4}}>0
\]
provided $c\geq c_{0}>16\pi^{2}e^{-2C_{p}^{(1)}}$ and $c_{0}$ large
enough. By (\ref{residue First analogue Epstein}), we know that $\lim_{s\rightarrow1^{-}}\zeta_{p,p^{\prime}}(s,c)=-\pi/\sqrt{c}<0$
and so an immediate application of the intermediate value theorem
shows that there exists some $\sigma_{0}\in\left(\frac{1}{2},\,1\right)$
for which $\zeta_{p,\infty}(\sigma_{0},c)=0$. 
\end{proof}

\subsection{A second Analogue of Epstein's zeta function}
Let $c>0$, $p,\,p^{\prime}>0$ and let $\lambda_{n},\,\lambda_{n}^{\prime}$
be the Koshliakov sequences satisfying the transcendental equations (\ref{beginning of statement transcendental}). For $\text{Re}(s)>1$, we define the second analogue of Epstein's
zeta function as 
\begin{equation}
\tilde{\zeta}_{p,p^{\prime}}(s,c):=\frac{2c^{-s}}{1+\frac{1}{\pi p}}\,\zeta_{p^{\prime}}(2s)+\frac{2}{1+\frac{1}{\pi p^{\prime}}}\,\eta_{p}(2s)+ \frac{8\,\pi^{s+1}c^{\frac{1}{4}-\frac{s}{2}}}{\Gamma(s)}\,\sum_{n=1}^{\infty} \frac{p^{\prime2}+\lambda_{n}^{\prime2}}{p^{\prime}\left(p^{\prime}+\frac{1}{\pi}\right)+\lambda_{n}^{\prime2}}\,\lambda_{n}^{\prime\frac{1}{2}-s}\,\intop_{0}^{\infty}\,\frac{y^{s-\frac{1}{2}}J_{s-\frac{1}{2}}\left(2\pi\lambda_{n}^{\prime}\sqrt{c}\,y\right)}{\sigma\left(y\right)e^{2\pi y}-1}\,dy.\label{second analogue definition definition}
\end{equation}

Although this definition seems to have a drastically different aspect
to the classical Epstein zeta function (\ref{Epstein def}), in view of the second analogue of Watson's formula (\ref{second analogue Watson}), we may expect to recover the classical cases
when $p,p^{\prime}$ tend to zero of infinity. In the next lemma we check that this is the case.

\begin{lemma}
Let $\text{Re}(s)>1$ and $c>0$. Then we have the limiting cases
\begin{equation}
\lim_{p,p^{\prime}\rightarrow\infty}\tilde{\zeta}_{p,p^{\prime}}(s,c)=\sum_{m,n\neq0}\frac{1}{\left(m^{2}+cn^{2}\right)^{s}}\label{first case second}
\end{equation}
and
\begin{equation}
\lim_{p,p^{\prime}\rightarrow0^{+}}\tilde{\zeta}_{p,p^{\prime}}(s,c)=\sum_{m\ne0,n\neq0}\frac{(-1)^{m}}{\left(m^{2}+c\left(n-\frac{1}{2}\right)^{2}\right)^{s}}.\label{second case second}
\end{equation}

Analogously,
\begin{equation}
\lim_{p\rightarrow0^{+},p^{\prime}\rightarrow\infty}\tilde{\zeta}_{p,p^{\prime}}(s,c)=\sum_{m\neq0,\,n\in\mathbb{Z}}\frac{(-1)^{m}}{\left(m^{2}+c\,n^{2}\right)^{s}},\label{thrid case secccond th}
\end{equation}
\begin{equation}
\lim_{p\rightarrow\infty,p^{\prime}\rightarrow0^{+}}\tilde{\zeta}_{p,p^{\prime}}(s,c)=\sum_{m\in\mathbb{Z},n\neq0}\frac{1}{\left(m^{2}+c\,\left(n-\frac{1}{2}\right)^{2}\right)^{s}}.\label{fiourth caase second analouge}
\end{equation}
\end{lemma}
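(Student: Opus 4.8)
The plan is to verify each of the four limiting cases by taking the appropriate limits in the three summands of the definition \eqref{second analogue definition definition} and matching the result, term by term, with the Fourier-type expansion produced by the second analogue of Watson's formula \eqref{second analogue Watson} (or rather, the identity one obtains by resumming the $m$-index in the classical Epstein series exactly as in the proof of Theorem 4.1). The key observation is that each of the three pieces of $\tilde\zeta_{p,p'}(s,c)$ is built from objects whose $p\to\infty$ and $p\to0^+$ limits are already recorded in the Introduction: $\zeta_p(s)\to\zeta(s)$ and $\zeta_p(s)\to(2^s-1)\zeta(s)$, and correspondingly, via the functional equation \eqref{functional equation Kosh}, $\eta_p(2s)\to\zeta(2s)$ as $p\to\infty$ while $\eta_p(2s)\to(2^{1-2s}-1)\zeta(2s)$ as $p\to 0^+$ (using $\lim_{k\to\infty}(s,\lambda k)_k=(1+2/\lambda)^{-s}$ from \eqref{asymptotic kosh sequence end} to get the $p\to0^+$ value $\sum_k(-1)^k/k^s$ directly from \eqref{secondary zeta function Koshliakov sense}). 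Likewise, the coefficient $\tfrac{p^2+\lambda_n^2}{p(p+1/\pi)+\lambda_n^2}\to 1$ in both limits, the constant $\tfrac{1}{1+1/(\pi p)}\to 1$ as $p\to\infty$ and $\to0$ as $p\to0^+$, and the sequence $\lambda_n$ tends to $n$ as $p\to\infty$ and to $n-\tfrac12$ as $p\to0^+$, while $\sigma(y)=\tfrac{p+y}{p-y}\to1$ as $p\to\infty$ and $\to-1$ as $p\to0^+$, so that $\tfrac{1}{\sigma(y)e^{2\pi y}-1}$ tends to $\tfrac{1}{e^{2\pi y}-1}$ respectively $\tfrac{-1}{e^{2\pi y}+1}$.

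First I would treat \eqref{first case second}. Take $p,p'\to\infty$ in \eqref{second analogue definition definition}: the first summand becomes $2c^{-s}\zeta(2s)$, the second becomes $2\zeta(2s)$, and the integral factor, by dominated convergence (the integrand is dominated by $y^{\mathrm{Re}(s)-1/2}|J_{s-1/2}(2\pi n\sqrt c y)|(e^{2\pi y}-1)^{-1}$ uniformly in $p$, using the uniform bound $|J_\nu(x)|\le C_\nu/\sqrt x$ as in \eqref{set of bounds justification}), converges to $\int_0^\infty \tfrac{y^{s-1/2}J_{s-1/2}(2\pi n\sqrt c y)}{e^{2\pi y}-1}\,dy$. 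One then recognizes the resulting expression as exactly the right-hand side of the classical analogue of the Selberg–Chowla formula for $Q(m,n)=m^2+cn^2$, which resums to $\sum_{m,n\ne 0}(m^2+cn^2)^{-s}$. In fact the cleanest way to see this is to note that, by Theorem 3.2 in the limit $p\to\infty$ (Corollary 3.9, i.e. classical Watson), $\tfrac{8\pi^{s+1}c^{1/4-s/2}}{\Gamma(s)}\sum_n n^{1/2-s}\int_0^\infty\tfrac{y^{s-1/2}J_{s-1/2}(2\pi n\sqrt c y)}{e^{2\pi y}-1}dy$ equals $8\sum_{m,n\ge1}\pi^s c^{(1-2s)/2}\cdot\tfrac{(m/n)^{s-1/2}}{\Gamma(s)}\cdot\Gamma(s)(\dots)$—more precisely, inserting Watson's formula \eqref{Watson Formula intro} backwards converts the $J$-integral back into $K$-Bessel sums and then into $\sum_{m\ge1}(\lambda_m^2+cn^2)^{-s}$ after subtracting the main terms; comparing with \eqref{definition Epstein at beginning} (with all $\lambda$'s classical) gives \eqref{first case second}. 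I would write this as: the three terms of $\tilde\zeta_{\infty,\infty}(s,c)$ are precisely the three terms in the decomposition \eqref{definition Epstein at beginning} of $\sum_{m,n\ne0}(m^2+cn^2)^{-s}$ once one applies classical Watson's formula to the double series, hence they sum to it.

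For \eqref{second case second}, \eqref{thrid case secccond th}, \eqref{fiourth caase second analouge} I would run the same argument, tracking the four possible combinations of limits. As $p\to0^+$: $\zeta_{p'}(2s)\to(2^{2s}-1)\zeta(2s)$ when $p'\to0^+$ but $\to\zeta(2s)$ when $p'\to\infty$; $\eta_p(2s)\to(2^{1-2s}-1)\zeta(2s)$ when $p\to0^+$ but $\to\zeta(2s)$ when $p\to\infty$; the constant $\tfrac{1}{1+1/(\pi p)}$ kills or keeps the first summand; $\lambda_n'\to n-\tfrac12$ or $n$; and $\tfrac{1}{\sigma(y)e^{2\pi y}-1}\to\tfrac{-1}{e^{2\pi y}+1}$ or $\tfrac{1}{e^{2\pi y}-1}$. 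In each case the limiting expression is the classical analogue-of-Watson expansion (Corollary 3.9 for the two cases with $p\to0^+$ giving the $(-1)^m$ sign in the $m$-variable via \eqref{yet another analogue Watson from second}, and the $\lambda_n'\to n-\tfrac12$ cases shifting the $n$-variable to $n-\tfrac12$), and resumming the Bessel series backwards identifies it with the stated double sum: $(-1)^m$ over $m^2+c(n-\tfrac12)^2$ in \eqref{second case second} (both odd), $(-1)^m$ over $m^2+cn^2$ with $n\in\mathbb Z$ in \eqref{thrid case secccond th} ($m$ odd-pattern, $n$ full), and $1$ over $m^2+c(n-\tfrac12)^2$ with $m\in\mathbb Z$ in \eqref{fiourth caase second analouge}. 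Throughout, the interchanges of limit, sum, and integral are justified exactly as in \eqref{set of bounds justification} and in the proof of Corollary 3.9, using $|J_\nu(x)|\le C_\nu/\sqrt x$, $\mathrm{Re}(s)>1$, and that $\lambda_n\ge n-\tfrac12$ uniformly in $p$.

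The main obstacle I expect is bookkeeping rather than depth: one must be careful about which of the two Koshliakov sequences $\lambda_n$ (tied to $\eta_p$, hence to the $m$-variable) versus $\lambda_n'$ (tied to $\zeta_{p'}$, hence to the $n$-variable) degenerates to $n$ and which to $n-\tfrac12$, and about the asymmetry inherited from $\zeta_p$ vs.\ $\eta_p$—it is $\eta_p(2s)$, not $\zeta_p(2s)$, that appears in \eqref{second analogue definition definition}, and its $p\to0^+$ limit is $(2^{1-2s}-1)\zeta(2s)$, matching $\sum_{m\ne0}(-1)^m/|m|^{2s}=2\cdot(2^{1-2s}-1)\zeta(2s)$, which is exactly what is needed to reconcile with the $(-1)^m$ numerator in \eqref{second case second} and \eqref{thrid case secccond th}. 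A careful statement of the $p\to0^+$ limit of $\eta_p$ via \eqref{asymptotic kosh sequence end} and \eqref{secondary zeta function Koshliakov sense} (dominated convergence in the defining series, legitimate for $\mathrm{Re}(s)>1$) is the one non-mechanical point, and I would isolate it as a short preliminary computation before handling the four cases uniformly.
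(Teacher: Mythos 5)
Your proposal is correct and follows essentially the same route as the paper: take the limit term by term in the definition \eqref{second analogue definition definition} (using dominated convergence and the bound $|J_\nu(x)|\le C_\nu/\sqrt{x}$ as in \eqref{set of bounds justification}), expand $\left(\sigma(y)e^{2\pi y}-1\right)^{-1}$ as a geometric series, and resum via the Laplace representation \eqref{Integral representation Bessel laplace} into the classical Epstein double sum. The paper proves only the case $p,p'\to\infty$ explicitly and declares the rest analogous, whereas you track all four sign/half-integer combinations; your bookkeeping of the limits of $\eta_p$, $\zeta_p$, $\lambda_n$ and $\sigma$ is accurate.
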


\begin{proof}
It suffices to prove the case (\ref{first case second}), the remaining ones being analogous. By definition (\ref{second analogue definition definition})
and by appealing to the steps given in (\ref{set of bounds justification})
and using once more the Laplace representation (\ref{Integral representation Bessel laplace}),
we obtain after some straightforward simplifications,
\begin{align}
\lim_{p,p^{\prime}\rightarrow\infty}\tilde{\zeta}_{p,p^{\prime}}(s,c) & :=2c^{-s}\,\zeta(2s)+2\,\zeta(2s)+\frac{8\,\pi^{s+1}c^{\frac{1}{4}-\frac{s}{2}}}{\Gamma(s)}\,\sum_{n=1}^{\infty}n^{\frac{1}{2}-s}\,\intop_{0}^{\infty}\,\frac{y^{s-\frac{1}{2}}J_{s-\frac{1}{2}}\left(2\pi n\,\sqrt{c}\,y\right)}{e^{2\pi y}-1}\,dy\nonumber \\
 & =2c^{-s}\,\zeta(2s)+2\,\zeta(2s)+\frac{8\,\pi^{s+1}c^{\frac{1}{4}-\frac{s}{2}}}{\Gamma(s)}\,\sum_{m,n=1}^{\infty}n^{\frac{1}{2}-s}\,\intop_{0}^{\infty}y^{s-\frac{1}{2}}J_{s-\frac{1}{2}}\left(2\pi n\,\sqrt{c}\,y\right)\,e^{-2\pi my}dy\nonumber \\
 & =2c^{-s}\,\zeta(2s)+2\,\zeta(2s)+4\,\sum_{m,n=1}^{\infty}\frac{1}{\left(m^{2}+cn^{2}\right)^{s}}=\sum_{m,n\neq0}\frac{1}{\left(m^{2}+cn^{2}\right)^{s}},\,\,\,\,\,\text{Re}(s)>1.\label{computation limiting cases!}
\end{align}

\end{proof}

Like in the previous section, we study the analytic continuation
of the Epstein zeta function (\ref{second analogue definition definition}).
It is also apparent from the limiting cases (\ref{first case second})-(\ref{fiourth caase second analouge})
that the analytic continuation of (\ref{second analogue definition definition})
should be similar to the analytic continuation of the Classical Epstein
zeta function (\ref{Epstein def}).

\bigskip{}

Our next result gives another generalization of the Selberg-Chowla
formula (\ref{Selberg Chowla Formula}) and the analytic continuation
of $\tilde{\zeta}_{p,p^{\prime}}(s,c)$.

\begin{theorem}
For every $\text{Re}(s)>1$, the following generalization of the Selberg-Chowla
formula takes place
\begin{align}
\tilde{\zeta}_{p,p^{\prime}}(s,c) & =\frac{2}{1+\frac{1}{\pi p^{\prime}}}\,\eta_{p}(2s)+\frac{2\sqrt{\pi}\,c^{\frac{1}{2}-s}}{\Gamma(s)}\cdot\frac{\Gamma\left(s-\frac{1}{2}\right)}{1+\frac{1}{\pi p}}\,\zeta_{p^{\prime}}(2s-1)+\nonumber \\
+\frac{8\,\pi^{s}c^{\frac{1}{4}-\frac{s}{2}}}{\Gamma(s)} & \sum_{m,n=1}^{\infty}\frac{p^{2}+\lambda_{m}^{2}}{p\left(p+\frac{1}{\pi}\right)+\lambda_{m}^{2}}\,\frac{p^{\prime2}+\lambda_{n}^{\prime2}}{p^{\prime}\left(p^{\prime}+\frac{1}{\pi}\right)+\lambda_{n}^{\prime2}}\,\left(\frac{\lambda_{m}}{\lambda_{n}^{\prime}}\right)^{s-\frac{1}{2}}\,K_{s-\frac{1}{2}}\left(2\pi\sqrt{c}\lambda_{m}\lambda_{n}^{\prime}\right).\label{Selberg-Chowla for second Epstein}
\end{align}

Furthermore, (\ref{Selberg-Chowla for second Epstein}) provides the
continuation of $\tilde{\zeta}_{p,p^{\prime}}(s,c)$ as an
analytic function for every $s\in\mathbb{C}$ except at a simple pole
located at $s=1$, whose residue is
\begin{equation}
\text{Res}_{s=1}\tilde{\zeta}_{p,p^{\prime}}(s,c)=\frac{\pi}{\sqrt{c}\left(1+\frac{1}{\pi p}\right)}.\label{Residue pole tilde zeta Koshliakov Epstein}
\end{equation}
\end{theorem}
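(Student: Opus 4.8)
The plan is to obtain (\ref{Selberg-Chowla for second Epstein}) by feeding the second analogue of Watson's formula (\ref{second analogue Watson}) into the definition (\ref{second analogue definition definition}). Fix $\text{Re}(s)>1$. In the third summand of (\ref{second analogue definition definition}) I would, for each fixed $n$, apply (\ref{second analogue Watson}) with $x$ replaced by $\sqrt{c}\,\lambda_{n}^{\prime}$, which yields
\[
\pi\intop_{0}^{\infty}\frac{y^{s-\frac{1}{2}}J_{s-\frac{1}{2}}(2\pi\sqrt{c}\,\lambda_{n}^{\prime}y)}{\sigma(y)e^{2\pi y}-1}\,dy=\sum_{m=1}^{\infty}\frac{p^{2}+\lambda_{m}^{2}}{p\left(p+\frac{1}{\pi}\right)+\lambda_{m}^{2}}\,\lambda_{m}^{s-\frac{1}{2}}K_{s-\frac{1}{2}}\!\left(2\pi\sqrt{c}\,\lambda_{m}\lambda_{n}^{\prime}\right)+\frac{\pi^{\frac{1}{2}-s}(\sqrt{c}\,\lambda_{n}^{\prime})^{\frac{1}{2}-s}}{4}\,\frac{\Gamma\!\left(s-\frac{1}{2}\right)}{1+\frac{1}{\pi p}}-\frac{\Gamma(s)\,\pi^{-s}(\sqrt{c}\,\lambda_{n}^{\prime})^{-s-\frac{1}{2}}}{4}.
\]
Since $\text{Re}(s)>1$ all the series and integrals in sight converge absolutely, so I may multiply through by $\frac{8\pi^{s}c^{\frac{1}{4}-\frac{s}{2}}}{\Gamma(s)}\cdot\frac{p^{\prime2}+\lambda_{n}^{\prime2}}{p^{\prime}(p^{\prime}+\frac{1}{\pi})+\lambda_{n}^{\prime2}}\,\lambda_{n}^{\prime\,\frac{1}{2}-s}$ and sum over $n$, interchanging the $m$- and $n$-summations and the $y$-integration freely. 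The $K_{s-\frac{1}{2}}$-part becomes exactly the double Bessel sum on the right of (\ref{Selberg-Chowla for second Epstein}); the $\Gamma(s-\frac12)$-part collapses, via $\sum_{n\ge1}\frac{p^{\prime2}+\lambda_{n}^{\prime2}}{p^{\prime}(p^{\prime}+\frac{1}{\pi})+\lambda_{n}^{\prime2}}\lambda_{n}^{\prime\,1-2s}=\zeta_{p^{\prime}}(2s-1)$, to the middle term of (\ref{Selberg-Chowla for second Epstein}); and the $\Gamma(s)$-part collapses, via $\sum_{n\ge1}\frac{p^{\prime2}+\lambda_{n}^{\prime2}}{p^{\prime}(p^{\prime}+\frac{1}{\pi})+\lambda_{n}^{\prime2}}\lambda_{n}^{\prime\,-2s}=\zeta_{p^{\prime}}(2s)$, to a constant multiple of $c^{-s}\zeta_{p^{\prime}}(2s)$, which combines with the first term $\frac{2c^{-s}}{1+\frac{1}{\pi p}}\zeta_{p^{\prime}}(2s)$ of (\ref{second analogue definition definition}); adding back the unchanged middle summand $\frac{2}{1+\frac{1}{\pi p^{\prime}}}\eta_{p}(2s)$ produces (\ref{Selberg-Chowla for second Epstein}). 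This is the exact analogue of how (\ref{Analytic Continuation}) was deduced from (\ref{formula valid for all s}) in the proof of Theorem 4.1, with the first analogue of Watson's formula replaced by the second.

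For the continuation and the pole I would argue as for $\zeta_{p,p^{\prime}}(s,c)$ in Theorem 4.1. The double Bessel sum in (\ref{Selberg-Chowla for second Epstein}) converges absolutely and locally uniformly on all of $\mathbb{C}$ — by the decay $K_{\nu}(x)=O(e^{-x}/\sqrt{x})$ as $x\to\infty$ together with $n-\frac12<\lambda_{n}<n$ — hence defines an entire function of $s$, while $\eta_{p}(2s)$ and $\zeta_{p^{\prime}}(2s-1)$ are meromorphic by Koshliakov's work. Thus (\ref{Selberg-Chowla for second Epstein}) provides the meromorphic continuation of $\tilde{\zeta}_{p,p^{\prime}}(s,c)$, whose only candidate poles are those of $\Gamma(s-\frac12)/\Gamma(s)$ (simple, at $s=\frac12-k$, $k\ge0$), of $\eta_{p}(2s)$ (at $s=\frac12$) and of $\zeta_{p^{\prime}}(2s-1)$ (at $s=1$). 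For each $k\ge1$ the trivial zero $\zeta_{p^{\prime}}(-2k)=0$ annihilates the pole of $\Gamma(s-\frac12)/\Gamma(s)$ at $s=\frac12-k$, exactly as the corresponding singularities of $\zeta_{p,p^{\prime}}(s,c)$ were seen to be removable. At $s=\frac12$ one uses Koshliakov's Laurent data $\eta_{p}(w)=\frac{1}{1+\frac{1}{\pi p}}\frac{1}{w-1}+\cdots$ (equation (\ref{Meromorphic expansion around zero Koshliakov sense})) and $\zeta_{p^{\prime}}(0)=-\frac12\,\frac{1}{1+\frac{1}{\pi p^{\prime}}}$: the residue of $\frac{2}{1+\frac{1}{\pi p^{\prime}}}\eta_{p}(2s)$ at $s=\frac12$ is $\frac{1}{(1+\frac{1}{\pi p})(1+\frac{1}{\pi p^{\prime}})}$, and the residue of the middle term of (\ref{Selberg-Chowla for second Epstein}) at $s=\frac12$ (coming from the simple pole of $\Gamma(s-\frac12)$ there, with $\Gamma(\frac12)=\sqrt{\pi}$) is exactly its negative, so the two cancel and $\tilde{\zeta}_{p,p^{\prime}}(s,c)$ is regular at $s=\frac12$. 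The only surviving pole is at $s=1$, caused solely by $\zeta_{p^{\prime}}(2s-1)=\frac{1}{2(s-1)}+C_{p^{\prime}}^{(1)}+O(s-1)$; evaluating $c^{\frac12-s}$, $\Gamma(s-\frac12)=\Gamma(\frac12)=\sqrt{\pi}$ and $\Gamma(s)=1$ at $s=1$ in the middle term of (\ref{Selberg-Chowla for second Epstein}) gives $\text{Res}_{s=1}\tilde{\zeta}_{p,p^{\prime}}(s,c)=\frac{\pi}{\sqrt{c}\,(1+\frac{1}{\pi p})}$, i.e.\ (\ref{Residue pole tilde zeta Koshliakov Epstein}).

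The steps that require care are the absolute-convergence justifications for the various interchanges — all covered by $\text{Re}(s)>1$ and the Bessel and Stirling bounds already used in Theorems 3.1 and 3.2 — and the tracking of the powers of $\pi$, $c$ and $\lambda_{n}^{\prime}$. The genuinely delicate point, and the main obstacle, is the cancellation of the apparent pole at $s=\frac12$: unlike the points $s=\frac12-k$ with $k\ge1$, where the singularity disappears for the cheap reason that $\zeta_{p^{\prime}}$ vanishes, at $s=\frac12$ one genuinely needs the precise residue of Koshliakov's $\eta_{p}$ at its pole and the precise value $\zeta_{p^{\prime}}(0)$ in order to see the leading Laurent coefficients annihilate. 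This mirrors, but is slightly more substantial than, the verification carried out for the first analogue just before (\ref{residue First analogue Epstein}).
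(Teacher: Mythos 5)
Your proof follows the paper's own argument essentially verbatim: the identity is obtained by substituting the second analogue of Watson's formula (with $x=\sqrt{c}\,\lambda_{n}^{\prime}$) into the integral term of the definition (\ref{second analogue definition definition}) and collecting the resulting $\zeta_{p^{\prime}}(2s-1)$, $\zeta_{p^{\prime}}(2s)$ and double Bessel contributions, while the continuation and residue statement is obtained, exactly as in the paper, by observing that the double Bessel sum is entire and locating the poles of the remaining elementary factors. Your explicit verification that the residues of $\frac{2}{1+\frac{1}{\pi p^{\prime}}}\eta_{p}(2s)$ and of the $\Gamma\left(s-\frac{1}{2}\right)\zeta_{p^{\prime}}(2s-1)$ term cancel at $s=\tfrac{1}{2}$, and that the trivial zeros $\zeta_{p^{\prime}}(-2k)=0$ remove the singularities at $s=\tfrac{1}{2}-k$, is in fact more detailed than the paper's treatment, which merely asserts these facts are straightforward to check.
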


\begin{proof}
The proof of (\ref{Selberg-Chowla for second Epstein}) comes immediately
from our second analogue of Watson's formula (\ref{second analogue Watson}).
Indeed, if we assume that $\text{Re}(s)>1$ and apply (\ref{second analogue Watson})
to the third term of (\ref{second analogue definition definition}),
we obtain
\begin{align*}
&\tilde{\zeta}_{p,p^{\prime}}(s,c):=\frac{2c^{-s}}{1+\frac{1}{\pi p}}\,\zeta_{p^{\prime}}(2s)+2\eta_{p}(2s)+\frac{8\,\pi^{s}c^{\frac{1}{4}-\frac{s}{2}}}{\Gamma(s)}\,\sum_{m,n=1}^{\infty}\frac{p^{2}+\lambda_{m}^{2}}{p\left(p+\frac{1}{\pi}\right)+\lambda_{m}^{2}}\cdot\frac{p^{\prime2}+\lambda_{n}^{\prime2}}{p^{\prime}\left(p^{\prime}+\frac{1}{\pi}\right)+\lambda_{n}^{\prime2}}\,\left(\frac{\lambda_{m}}{\lambda_{n}^{\prime}}\right)^{s-\frac{1}{2}}\,K_{s-\frac{1}{2}}\left(2\pi\sqrt{c}\lambda_{m}\lambda_{n}^{\prime}\right)\\
 & +\frac{8\pi^{s}c^{1/4-\frac{s}{2}}}{\Gamma(s)}\,\left\{ \frac{\pi^{\frac{1}{2}-s}\left(\sqrt{c}\right)^{\frac{1}{2}-s}}{4}\,\frac{\Gamma\left(s-\frac{1}{2}\right)}{1+\frac{1}{\pi p}}\,\zeta_{p^{\prime}}(2s-1)-\frac{\Gamma(s)\pi^{-s}c^{-\frac{s}{2}-\frac{1}{4}}}{4}\zeta_{p^{\prime}}(2s)\right\} =2\eta_{p}(2s)+\frac{2\sqrt{\pi}\,c^{\frac{1}{2}-s}\Gamma\left(s-\frac{1}{2}\right)\,\zeta_{p^{\prime}}(2s-1)}{\Gamma(s)(1+\frac{1}{\pi p})}+\\
 &+\frac{8\,\pi^{s}c^{\frac{1}{4}-\frac{s}{2}}}{\Gamma(s)}\,\sum_{m,n=1}^{\infty}\frac{p^{2}+\lambda_{m}^{2}}{p\left(p+\frac{1}{\pi}\right)+\lambda_{m}^{2}}\,\frac{p^{\prime2}+\lambda_{n}^{\prime2}}{p^{\prime}\left(p^{\prime}+\frac{1}{\pi}\right)+\lambda_{n}^{\prime2}}\,\left(\frac{\lambda_{m}}{\lambda_{n}^{\prime}}\right)^{s-\frac{1}{2}}\,K_{s-\frac{1}{2}}\left(2\pi\sqrt{c}\lambda_{m}\lambda_{n}^{\prime}\right),
\end{align*}
which proves (\ref{Selberg-Chowla for second Epstein}). To deduce
the analytic continuation of $\tilde{\zeta}_{p,p^{\prime}}(s,c)$
from this, we recall that proving that
\begin{equation}
\tilde{H}_{p,p^{\prime}}(s,c):=\sum_{m,n=1}^{\infty}\frac{p^{2}+\lambda_{m}^{2}}{p\left(p+\frac{1}{\pi}\right)+\lambda_{m}^{2}}\,\frac{p^{\prime2}+\lambda_{n}^{\prime2}}{p^{\prime}\left(p^{\prime}+\frac{1}{\pi}\right)+\lambda_{n}^{\prime2}}\,\left(\frac{\lambda_{m}}{\lambda_{n}^{\prime}}\right)^{s-\frac{1}{2}}\,K_{s-\frac{1}{2}}\left(2\pi\sqrt{c}\lambda_{m}\lambda_{n}^{\prime}\right)\label{definition tilde H entire argument}
\end{equation}
is entire requires no more than repeating the argument given in {[}\cite{Bateman_Epstein},
p. 368, Lemma 2{]}. Therefore, any potential meromorphic behavior of $\tilde{\zeta}_{p,p^{\prime}}(s,c)$ is due to the function
\[
\tilde{G}_{p,p^{\prime}}(s,c):=\frac{2}{1+\frac{1}{\pi p^{\prime}}}\,\eta_{p}(2s)+\frac{2\sqrt{\pi}\,c^{\frac{1}{2}-s}}{\Gamma(s)}\cdot\frac{\Gamma\left(s-\frac{1}{2}\right)}{1+\frac{1}{\pi p}}\,\zeta_{p^{\prime}}(2s-1).
\]

It is straightforward to check that $\tilde{G}_{p,p^{\prime}}(s,c)$
has removable singularities at $s=\frac{1}{2}-k,$ $k\in\mathbb{N}_{0}$.
Since $\eta_{p}(2s)$ and $\Gamma\left(s-\frac{1}{2}\right)$ are
analytic in a neighborhood of $s=1$, we conclude that $\tilde{\zeta}_{p,p^{\prime}}(s,c)$
must have a simple pole at $s=1$ (coming from the contribution of $\zeta_{p^{\prime}}(2s-1)$)
with residue explicitly given by (\ref{Residue pole tilde zeta Koshliakov Epstein}).
\end{proof}

Like in the previous section, we may get a generalization of Kronecker's
limit formula (\ref{Kronecker limit formula diagonal form}) in another
direction. The following corollary provides such generalization.

\begin{corollary}
Let $p,\,p^{\prime}\in\mathbb{R}_{+}$, $c>0$ and let $\tilde{\zeta}_{p,p^{\prime}}(s,c)$
be the generalized Epstein zeta function (\ref{definition Epstein in statement}).
Then $\tilde{\zeta}_{p,p^{\prime}}(s,c)$ admits the following meromorphic expansion
around $s=1$,  
\begin{align}
\zeta_{p,p^{\prime}}(s,\,c) & =\frac{\pi}{\sqrt{c}\left(1+\frac{1}{\pi p}\right)}\,\frac{1}{s-1}+\frac{2}{1+\frac{1}{\pi p^{\prime}}}\,\eta_{p}(2)+\nonumber \\
+\frac{\pi}{\sqrt{c}} & \left(\frac{2C_{p^{\prime}}^{(1)}-\log\left(4c\right)}{1+\frac{1}{\pi p}}+4\,\sum_{n=1}^{\infty}\frac{p^{\prime2}+\lambda_{n}^{\prime2}}{p^{\prime}\left(p^{\prime}+\frac{1}{\pi}\right)+\lambda_{n}^{\prime2}}\,\lambda_{n}^{\prime-1}\,\sigma_{p}\left(2\pi\sqrt{c}\lambda_{n}^{\prime}\right)\right)+O(s-1),\label{Kronecker limit formula for second Epstein zeta function}
\end{align}
where $\sigma_{p}(z)$ is defined by (\ref{sigma p definition on Koshliakov}).
\end{corollary}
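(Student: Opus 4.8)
The plan is to repeat, \emph{mutatis mutandis}, the argument that produced the first generalized Kronecker limit formula (\ref{corrected Kronecker Limit formula}), but now starting from the second generalized Selberg--Chowla formula (\ref{Selberg-Chowla for second Epstein}), which already supplies the analytic continuation of $\tilde{\zeta}_{p,p'}(s,c)$ to $\mathbb{C}\setminus\{1\}$. First I would locate the unique term on the right-hand side of (\ref{Selberg-Chowla for second Epstein}) that is singular at $s=1$, namely $\frac{2\sqrt{\pi}\,c^{\frac{1}{2}-s}}{\Gamma(s)}\cdot\frac{\Gamma(s-\frac{1}{2})}{1+\frac{1}{\pi p}}\,\zeta_{p'}(2s-1)$: indeed $\eta_p(2s)$ is analytic near $s=1$ (its pole, by (\ref{Meromorphic expansion around zero Koshliakov sense}), sits at the argument $1$, i.e.\ at $s=\tfrac12$), and the double Bessel sum $\tilde{H}_{p,p'}(s,c)$ of (\ref{definition tilde H entire argument}) is entire. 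Using the expansion (\ref{First meromorphic Kronecker limit}) of $\sqrt{\pi}\,\Gamma(s-\tfrac12)/\Gamma(s)$ about $s=1$, the elementary expansion of $c^{\frac{1}{2}-s}$, and the Laurent expansion (\ref{Second meromorphic Kronecker limit}) of $\zeta_{p'}(2s-1)$, exactly as in the passage leading to (\ref{first approximation Kronecker}), this term contributes $\frac{\pi}{\sqrt{c}\,(1+\frac{1}{\pi p})}\,\frac{1}{s-1}+\frac{\pi}{\sqrt{c}\,(1+\frac{1}{\pi p})}\bigl(2C_{p'}^{(1)}-\log(4c)\bigr)+O(s-1)$, which simultaneously gives the principal part and confirms the residue (\ref{Residue pole tilde zeta Koshliakov Epstein}).

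Next I would evaluate the two regular pieces at $s=1$. The first, $\frac{2}{1+\frac{1}{\pi p'}}\,\eta_p(2s)$, simply gives $\frac{2}{1+\frac{1}{\pi p'}}\,\eta_p(2)$. For the double sum, I would set $s=1$ in (\ref{definition tilde H entire argument}) and insert the closed form $K_{1/2}(z)=\sqrt{\pi/(2z)}\,e^{-z}$ from (\ref{particular case of bessel!}); since then $\bigl(\lambda_m/\lambda_n'\bigr)^{1/2}K_{1/2}\bigl(2\pi\sqrt{c}\lambda_m\lambda_n'\bigr)=\frac{1}{2c^{1/4}\lambda_n'}\,e^{-2\pi\sqrt{c}\lambda_m\lambda_n'}$, the inner sum over $m$ is, by the very definition (\ref{sigma p definition on Koshliakov}) of $\sigma_p$, equal to $\sigma_p\bigl(2\pi\sqrt{c}\lambda_n'\bigr)$. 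Hence $\tilde{H}_{p,p'}(1,c)=\frac{1}{2c^{1/4}}\sum_{n=1}^{\infty}\frac{p'^2+\lambda_n'^2}{p'(p'+\frac{1}{\pi})+\lambda_n'^2}\,\lambda_n'^{-1}\,\sigma_p\bigl(2\pi\sqrt{c}\lambda_n'\bigr)$, and multiplying by the prefactor $8\pi^s c^{\frac{1}{4}-\frac{s}{2}}/\Gamma(s)$ evaluated at $s=1$, namely $8\pi c^{-1/4}$, yields precisely $\frac{4\pi}{\sqrt{c}}\sum_{n=1}^{\infty}\frac{p'^2+\lambda_n'^2}{p'(p'+\frac{1}{\pi})+\lambda_n'^2}\,\lambda_n'^{-1}\,\sigma_p\bigl(2\pi\sqrt{c}\lambda_n'\bigr)$, the series appearing in (\ref{Kronecker limit formula for second Epstein zeta function}). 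Collecting the three contributions gives (\ref{Kronecker limit formula for second Epstein zeta function}).

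The main technical point is the same as in the proof of (\ref{corrected Kronecker Limit formula}): one must justify interchanging the order of summation when collapsing the inner $m$-sum into $\sigma_p$, which is legitimate by the absolute convergence underlying the fact (established in the proof of (\ref{Selberg-Chowla for second Epstein}) via the argument of [\cite{Bateman_Epstein}, p.\ 368, Lemma 2]) that $\tilde{H}_{p,p'}(s,c)$ is entire, and one must keep careful track of the powers of $c$ and $\pi$ so that the $c^{1/4}$ factors cancel and the clean factor $\pi/\sqrt{c}$ emerges. As a consistency check, letting $p,p'\to\infty$ in (\ref{Kronecker limit formula for second Epstein zeta function}) recovers the classical Kronecker limit formula (\ref{Kronecker limit formula diagonal form}); everything else is routine Laurent-expansion bookkeeping already performed in the proofs of (\ref{proof expression of Koshliakov}) and (\ref{corrected Kronecker Limit formula}).
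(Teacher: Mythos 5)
Your proposal is correct and follows essentially the same route as the paper: expand the singular term $\frac{2\sqrt{\pi}\,c^{\frac{1}{2}-s}}{\Gamma(s)}\cdot\frac{\Gamma(s-\frac{1}{2})}{1+\frac{1}{\pi p}}\,\zeta_{p^{\prime}}(2s-1)$ of (\ref{Selberg-Chowla for second Epstein}) via (\ref{First meromorphic Kronecker limit}) and (\ref{Second meromorphic Kronecker limit}), evaluate the remaining regular terms at $s=1$ using $K_{1/2}(z)=\sqrt{\pi/(2z)}\,e^{-z}$, and collapse the inner $m$-sum into $\sigma_p\left(2\pi\sqrt{c}\lambda_n^{\prime}\right)$. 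Your bookkeeping of the powers of $c$ and $\pi$ checks out and reproduces the coefficient $4\pi/\sqrt{c}$ exactly as in the paper.
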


\begin{proof}
Like in the proof of Corollary 4.1., we appeal to the Laurent expansions
(\ref{First meromorphic Kronecker limit}) and (\ref{Second meromorphic Kronecker limit}), which show that
\begin{equation}
\frac{2\sqrt{\pi}\,c^{\frac{1}{2}-s}}{\Gamma(s)}\cdot\frac{\Gamma\left(s-\frac{1}{2}\right)}{1+\frac{1}{\pi p}}\,\zeta_{p^{\prime}}(2s-1)=\frac{\pi}{\sqrt{c}\left(1+\frac{1}{\pi p}\right)}\,\frac{1}{s-1}+\frac{\pi}{\sqrt{c}\left(1+\frac{1}{\pi p}\right)}\left(2C_{p^{\prime}}^{(1)}-\log\left(4c\right)\right)+O\left(s-1\right).\label{second meromorphic expansion}
\end{equation}

Therefore, using the generalized Selberg-Chowla formula (\ref{Selberg-Chowla for second Epstein})
and (\ref{particular case of bessel!}) and using the limit $s\rightarrow1$,
we conclude
\begin{align*}
\tilde{\zeta}_{p,p^{\prime}}(s,c)=\frac{2}{1+\frac{1}{\pi p^{\prime}}}\,\eta_{p}(2)+\frac{1}{1+\frac{1}{\pi p}}\left\{ \frac{\pi}{\sqrt{c}}\,\frac{1}{s-1}+\frac{\pi}{\sqrt{c}}\left(2C_{p^{\prime}}^{(1)}-\log\left(4c\right)\right)\right\} \\
+\frac{4\pi}{\sqrt{c}}\,\sum_{m,n=1}^{\infty}\frac{\left(p^{2}+\lambda_{m}^{2}\right)\left(p^{\prime2}+\lambda_{n}^{\prime2}\right)\,\lambda_{n}^{\prime-1}}{\left(p\left(p+\frac{1}{\pi}\right)+\lambda_{m}^{2}\right)\left(p^{\prime}\left(p^{\prime}+\frac{1}{\pi}\right)+\lambda_{n}^{\prime2}\right)}\cdot e^{-2\pi\sqrt{c}\lambda_{m}\lambda_{n}^{\prime}}+O\left(s-1\right).
\end{align*}

One finally obtains (\ref{Kronecker limit formula for second Epstein zeta function}) after using the definition of $\sigma_{p}(z)$ (\ref{sigma p definition on Koshliakov}).  
\end{proof}

\bigskip{}

We may use the previous result to derive more analogues of Corollaries 4.2, 4.3., 4.4. and 4.5. Unlike the previous analogue of the Epstein zeta function, the symmetries
provided by the Selberg-Chowla formula (\ref{Selberg-Chowla for second Epstein})
can be used to derive (as an easy consequence) a symmetric functional equation
for $\tilde{\zeta}_{p,p^{\prime}}(s,c)$.
\begin{corollary}
For every $s\in\mathbb{C}$, the following functional equation holds
\begin{equation}
\left(\frac{\pi}{\sqrt{c}}\right)^{-s}\Gamma(s)\,\tilde{\zeta}_{p,p^{\prime}}\left(s,c\right)=\left(\frac{\pi}{\sqrt{c}}\right)^{-(1-s)}\Gamma\left(1-s\right)\,\tilde{\zeta}_{p^{\prime},p}\left(1-s,c\right).\label{Functional equation Epstein second analogueeee!}
\end{equation}
\end{corollary}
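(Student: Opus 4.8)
The plan is to read the functional equation straight off the generalized Selberg--Chowla formula (\ref{Selberg-Chowla for second Epstein}), which writes $\tilde{\zeta}_{p,p^{\prime}}(s,c)$ as the sum of a term carrying $\eta_{p}(2s)$, a term carrying $\zeta_{p^{\prime}}(2s-1)$, and the double Bessel series $\tilde{H}_{p,p^{\prime}}(s,c)$ of (\ref{definition tilde H entire argument}). First I would multiply (\ref{Selberg-Chowla for second Epstein}) by $\left(\pi/\sqrt{c}\right)^{-s}\Gamma(s)=\pi^{-s}c^{s/2}\Gamma(s)$; the factors $1/\Gamma(s)$ in the last two pieces cancel, giving, for $\text{Re}(s)>1$,
\begin{align*}
\left(\frac{\pi}{\sqrt{c}}\right)^{-s}\Gamma(s)\,\tilde{\zeta}_{p,p^{\prime}}(s,c)&=\frac{2\pi^{-s}c^{s/2}\Gamma(s)}{1+\frac{1}{\pi p^{\prime}}}\,\eta_{p}(2s)+\frac{2\sqrt{\pi}\,\pi^{-s}c^{\frac{1-s}{2}}\Gamma\!\left(s-\frac{1}{2}\right)}{1+\frac{1}{\pi p}}\,\zeta_{p^{\prime}}(2s-1)\\
&\qquad+8c^{1/4}\,\tilde{H}_{p,p^{\prime}}(s,c),
\end{align*}
and likewise for $\left(\pi/\sqrt{c}\right)^{-(1-s)}\Gamma(1-s)\,\tilde{\zeta}_{p^{\prime},p}(1-s,c)$.

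Next I would dispose of the Bessel term. In (\ref{definition tilde H entire argument}), relabelling the summation indices $m\leftrightarrow n$ and using $K_{-\nu}=K_{\nu}$ together with $(\lambda_{n}^{\prime}/\lambda_{m})^{1/2-s}=(\lambda_{m}/\lambda_{n}^{\prime})^{s-1/2}$ shows at once that $\tilde{H}_{p^{\prime},p}(1-s,c)=\tilde{H}_{p,p^{\prime}}(s,c)$; hence the contribution $8c^{1/4}\tilde{H}_{p,p^{\prime}}(s,c)$ on the left is literally the contribution $8c^{1/4}\tilde{H}_{p^{\prime},p}(1-s,c)$ appearing in $\left(\pi/\sqrt{c}\right)^{-(1-s)}\Gamma(1-s)\,\tilde{\zeta}_{p^{\prime},p}(1-s,c)$, and it only remains to match the two ``elementary'' pieces. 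Since $\tilde{\zeta}_{p,p^{\prime}}$ carries $\eta_{p}$ and $\zeta_{p^{\prime}}$ while $\tilde{\zeta}_{p^{\prime},p}(1-s,c)$ carries $\eta_{p^{\prime}}$ and $\zeta_{p}$, the matching is cross-wise: I would show that the $\eta_{p}(2s)$--term above equals the $\zeta_{p}(1-2s)$--term of $\left(\pi/\sqrt{c}\right)^{-(1-s)}\Gamma(1-s)\,\tilde{\zeta}_{p^{\prime},p}(1-s,c)$, that is, after cancelling the common factor $2c^{s/2}/(1+\frac{1}{\pi p^{\prime}})$,
\[
\pi^{-s}\Gamma(s)\,\eta_{p}(2s)=\sqrt{\pi}\,\pi^{s-1}\Gamma\!\left(\tfrac{1}{2}-s\right)\zeta_{p}(1-2s),
\]
and symmetrically that the $\zeta_{p^{\prime}}(2s-1)$--term matches the $\eta_{p^{\prime}}(2-2s)$--term, i.e. $\sqrt{\pi}\,\pi^{-s}\Gamma\!\left(s-\tfrac{1}{2}\right)\zeta_{p^{\prime}}(2s-1)=\pi^{s-1}\Gamma(1-s)\,\eta_{p^{\prime}}(2-2s)$.

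Each of these two scalar identities is a purely formal consequence of Koshliakov's functional equation (\ref{functional equation Kosh}) --- used in the forms $\zeta_{p}(1-2s)=\frac{2\cos(\pi s)\Gamma(2s)}{(2\pi)^{2s}}\eta_{p}(2s)$ and $\zeta_{p^{\prime}}(2s-1)=-\frac{2\cos(\pi s)\Gamma(2-2s)}{(2\pi)^{2-2s}}\eta_{p^{\prime}}(2-2s)$, the extra sign in the second coming from $\cos\!\left(\pi(1-s)\right)=-\cos(\pi s)$ --- combined with Legendre's duplication formula for $\Gamma(2s)$ (resp. $\Gamma(2-2s)$) and Euler's reflection formula in the shapes $\Gamma\!\left(\tfrac{1}{2}+s\right)\Gamma\!\left(\tfrac{1}{2}-s\right)=\pi/\cos(\pi s)$ and $\Gamma\!\left(s-\tfrac{1}{2}\right)\Gamma\!\left(\tfrac{3}{2}-s\right)=-\pi/\cos(\pi s)$. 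After these substitutions every transcendental factor cancels and one is left with a trivially true identity between powers of $\pi$ and $2$. This establishes (\ref{Functional equation Epstein second analogueeee!}) for $\text{Re}(s)>1$; since (\ref{Selberg-Chowla for second Epstein}) supplies the meromorphic continuation of $\tilde{\zeta}_{p,p^{\prime}}(s,c)$, hence of $\tilde{\zeta}_{p^{\prime},p}(1-s,c)$, to the whole plane, the principle of analytic continuation then delivers the identity for every $s\in\mathbb{C}$. The only place demanding care is the $\Gamma$-factor bookkeeping in this last step --- in particular tracking the signs hidden in $\cos(\pi(1-s))=-\cos(\pi s)$ and $\sin\!\left(\pi(s-\tfrac{1}{2})\right)=-\cos(\pi s)$ --- but nothing deeper is involved.
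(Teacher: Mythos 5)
Your proof is correct, and it takes a genuinely shorter route than the paper's. You work entirely inside the Selberg--Chowla expansion (\ref{Selberg-Chowla for second Epstein}): after multiplying by the completing factor $\left(\pi/\sqrt{c}\right)^{-s}\Gamma(s)$, the double $K$-Bessel series $\tilde{H}_{p,p^{\prime}}(s,c)$ is manifestly invariant under $(s,p,p^{\prime})\mapsto(1-s,p^{\prime},p)$ (relabel $m\leftrightarrow n$ and use $K_{-\nu}=K_{\nu}$), so the whole statement reduces to the two scalar identities $\pi^{-s}\Gamma(s)\,\eta_{p}(2s)=\pi^{s-\frac{1}{2}}\Gamma\left(\tfrac{1}{2}-s\right)\zeta_{p}(1-2s)$ and its mirror, which I have checked follow exactly as you say from (\ref{functional equation Kosh}), Legendre duplication and reflection (your sign bookkeeping, including $\cos(\pi(1-s))=-\cos(\pi s)$ and $\Gamma(s-\tfrac{1}{2})\Gamma(\tfrac{3}{2}-s)=-\pi/\cos(\pi s)$, is right). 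The paper instead substitutes $s\to 1-s$ in (\ref{Selberg-Chowla for second Epstein}), resums the Bessel double series over $n$ via the second Watson analogue (\ref{second analogue Watson}) for $\text{Re}(s)>1$, and compares the result against the original $J$-Bessel integral definition (\ref{second analogue definition definition}) of $\tilde{\zeta}_{p^{\prime},p}(s,c)$; that route makes the link to the defining integrals explicit but costs an extra interchange of summation and another pass through Watson's formula. Your argument buys directness and avoids the convergence bookkeeping of the resummation; its only prerequisite is that (\ref{Selberg-Chowla for second Epstein}) is already known to furnish the continuation to all $s$, which the paper establishes in Theorem 4.2, so the final appeal to analytic continuation is sound (indeed, since your two scalar identities are identities of meromorphic functions, the equality holds for all $s$ at once).
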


\begin{proof}
Indeed, using the Selberg-Chowla formula (\ref{Selberg-Chowla for second Epstein})
with $s$ being replaced by $1-s$, we are able to see that
\begin{align}
\tilde{\zeta}_{p,p^{\prime}}\left(1-s,c\right) & =\frac{2}{1+\frac{1}{\pi p^{\prime}}}\,\eta_{p}(2-2s)+\frac{2\sqrt{\pi}\,c^{s-\frac{1}{2}}}{\Gamma(1-s)}\cdot\frac{\Gamma\left(\frac{1}{2}-s\right)}{1+\frac{1}{\pi p}}\,\zeta_{p^{\prime}}(1-2s)+\nonumber \\
+\frac{8\,\pi^{1-s}c^{\frac{s}{2}-\frac{1}{4}}}{\Gamma(1-s)} & \sum_{m,n=1}^{\infty}\frac{p^{2}+\lambda_{m}^{2}}{p\left(p+\frac{1}{\pi}\right)+\lambda_{m}^{2}}\,\frac{p^{\prime2}+\lambda_{n}^{\prime2}}{p^{\prime}\left(p^{\prime}+\frac{1}{\pi}\right)+\lambda_{n}^{\prime2}}\,\left(\frac{\lambda_{n}^{\prime}}{\lambda_{m}}\right)^{s-\frac{1}{2}}\,K_{s-\frac{1}{2}}\left(2\pi\sqrt{c}\,\lambda_{m}\lambda_{n}^{\prime}\right).\label{Once again Selberg Chowla for second ananananaanlogue}
\end{align}

Assume now that $\text{Re}(s)>1$: if we start by evaluating the Bessel
series on the right-hand side of (\ref{Once again Selberg Chowla for second ananananaanlogue})
by summing first with respect to the variable of summation $n$, we
get (according to the second analogue of Watson's formula (\ref{second analogue Watson})),
\begin{align}
\sum_{n=1}^{\infty}\frac{p^{\prime2}+\lambda_{n}^{\prime2}}{p^{\prime}\left(p^{\prime}+\frac{1}{\pi}\right)+\lambda_{n}^{\prime2}}\,\lambda_{n}^{\prime s-\frac{1}{2}}\,K_{s-\frac{1}{2}}\left(2\pi\lambda_{m}\sqrt{c}\lambda_{n}^{\prime}\right) & =\nonumber \\
=-\frac{\pi^{\frac{1}{2}-s}(\lambda_{m}\sqrt{c})^{\frac{1}{2}-s}}{4}\,\frac{\Gamma\left(s-\frac{1}{2}\right)}{1+\frac{1}{\pi p^{\prime}}}+\frac{\Gamma(s)\pi^{-s}\left(\lambda_{m}\sqrt{c}\right)^{-s-\frac{1}{2}}}{4}+\pi\,\intop_{0}^{\infty}\,\frac{y^{s-\frac{1}{2}}J_{s-\frac{1}{2}}\left(2\pi\lambda_{m}\sqrt{c}y\right)}{\sigma^{\prime}\left(y\right)e^{2\pi y}-1}\,dy.\label{sum sum sum sum sum}
\end{align}

Sum now the right-hand side of the previous equality with respect to $m$ and use the hypothesis $\text{Re}(s)>1$ to obtain (here $\mathcal{A}:=\tilde{\zeta}_{p,p^{\prime}}\left(1-s,c\right)$)
\begin{align*}
\mathcal{A} & =\frac{2\eta_{p}(2-2s)}{1+\frac{1}{\pi p^{\prime}}}+\frac{2\sqrt{\pi}\,c^{s-\frac{1}{2}}}{\Gamma(1-s)}\cdot\frac{\Gamma\left(\frac{1}{2}-s\right)\zeta_{p^{\prime}}(1-2s)}{1+\frac{1}{\pi p}}+\frac{8\,\pi^{1-s}c^{\frac{s}{2}-\frac{1}{4}}}{\Gamma(1-s)}\left[\frac{\Gamma(s)\pi^{-s}c^{-\frac{s}{2}-\frac{1}{4}}\zeta_{p}(2s)}{4}-\frac{\pi^{\frac{1}{2}-s}\Gamma\left(s-\frac{1}{2}\right)c^{\frac{1}{4}-\frac{s}{2}}\zeta_{p}(2s-1)}{4\left(1+\frac{1}{\pi p^{\prime}}\right)}\right]\\
 & +\frac{8\,\pi^{2-s}c^{\frac{s}{2}-\frac{1}{4}}}{\Gamma(1-s)}\,\sum_{m=1}^{\infty}\frac{p^{2}+\lambda_{m}^{2}}{p\left(p+\frac{1}{\pi}\right)+\lambda_{m}^{2}}\lambda_{m}^{\frac{1}{2}-s}\,\intop_{0}^{\infty}\,\frac{y^{s-\frac{1}{2}}J_{s-\frac{1}{2}}\left(2\pi\lambda_{m}\sqrt{c}y\right)}{\sigma^{\prime}\left(y\right)e^{2\pi y}-1}\,dy.
\end{align*}

Appealing to the functional equation for $\zeta_{p}(2s-1)$, (\ref{functional equation Kosh}),
we are able to simplify the previous expression to:
\begin{align}
\tilde{\zeta}_{p,p^{\prime}}\left(1-s,c\right) & =\frac{2\sqrt{\pi}\,c^{s-\frac{1}{2}}}{\Gamma(1-s)}\cdot\frac{\Gamma\left(\frac{1}{2}-s\right)}{1+\frac{1}{\pi p}}\,\zeta_{p^{\prime}}(1-2s)+\frac{8\,\pi^{1-s}c^{\frac{s}{2}-\frac{1}{4}}}{\Gamma(1-s)}\times\nonumber \\
\times & \left[\frac{\Gamma(s)\pi^{-s}c^{-\frac{s}{2}-\frac{1}{4}}}{4}\zeta_{p}(2s)+\pi\sum_{m=1}^{\infty}\frac{p^{2}+\lambda_{m}^{2}}{p\left(p+\frac{1}{\pi}\right)+\lambda_{m}^{2}}\lambda_{m}^{\frac{1}{2}-s}\,\intop_{0}^{\infty}\,\frac{y^{s-\frac{1}{2}}J_{s-\frac{1}{2}}\left(2\pi\lambda_{m}\sqrt{c}y\right)}{\sigma^{\prime}\left(y\right)e^{2\pi y}-1}\,dy\right].\label{one of the final expressions to deduce!}
\end{align}

Henceforth, we see that (\ref{one of the final expressions to deduce!})
gives
\begin{align}
\left(\frac{\pi}{\sqrt{c}}\right)^{-(1-s)}\Gamma\left(1-s\right)\,\tilde{\zeta}_{p,p^{\prime}}\left(1-s,c\right) & =2\pi^{s-\frac{1}{2}}\,c^{\frac{s}{2}}\cdot\frac{\Gamma\left(\frac{1}{2}-s\right)}{1+\frac{1}{\pi p}}\,\zeta_{p^{\prime}}(1-2s)+2\Gamma(s)\pi^{-s}c^{-\frac{s}{2}}\,\zeta_{p}(2s)\nonumber \\
+8\pi c^{\frac{1}{4}}\sum_{m=1}^{\infty}\frac{p^{2}+\lambda_{m}^{2}}{p\left(p+\frac{1}{\pi}\right)+\lambda_{m}^{2}} & \lambda_{m}^{\frac{1}{2}-s}\,\intop_{0}^{\infty}\,\frac{y^{s-\frac{1}{2}}J_{s-\frac{1}{2}}\left(2\pi\lambda_{m}\sqrt{c}y\right)}{\sigma^{\prime}\left(y\right)e^{2\pi y}-1}\,dy.\label{after several simplifications in the end of proof!}
\end{align}

Invoking now the definition of the second analogue of Epstein's zeta function, (\ref{second analogue definition definition}) (with $p$ and $p^{\prime}$ being switched), we are also able to check that
\begin{align}
\left(\frac{\pi}{\sqrt{c}}\right)^{-s}\Gamma(s)\,\tilde{\zeta}_{p^{\prime},p}(s,c) & :=\frac{2c^{-s/2}\,\pi^{-s}\Gamma(s)\,\zeta_{p}(2s)}{1+\frac{1}{\pi p^{\prime}}}+\frac{2c^{s/2}\,\pi^{-s}\Gamma(s)\eta_{p^{\prime}}(2s)}{1+\frac{1}{\pi p}}+\nonumber \\
 & +8\pi c^{\frac{1}{4}}\,\sum_{m=1}^{\infty}\frac{p^{2}+\lambda_{m}^{2}}{p\left(p+\frac{1}{\pi}\right)+\lambda_{m}^{2}}\lambda_{m}^{\frac{1}{2}-s}\,\intop_{0}^{\infty}\,\frac{y^{s-\frac{1}{2}}J_{s-\frac{1}{2}}\left(2\pi\lambda_{m}\sqrt{c}y\right)}{\sigma^{\prime}\left(y\right)e^{2\pi y}-1}\label{other expression deeeefinition!}
\end{align}
for every $\text{Re}(s)>1$. But the right-hand sides of (\ref{one of the final expressions to deduce!})
and (\ref{other expression deeeefinition!}) coincide, due to the
functional equation for $\zeta_{p^{\prime}}(1-2s)$. Therefore, equating
both sides of (\ref{one of the final expressions to deduce!}) and
(\ref{other expression deeeefinition!}) (and replacing $p$ by $p^{\prime}$)
gives (\ref{Functional equation Epstein second analogueeee!}).
\end{proof}

As a particular case of the previous theorem, we can write a simple
functional equation for the particular case of (\ref{definition Epstein in statement})
studied in Corollary 4.4.

\begin{corollary}
For $c>0$, let $\zeta_{p,p^{\prime}}(s,c)$ be the first analogue
of Epstein's zeta function (\ref{definition Epstein in statement}).
Furthermore, for every $p^{\prime}\in\mathbb{R}_{+}$, let
\[
\zeta_{\infty,p^{\prime}}(s,c):=\sum_{m,n\neq0}\frac{p^{\prime2}+\lambda_{n}^{\prime2}}{p^{\prime}(p^{\prime}+\frac{1}{\pi})+\lambda_{n}^{\prime2}}\,\frac{1}{\left(m^{2}+c\lambda_{n}^{\prime2}\right)^{s}},\,\,\,\,\text{Re}(s)>1.
\]

Then the analytic continuation of $\zeta_{\infty,p^{\prime}}(s,c)$
satisfies the functional equation
\begin{equation}
\left(\frac{\pi}{\sqrt{c}}\right)^{-s}\Gamma(s)\,\zeta_{\infty,p^{\prime}}\left(s,c\right)=\left(\frac{\pi}{\sqrt{c}}\right)^{-(1-s)}\Gamma\left(1-s\right)\,\tilde{\zeta}_{p^{\prime},\infty}\left(1-s,c\right).\label{Functional equation particular case}
\end{equation}
\end{corollary}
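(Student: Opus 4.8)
The plan is to deduce \ref{Functional equation particular case} from the symmetric functional equation
\[
\left(\frac{\pi}{\sqrt{c}}\right)^{-s}\Gamma(s)\,\tilde{\zeta}_{p,p^{\prime}}\left(s,c\right)=\left(\frac{\pi}{\sqrt{c}}\right)^{-(1-s)}\Gamma\left(1-s\right)\,\tilde{\zeta}_{p^{\prime},p}\left(1-s,c\right)
\]
of the previous corollary (equation \ref{Functional equation Epstein second analogueeee!}), which holds for every $s\in\mathbb{C}$ and every $p,p^{\prime}\in\mathbb{R}_{+}$, by letting $p\to\infty$. Since the prefactors $\left(\pi/\sqrt{c}\right)^{-s}\Gamma(s)$ and $\left(\pi/\sqrt{c}\right)^{-(1-s)}\Gamma(1-s)$ do not involve $p$, everything reduces to identifying the two limits $\lim_{p\to\infty}\tilde{\zeta}_{p,p^{\prime}}(s,c)$ and $\lim_{p\to\infty}\tilde{\zeta}_{p^{\prime},p}(1-s,c)$.

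First I would prove that $\lim_{p\to\infty}\tilde{\zeta}_{p,p^{\prime}}(s,c)=\zeta_{\infty,p^{\prime}}(s,c)$. Starting from the defining expression \ref{second analogue definition definition} with $\text{Re}(s)>1$, as $p\to\infty$ one has $\left(1+\frac{1}{\pi p}\right)^{-1}\to1$, $\zeta_{p^{\prime}}(2s)$ unchanged, $\eta_{p}(2s)\to\zeta(2s)$ (a consequence of the functional equation \ref{functional equation Kosh} together with the classical functional equation for $\zeta$, as already used in the computation \ref{computation limiting cases!}), and $\frac{1}{\sigma(y)e^{2\pi y}-1}\to\frac{1}{e^{2\pi y}-1}$ because $\sigma(y)=\frac{p+y}{p-y}\to1$; a dominated-convergence argument of the type used in \ref{set of bounds justification} lets these limits pass through the $y$-integral and the sum over $n$. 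Then, expanding $\frac{1}{e^{2\pi y}-1}=\sum_{m\geq1}e^{-2\pi my}$ and applying the Laplace representation \ref{Integral representation Bessel laplace} precisely as in the argument leading to \ref{computation limiting cases!}, but now summing over a single index, the transform-side series recombines and yields
\[
\lim_{p\to\infty}\tilde{\zeta}_{p,p^{\prime}}(s,c)=2c^{-s}\zeta_{p^{\prime}}(2s)+\frac{2}{1+\frac{1}{\pi p^{\prime}}}\,\zeta(2s)+4\sum_{m,n=1}^{\infty}\frac{p^{\prime2}+\lambda_{n}^{\prime2}}{p^{\prime}\left(p^{\prime}+\frac{1}{\pi}\right)+\lambda_{n}^{\prime2}}\,\frac{1}{\left(m^{2}+c\lambda_{n}^{\prime2}\right)^{s}}.
\]
Splitting the double series that defines $\zeta_{\infty,p^{\prime}}(s,c)$ into the contributions $m=0$, $n=0$ and $m,n\neq0$ — using $\lambda_{-n}^{\prime}=-\lambda_{n}^{\prime}$, $\lambda_{0}^{\prime}=0$ and the evenness in $n$ of the weight $\frac{p^{\prime2}+\lambda_{n}^{\prime2}}{p^{\prime}(p^{\prime}+\frac{1}{\pi})+\lambda_{n}^{\prime2}}$ — shows that the right-hand side above is precisely $\zeta_{\infty,p^{\prime}}(s,c)$. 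As both sides of this identity continue meromorphically to $\mathbb{C}$ with a single pole at $s=1$ (the former by the Selberg-Chowla formula \ref{Selberg-Chowla for second Epstein}, the latter as the limiting case $p\to\infty$ of \ref{representation Re(s)<1}), the equality $\lim_{p\to\infty}\tilde{\zeta}_{p,p^{\prime}}(s,c)=\zeta_{\infty,p^{\prime}}(s,c)$ holds for all $s\in\mathbb{C}$ by analytic continuation.

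The second limit is immediate: in $\tilde{\zeta}_{p^{\prime},p}(1-s,c)$ the parameter $p$ occupies the second slot of the definition \ref{second analogue definition definition}, so as $p\to\infty$ we have $\left(1+\frac{1}{\pi p}\right)^{-1}\to1$, $\zeta_{p}(2(1-s))\to\zeta(2(1-s))$, the weights $\frac{p^{2}+\lambda_{n}^{2}}{p(p+\frac{1}{\pi})+\lambda_{n}^{2}}\to1$ and $\lambda_{n}\to n$, while $\eta_{p^{\prime}}$ and $\sigma^{\prime}$ are untouched; by the very form of \ref{second analogue definition definition} this limit equals $\tilde{\zeta}_{p^{\prime},\infty}(1-s,c)$. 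Substituting both limits into \ref{Functional equation Epstein second analogueeee!} gives \ref{Functional equation particular case}, first where the limits are computed directly and then for all $s\in\mathbb{C}$ by analytic continuation. The one point demanding care is the identification of $\lim_{p\to\infty}\tilde{\zeta}_{p,p^{\prime}}(s,c)$ with the quadratic-form series $\zeta_{\infty,p^{\prime}}(s,c)$, where the transform-side series of the ``second analogue'' must be resummed via \ref{Integral representation Bessel laplace} exactly as in the proof of the limiting cases above; everything else is a routine interchange of limits and summations.
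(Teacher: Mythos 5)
Your proposal is correct and follows essentially the same route as the paper: the paper's proof consists precisely of letting $p\rightarrow\infty$ in the symmetric functional equation (\ref{Functional equation Epstein second analogueeee!}) and identifying $\lim_{p\rightarrow\infty}\tilde{\zeta}_{p,p^{\prime}}(s,c)$ with $\zeta_{\infty,p^{\prime}}(s,c)$ via ``a simple adaptation'' of the computation (\ref{computation limiting cases!}). You merely supply more of the details (the resummation through the Laplace representation (\ref{Integral representation Bessel laplace}), the bookkeeping of the $m=0$ and $n=0$ terms, and the identification of the second limit) than the paper records.
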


\begin{proof}
Indeed, by letting $p\rightarrow\infty$, we see from a simple adaptation
of (\ref{computation limiting cases!}) that
\[
\tilde{\zeta}_{\infty,p^{\prime}}(s,c):=\lim_{p\rightarrow\infty}\tilde{\zeta}_{p,p^{\prime}}(s,c)=\sum_{m,n\neq0}\frac{p^{\prime2}+\lambda_{n}^{\prime2}}{p^{\prime}\left(p^{\prime}+\frac{1}{\pi}\right)+\lambda_{n}^{\prime2}}\frac{1}{\left(m^{2}+c\lambda_{n}^{\prime2}\right)^{s}}=\zeta_{\infty,p^{\prime}}(s,c),\,\,\,\,\,\text{Re}(s)>1.
\]

Therefore, (\ref{Functional equation particular case}) follows from (\ref{Functional equation Epstein second analogueeee!}). 
\end{proof}

We end this section by establishing an analogue of Corollary 4.6.
for $\tilde{\zeta}_{p,p^{\prime}}(s,c)$.
\begin{corollary}
Let $\sigma(t)$ be defined by (\ref{definition sigma p sigma p'}). Then the following identity takes place,
\begin{align}
\tilde{\zeta}_{p,p^{\prime}}\left(\frac{1}{2},c\right) & =\frac{2C_{p}^{(2)}-\gamma-\log\left(\frac{4\pi}{\sqrt{c}}\right)}{1+\frac{1}{\pi p^{\prime}}}+\frac{2C_{p^{\prime}}^{(2)}-2\log(2\pi)-2\gamma}{1+\frac{1}{\pi p}}+\frac{\log(\pi\sqrt{c})+\gamma-4e^{2\pi p}\,Q_{2\pi p}(0)-4e^{2\pi p^{\prime}}Q_{2\pi p^{\prime}}(0)}{\left(1+\frac{1}{\pi p^{\prime}}\right)\left(1+\frac{1}{\pi p}\right)}\nonumber \\
 & +8\,\sum_{m,n=1}^{\infty}\frac{p^{2}+\lambda_{m}^{2}}{p\left(p+\frac{1}{\pi}\right)+\lambda_{m}^{2}}\cdot\frac{p^{\prime2}+\lambda_{n}^{\prime2}}{p^{\prime}\left(p^{\prime}+\frac{1}{\pi}\right)+\lambda_{n}^{\prime2}}\,K_{0}\left(2\pi\sqrt{c}\lambda_{m}\lambda_{n}^{\prime}\right),\label{identity central value for second analogue Epstein}
\end{align}
where $C^{(2)}_{p}$ and $Q_{\mu}(s)$ are respectively defined by (\ref{second analogue Euler Mascheroni}) and (\ref{Incomplete Gamma function Kosh sense}).
\end{corollary}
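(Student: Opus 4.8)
The plan is to deduce (\ref{identity central value for second analogue Epstein}) exactly as Corollary~4.6 was deduced, the only change being that here the relevant input is the second generalized Selberg--Chowla formula (\ref{Selberg-Chowla for second Epstein}) for $\tilde{\zeta}_{p,p^{\prime}}(s,c)$, which I specialize at $s=\frac{1}{2}$. Write $\epsilon:=s-\frac{1}{2}$. The third (Bessel) term on the right of (\ref{Selberg-Chowla for second Epstein}) is regular at $s=\frac{1}{2}$: its prefactor $\frac{8\pi^{s}c^{1/4-s/2}}{\Gamma(s)}$ tends to $\frac{8\sqrt{\pi}}{\sqrt{\pi}}=8$, the factor $\left(\lambda_{m}/\lambda_{n}^{\prime}\right)^{s-1/2}$ tends to $1$, and $K_{s-1/2}\to K_{0}$, so this term contributes precisely the double Bessel sum $8\sum_{m,n\geq1}\frac{p^{2}+\lambda_{m}^{2}}{p(p+\frac{1}{\pi})+\lambda_{m}^{2}}\cdot\frac{p^{\prime2}+\lambda_{n}^{\prime2}}{p^{\prime}(p^{\prime}+\frac{1}{\pi})+\lambda_{n}^{\prime2}}\,K_{0}(2\pi\sqrt{c}\,\lambda_{m}\lambda_{n}^{\prime})$ appearing on the right of (\ref{identity central value for second analogue Epstein}). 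The first two terms of (\ref{Selberg-Chowla for second Epstein}) are each singular at $s=\frac{1}{2}$ --- the first through the pole of $\eta_{p}$ at $1$ (it is $\eta_{p}(2s)$), the second through the pole of $\Gamma(s-\frac{1}{2})$ --- so the crux is to show that these two simple poles cancel and to read off the finite part of their sum.

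For this I substitute the Laurent expansions already recorded in the paper. For the first term I use (\ref{Meromorphic expansion around zero Koshliakov sense}) with argument $2s$, namely $\eta_{p}(2s)=\frac{1}{1+\frac{1}{\pi p}}\cdot\frac{1}{2\epsilon}+C_{p}^{(2)}-\frac{2e^{2\pi p}Q_{2\pi p}(0)}{1+\frac{1}{\pi p}}+O(\epsilon)$; multiplying by $\frac{2}{1+\frac{1}{\pi p^{\prime}}}$ produces the singular part $\frac{1}{\epsilon\left(1+\frac{1}{\pi p^{\prime}}\right)\left(1+\frac{1}{\pi p}\right)}$ together with the contribution involving $C_{p}^{(2)}$ and $e^{2\pi p}Q_{2\pi p}(0)$. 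For the second term I expand $c^{1/2-s}$ (as in (\ref{meromorphic x})), $\Gamma(s-\frac{1}{2})$ (as in (\ref{meromorphic Gamma})), $\frac{1}{\Gamma(s)}$ (as in (\ref{first take meromorphic})), $\Gamma(s)$ (as in (\ref{expansion gamma function atound 1/2 onde mocre})), and, crucially, $\zeta_{p^{\prime}}(2s-1)$ around $s=\frac{1}{2}$ via (\ref{at beginning but will resurface on Koshliakoc}) with $p$ replaced by $p^{\prime}$, whose linear coefficient $2\zeta_{p^{\prime}}^{\prime}(0)=-\log(2\pi)+C_{p^{\prime}}^{(2)}-\gamma-\frac{2e^{2\pi p^{\prime}}Q_{2\pi p^{\prime}}(0)}{1+\frac{1}{\pi p^{\prime}}}$ carries the constants $C_{p^{\prime}}^{(2)}$ and $e^{2\pi p^{\prime}}Q_{2\pi p^{\prime}}(0)$. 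Since $\zeta_{p^{\prime}}(0)=-\frac{1}{2}\cdot\frac{1}{1+\frac{1}{\pi p^{\prime}}}$, the $1/\epsilon$ part of the second term equals $-\frac{1}{\epsilon\left(1+\frac{1}{\pi p^{\prime}}\right)\left(1+\frac{1}{\pi p}\right)}$, so it cancels the pole of the first term; the surviving constants then combine, after routine rearrangement of the $\log2$, $\log c$, $\log\pi$ and $\gamma$ contributions, into the remaining terms of (\ref{identity central value for second analogue Epstein}).

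Collecting these pieces gives (\ref{identity central value for second analogue Epstein}). As a consistency check one may let $p,p^{\prime}\to\infty$ --- using $K_{1/2}(z)=\sqrt{\pi/(2z)}\,e^{-z}$ from (\ref{particular case of bessel!}) to handle the Bessel term --- recovering the classical central-value identity for the diagonal Epstein zeta function, and one may let only $p^{\prime}\to\infty$ to obtain the cleaner identity that feeds the Bateman--Grosswald type argument of Corollary~4.7. The only genuine difficulty I anticipate is bookkeeping: one must multiply four or five Laurent series, track cross terms such as $\Gamma(s-\frac{1}{2})\,\zeta_{p^{\prime}}^{\prime}(0)$, the $\log c$ from $c^{1/2-s}$ and the $\log2$ from $\frac{1}{\Gamma(s)}$, and keep the denominators $1+\frac{1}{\pi p}$ and $1+\frac{1}{\pi p^{\prime}}$ correctly placed; the cancellation of the two poles is the natural checkpoint that the expansions have been combined correctly.
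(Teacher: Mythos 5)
Your proposal is correct and follows essentially the same route as the paper: specialize the second Selberg--Chowla formula (\ref{Selberg-Chowla for second Epstein}) at $s=\tfrac12$, note the double Bessel sum is regular there with prefactor tending to $8$, and extract the finite part of the two singular terms from the recorded Laurent expansions (\ref{Expansion around 1/2 Eta second Kosliakov}) and (\ref{at beginning but will resurface on Koshliakoc}), the pole cancellation via $\zeta_{p^{\prime}}(0)=-\tfrac12(1+\tfrac{1}{\pi p^{\prime}})^{-1}$ being the key check. The only cosmetic difference is that the paper first multiplies through by the completed factor $(\pi/\sqrt{c})^{-s}\Gamma(s)$ so as to reuse the expansions (\ref{Simplification first term zeta (1/2) some details}) and (\ref{Simplifications second term central value}) from Corollary 4.6, whereas you expand the raw terms directly; both yield the same constant term.
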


\begin{proof}
Using the second Selberg-Chowla formula (\ref{Selberg-Chowla for second Epstein}),
we find that, in an analogous way to (\ref{Simple expression at beginning central value}),
\begin{align*}
\tilde{\zeta}_{p,p^{\prime}}\left(\frac{1}{2},c\right) & =c^{-1/4}\,\lim_{s\rightarrow\frac{1}{2}}\left\{ \frac{2c^{s/2}}{1+\frac{1}{\pi p^{\prime}}}\,\pi^{-s}\,\Gamma(s)\,\eta_{p}(2s)+\frac{2c^{\frac{1-s}{2}}}{1+\frac{1}{\pi p}}\,\pi^{-\left(s-\frac{1}{2}\right)}\Gamma\left(s-\frac{1}{2}\right)\,\zeta_{p^{\prime}}(2s-1)\right\} \\
 & +8\,\sum_{m,n=1}^{\infty}\frac{p^{2}+\lambda_{m}^{2}}{p\left(p+\frac{1}{\pi}\right)+\lambda_{m}^{2}}\cdot\frac{p^{\prime2}+\lambda_{n}^{\prime2}}{p^{\prime}\left(p^{\prime}+\frac{1}{\pi}\right)+\lambda_{n}^{\prime2}}\,K_{0}\left(2\pi\sqrt{c}\lambda_{m}\lambda_{n}^{\prime}\right).
\end{align*}

We have seen already (c.f. (\ref{Simplifications second term central value})
above) that the second term on the braces has the Laurent expansion
\begin{align}
\frac{2c^{\frac{1-s}{2}}}{1+\frac{1}{\pi p}}\,\pi^{-\left(s-\frac{1}{2}\right)}\Gamma\left(s-\frac{1}{2}\right)\,\zeta_{p^{\prime}}(2s-1)=\nonumber \\
=\frac{2c^{1/4}}{1+\frac{1}{\pi p}}\left\{ \frac{\zeta_{p^{\prime}}(0)}{s-\frac{1}{2}}+2\zeta_{p^{\prime}}^{\prime}(0)-\gamma\zeta_{p^{\prime}}(0)-\log(\pi\sqrt{c})\,\zeta_{p^{\prime}}(0)+O\left(s-\frac{1}{2}\right)\right\} ,\label{first laurent expansion central value second analogue}
\end{align}
with $\zeta_{p^{\prime}}^{\prime}(0)$ being given in the expression
(\ref{at beginning but will resurface on Koshliakoc}). Therefore,
we just need to find the Laurent expansion of the first term in the
braces. Invoking (\ref{expansion gamma function atound 1/2 onde mocre})
and {[}\cite{KOSHLIAKOV}, p. 49, eq. (53){]},
\begin{equation}
\eta_{p}(2s)=\frac{1}{1+\frac{1}{\pi p}}\cdot\frac{1}{2s-1}+C_{p}^{(2)}-\frac{2e^{2\pi p}}{1+\frac{1}{\pi p}}Q_{2\pi p}(0)+O\left(s-\frac{1}{2}\right),\label{Expansion around 1/2 Eta second Kosliakov}
\end{equation}
we find, just like in (\ref{Simplification first term zeta (1/2) some details}),
\begin{equation}
\left(\frac{\pi}{\sqrt{c}}\right)^{-s}\Gamma(s)\,\eta_{p}(2s)=c^{1/4}\left[\frac{1}{1+\frac{1}{\pi p}}\cdot\frac{1}{2s-1}+C_{p}^{(2)}-\frac{2e^{2\pi p}}{1+\frac{1}{\pi p}}Q_{2\pi p}(0)-\frac{1}{2}\left(\gamma+\log\left(\frac{4\pi}{\sqrt{c}}\right)\right)+O\left(s-\frac{1}{2}\right)\right].\label{first term simplifying second analogue Epstein}
\end{equation}

By combining (\ref{first laurent expansion central value second analogue})
and (\ref{first term simplifying second analogue Epstein}), (\ref{identity central value for second analogue Epstein}) follows.
\end{proof}

\section{Generalizations of Entries 3.3.1 - 3.3.3., pp. 253-254 of Ramanujan's Lost Notebook}

Building on the work of the previous two sections, we now establish
a generalization of the Ramanujan-Guinand formula\footnote{As stated in footnote 1, page 6, of this paper, in this section $\sigma_{k}(n)$ will always denote the usual divisor function and not Koshliakov's function 
 (\ref{sigma p definition on Koshliakov}).} (\ref{Guinand given at intro}), stated at the introduction as "Entry 3.3.1.". However, before stating it, we need a Lemma that motivates the general aspect of the Modified Bessel functions showing up in (\ref{Guinand given at intro}).

\begin{lemma}
For $x>0$ and $\text{Re}(\nu)<\frac{1}{2}$, let
\begin{equation}
\mathcal{K}_{\nu,p}(x)=\intop_{0}^{\infty}\,\frac{y^{-\nu-\frac{1}{2}}(y+1)^{-\nu-\frac{1}{2}}}{\sigma\left(\frac{x}{2\pi}\,(2y+1)\,\right)e^{(2y+1)x}-1}dy,\,\,\,\,\,x>0,\,\,\text{Re}(\nu)<\frac{1}{2},\label{modified Bessel function kosh case}
\end{equation}
where $\sigma(t)$ is defined by (\ref{definition sigma p sigma p'}).

Then one has the limiting cases:
\begin{equation}
\lim_{p\rightarrow\infty}\mathcal{K}_{\nu,p}(x)=\frac{\Gamma\left(\frac{1}{2}-\nu\right)(2x)^{\nu}}{\sqrt{\pi}}\sum_{n=1}^{\infty}n^{\nu}\,K_{\nu}(xn),\label{p infinity limiting case}
\end{equation}
while 
\begin{equation}
\lim_{p\rightarrow0^{+}}\mathcal{K}_{\nu,p}(x)=\frac{\Gamma\left(\frac{1}{2}-\nu\right)(2x)^{\nu}}{\sqrt{\pi}}\sum_{n=1}^{\infty}(-1)^{n}\,n^{\nu}\,K_{\nu}(xn).\label{p zero limiting case almost all}
\end{equation}
\end{lemma}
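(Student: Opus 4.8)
The plan is to start from the defining integral \eqref{modified Bessel function kosh case} and carry out the two limits $p\to\infty$ and $p\to 0^+$ directly at the level of the integrand, then recognize the resulting integral as a known Bessel representation. First I would observe that the only $p$-dependence in $\mathcal{K}_{\nu,p}(x)$ sits inside the factor $\frac{1}{\sigma(t)e^{(2y+1)x}-1}$ with $t=\frac{x}{2\pi}(2y+1)$, and that by the geometric series \eqref{power series def} this equals $\sum_{m=1}^{\infty}\left(\frac{p-t}{p+t}\right)^{m}e^{-2\pi m t}=\sum_{m=1}^{\infty}\left(\frac{p-t}{p+t}\right)^{m}e^{-m(2y+1)x}$. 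As $p\to\infty$ we have $\left(\frac{p-t}{p+t}\right)^{m}\to 1$ for each fixed $m$, so the summand converges to $e^{-m(2y+1)x}$, i.e. the integrand converges pointwise to $\frac{y^{-\nu-\frac12}(y+1)^{-\nu-\frac12}}{e^{(2y+1)x}-1}$; and as $p\to 0^+$ we have $\left(\frac{p-t}{p+t}\right)^{m}\to(-1)^{m}$, giving the integrand $\frac{y^{-\nu-\frac12}(y+1)^{-\nu-\frac12}}{-\,e^{(2y+1)x}-1}\cdot(-1)$, wait — more precisely $\sum_m(-1)^m e^{-m(2y+1)x}=\frac{-e^{-(2y+1)x}}{1+e^{-(2y+1)x}}=\frac{-1}{e^{(2y+1)x}+1}$, so the $p\to 0^+$ integrand becomes $\frac{-y^{-\nu-\frac12}(y+1)^{-\nu-\frac12}}{e^{(2y+1)x}+1}$.

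Next I would justify passing the limit under the integral sign by dominated convergence: since $0<\frac{p-t}{p+t}<1$ for all $t>0$ and $p>0$, the partial sums are dominated uniformly in $p$ by $\sum_m e^{-m(2y+1)x}=\frac{1}{e^{(2y+1)x}-1}$, and the majorant $\frac{y^{-\sigma-\frac12}(y+1)^{-\sigma-\frac12}}{e^{(2y+1)x}-1}$ (with $\sigma=\operatorname{Re}(\nu)<\frac12$) is integrable on $(0,\infty)$ — near $y=0$ because $-\sigma-\frac12>-1$, and near $y=\infty$ because of exponential decay — exactly as in the elementary bound \eqref{bound proof entire part Watson}. This is the step I expect to require the most care, but it is routine: the same domination works verbatim in both limits.

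Finally I would identify the limiting integrals. For $p\to\infty$, expanding $\frac{1}{e^{(2y+1)x}-1}=\sum_{n=1}^\infty e^{-n(2y+1)x}$ and integrating term by term (again justified by absolute convergence) gives $\sum_{n=1}^\infty e^{-nx}\int_0^\infty y^{-\nu-\frac12}(y+1)^{-\nu-\frac12}e^{-2nxy}\,dy$, and by the integral representation \eqref{as laplace traaansform} with $z=nx$ one has $\int_0^\infty e^{-2nxt}t^{-\nu-\frac12}(t+1)^{-\nu-\frac12}\,dt=\frac{\Gamma\left(\frac12-\nu\right)}{2^{-2\nu}\sqrt{\pi}}e^{nx}\left(\frac{nx}{2}\right)^{\nu}K_{\nu}(nx)$; multiplying out the constants yields $\frac{\Gamma\left(\frac12-\nu\right)(2x)^{\nu}}{\sqrt{\pi}}\sum_{n=1}^\infty n^{\nu}K_{\nu}(nx)$, which is \eqref{p infinity limiting case}. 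For $p\to 0^+$ the same computation with $\frac{-1}{e^{(2y+1)x}+1}=-\sum_{n=1}^\infty(-1)^{n-1}e^{-n(2y+1)x}=\sum_{n=1}^\infty(-1)^{n}e^{-n(2y+1)x}$ produces the alternating sum $\frac{\Gamma\left(\frac12-\nu\right)(2x)^{\nu}}{\sqrt{\pi}}\sum_{n=1}^\infty(-1)^{n}n^{\nu}K_{\nu}(nx)$, which is \eqref{p zero limiting case almost all}. The hypothesis $\operatorname{Re}(\nu)<\frac12$ is precisely what makes \eqref{as laplace traaansform} applicable and the majorant integrable at the origin, so no further restriction is needed.
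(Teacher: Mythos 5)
Your proposal is correct and follows essentially the same route as the paper's own proof: dominated convergence applied to the geometric-series expansion \eqref{power series def}, followed by term-by-term integration and the Laplace-type representation \eqref{as laplace traaansform} to recover the Bessel functions, with $\text{Re}(\nu)<\tfrac{1}{2}$ guaranteeing integrability at the origin. The only cosmetic slip is the claim that $0<\frac{p-t}{p+t}<1$ (this quantity is negative for $t>p$), but since the majorant only requires $\left|\frac{p-t}{p+t}\right|<1$, the argument is unaffected.
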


\begin{proof}
Indeed, by a simple use of the dominated convergence theorem, the
power series (\ref{power series def}), the absolute convergence of
the iteration of the series and the integral for $\text{Re}(\nu)<\frac{1}{2}$
and finally the integral representation (\ref{as laplace traaansform}),
we obtain 
\begin{align*}
\lim_{p\rightarrow\infty}\mathcal{K}_{\nu,p}(x) & =\intop_{0}^{\infty}\,\frac{y^{-\nu-\frac{1}{2}}(y+1)^{-\nu-\frac{1}{2}}}{e^{(2y+1)x}-1}dy=\sum_{n=1}^{\infty}\intop_{0}^{\infty}y^{-\nu-\frac{1}{2}}(y+1)^{-\nu-\frac{1}{2}}\,e^{-(2y+1)xn}dy\\
 & =\frac{\Gamma\left(\frac{1}{2}-\nu\right)(2x)^{\nu}}{\sqrt{\pi}}\sum_{n=1}^{\infty}n^{\nu}\,K_{\nu}(xn),
\end{align*}
while
\begin{align*}
\lim_{p\rightarrow0^{+}}\mathcal{K}_{\nu,p}(x) & =-\intop_{0}^{\infty}\,\frac{y^{-\nu-\frac{1}{2}}(y+1)^{-\nu-\frac{1}{2}}}{e^{(2y+1)x}+1}dy=\sum_{n=1}^{\infty}(-1)^{n}\intop_{0}^{\infty}y^{-\nu-\frac{1}{2}}(y+1)^{-\nu-\frac{1}{2}}\,e^{-(2y+1)xn}dy\\
 & =\frac{\Gamma\left(\frac{1}{2}-\nu\right)(2x)^{\nu}}{\sqrt{\pi}}\sum_{n=1}^{\infty}(-1)^{n}\,n^{\nu}\,K_{\nu}(xn).
\end{align*}
\end{proof}
\begin{theorem}
If $\alpha$ and $\beta$ are positive numbers such that $\alpha\beta=\pi^{2}$
and if $s$ is a complex number such that $\text{Re}(s)<1$, then
the following identity takes place
\begin{align}
\frac{2^{-s}\sqrt{\pi}}{\Gamma\left(\frac{1-s}{2}\right)}\alpha^{\frac{1-s}{2}}\sum_{n=1}^{\infty}\frac{p^{2}+\lambda_{n}^{2}}{p\left(p+\frac{1}{\pi}\right)+\lambda_{n}^{2}}\,\lambda_{n}^{-s}\,\mathcal{K}_{\frac{s}{2},p}\left(2\lambda_{n}\alpha\right)-\frac{2^{-s}\sqrt{\pi}}{\Gamma\left(\frac{1-s}{2}\right)}\beta^{\frac{1-s}{2}}\,\sum_{n=1}^{\infty}\frac{p^{2}+\lambda_{n}^{2}}{p\left(p+\frac{1}{\pi}\right)+\lambda_{n}^{2}}\,\lambda_{n}^{-s}\,\mathcal{K}_{\frac{s}{2},p}\left(2\lambda_{n}\beta\right)\nonumber \\
=\frac{\Gamma\left(-\frac{s}{2}\right)\,\eta_{p}(-s)}{4\left(1+\frac{1}{\pi p}\right)}\left\{ \beta^{\frac{1+s}{2}}-\alpha^{\frac{1+s}{2}}\right\} +\frac{\Gamma\left(\frac{s}{2}\right)\zeta_{p}(s)}{4}\left\{ \beta^{\frac{1-s}{2}}-\alpha^{\frac{1-s}{2}}\right\} ,\label{generalized Guinand with new function}
\end{align}
where $\mathcal{K}_{\nu,p}(x)$ denotes the integral defined by (\ref{modified Bessel function kosh case}). 
\end{theorem}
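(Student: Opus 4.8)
The plan is to set $g_{p}(s,\alpha):=\sum_{n=1}^{\infty}\frac{p^{2}+\lambda_{n}^{2}}{p(p+\frac{1}{\pi})+\lambda_{n}^{2}}\,\lambda_{n}^{-s}\,\mathcal{K}_{\frac{s}{2},p}(2\lambda_{n}\alpha)$, so that the left-hand side of (\ref{generalized Guinand with new function}) equals $\frac{2^{-s}\sqrt{\pi}}{\Gamma((1-s)/2)}\bigl(\alpha^{(1-s)/2}g_{p}(s,\alpha)-\beta^{(1-s)/2}g_{p}(s,\beta)\bigr)$, and to study $g_{p}(s,\alpha)$ through its Mellin transform in $\alpha$. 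Since $\mathcal{K}_{s/2,p}(2\lambda_{n}\alpha)$ decays exponentially in $\lambda_{n}$, the defining series converges absolutely for $\operatorname{Re}(s)<1$, and for $\operatorname{Re}(w)$ large one may interchange sum and integral to get $\int_{0}^{\infty}\alpha^{w-1}g_{p}(s,\alpha)\,d\alpha=2^{-w}\,\zeta_{p}(s+w)\int_{0}^{\infty}u^{w-1}\mathcal{K}_{s/2,p}(u)\,du$. The inner transform is evaluated in closed form: the substitution $z=\frac{u}{2\pi}(2y+1)$ in the definition (\ref{modified Bessel function kosh case}), Koshliakov's integral representation (\ref{representation as starting point}) for $\zeta_{p}(1-w)$, and the elementary Beta integral $\int_{0}^{\infty}y^{-\frac{s+1}{2}}(y+1)^{-\frac{s+1}{2}}(2y+1)^{-w}\,dy=2^{s-1}\Gamma(\tfrac{w+s}{2})\Gamma(\tfrac{1-s}{2})\big/\Gamma(\tfrac{w+1}{2})$ together give $\int_{0}^{\infty}u^{w-1}\mathcal{K}_{s/2,p}(u)\,du=2^{s-2}(2\pi)^{w}\,\Gamma(\tfrac{w+s}{2})\Gamma(\tfrac{1-s}{2})\,\zeta_{p}(1-w)\big/\bigl(\Gamma(\tfrac{w+1}{2})\cos(\tfrac{\pi w}{2})\bigr)$.

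Next I would simplify $G_{p}(s,w):=\int_{0}^{\infty}\alpha^{w-1}g_{p}(s,\alpha)\,d\alpha$: feeding the functional equation (\ref{functional equation Kosh}) (to replace $\zeta_{p}(1-w)$ by $\eta_{p}(w)$) and Legendre's duplication formula into the above collapses it to the compact shape $G_{p}(s,w)=\frac{2^{s-2}\Gamma((1-s)/2)}{\sqrt{\pi}}\,\Gamma(\tfrac{w}{2})\,\Gamma(\tfrac{w+s}{2})\,\eta_{p}(w)\,\zeta_{p}(s+w)$. The structural point is that $G_{p}$ carries the reflection $G_{p}(s,w)=\pi^{2w-(1-s)}G_{p}(s,1-s-w)$; this is verified by applying (\ref{functional equation Kosh}) to both $\zeta_{p}$ and $\eta_{p}$ and then eliminating the $\Gamma$- and trigonometric factors with the duplication and reflection formulas, and it is precisely the analytic counterpart of the involution $\alpha\leftrightarrow\beta=\pi^{2}/\alpha$.

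Then, by Mellin inversion, $\alpha^{(1-s)/2}g_{p}(s,\alpha)=\frac{1}{2\pi i}\int_{(c)}\alpha^{(1-s)/2-w}G_{p}(s,w)\,dw$ for $c$ large; performing $w\mapsto1-s-w$ in the corresponding integral for $\beta^{(1-s)/2}g_{p}(s,\beta)$ and invoking the reflection of $G_{p}$ together with $\beta=\pi^{2}/\alpha$ turns it into the integral of the same integrand over a line to the left, so that $\alpha^{(1-s)/2}g_{p}(s,\alpha)-\beta^{(1-s)/2}g_{p}(s,\beta)$ equals, by the residue theorem, the sum of residues between the two lines. The trivial zeros $\zeta_{p}(-2k)=\eta_{p}(-2k)=0$ $(k\ge1)$ cancel every pole of $\Gamma(\tfrac{w}{2})$ and $\Gamma(\tfrac{w+s}{2})$ except those at $w=0$ and $w=-s$, so only four poles remain, at $w\in\{0,-s,1-s,1\}$ — the last two with residues $1$ and $1/(1+\tfrac{1}{\pi p})$ read off from (\ref{Koshliakov meromorphic expansion}) and (\ref{Meromorphic expansion around zero Koshliakov sense}). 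By the reflection these group into the two $\alpha\!\leftrightarrow\!\beta$-symmetric pairs $\{0,\,1-s\}$ and $\{-s,\,1\}$. A short computation using $\Gamma(\tfrac{1-s}{2})\Gamma(\tfrac{1+s}{2})\cos(\tfrac{\pi s}{2})=\pi$ shows the first pair contributes $\tfrac{1}{4}\Gamma(\tfrac{s}{2})\zeta_{p}(s)\{\beta^{(1-s)/2}-\alpha^{(1-s)/2}\}$ (after multiplication by $\tfrac{2^{-s}\sqrt{\pi}}{\Gamma((1-s)/2)}$), while the second, after converting $\zeta_{p}(s+1)$ via (\ref{functional equation Kosh}) and Legendre duplication, contributes $\tfrac{1}{4(1+\frac{1}{\pi p})}\Gamma(-\tfrac{s}{2})\eta_{p}(-s)\{\beta^{(1+s)/2}-\alpha^{(1+s)/2}\}$; their sum is exactly the right-hand side of (\ref{generalized Guinand with new function}).

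The difficulty lies not in the algebra but in the justifications surrounding the contour argument: Fubini's theorem for the series-times-integral Mellin computation and the range of validity of (\ref{representation as starting point}); the vanishing of the horizontal segments as $|\operatorname{Im}(w)|\to\infty$, which requires the convex bounds (\ref{Koshliakov Phragmen Analogue}) for $\zeta_{p}$, an analogous vertical-line growth bound for $\eta_{p}$, and Stirling's formula for the $\Gamma$-quotient; and the verification of the reflection identity for $G_{p}$. The derivation is carried out for generic $s$ with $\operatorname{Re}(s)<1$ (so the four poles are distinct) and then extended to the exceptional values $s\in\{0,-1\}$ by continuity — legitimate because both sides of (\ref{generalized Guinand with new function}) are holomorphic there, the poles of $\Gamma(\pm s/2)$ at $s=0$ and of $\eta_{p}(-s)$ at $s=-1$ being annihilated by the vanishing of the corresponding differences of powers of $\alpha$ and $\beta$. (An alternative, more hands-on route substitutes the first generalized Watson formula (\ref{formula in a region compact to be extended}), with parameter $\tfrac{s+1}{2}$ and argument $\lambda_{n}\alpha/\pi$, directly into $g_{p}(s,\alpha)$: using $\alpha\beta=\pi^{2}$ to write $\pi^{2}\lambda_{m}^{2}+\alpha^{2}\lambda_{n}^{2}=\alpha(\beta\lambda_{m}^{2}+\alpha\lambda_{n}^{2})$, the resulting double series is symmetric under $(\alpha,m)\leftrightarrow(\beta,n)$ and so cancels in the antisymmetric combination, leaving the same $\zeta_{p}(s)$- and $\zeta_{p}(s+1)$-terms; this path, however, must treat conditionally convergent rearrangements carefully, a difficulty the Mellin argument bypasses.)
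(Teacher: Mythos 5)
Your argument is correct, and it is genuinely different from the paper's. The paper obtains (\ref{generalized Guinand with new function}) as a by-product of Section 4: it uses the trivial symmetry $\zeta_{p,p^{\prime}}(s,c)=c^{-s}\zeta_{p^{\prime},p}(s,1/c)$ of the generalized Epstein zeta function in its region of absolute convergence, applies the Selberg--Chowla-type continuation (\ref{representation Re(s)<1}) to each side to get two expressions for the same analytic continuation in $\text{Re}(s)<1$, equates them to obtain (\ref{protoform of Guinand}), and then specializes $c=\alpha^{2}/\pi^{2}$, $s\mapsto\frac{1+s}{2}$, $p=p^{\prime}$. (The ``hands-on route'' you sketch in your closing parenthesis is essentially this proof.) Your main argument instead works directly on the series $g_{p}(s,\alpha)$: I checked your Mellin computation, including the Beta-type evaluation and the collapse via (\ref{functional equation Kosh}) and duplication to $G_{p}(s,w)=\frac{2^{s-2}\Gamma((1-s)/2)}{\sqrt{\pi}}\Gamma(\tfrac{w}{2})\Gamma(\tfrac{w+s}{2})\eta_{p}(w)\zeta_{p}(s+w)$, the reflection $G_{p}(s,w)=\pi^{2w-(1-s)}G_{p}(s,1-s-w)$, and the four residues (the pair $\{0,1-s\}$ does produce $\tfrac{1}{4}\Gamma(\tfrac{s}{2})\zeta_{p}(s)\{\beta^{(1-s)/2}-\alpha^{(1-s)/2}\}$ after multiplying by the prefactor, using $\eta_{p}(0)=-\tfrac12$ and $\text{Res}_{w=1-s}\zeta_{p}(s+w)=1$); all of this is sound, and your treatment of the exceptional values $s\in\{0,-1\}$ by continuity is legitimate since the apparent poles of the right-hand side cancel there. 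What each approach buys: the paper's route reuses machinery already established (the generalized Watson formula and the Epstein continuation) and delivers the two-parameter version (Theorem 5.2) at no extra cost, whereas yours is self-contained, closer in spirit to Guinand's original modular-relation proof, and makes transparent exactly how the poles of $\Gamma(\tfrac{w}{2})$, $\Gamma(\tfrac{w+s}{2})$, $\eta_{p}$ and $\zeta_{p}$ pair up under $\alpha\leftrightarrow\beta$. Two points you would need to supply in a full write-up: a vertical-line growth bound for $\eta_{p}$ (the paper only records convexity bounds (\ref{Koshliakov Phragmen Analogue}) for $\zeta_{p}$; the bound for $\eta_{p}$ follows from these via the functional equation and Stirling), and the vanishing $\eta_{p}(-2k)=0$ for $k\geq1$, which is not stated in the paper but follows from (\ref{functional equation Kosh}) since $\Gamma(s)$ has a pole at $s=-2k$ while $\zeta_{p}(1+2k)$ and $\cos(\pi k)$ are finite and nonzero.
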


\begin{proof}
In Theorem 4.1. we have seen a representation (namely (\ref{representation Re(s)<1}))
which extends the Epstein zeta function, $\zeta_{p,p^{\prime}}(s,c)$,
to the half-plane $\text{Re}(s)<1$. It was deduced by applying the first analogue of Watson's
formula to the sum with respect to the variable of summation $m$.
But in the region of absolute convergence of $\zeta_{p,p^{\prime}}(s,c)$
it is possible to reverse the orders of summation: indeed, for $\text{Re}(s)>1$,
\begin{align}
\left(\frac{\pi}{\sqrt{c}}\right)^{-s}\Gamma(s)\,\zeta_{p,p^{\prime}}(s,c) & =\left(\frac{\pi}{\sqrt{1/c}}\right)^{-s}\Gamma(s)\,\sum_{m,n\neq0}\frac{\left(p^{2}+\lambda_{n}^{2}\right)\cdot\left(p^{\prime2}+\lambda_{n}^{\prime2}\right)}{\left(p(p+\frac{1}{\pi})+\lambda_{n}^{2}\right)\cdot\left(p^{\prime}(p^{\prime}+\frac{1}{\pi})+\lambda_{n}^{\prime2}\right)}\,\frac{1}{\left(\lambda_{n}^{\prime2}+\lambda_{m}^{2}c^{-1}\right)^{s}}\nonumber \\
 & =\left(\frac{\pi}{\sqrt{1/c}}\right)^{-s}\Gamma(s)\,\zeta_{p^{\prime},p}\left(s,\frac{1}{c}\right).\label{property in the region of absolute convergence}
\end{align}

Therefore, an alternative representation of $\zeta_{p,p^{\prime}}(s,c)$
can be obtained once we apply Theorem 4.1. to $\zeta_{p^{\prime},p}\left(s,\frac{1}{c}\right)$.
In fact, using (\ref{representation Re(s)<1}) with $c$ being replaced by
$1/c$ and with $p$ being replaced by $p^{\prime}$, we find that
an equivalent continuation of $\zeta_{p,p^{\prime}}\left(s,c\right)$
to the half-plane $\text{Re}(s)<1$ is 
\begin{align}
\left(\frac{\pi}{\sqrt{c}}\right)^{-s}\Gamma(s)\,\zeta_{p,p^{\prime}}(s,\,c) & =\frac{2c^{-s/2}\,\pi^{-s}\Gamma(s)}{1+\frac{1}{\pi p}}\,\zeta_{p^{\prime}}(2s)+2\,c^{\frac{s-1}{2}}\,\pi^{-\left(s-\frac{1}{2}\right)}\Gamma\left(s-\frac{1}{2}\right)\zeta_{p}(2s-1)+\nonumber \\
+\frac{2^{4-2s}\,\pi^{1-s}\,c^{\frac{s-1}{2}}}{\Gamma(1-s)} & \sum_{n=1}^{\infty}\frac{p^{2}+\lambda_{n}^{2}}{p\left(p+\frac{1}{\pi}\right)+\lambda_{n}^{2}}\,\lambda_{n}^{1-2s}\,\intop_{0}^{\infty}\,\frac{y^{-s}(y+1)^{-s}}{\sigma^{\prime}\left(\frac{\lambda_{n}}{\sqrt{c}}\,(2y+1)\,\right)e^{\frac{2\pi\lambda_{n}}{\sqrt{c}}(2y+1)}-1}\,dy.\label{c replaced by 1/c proof}
\end{align}

Since (\ref{representation Re(s)<1}) and (\ref{c replaced by 1/c proof})
both represent the analytic continuation of $\zeta_{p,p^{\prime}}(s,c)$
to the region $\text{Re}(s)<1$, by uniqueness of the continuation we must have 
\begin{align}
\frac{2c^{s/2}\,\pi^{-s}\Gamma(s)}{1+\frac{1}{\pi p^{\prime}}}\,\zeta_{p}(2s)+2\,c^{\frac{1-s}{2}}\,\pi^{-\left(s-\frac{1}{2}\right)}\Gamma\left(s-\frac{1}{2}\right)\zeta_{p^{\prime}}(2s-1)+\nonumber \\
+\frac{2^{4-2s}\,\pi^{1-s}\,c^{\frac{1-s}{2}}}{\Gamma(1-s)}\sum_{n=1}^{\infty}\frac{p^{\prime2}+\lambda_{n}^{\prime2}}{p^{\prime}\left(p^{\prime}+\frac{1}{\pi}\right)+\lambda_{n}^{\prime2}}\,\lambda_{n}^{\prime1-2s}\,\intop_{0}^{\infty}\,\frac{y^{-s}(y+1)^{-s}}{\sigma\left(\lambda_{n}^{\prime}\,(2y+1)\,\sqrt{c}\right)e^{2\pi\lambda_{n}^{\prime}(2y+1)\sqrt{c}}-1}dy\nonumber \\
=\frac{2c^{-s/2}\,\pi^{-s}\Gamma(s)}{1+\frac{1}{\pi p}}\,\zeta_{p^{\prime}}(2s)+2\,c^{\frac{s-1}{2}}\,\pi^{-\left(s-\frac{1}{2}\right)}\Gamma\left(s-\frac{1}{2}\right)\zeta_{p}(2s-1)+\nonumber \\
+\frac{2^{4-2s}\,\pi^{1-s}\,c^{\frac{s-1}{2}}}{\Gamma(1-s)}\sum_{n=1}^{\infty}\frac{p^{2}+\lambda_{n}^{2}}{p\left(p+\frac{1}{\pi}\right)+\lambda_{n}^{2}}\,\lambda_{n}^{1-2s}\,\intop_{0}^{\infty}\,\frac{y^{-s}(y+1)^{-s}}{\sigma^{\prime}\left(\frac{\lambda_{n}}{\sqrt{c}}\,(2y+1)\,\right)e^{\frac{2\pi\lambda_{n}^{\prime}}{\sqrt{c}}(2y+1)}-1}\,dy, & \,\,\text{Re}(s)<1.\label{protoform of Guinand}
\end{align}
Take the substitution $c=\alpha^{2}/\pi^{2}$,
replace $s$ by $\frac{1+s}{2}$ and let $p=p^{\prime}$ in (\ref{protoform of Guinand}): recalling
the condition $\alpha\beta=\pi^{2}$, we may reduce (\ref{protoform of Guinand})
to 
\begin{align}
\frac{\pi^{-\left(s+1\right)}\,\Gamma\left(\frac{s+1}{2}\right)}{4\left(1+\frac{1}{\pi p}\right)}\,\zeta_{p}(s+1)\left\{ \alpha^{\frac{1+s}{2}}-\beta^{\frac{1+s}{2}}\right\} +\frac{1}{4\sqrt{\pi}}\Gamma\left(\frac{s}{2}\right)\zeta_{p}(s)\left\{ \alpha^{\frac{1-s}{2}}-\beta^{\frac{1-s}{2}}\right\} \nonumber \\
=\frac{2^{-s}}{\Gamma\left(\frac{1-s}{2}\right)}\beta^{\frac{1-s}{2}}\,\sum_{n=1}^{\infty}\frac{p^{2}+\lambda_{n}^{2}}{p\left(p+\frac{1}{\pi}\right)+\lambda_{n}^{2}}\,\lambda_{n}^{-s}\,\intop_{0}^{\infty}\,\frac{y^{-\frac{s+1}{2}}(y+1)^{-\frac{s+1}{2}}}{\sigma\left(\frac{\lambda_{n}\beta}{\pi}\,(2y+1)\,\right)e^{2\beta\lambda_{n}(2y+1)}-1}\,dy-\nonumber \\
-\frac{2^{-s}}{\Gamma\left(\frac{1-s}{2}\right)}\alpha^{\frac{1-s}{2}}\sum_{n=1}^{\infty}\frac{p^{2}+\lambda_{n}^{2}}{p\left(p+\frac{1}{\pi}\right)+\lambda_{n}^{2}}\,\lambda_{n}^{-s}\,\intop_{0}^{\infty}\,\frac{y^{-\frac{s+1}{2}}(y+1)^{-\frac{s+1}{2}}}{\sigma\left(\frac{\lambda_{n}\alpha}{\pi}\,(2y+1)\,\right)e^{2\alpha\lambda_{n}(2y+1)}-1}dy\label{guinand almost finished}
\end{align}

Now, (\ref{generalized Guinand with new function}) is finally obtained
by invoking the functional equation to $\zeta_{p}(s+1)$ on the first
term lying on the left of (\ref{guinand almost finished}) and invoking
the definition of $\mathcal{K}_{\nu,p}(x)$ (\ref{modified Bessel function kosh case}).
\end{proof}

\begin{remark}
By taking a proper deformation of the contour defining the integral
(\ref{modified Bessel function kosh case}), it is possible to have
a formula similar to (\ref{generalized Guinand with new function})
and valid for every $s\in\mathbb{C}$. See Remark 3.2. above.
\end{remark}

With a slight change in the sequence of Ramanujan's statements in
\cite{Ramanujan_lost}, we now establish a generalized version of Entry 3.3.3., also known as Koshliakov's
formula (\ref{Koshliakov given intro}).

\begin{corollary}

Let $\alpha$ and $\beta$ denote positive numbers such that $\alpha\beta=\pi^{2}$.
Define
\begin{equation}
\gamma_{p}:=C_{p}^{(2)}+\frac{C_{p}^{(1)}}{1+\frac{1}{\pi p}}-\gamma-\frac{2e^{2\pi p}\,Q_{2\pi p}(0)}{1+\frac{1}{\pi p}}-\log(2\pi)\left(1-\frac{1}{1+\frac{1}{\pi p}}\right),\label{another general Euler Mascheroni}
\end{equation}
where $C_{p}^{(1)}$, $C_{p}^{(2)}$ and $Q_{\mu}(s)$ are respectively given by (\ref{Euler Mascheroni Koshliakov sense}), (\ref{second analogue Euler Mascheroni}) and (\ref{Incomplete Gamma function Kosh sense}). 

\bigskip{}

Then the following generalization of Entry 3.3.3, (\ref{Koshliakov given intro}), holds
\begin{equation}
\sqrt{\alpha}\left(\frac{1}{4}\gamma_{p}-\frac{\log(4\beta)}{4\left(1+\frac{1}{\pi p}\right)}+\sum_{n=1}^{\infty}\frac{p^{2}+\lambda_{n}^{2}}{p\left(p+\frac{1}{\pi}\right)+\lambda_{n}^{2}}\,\mathcal{K}_{0,p}\left(2\alpha\lambda_{n}\right)\right)=\sqrt{\beta}\left(\frac{1}{4}\gamma_{p}-\frac{\log(4\alpha)}{4\left(1+\frac{1}{\pi p}\right)}+\sum_{n=1}^{\infty}\frac{p^{2}+\lambda_{n}^{2}}{p\left(p+\frac{1}{\pi}\right)+\lambda_{n}^{2}}\,\mathcal{K}_{0,p}\left(2\beta\lambda_{n}\right)\right),\label{Koshliakov formula general final version paper}
\end{equation}
where $\mathcal{K}_{\nu,p}(x)$ denotes the integral defined by (\ref{modified Bessel function kosh case}).
\end{corollary}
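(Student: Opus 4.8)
The plan is to derive (\ref{Koshliakov formula general final version paper}) by letting $s\to 0$ in the generalized Ramanujan--Guinand formula (\ref{generalized Guinand with new function}) of Theorem 5.1, in the same spirit in which Corollary 4.1 was obtained from Theorem 4.1 and Corollary 3.5 from its Theorem.

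First I would dispose of the left-hand side of (\ref{generalized Guinand with new function}). As $s\to 0$ the prefactor $\frac{2^{-s}\sqrt{\pi}}{\Gamma\left(\frac{1-s}{2}\right)}$ tends to $\frac{\sqrt{\pi}}{\Gamma(1/2)}=1$, while $\alpha^{(1-s)/2}\to\sqrt{\alpha}$, $\beta^{(1-s)/2}\to\sqrt{\beta}$, $\lambda_n^{-s}\to1$ and $\mathcal{K}_{s/2,p}(2\lambda_n\alpha)\to\mathcal{K}_{0,p}(2\lambda_n\alpha)$ (the index $\nu=s/2$ staying in the admissible range $\text{Re}(\nu)<\frac12$). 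Expanding the geometric series (\ref{power series def}) inside (\ref{modified Bessel function kosh case}) and using (\ref{as laplace traaansform}) one sees that $\mathcal{K}_{s/2,p}(2\lambda_n\alpha)=O\bigl(e^{-\lambda_n\alpha}\bigr)$ uniformly for $s$ in a neighbourhood of $0$, so the two series converge uniformly there and the limit may be taken term by term. Hence the left-hand side of (\ref{generalized Guinand with new function}) converges to $\sqrt{\alpha}\sum_{n}\frac{p^2+\lambda_n^2}{p(p+\frac1\pi)+\lambda_n^2}\mathcal{K}_{0,p}(2\lambda_n\alpha)-\sqrt{\beta}\sum_{n}\frac{p^2+\lambda_n^2}{p(p+\frac1\pi)+\lambda_n^2}\mathcal{K}_{0,p}(2\lambda_n\beta)$, which is exactly the difference of the two bracketed sums in (\ref{Koshliakov formula general final version paper}).

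The real work is on the right-hand side. Both $\Gamma\!\left(-\frac{s}{2}\right)\eta_p(-s)$ and $\Gamma\!\left(\frac{s}{2}\right)\zeta_p(s)$ carry a simple pole at $s=0$ coming from the Gamma factor, and these poles must cancel because $\beta^{(1\pm s)/2}-\alpha^{(1\pm s)/2}\to\sqrt{\beta}-\sqrt{\alpha}$. To make the cancellation explicit and read off the finite part I would insert $\Gamma\!\left(\pm\frac{s}{2}\right)=\pm\frac2s-\gamma+O(s)$, the Laurent expansion $\zeta_p(s)=-\frac{1}{2\left(1+\frac1{\pi p}\right)}+\zeta_p'(0)\,s+O(s^2)$ with $\zeta_p'(0)=-\frac{\log(2\pi)}{2}+\frac12 C_p^{(2)}-\frac{\gamma}{2}-\frac{e^{2\pi p}Q_{2\pi p}(0)}{1+\frac1{\pi p}}$ (this is (\ref{at beginning but will resurface on Koshliakoc}), from [\cite{KOSHLIAKOV}, p. 49, eq. (54)]), together with the expansion $\eta_p(-s)=-\frac12+\frac{s}{2}\bigl(\gamma-C_p^{(1)}+\log(2\pi)\bigr)+O(s^2)$. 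This last expansion I would obtain from the functional equation (\ref{functional equation Kosh}), written as $\eta_p(-s)=\frac{(2\pi)^{-s}}{2\cos(\pi s/2)\,\Gamma(-s)}\,\zeta_p(1+s)$, upon combining $\Gamma(-s)=-\frac1s-\gamma+O(s)$ with the Laurent expansion of $\zeta_p$ at $1$, namely $\zeta_p(w)=\frac{1}{w-1}+C_p^{(1)}+O(w-1)$ (equivalently (\ref{Koshliakov meromorphic expansion})). Substituting all of this, the $1/s$ contributions cancel and the right-hand side of (\ref{generalized Guinand with new function}) tends to $\frac{\sqrt{\beta}\log\beta-\sqrt{\alpha}\log\alpha}{4\left(1+\frac1{\pi p}\right)}+\left(\frac{\gamma_p}{4}-\frac{\log(2\pi)}{2\left(1+\frac1{\pi p}\right)}\right)\bigl(\sqrt{\beta}-\sqrt{\alpha}\bigr)$, with $\gamma_p$ as defined in (\ref{another general Euler Mascheroni}).

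It then remains to recognise this as the symmetric Koshliakov-type identity. Using $\alpha\beta=\pi^2$ one has $\sqrt{\beta}\log(4\alpha)-\sqrt{\alpha}\log(4\beta)=2\log(2\pi)\bigl(\sqrt{\beta}-\sqrt{\alpha}\bigr)-\bigl(\sqrt{\beta}\log\beta-\sqrt{\alpha}\log\alpha\bigr)$; feeding this back, the finite part just computed becomes $\frac{\gamma_p}{4}\bigl(\sqrt{\beta}-\sqrt{\alpha}\bigr)-\frac{\sqrt{\beta}\log(4\alpha)-\sqrt{\alpha}\log(4\beta)}{4\left(1+\frac1{\pi p}\right)}$, and equating it with the limiting form of the left-hand side and rearranging gives precisely (\ref{Koshliakov formula general final version paper}). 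The one genuinely delicate step is securing the correct Laurent expansion of $\eta_p(-s)$ at $s=0$ (and the uniformity that licenses passing the limit into the series); everything else is routine bookkeeping, and the limit $p\to\infty$, via Lemma 5.1, then returns Ramanujan's Entry 3.3.3, i.e.\ Koshliakov's formula (\ref{Koshliakov given intro}).
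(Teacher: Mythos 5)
Your proposal is correct and follows essentially the same route as the paper: let $s\to 0$ in Theorem 5.1, insert the Laurent expansions of $\Gamma(\pm s/2)$, $\zeta_{p}(s)$ and $\eta_{p}(-s)$ at the origin, observe that the poles cancel, and rearrange the finite part using $\alpha\beta=\pi^{2}$. The only (harmless) deviation is that you rederive the expansion of $\eta_{p}$ at $0$ from the functional equation and the expansion of $\zeta_{p}$ at $1$, whereas the paper quotes Koshliakov's expansion directly; the two agree, and your finite part matches the stated $\gamma_{p}$.
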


\begin{proof}
The proof comes from letting $s\rightarrow0$ in the generalized Ramanujan-Guinand's formula
(\ref{generalized Guinand with new function}) and from invoking the Laurent
expansion for $\Gamma(s)$ around $s=0$, (\ref{meromorphic Gamma}), 
together with [\cite{KOSHLIAKOV}, p. 22, pp. 48 and 49, eqs. (52), (54){]}
\begin{equation}
\zeta_{p}(s)=-\frac{1}{2}\,\frac{1}{1+\frac{1}{\pi p}}+\left(\frac{1}{2}C_{p}^{(2)}-\frac{\gamma}{2}-\frac{e^{2\pi p}}{1+\frac{1}{\pi p}}Q_{2\pi p}(0)-\frac{\log(2\pi)}{2}\right)\,s+O\left(s^{2}\right)\label{First Laurent Kosh}
\end{equation}
and
\begin{equation}
\eta_{p}(s)=-\frac{1}{2}+\left(\frac{1}{2}C_{p}^{(1)}-\frac{\gamma}{2}-\frac{\log(2\pi)}{2}\right)\,s+O\left(s^{2}\right).\label{Seocnd Laurent KOSSSH}
\end{equation}

Using these expansions under the limit $s\rightarrow0$ and rearranging
the terms gives (\ref{Koshliakov formula general final version paper}).
\end{proof}

\bigskip{}

By letting $p\rightarrow0^{+}$ or $p\rightarrow\infty$, we are able
to deduce formulas akin to the classical ones, because in these cases
the integral (\ref{modified Bessel function kosh case}) can be written
as a series of Bessel functions (see Lemma 5.1. above). 

\begin{corollary}
If $\alpha$ and $\beta$ are positive numbers such that $\alpha\beta=\pi^{2}$
and if $s$ is any complex number, then the following identities take
place
\begin{align}
\sqrt{\alpha}\,\sum_{m,n=1}^{\infty}\left(\frac{m}{n}\right)^{\frac{s}{2}}\,K_{\frac{s}{2}}\left(2m\,n\alpha\right) & -\sqrt{\beta}\,\sum_{m,n=1}^{\infty}\left(\frac{m}{n}\right)^{\frac{s}{2}}\,K_{\frac{s}{2}}\left(2m\,n\beta\right)=\nonumber \\
\frac{1}{4}\Gamma\left(-\frac{s}{2}\right)\zeta(-s)\left\{ \beta^{(1+s)/2}-\alpha^{(1+s)/2}\right\}  & +\frac{1}{4}\Gamma\left(\frac{s}{2}\right)\zeta(s)\left\{ \beta^{(1-s)/2}-\alpha^{(1-s)/2}\right\} ,\label{classical Guinand p infinity}
\end{align}
\begin{align}
\sqrt{\alpha}\,\sum_{m,n=1}^{\infty}\frac{(-1)^{m}m^{s/2}}{(2n-1)^{s/2}}K_{\frac{s}{2}}\left(\left(2n-1\right)m\,\alpha\right)-\sqrt{\beta}\,\sum_{m,n=1}^{\infty}\frac{(-1)^{m}m^{s/2}}{(2n-1)^{s/2}}K_{\frac{s}{2}}\left(\left(2n-1\right)m\,\beta\right)\nonumber \\
=\frac{\Gamma\left(\frac{s}{2}\right)\left(2^{s/2}-2^{-s/2}\right)\zeta(s)}{4}\left\{ \beta^{\frac{1-s}{2}}-\alpha^{\frac{1-s}{2}}\right\} .\label{classical Guinand p zero}
\end{align}

Moreover, we have
\begin{equation}
\sqrt{\alpha}\left(\frac{1}{4}\gamma-\frac{1}{4}\log(4\beta)+\sum_{m,n=1}^{\infty}K_{0}(2mn\,\alpha)\right)=\sqrt{\beta}\left(\frac{1}{4}\gamma-\frac{1}{4}\log(4\alpha)+\sum_{m,n=1}^{\infty}K_{0}(2mn\,\beta)\right),\label{Classical Kosh}
\end{equation}
\begin{equation}
\sqrt{\alpha}\left(\sum_{m,n=1}^{\infty}(-1)^{m}\,K_{0}\left(\left(2n-1\right)m\,\alpha\right)-\frac{\log(2)}{4}\right)=\sqrt{\beta}\left(\sum_{m,n=1}^{\infty}(-1)^{m}K_{0}\left(\left(2n-1\right)m\,\beta\right)-\frac{\log(2)}{4}\right).\label{Modified Koshliakooov}
\end{equation}
\end{corollary}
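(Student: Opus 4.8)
The plan is to read off all four identities as the $p\to\infty$ and $p\to0^{+}$ degenerations of the results of this section, turning the Koshliakov integral $\mathcal{K}_{\nu,p}$ into a series of Macdonald functions by Lemma 5.1. Throughout I would use the elementary facts that, as $p\to\infty$, $\lambda_{n}\to n$, the weight $\frac{p^{2}+\lambda_{n}^{2}}{p(p+\frac{1}{\pi})+\lambda_{n}^{2}}\to1$, $1+\frac{1}{\pi p}\to1$, $\zeta_{p}(s)\to\zeta(s)$ and (via the functional equation (\ref{functional equation Kosh})) $\eta_{p}(s)\to\zeta(s)$, while as $p\to0^{+}$ one has $\lambda_{n}\to n-\tfrac12$, the weight still $\to1$, $\frac{1}{1+\frac{1}{\pi p}}\to0$, $\zeta_{p}(s)\to(2^{s}-1)\zeta(s)$ and $\eta_{p}(s)\to(2^{1-s}-1)\zeta(s)$.

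First I would obtain (\ref{classical Guinand p infinity}) and (\ref{classical Guinand p zero}) from Theorem 5.1. Letting $p\to\infty$ in (\ref{generalized Guinand with new function}), Lemma 5.1, (\ref{p infinity limiting case}), turns $\mathcal{K}_{s/2,p}(2\lambda_{n}\alpha)$ into $\frac{\Gamma(\frac{1-s}{2})(4\alpha n)^{s/2}}{\sqrt{\pi}}\sum_{m\ge1}m^{s/2}K_{s/2}(2mn\alpha)$, and the prefactor $\frac{2^{-s}\sqrt{\pi}}{\Gamma(\frac{1-s}{2})}\alpha^{(1-s)/2}\lambda_{n}^{-s}$ combines with it so that $2^{-s}4^{s/2}=1$, $\alpha^{(1-s)/2}\alpha^{s/2}=\sqrt{\alpha}$ and $n^{-s}n^{s/2}=n^{-s/2}$, giving $\sqrt{\alpha}\sum_{m,n\ge1}(m/n)^{s/2}K_{s/2}(2mn\alpha)$ (and the analogue with $\beta$); on the right $\eta_{p}(-s)\to\zeta(-s)$, $\zeta_{p}(s)\to\zeta(s)$ and $1+\frac{1}{\pi p}\to1$, so (\ref{generalized Guinand with new function}) becomes exactly (\ref{classical Guinand p infinity}). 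Letting instead $p\to0^{+}$, the first term on the right of (\ref{generalized Guinand with new function}) vanishes because of the factor $\frac{1}{1+\frac{1}{\pi p}}$, while $\zeta_{p}(s)\to(2^{s}-1)\zeta(s)$; on the left, Lemma 5.1, (\ref{p zero limiting case almost all}), together with $\lambda_{n}\to n-\tfrac12$ produces $2^{s/2}\sqrt{\alpha}\sum_{m,n\ge1}\frac{(-1)^{m}m^{s/2}}{(2n-1)^{s/2}}K_{s/2}((2n-1)m\alpha)$ (and the $\beta$-analogue), and since $2^{s/2}(2^{s/2}-2^{-s/2})=2^{s}-1$, dividing through by $2^{s/2}$ yields (\ref{classical Guinand p zero}).

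For the Koshliakov-type formulas (\ref{Classical Kosh}) and (\ref{Modified Koshliakooov}) the cleanest route is to let $s\to0$ in the two identities just established; alternatively (\ref{Classical Kosh}) follows directly by letting $p\to\infty$ in Corollary 5.1, where $\gamma_{p}\to\gamma$ because $C_{p}^{(1)},C_{p}^{(2)}\to\gamma$ and $e^{2\pi p}Q_{2\pi p}(0)=\Gamma(0,2\pi p)\to0$, while $\mathcal{K}_{0,p}(2\alpha\lambda_{n})\to\sum_{m\ge1}K_{0}(2mn\alpha)$ by Lemma 5.1 with $\nu=0$. In (\ref{classical Guinand p zero}), as $s\to0$ the left side tends to $\sqrt{\alpha}\sum_{m,n}(-1)^{m}K_{0}((2n-1)m\alpha)-\sqrt{\beta}\sum_{m,n}(-1)^{m}K_{0}((2n-1)m\beta)$, while $\Gamma(s/2)(2^{s/2}-2^{-s/2})=\bigl(\tfrac2s-\gamma+O(s)\bigr)\bigl(s\log2+O(s^{3})\bigr)\to2\log2$ and $\zeta(0)=-\tfrac12$, so the right side tends to $-\tfrac{\log2}{4}(\sqrt{\beta}-\sqrt{\alpha})$; rearranging gives (\ref{Modified Koshliakooov}). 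Applying the same $s\to0$ procedure to (\ref{classical Guinand p infinity}), now both terms on the right are individually singular at $s=0$, but the simple poles of $\Gamma(\pm s/2)$ cancel because $\Gamma(s/2)+\Gamma(-s/2)=-2\gamma+O(s^{2})$; collecting the resulting $O(1)$ terms (using $\zeta(\pm s)=-\tfrac12\mp\tfrac{\log(2\pi)}{2}s+O(s^{2})$ and $\beta^{(1\pm s)/2}=\sqrt{\beta}\bigl(1\pm\tfrac s2\log\beta\bigr)+O(s^{2})$) and invoking $\log\alpha+\log\beta=2\log\pi$ reproduces (\ref{Classical Kosh}).

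The two places needing real care are the interchange of these limits with the infinite sums and the Laurent bookkeeping. For the former I would use that, for $0<p<t$, $\sigma(t)<0$ and $|\sigma(t)|=\frac{p+t}{t-p}\ge1$, so $\frac{1}{|\sigma(t)e^{2\pi t}-1|}\le e^{-2\pi t}$; inserted into (\ref{modified Bessel function kosh case}) this gives the $p$-uniform bound $|\mathcal{K}_{\nu,p}(x)|\le\int_{0}^{\infty}y^{-\text{Re}\,\nu-\frac12}(y+1)^{-\text{Re}\,\nu-\frac12}e^{-(2y+1)x}\,dy$, which by (\ref{as laplace traaansform}) is a constant multiple of $x^{\text{Re}\,\nu}K_{\text{Re}\,\nu}(x)$ and hence decays exponentially; combined with $\lambda_{n}>n-\tfrac12$ this supplies the dominating series needed to pass each limit through. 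The Laurent computation at $s=0$ is elementary but must be carried out carefully, since the constants $\tfrac\gamma4-\tfrac{\log(4\beta)}{4}$ and $-\tfrac{\log2}{4}$ emerge only after several $O(1)$ contributions are combined and $\alpha\beta=\pi^{2}$ is used; this bookkeeping, together with the dominated-convergence justification, is the main obstacle.
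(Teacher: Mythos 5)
Your proposal is correct and follows essentially the same route as the paper: specialize Theorem 5.1 (and Corollary 5.1) at $p\to\infty$ and $p\to0^{+}$, using Lemma 5.1 to turn $\mathcal{K}_{s/2,p}$ into the Macdonald-function double series, with the powers of $2$ and of $\alpha$ combining exactly as you compute. The only step the paper makes explicit that you omit is the final extension from $\text{Re}(s)<1$ (where Theorem 5.1 is stated) to all $s\in\mathbb{C}$, which follows because the Bessel double series converges absolutely and uniformly for every $s$ and hence both sides continue analytically; also note that your $p$-uniform bound $|\sigma(t)e^{2\pi t}-1|^{-1}\le e^{-2\pi t}$ only holds for $t>p$, and the bound valid on the whole integration range (and uniformly in $p$) is $\left(e^{2\pi t}-1\right)^{-1}$, which still suffices for the dominated-convergence argument.
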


\begin{proof}

We only prove the classical Ramanujan-Guinand formula (\ref{classical Guinand p infinity}) from our generalized version of Entry 3.3.1.
The proofs of the remaining formulas (\ref{classical Guinand p zero}), (\ref{Classical Kosh}) and (\ref{Modified Koshliakooov}) follow the same principle. The only part that
it is not immediate is the recovery of the Modified Bessel function appearing in (\ref{classical Guinand p infinity})
from the left-hand side of (\ref{generalized Guinand with new function}). However, this was done in Lemma 5.1. above. Still, assuming that $\text{Re}(s)<1$ and justifying once more all the intermediate steps by absolute convergence, we find that the first term on the
left of (\ref{generalized Guinand with new function}) is
\begin{align}
\frac{2^{-s}\sqrt{\pi}}{\Gamma\left(\frac{1-s}{2}\right)}\alpha^{\frac{1-s}{2}}\sum_{n=1}^{\infty}\frac{1}{n^{s}}\,\intop_{0}^{\infty}\,\frac{y^{-\frac{s+1}{2}}(y+1)^{-\frac{s+1}{2}}}{e^{2\alpha n(2y+1)}-1}dy & =\frac{2^{-s}\sqrt{\pi}}{\Gamma\left(\frac{1-s}{2}\right)}\alpha^{\frac{1-s}{2}}\sum_{m,n=1}^{\infty}\frac{1}{n^{s}}\intop_{0}^{\infty}\,e^{-2\alpha m\,n(2y+1)}y^{-\frac{s+1}{2}}(y+1)^{-\frac{s+1}{2}}dy\nonumber \\
=\sqrt{\alpha}\,\sum_{m,n=1}^{\infty}\left(\frac{m}{n}\right)^{\frac{s}{2}}\,K_{\frac{s}{2}}\left(2m\,n\alpha\right) & =\sqrt{\alpha}\,\sum_{n=1}^{\infty}\sum_{d|n}d^{-s}\,n^{\frac{s}{2}}\,K_{\frac{s}{2}}\left(2n\alpha\right)=\sqrt{\alpha}\,\sum_{n=1}^{\infty}\sigma_{-s}(n)\,n^{s/2}K_{\frac{s}{2}}\left(2n\alpha\right).\label{extreme right side}
\end{align}

Note now that the expression on the extreme right-hand side of (\ref{extreme right side})
defines a series that converges uniformly and absolutely for every
$s\in\mathbb{C}$. Since its summands are analytic functions of $s$,
this series defines an analytic function of $s$. By the principle
of analytic continuation, we now see that (\ref{classical Guinand p infinity})
must hold for every $s\in\mathbb{C}$.
\end{proof}

\bigskip{}
We now note that our proof of Theorem 5.1. can be actually used to
prove a more general result than (\ref{generalized Guinand with new function}).
Note that we can use the identity (\ref{protoform of Guinand}), take
the substitutions $c=\alpha^{2}/\pi^{2}$, $s\leftrightarrow\frac{1+s}{2}$
but not imposing that $p=p^{\prime}$. In this case we get the following
result.

\begin{theorem}
If $\alpha$ and $\beta$ are positive numbers such that $\alpha\beta=\pi^{2}$
and if $s$ is a complex number such that $\text{Re}(s)<1$, then
the following identity takes place
\begin{align}
\frac{\Gamma\left(-\frac{s}{2}\right)}{4}\left\{ \frac{\beta^{\frac{s+1}{2}}}{1+\frac{1}{\pi p}}\eta_{p^{\prime}}(-s)-\frac{\alpha^{\frac{s+1}{2}}}{1+\frac{1}{\pi p^{\prime}}}\eta_{p}(-s)\right\} +\frac{\Gamma\left(\frac{s}{2}\right)}{4}\left\{ \beta^{\frac{1-s}{2}}\zeta_{p}(s)-\alpha^{\frac{1-s}{2}}\zeta_{p^{\prime}}(s)\right\}  & =\nonumber \\
=\frac{2^{-s}\sqrt{\pi}}{\Gamma\left(\frac{1-s}{2}\right)}\,\alpha^{\frac{1-s}{2}}\sum_{n=1}^{\infty}\frac{p^{\prime2}+\lambda_{n}^{\prime2}}{p^{\prime}\left(p^{\prime}+\frac{1}{\pi}\right)+\lambda_{n}^{\prime2}}\,\lambda_{n}^{\prime-s}\,\mathcal{K}_{\frac{s}{2},p}\left(2\lambda_{n}^{\prime}\alpha\right)-\frac{2^{-s}\sqrt{\pi}}{\Gamma\left(\frac{1-s}{2}\right)}\,\beta^{\frac{1-s}{2}}\,\sum_{n=1}^{\infty}\frac{p^{2}+\lambda_{n}^{2}}{p\left(p+\frac{1}{\pi}\right)+\lambda_{n}^{2}}\,\lambda_{n}^{-s}\mathcal{K}_{\frac{s}{2},p^{\prime}}\left(2\lambda_{n}\beta\right),\label{More general Koshliakov final version}
\end{align}
where $\mathcal{K}_{\nu,p}(x)$ denotes the integral defined by (\ref{modified Bessel function kosh case}).
\end{theorem}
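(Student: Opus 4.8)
The plan is to follow the proof of Theorem 5.1.\ verbatim up to the point where the specialization $p=p^{\prime}$ is imposed, and simply not impose it. Concretely, I start from the key identity \eqref{protoform of Guinand}, which expresses the equality of the two representations of $\zeta_{p,p^{\prime}}(s,c)$ obtained by applying the first analogue of Watson's formula (Theorem 3.1., in the form \eqref{representation Re(s)<1}) to the $m$-sum versus the $n$-sum, using the symmetry \eqref{property in the region of absolute convergence} that swaps $c\leftrightarrow 1/c$ and $p\leftrightarrow p^{\prime}$. That identity is valid for $\mathrm{Re}(s)<1$ and carries both $p$ and $p^{\prime}$; the reduction to \eqref{generalized Guinand with new function} in the proof of Theorem 5.1.\ only used $p=p^{\prime}$ at the very last moment.

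The substitution to make is $c=\alpha^{2}/\pi^{2}$, so that $\sqrt{c}=\alpha/\pi$ and $2\pi\sqrt{c}=2\alpha$, together with $s\mapsto \tfrac{1+s}{2}$. Under $\alpha\beta=\pi^{2}$ one has $1/\sqrt{c}=\pi/\alpha=\beta/\pi$, so the ``$c$ replaced by $1/c$'' side of \eqref{protoform of Guinand} produces the arguments $2\beta\lambda_n$ and the modulus $\tfrac{\lambda_n\beta}{\pi}(2y+1)$, exactly matching the definition \eqref{modified Bessel function kosh case} of $\mathcal{K}_{\nu,p}(x)$ with $x=2\lambda_n\beta$ and $\nu=\tfrac{s}{2}$ (note $-\nu-\tfrac12=-\tfrac{s+1}{2}$, which is the exponent appearing after the substitution). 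Thus the two Bessel-type series on the right of \eqref{protoform of Guinand} become, up to the explicit prefactors $\dfrac{2^{-s}\sqrt{\pi}}{\Gamma\left(\frac{1-s}{2}\right)}\alpha^{\frac{1-s}{2}}$ and $\dfrac{2^{-s}\sqrt{\pi}}{\Gamma\left(\frac{1-s}{2}\right)}\beta^{\frac{1-s}{2}}$, precisely the sums $\sum_n \frac{p^{\prime2}+\lambda_n^{\prime2}}{p^{\prime}(p^{\prime}+\frac1\pi)+\lambda_n^{\prime2}}\lambda_n^{\prime-s}\mathcal{K}_{s/2,p}(2\lambda_n^{\prime}\alpha)$ and its $\beta$-counterpart with roles of $p,p^{\prime}$ interchanged — one has to track carefully which Koshliakov sequence ($\lambda_n$ versus $\lambda_n^{\prime}$) and which $\sigma$ versus $\sigma^{\prime}$ sits inside each integral, since the two sides of \eqref{protoform of Guinand} are not symmetric in this respect. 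That bookkeeping is the one genuinely error-prone step, but it is purely mechanical.

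For the left-hand side, the terms in \eqref{protoform of Guinand} involving $\zeta_p(2s)$, $\zeta_{p^{\prime}}(2s)$, $\zeta_{p^{\prime}}(2s-1)$, $\zeta_p(2s-1)$ become, after the substitution, terms in $\zeta_{p}(s+1)$, $\zeta_{p^{\prime}}(s+1)$, $\zeta_{p^{\prime}}(s)$, $\zeta_p(s)$ with Gamma-factor prefactors. Exactly as in the proof of Theorem 5.1., I then apply the functional equation \eqref{functional equation Kosh} to convert $\zeta_{p^{\prime}}(s+1)$ and $\zeta_p(s+1)$ into $\eta_{p^{\prime}}(-s)$ and $\eta_p(-s)$ respectively — this is where the factors $\Gamma(-\tfrac{s}{2})$, the $\frac{1}{1+\frac{1}{\pi p}}$ and $\frac{1}{1+\frac{1}{\pi p^{\prime}}}$ (coming from $\zeta_p(0)$-type normalizations already implicit in \eqref{protoform of Guinand}), and the half-integer powers $\beta^{(s+1)/2}$, $\alpha^{(s+1)/2}$ emerge. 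Collecting the two ``$\eta$'' contributions into the brace $\left\{\frac{\beta^{(s+1)/2}}{1+\frac1{\pi p}}\eta_{p^{\prime}}(-s)-\frac{\alpha^{(s+1)/2}}{1+\frac1{\pi p^{\prime}}}\eta_p(-s)\right\}$ and the two ``$\zeta$'' contributions into $\left\{\beta^{(1-s)/2}\zeta_p(s)-\alpha^{(1-s)/2}\zeta_{p^{\prime}}(s)\right\}$ gives \eqref{More general Koshliakov final version}. Since every manipulation (interchanges of sum and integral, use of the geometric series \eqref{power series def}, Legendre duplication, and the functional equation) is justified exactly as in the cited earlier proofs under $\mathrm{Re}(s)<1$, no new analytic input is needed; the only thing to get right is the asymmetric labeling of $p$ versus $p^{\prime}$ on the two sides, which is why I would write out \eqref{protoform of Guinand}, perform the substitution explicitly, and then read off \eqref{More general Koshliakov final version} term by term.
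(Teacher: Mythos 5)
Your proposal is correct and follows essentially the same route as the paper: the authors themselves prove this theorem by taking the identity \eqref{protoform of Guinand}, substituting $c=\alpha^{2}/\pi^{2}$ and $s\mapsto\frac{1+s}{2}$, and simply not imposing $p=p^{\prime}$, exactly as you describe. Your tracking of the asymmetric $p$ versus $p^{\prime}$ labeling and the final use of the functional equation \eqref{functional equation Kosh} to produce the $\eta_{p}(-s)$, $\eta_{p^{\prime}}(-s)$ terms matches the reduction already carried out in the proof of Theorem 5.1.
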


Using this more general version of the Ramanujan-Guinand formula,
we can establish a wide class of Corollaries. The following result
comes easily from taking $p^{\prime}\rightarrow\infty$ in (\ref{More general Koshliakov final version})
and appealing to Lemma 5.1.

\begin{corollary}
If $\alpha$ and $\beta$ are positive numbers such that $\alpha\beta=\pi^{2}$
and if $s$ is a complex number satisfying the condition $\text{Re}(s)<1$, then the following identity takes
place
\begin{align}
\frac{\Gamma\left(-\frac{s}{2}\right)}{4}\left\{ \frac{\beta^{\frac{s+1}{2}}}{1+\frac{1}{\pi p}}\zeta(-s)-\alpha^{\frac{s+1}{2}}\eta_{p}(-s)\right\} +\frac{\Gamma\left(\frac{s}{2}\right)}{4}\left\{ \beta^{\frac{1-s}{2}}\zeta_{p}(s)-\alpha^{\frac{1-s}{2}}\zeta(s)\right\}  & =\nonumber \\
=\frac{2^{-s}\sqrt{\pi}}{\Gamma\left(\frac{1-s}{2}\right)}\,\alpha^{\frac{1-s}{2}}\sum_{n=1}^{\infty}\,n^{-s}\,\mathcal{K}_{\frac{s}{2},p}\left(2n\alpha\right)-\sqrt{\beta}\,\sum_{m,n=1}^{\infty}\frac{p^{2}+\lambda_{n}^{2}}{p\left(p+\frac{1}{\pi}\right)+\lambda_{n}^{2}}\,\left(\frac{m}{\lambda_{n}}\right)^{\frac{s}{2}}\,K_{\frac{s}{2}}\left(2m\lambda_{n}\beta\right).\label{another corollllllary general guuuiiiinand}
\end{align}

In particular, (\ref{Guinand given at intro}) holds.  Moreover,
\begin{align*}
\frac{\Gamma\left(-\frac{s}{2}\right)\zeta(-s)}{4}\alpha^{\frac{s+1}{2}}(1-2^{1+s})+\frac{\Gamma\left(\frac{s}{2}\right)\zeta(s)}{4}\left\{ \beta^{\frac{1-s}{2}}(2^{s}-1)-\alpha^{\frac{1-s}{2}})\right\}  & =\\
=\sqrt{\alpha}\,\sum_{m,n=1}^{\infty}(-1)^{n}\,\left(\frac{n}{m}\right)^{s/2}\,K_{\frac{s}{2}}\left(2mn\,\alpha\right)-2^{s/2}\,\sqrt{\beta}\,\sum_{m,n=1}^{\infty}\left(\frac{m}{2n-1}\right)^{\frac{s}{2}}\,K_{\frac{s}{2}}\left((2n-1)\,m\beta\right),
\end{align*}
for every $s\in\mathbb{C}$.
\end{corollary}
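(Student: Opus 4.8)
The plan is to obtain the displayed identity (\ref{another corollllllary general guuuiiiinand}) directly from the general Ramanujan--Guinand formula of Theorem 5.2, equation (\ref{More general Koshliakov final version}), by letting $p^{\prime}\to\infty$ with $p$ held fixed. On the left of (\ref{More general Koshliakov final version}) the passage is immediate: $\lim_{p^{\prime}\to\infty}\zeta_{p^{\prime}}(s)=\zeta(s)$ and hence, via the functional equation (\ref{functional equation Kosh}) combined with the classical functional equation for $\zeta(s)$, also $\lim_{p^{\prime}\to\infty}\eta_{p^{\prime}}(-s)=\zeta(-s)$, while $(1+\tfrac{1}{\pi p^{\prime}})^{-1}\to1$. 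For the first sum on the right one uses $\lambda_{n}^{\prime}\to n$ and $\tfrac{p^{\prime2}+\lambda_{n}^{\prime2}}{p^{\prime}(p^{\prime}+1/\pi)+\lambda_{n}^{\prime2}}\to1$, so it tends to $\tfrac{2^{-s}\sqrt{\pi}}{\Gamma(\frac{1-s}{2})}\,\alpha^{\frac{1-s}{2}}\sum_{n=1}^{\infty}n^{-s}\mathcal{K}_{s/2,p}(2n\alpha)$; for the second sum, only the argument of $\mathcal{K}_{s/2,p^{\prime}}$ carries $p^{\prime}$, so I invoke Lemma 5.1, i.e.\ (\ref{p infinity limiting case}) with $\nu=s/2$, $x=2\lambda_{n}\beta$, and the powers of $2$, $\alpha$, $\beta$, $\lambda_{n}$ collapse to give exactly $-\sqrt{\beta}\sum_{m,n=1}^{\infty}\tfrac{p^{2}+\lambda_{n}^{2}}{p(p+1/\pi)+\lambda_{n}^{2}}(m/\lambda_{n})^{s/2}K_{s/2}(2m\lambda_{n}\beta)$. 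All interchanges of the limit $p^{\prime}\to\infty$ with the series and the $y$-integrals are justified by dominated convergence, using $\lambda_{n}^{\prime}\ge n-\tfrac12$, $\tfrac{p^{\prime2}+\lambda_{n}^{\prime2}}{p^{\prime}(p^{\prime}+1/\pi)+\lambda_{n}^{\prime2}}\le1$, and $K_{\nu}(x)=O(e^{-x}/\sqrt{x})$ as $x\to\infty$ --- the same crude majorants already used in Lemma 5.1 and in the corollaries of Section 4. This establishes (\ref{another corollllllary general guuuiiiinand}) for $\text{Re}(s)<1$.

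To deduce the ``in particular'' statement, namely Entry 3.3.1 / (\ref{Guinand given at intro}), I would let $p\to\infty$ in (\ref{another corollllllary general guuuiiiinand}), again using $\eta_{p}(-s)\to\zeta(-s)$, $\zeta_{p}(s)\to\zeta(s)$, $(1+\tfrac{1}{\pi p})^{-1}\to1$, and $\mathcal{K}_{s/2,p}(2n\alpha)\to\tfrac{\Gamma(\frac{1-s}{2})(4n\alpha)^{s/2}}{\sqrt{\pi}}\sum_{m=1}^{\infty}m^{s/2}K_{s/2}(2mn\alpha)$ from Lemma 5.1. After the identical simplification of the powers the right side becomes $\sqrt{\alpha}\sum_{m,n=1}^{\infty}(m/n)^{s/2}K_{s/2}(2mn\alpha)-\sqrt{\beta}\sum_{m,n=1}^{\infty}(m/n)^{s/2}K_{s/2}(2mn\beta)$, and collapsing the inner sum via $\sum_{d\mid n}d^{-s}=\sigma_{-s}(n)$ yields (\ref{Guinand given at intro}) for $\text{Re}(s)<1$. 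Since $\sum_{n=1}^{\infty}\sigma_{-s}(n)\,n^{s/2}K_{s/2}(2n\alpha)$ converges absolutely and locally uniformly in $s$ with entire summands, while the right side of (\ref{Guinand given at intro}) is meromorphic (and the apparent poles of $\Gamma(\pm s/2)\zeta(\mp s)$ at the integers cancel between the two brace terms), the principle of analytic continuation extends (\ref{Guinand given at intro}) to all $s\in\mathbb{C}$, exactly as in the proof of the preceding corollary.

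The final ``Moreover'' identity is obtained instead by letting $p\to0^{+}$ in (\ref{another corollllllary general guuuiiiinand}). Here $\lim_{p\to0^{+}}\zeta_{p}(s)=(2^{s}-1)\zeta(s)$ (recorded after (\ref{Transcendental equation})), hence $\lim_{p\to0^{+}}\eta_{p}(s)=(2^{1-s}-1)\zeta(s)$ through (\ref{functional equation Kosh}), and $(1+\tfrac{1}{\pi p})^{-1}\to0$, so the term carrying $\beta^{(s+1)/2}$ on the left drops out; applying (\ref{p zero limiting case almost all}) of Lemma 5.1 to $\mathcal{K}_{s/2,p}$, together with $\tfrac{p^{2}+\lambda_{n}^{2}}{p(p+1/\pi)+\lambda_{n}^{2}}\to1$ and $\lambda_{n}\to n-\tfrac12$, the right side collapses to $\sqrt{\alpha}\sum_{m,n=1}^{\infty}(-1)^{m}(m/n)^{s/2}K_{s/2}(2mn\alpha)-2^{s/2}\sqrt{\beta}\sum_{m,n=1}^{\infty}\bigl(\tfrac{m}{2n-1}\bigr)^{s/2}K_{s/2}((2n-1)m\beta)$, and relabelling $m\leftrightarrow n$ in the first double sum puts it in the stated shape for $\text{Re}(s)<1$; analytic continuation then removes the restriction on $s$. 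The only genuine technical point in the whole argument is the repeated passage of the limits $p^{\prime},p\to\infty$ and $p\to0^{+}$ inside the double sums and the $y$-integrals --- handled, as above, by dominated convergence with those majorants --- together with the careful (but routine) bookkeeping of the powers of $2$, $\alpha$, $\beta$ needed to land on the coefficients exactly as written.
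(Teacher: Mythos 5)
Your proposal is correct and follows exactly the route the paper intends: letting $p^{\prime}\rightarrow\infty$ in Theorem 5.2's identity (\ref{More general Koshliakov final version}), invoking Lemma 5.1 to convert $\mathcal{K}_{\frac{s}{2},p^{\prime}}$ into the Bessel series, and then specializing $p\rightarrow\infty$ and $p\rightarrow0^{+}$ (with the same analytic-continuation step used for (\ref{classical Guinand p infinity})) to obtain the two particular cases. The paper states this corollary without a written proof, so your dominated-convergence justifications and power-of-two bookkeeping simply fill in details the authors left implicit.
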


Our next corollary is obtained when we take $p^{\prime}\rightarrow0^{+}$
in (\ref{More general Koshliakov final version}) and use Lemma 5.1.

\begin{corollary}
If $\alpha$ and $\beta$ are positive numbers such that $\alpha\beta=\pi^{2}$
and if $s$ is a complex number satisfying the condition $\text{Re}(s)<1$, then the following identity takes
place
\begin{align*}
\frac{\Gamma\left(-\frac{s}{2}\right)\zeta(-s)}{4}\,\frac{\beta^{\frac{s+1}{2}}}{1+\frac{1}{\pi p}}\left(2^{1+s}-1\right)+\frac{\Gamma\left(\frac{s}{2}\right)}{4}\left\{ \beta^{\frac{1-s}{2}}\zeta_{p}(s)-\alpha^{\frac{1-s}{2}}\left(2^{s}-1\right)\zeta(s)\right\}  & =\\
=\frac{\sqrt{\pi}}{\Gamma\left(\frac{1-s}{2}\right)}\,\alpha^{\frac{1-s}{2}}\sum_{n=1}^{\infty}\frac{\mathcal{K}_{\frac{s}{2},p}\left((2n-1)\alpha\right)}{\left(2n-1\right)^{s}}-\sqrt{\beta}\,\sum_{m,n=1}^{\infty}\frac{(p^{2}+\lambda_{n}^{2})\,(-1)^{m}}{p\left(p+\frac{1}{\pi}\right)+\lambda_{n}^{2}}\left(\frac{m}{\lambda_{n}}\right)^{s/2}\,K_{\frac{s}{2}}(2m\,\lambda_{n}\beta).
\end{align*}

In particular, (\ref{classical Guinand p zero}) holds. 
\end{corollary}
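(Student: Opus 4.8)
The plan is to derive the identity directly from Theorem 5.2, i.e.\ from (\ref{More general Koshliakov final version}), by letting $p^{\prime}\to0^{+}$ while $p$, $\alpha$, $\beta$ and $s$ (with $\text{Re}(s)<1$ and $\alpha\beta=\pi^{2}$) are held fixed. First I would record the limits of the $p^{\prime}$-dependent ingredients of (\ref{More general Koshliakov final version}). On the left-hand side $\frac{1}{1+\frac{1}{\pi p^{\prime}}}\to0$, so the term $\frac{\alpha^{(s+1)/2}}{1+1/(\pi p^{\prime})}\,\eta_{p}(-s)$ disappears; moreover $\zeta_{p^{\prime}}(s)\to(2^{s}-1)\zeta(s)$ by the limiting value of $\zeta_{p}$ recorded in the introduction, and, combining the functional equation (\ref{functional equation Kosh}) with the classical functional equation of $\zeta$, one gets $\eta_{p^{\prime}}(-s)=\frac{(2\pi)^{-s}}{2\cos(\pi s/2)\Gamma(-s)}\,\zeta_{p^{\prime}}(1+s)\to(2^{1+s}-1)\zeta(-s)$. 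On the right-hand side, the first sum involves $\mathcal{K}_{s/2,p}$ with the \emph{fixed} parameter $p$, so only $\lambda_{n}^{\prime}$ and its weight vary: since $\lambda_{n}^{\prime}$ is the $n$-th positive root of $\tan(\pi y)=-y/p^{\prime}$, one has $n-\frac12<\lambda_{n}^{\prime}<n$ and $\lambda_{n}^{\prime}\to n-\frac12=\frac{2n-1}{2}$ as $p^{\prime}\to0^{+}$, while $\frac{p^{\prime2}+\lambda_{n}^{\prime2}}{p^{\prime}(p^{\prime}+\frac1\pi)+\lambda_{n}^{\prime2}}\to1$; collecting the powers of $2$ then turns the first sum into $\frac{\sqrt{\pi}}{\Gamma(\frac{1-s}{2})}\,\alpha^{(1-s)/2}\sum_{n\ge1}(2n-1)^{-s}\mathcal{K}_{s/2,p}\bigl((2n-1)\alpha\bigr)$.

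Then I would handle the second sum on the right of (\ref{More general Koshliakov final version}), whose integral now carries the parameter $p^{\prime}$, by appealing to Lemma 5.1: by (\ref{p zero limiting case almost all}),
\[
\mathcal{K}_{s/2,p^{\prime}}(2\lambda_{n}\beta)\ \longrightarrow\ \frac{\Gamma\!\left(\frac{1-s}{2}\right)(4\lambda_{n}\beta)^{s/2}}{\sqrt{\pi}}\sum_{m=1}^{\infty}(-1)^{m}\,m^{s/2}\,K_{s/2}(2m\lambda_{n}\beta).
\]
Substituting this and simplifying the resulting powers of $2$, $\beta$ and $\lambda_{n}$ yields exactly the double series $\sqrt{\beta}\sum_{m,n\ge1}\frac{(p^{2}+\lambda_{n}^{2})(-1)^{m}}{p(p+\frac1\pi)+\lambda_{n}^{2}}\bigl(\frac{m}{\lambda_{n}}\bigr)^{s/2}K_{s/2}(2m\lambda_{n}\beta)$ of the statement; assembling the four pieces gives the claimed identity.

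The step that requires care — and the one I expect to be the main obstacle — is the justification of passing the limit $p^{\prime}\to0^{+}$ term by term inside the two infinite sums, and, for the second one, inside the integral defining $\mathcal{K}_{s/2,p^{\prime}}$. This is carried out exactly as in the proofs of Lemma 5.1 and of Corollary 5.2 and in the estimate (\ref{bound proof entire part Watson}): since $\bigl|\frac{p^{\prime}-t}{p^{\prime}+t}\bigr|<1$ for every real $t>0$ and every $p^{\prime}>0$, the geometric series (\ref{power series def}) gives $\bigl|\sigma^{\prime}\!\bigl(\frac{x}{2\pi}(2y+1)\bigr)e^{(2y+1)x}-1\bigr|^{-1}\le\bigl(e^{(2y+1)x}-1\bigr)^{-1}$, so the integrands of $\mathcal{K}_{s/2,p^{\prime}}$ are dominated uniformly in $p^{\prime}$ by an integrable function, while the tails of both series are dominated uniformly in $p^{\prime}$ using $\lambda_{n}^{\prime}\ge n-\frac12$ together with $K_{\nu}(x)=O(e^{-x}/\sqrt{x})$; the dominated convergence theorem then applies, everything taking place in the strip $\text{Re}(s)<1$. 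Finally, the ``in particular'' assertion that (\ref{classical Guinand p zero}) holds follows by letting $p\to0^{+}$ in the identity just established and applying Lemma 5.1 once more to $\mathcal{K}_{s/2,p}$: the surviving left-hand side equals $\frac14\Gamma(\frac s2)(2^{s}-1)\zeta(s)\bigl\{\beta^{(1-s)/2}-\alpha^{(1-s)/2}\bigr\}$, and dividing through by $2^{s/2}$ recovers (\ref{classical Guinand p zero}); since both sides of the latter are entire in $s$, the hypothesis $\text{Re}(s)<1$ is then removed by analytic continuation.
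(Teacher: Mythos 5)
Your proposal is correct and takes essentially the same route the paper intends: the corollary is obtained precisely by letting $p'\rightarrow 0^{+}$ in Theorem 5.2 (formula (\ref{More general Koshliakov final version})) and invoking Lemma 5.1, and your bookkeeping of the limits ($\zeta_{p'}(s)\rightarrow(2^{s}-1)\zeta(s)$, $\eta_{p'}(-s)\rightarrow(2^{1+s}-1)\zeta(-s)$ via the functional equation, $\lambda_{n}'\rightarrow n-\tfrac{1}{2}$ with unit weights, and the limit of $\mathcal{K}_{s/2,p'}$ producing the alternating Bessel double series) checks out, as does the recovery of (\ref{classical Guinand p zero}) by the further limit $p\rightarrow 0^{+}$. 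Your explicit dominated-convergence justifications simply supply details the paper leaves implicit.
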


\begin{remark}
Another proof of Corollary 5.3. can be obtained if we reverse the roles
of $p$ and $p^{\prime}$ and use the functional equation (\ref{Functional equation particular case}).
Writing each side of (\ref{Functional equation particular case})
by using their respective analogue of the Selberg-Chowla formula,
(\ref{representation Re(s)<1}) or (\ref{Selberg-Chowla for second Epstein}),
we can easily derive (\ref{another corollllllary general guuuiiiinand}). 
\end{remark}

\bigskip{}

Finally, we derive a generalized transformation formula for the logarithm
of the Dedekind $\eta-$function. We remark that a particular case
of our formula (when $p=p^{\prime}$) was derived by Dixit and Gupta
{[}\cite{DG}, Theorem 4.5., p. 15{]}. Our proof is drastically different because we use the first analogue of Kronecker's limit formula (\ref{corrected Kronecker Limit formula}) given in the previous section. 

\begin{corollary}

Let $p,p^{\prime}\in\mathbb{R}_{+}$ and $\sigma(t)$, $\sigma^{\prime}(t)$
be defined by (\ref{definition sigma p sigma p'}). Then, if $\alpha,\beta>0$
are such that $\alpha\beta=\pi^{2}$, the following generalization of Entry 3.3.2. takes place 
\begin{align}
\,\sum_{n=1}^{\infty}\frac{p^{\prime2}+\lambda_{n}^{\prime2}}{p^{\prime}\left(p^{\prime}+\frac{1}{\pi}\right)+\lambda_{n}^{\prime2}}\cdot\frac{\lambda_{n}^{\prime-1}}{\sigma\left(\frac{\lambda_{n}^{\prime}\alpha}{\pi}\right)e^{2\alpha\lambda_{n}^{\prime}}-1}-\sum_{n=1}^{\infty}\frac{p^{2}+\lambda_{n}^{2}}{p\left(p+\frac{1}{\pi}\right)+\lambda_{n}^{2}}\cdot\frac{\lambda_{n}^{-1}}{\sigma^{\prime}\left(\frac{\lambda_{n}\beta}{\pi}\right)e^{2\beta\lambda_{n}}-1}\nonumber \\
=\frac{1}{12\left(1+\frac{1}{\pi p}\right)\left(1+\frac{1}{\pi p^{\prime}}\right)}\left\{ \beta\,\frac{1+\frac{3}{\pi p^{\prime}}(1+\frac{1}{\pi p^{\prime}})}{1+\frac{1}{\pi p^{\prime}}}-\alpha\,\frac{1+\frac{3}{\pi p}(1+\frac{1}{\pi p})}{1+\frac{1}{\pi p}}\right\} +\frac{C_{p}^{(1)}-C_{p^{\prime}}^{(1)}}{2}+\frac{1}{4}\,\log\left(\frac{\alpha}{\beta}\right).\label{Dedekind generalized final version}
\end{align}

\end{corollary}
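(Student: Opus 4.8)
The plan is to extract the identity \eqref{Dedekind generalized final version} by comparing two different expressions for the constant term in the Laurent expansion of $\zeta_{p,p'}(s,c)$ about its simple pole at $s=1$. The first expression is precisely the generalized Kronecker limit formula \eqref{corrected Kronecker Limit formula} of Corollary 4.1. The second comes from applying that same formula to $\zeta_{p',p}(s,1/c)$ together with the symmetry relation \eqref{property in the region of absolute convergence}, which after cancelling the common powers of $\pi$ reads $\zeta_{p,p'}(s,c)=c^{-s}\,\zeta_{p',p}(s,1/c)$; this holds for $\mathrm{Re}(s)>1$ and hence, by uniqueness of analytic continuation, in a punctured neighbourhood of $s=1$.

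First I would record, straight from \eqref{corrected Kronecker Limit formula},
\[
\zeta_{p,p'}(s,c)=\frac{\pi}{\sqrt c}\,\frac{1}{s-1}+A+\frac{\pi}{\sqrt c}\Bigl(2C^{(1)}_{p'}-\log(4c)+4S_{p'}(c)\Bigr)+O(s-1),
\]
where $A=\tfrac{\pi^2}{3}\,\frac{1+\frac{3}{\pi p}(1+\frac{1}{\pi p})}{(1+\frac{1}{\pi p'})(1+\frac{1}{\pi p})^2}$ and $S_{p'}(c)$ is the series over $\lambda_n'$ appearing on the left of \eqref{Dedekind generalized final version}. Next I would write the analogous expansion obtained from \eqref{corrected Kronecker Limit formula} after interchanging $p$ and $p'$ and replacing $c$ by $1/c$; one must note here that the function $\sigma$ occurring in the Kronecker formula is, by its derivation from Watson's formula, attached to the \emph{first} parameter, so it becomes $\sigma'$, while the summation sequence becomes the $p$-sequence $\lambda_n$. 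Multiplying this expansion for $\zeta_{p',p}(s,1/c)$ by $c^{-s}=c^{-1}\bigl(1-(s-1)\log c+O((s-1)^2)\bigr)$ and collecting $O(1)$ terms — the extra $-\tfrac{\pi}{\sqrt c}\log c$ produced by the pole combines with $-\log(4/c)$ to leave just $-\log 4$ — gives a second formula for the constant term of $\zeta_{p,p'}(s,c)$, with the Eisenstein constant $B=\tfrac{\pi^2}{3}\,\frac{1+\frac{3}{\pi p'}(1+\frac{1}{\pi p'})}{(1+\frac{1}{\pi p})(1+\frac{1}{\pi p'})^2}$ and a series $T_p(1/c)$ over $\lambda_n$ built from $\sigma'$.

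Equating the two constant terms, multiplying through by $\sqrt c/(4\pi)$ and separating the logarithmic contributions (which leave exactly $\tfrac14\log c$ alongside $\tfrac12(C^{(1)}_p-C^{(1)}_{p'})$), I obtain the symmetric identity
\[
S_{p'}(c)-T_p(1/c)=\frac{B}{4\pi\sqrt c}-\frac{A\sqrt c}{4\pi}+\frac{C^{(1)}_p-C^{(1)}_{p'}}{2}+\frac{\log c}{4}.
\]
Finally I specialize $c=\alpha^2/\pi^2$, so that $\sqrt c=\alpha/\pi$ and, using $\alpha\beta=\pi^2$, $\sqrt{1/c}=\pi/\alpha=\beta/\pi$; then $2\pi\sqrt c\,\lambda_n'=2\alpha\lambda_n'$ and $2\pi\sqrt{1/c}\,\lambda_n=2\beta\lambda_n$, so $S_{p'}(c)$ and $T_p(1/c)$ turn into exactly the two series in \eqref{Dedekind generalized final version}; moreover $\tfrac14\log c=\tfrac14\log(\alpha/\beta)$, while $\pi^2/\alpha=\beta$ lets me rewrite $B/(4\pi\sqrt c)-A\sqrt c/(4\pi)$ in the factored bracketed form displayed there, completing the proof.

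The only real difficulty is bookkeeping: one has to be scrupulous, under the substitutions $p\leftrightarrow p'$ and $c\leftrightarrow 1/c$, about which parameter the function $\sigma$ in the Kronecker formula belongs to and which of $\lambda_n,\lambda_n'$ indexes the surviving series, and then verify that the two Eisenstein constants $A$ and $B$ reassemble precisely into the right-hand bracket of \eqref{Dedekind generalized final version}. No analytic issue arises beyond what is already contained in Corollary 4.1.
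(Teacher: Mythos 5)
Your proposal is correct and is essentially the paper's own proof: both exploit the symmetry $\zeta_{p,p'}(s,c)=c^{-s}\zeta_{p',p}(s,1/c)$, apply the generalized Kronecker limit formula \eqref{corrected Kronecker Limit formula} to each side, equate the constant terms of the two Laurent expansions at $s=1$, and then set $c=\alpha^{2}/\pi^{2}$. Your bookkeeping of the $\sigma\leftrightarrow\sigma'$ and $\lambda_n\leftrightarrow\lambda_n'$ swaps and of the logarithmic terms matches what the paper does.
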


\begin{proof}
We have seen in the proof of Theorem 5.1. that the relation holds
\begin{equation}
\zeta_{p,p^{\prime}}(s,\,c)=c^{-s}\,\zeta_{p^{\prime},p}\left(s,\,\frac{1}{c}\right).\label{relation valid for all s}
\end{equation}

Employing Kronecker's limit formula to $\zeta_{p^{\prime},p}\left(s,\,\frac{1}{c}\right)$
(i.e., using (\ref{corrected Kronecker Limit formula}) with $c$
replaced by $c^{-1}$ and $p$ by $p^{\prime}$), we deduce that $c^{-s}\zeta_{p^{\prime},p}\left(s,\frac{1}{c}\right)$
must have the meromorphic expansion
\begin{align}
c^{-s}\zeta_{p,p^{\prime}}\left(s,\frac{1}{c}\right) & =\frac{\pi}{\sqrt{c}}\,\frac{1}{s-1}-\frac{\pi\log(c)}{\sqrt{c}}+\frac{\pi^{2}}{3c}\,\frac{1+\frac{3}{\pi p^{\prime}}(1+\frac{1}{\pi p^{\prime}})}{\left(1+\frac{1}{\pi p}\right)\left(1+\frac{1}{\pi p^{\prime}}\right)^{2}}+\nonumber \\
+\pi\sqrt{c}\, & \left(2C_{p}^{(1)}-\log\left(\frac{4}{c}\right)+4\,\sum_{n=1}^{\infty}\frac{p^{2}+\lambda_{n}^{2}}{p\left(p+\frac{1}{\pi}\right)+\lambda_{n}^{2}}\cdot\frac{\lambda_{n}^{-1}}{\sigma^{\prime}\left(\sqrt{\frac{1}{c}}\lambda_{n}\right)e^{2\pi\sqrt{\frac{1}{c}}\lambda_{n}}-1}\right)+O\left(s-1\right)\label{second meromorphic Kronecker}
\end{align}

Since relation (\ref{relation valid for all s}) holds for every $s\in\mathbb{C}$
by analytic continuation, the constant terms of the meromorphic expansions
(\ref{second meromorphic Kronecker}) and (\ref{corrected Kronecker Limit formula})
must be the same. Henceforth, equating the constant terms of (\ref{corrected Kronecker Limit formula})
and (\ref{second meromorphic Kronecker}) and replacing $c=\alpha^{2}/\pi^{2}$ gives (\ref{Dedekind generalized final version}). 
\end{proof}

Our next corollary presents a particular case of the previous result when we take $p\rightarrow0^{+}$
and $p^{\prime}\rightarrow\infty$. It is not explicitly given in \cite{DG}, so we shall state it here. 
\begin{corollary}
Let $\alpha,\beta>0$
be such that $\alpha\beta=\pi^{2}$. Then the following identity takes place   
\[
\sum_{n=1}^{\infty}\frac{1}{n\left(e^{2\alpha n}+1\right)}+2\,\sum_{n=1}^{\infty}\frac{1}{(2n-1)}\,\frac{1}{\left(e^{\beta(2n-1)}-1\right)}=\frac{\alpha}{4}-\log(2)-\frac{1}{4}\log\left(\frac{\alpha}{\beta}\right).
\]
\end{corollary}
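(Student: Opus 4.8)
The plan is to obtain the stated identity as the limiting case $p^{\prime}\to\infty$, $p\to0^{+}$ of the generalized transformation formula for the logarithm of Dedekind's $\eta$-function, equation (\ref{Dedekind generalized final version}) of Corollary 5.6; the two limits may be performed in that order. First I would record the elementary facts governing the two Koshliakov sequences and the weights occurring there: as $p^{\prime}\to\infty$ one has $\lambda_{n}^{\prime}\to n$, $\frac{p^{\prime2}+\lambda_{n}^{\prime2}}{p^{\prime}(p^{\prime}+\frac{1}{\pi})+\lambda_{n}^{\prime2}}\to1$ and $\sigma^{\prime}(t)=\frac{p^{\prime}+t}{p^{\prime}-t}\to1$ for each fixed $t>0$, while $1+\frac{1}{\pi p^{\prime}}\to1$ and $\frac{1+\frac{3}{\pi p^{\prime}}(1+\frac{1}{\pi p^{\prime}})}{1+\frac{1}{\pi p^{\prime}}}\to1$; and as $p\to0^{+}$ one has $\lambda_{n}\to n-\tfrac12$, $\frac{p^{2}+\lambda_{n}^{2}}{p(p+\frac{1}{\pi})+\lambda_{n}^{2}}\to1$ and $\sigma(t)=\frac{p+t}{p-t}\to-1$ for each fixed $t>0$.

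After letting $p^{\prime}\to\infty$ in (\ref{Dedekind generalized final version}) and then $p\to0^{+}$, the left-hand side tends termwise to
\[
\sum_{n=1}^{\infty}\frac{1}{n}\cdot\frac{1}{-e^{2\alpha n}-1}-\sum_{n=1}^{\infty}\frac{1}{n-\tfrac12}\cdot\frac{1}{e^{\beta(2n-1)}-1}=-\sum_{n=1}^{\infty}\frac{1}{n(e^{2\alpha n}+1)}-2\sum_{n=1}^{\infty}\frac{1}{(2n-1)(e^{\beta(2n-1)}-1)} .
\]
The interchange of these two limits with the infinite sums is justified by dominated convergence: using the geometric series (\ref{power series def}) one has $\bigl|\frac{1}{\sigma^{\prime}(z)e^{2\pi z}-1}\bigr|\le\frac{1}{e^{2\pi z}-1}$ and $\bigl|\frac{1}{\sigma(z)e^{2\pi z}-1}\bigr|\le\frac{1}{e^{2\pi z}-1}$, the weights are bounded by $1$, and $\lambda_{n}\ge n-\tfrac12$ supplies a summable exponential majorant independent of the parameters.

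For the right-hand side, write $u=\frac{1}{\pi p}$. After the limit $p^{\prime}\to\infty$ the algebraic term of (\ref{Dedekind generalized final version}) becomes $\frac{1}{12(1+u)}\bigl\{\beta-\alpha\,\frac{1+3u(1+u)}{1+u}\bigr\}=\frac{\beta}{12(1+u)}-\frac{\alpha}{12}\cdot\frac{1+3u(1+u)}{(1+u)^{2}}$, which tends to $-\alpha/4$ as $u\to\infty$ because $\frac{1+3u(1+u)}{(1+u)^{2}}\to3$; tracking this cancellation is the essential point. For the constant term I would pass to the limit directly in the definition (\ref{Euler Mascheroni Koshliakov sense}): the weights tend to $1$, so $C_{p^{\prime}}^{(1)}\to\lim_{n}\{\sum_{j=1}^{n-1}\tfrac1j-\log n\}=\gamma$ as $p^{\prime}\to\infty$, whereas as $p\to0^{+}$ one gets $C_{p}^{(1)}\to\lim_{n}\{\sum_{j=1}^{n-1}\tfrac{1}{j-1/2}-\log(n-\tfrac12)\}$, and the asymptotics $\sum_{j=1}^{N}\frac{1}{2j-1}=H_{2N}-\tfrac12 H_{N}=\log2+\tfrac12\log N+\tfrac{\gamma}{2}+o(1)$ give $C_{p}^{(1)}\to\gamma+2\log2$; hence $\tfrac12(C_{p}^{(1)}-C_{p^{\prime}}^{(1)})\to\log2$. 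The term $\tfrac14\log(\alpha/\beta)$ is untouched. Collecting everything, the limit of (\ref{Dedekind generalized final version}) reads
\[
-\sum_{n=1}^{\infty}\frac{1}{n(e^{2\alpha n}+1)}-2\sum_{n=1}^{\infty}\frac{1}{(2n-1)(e^{\beta(2n-1)}-1)}=-\frac{\alpha}{4}+\log2+\frac14\log\!\Bigl(\frac{\alpha}{\beta}\Bigr),
\]
and multiplying through by $-1$ yields the asserted identity. The main obstacle is the passage to the limit on the right-hand side: the factors $1+\frac{1}{\pi p}$ (and the $\zeta_{p}$-values hidden in the expansion) diverge as $p\to0^{+}$, so one must keep exact account of the cancellations in the bracket to see the finite limit $-\alpha/4$, and one must extract the $2\log2$ buried in the odd-harmonic asymptotics inside $C_{p}^{(1)}$; the dominated-convergence argument on the left is then routine.
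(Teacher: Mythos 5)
Your proposal is correct and follows exactly the route the paper intends: the corollary is stated as the specialization $p\to0^{+}$, $p'\to\infty$ of (\ref{Dedekind generalized final version}), and your detailed bookkeeping — the limit $-\alpha/4$ of the algebraic bracket via $\frac{1+3u(1+u)}{(1+u)^2}\to3$, the values $C_{p'}^{(1)}\to\gamma$ and $C_{p}^{(1)}\to\gamma+2\log 2$, and the dominated-convergence bound $\bigl|\sigma(z)e^{2\pi z}-1\bigr|^{-1}\le\bigl(e^{2\pi z}-1\bigr)^{-1}$ from (\ref{power series def}) — fills in precisely the steps the paper leaves implicit. No gaps.
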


\bigskip{}

Finally, using the same method as in the previous Corollary 5.5. (proof of a generalized Entry 3.3.2.), we state
the final result of this paper, which is also an extension of a formula
found in one of Ramanujan's notebooks [\cite{Ramanujan_lost}, p. 318],
although discovered earlier by Schl\"omlich (see [\cite{berndt_notebooks_II}, p. 256] for a historical account of this particular formula). We will omit its proof,
because it consists in using (\ref{Dedekind generalized final version})
with $p=p^{\prime}$, dividing by $\beta-\alpha$ and then let $\alpha\rightarrow\beta$.
\begin{corollary}
Let $p\in\mathbb{R}_{+}$ and let $\sigma(t)$ be defined by (\ref{definition sigma p sigma p'}). Then
the following identity holds 
\[
\sum_{n=1}^{\infty}\frac{p^{2}+\lambda_{n}^{2}}{p\left(p+\frac{1}{\pi}\right)+\lambda_{n}^{2}}\,\frac{e^{2\pi\lambda_{n}}}{\left(\sigma(\lambda_{n})e^{2\pi\lambda_{n}}-1\right)^{2}}\cdot\left\{ \pi\,\sigma\left(\lambda_{n}\right)+\frac{p}{\left(p-\lambda_{n}\right)^{2}}\right\} =\frac{\pi}{24}\cdot\frac{1+\frac{3}{\pi p}(1+\frac{1}{\pi p})}{\left(1+\frac{1}{\pi p}\right)^{3}}-\frac{1}{8}.
\]
\end{corollary}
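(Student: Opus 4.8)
The plan is to take the generalized Entry 3.3.2, identity (\ref{Dedekind generalized final version}), specialize it to the diagonal case $p=p'$, divide through by $\beta-\alpha$, and let $\alpha\to\beta$. When $p=p'$ one has $\lambda_n=\lambda_n'$ and $\sigma=\sigma'$, the term $\tfrac12(C_p^{(1)}-C_{p'}^{(1)})$ drops out, and the bracketed expression on the right of (\ref{Dedekind generalized final version}) collapses to $(\beta-\alpha)\bigl(1+\tfrac{3}{\pi p}(1+\tfrac{1}{\pi p})\bigr)\big/\bigl(12(1+\tfrac{1}{\pi p})^{3}\bigr)$. Introducing
\[
F(\alpha):=\sum_{n=1}^{\infty}\frac{p^{2}+\lambda_{n}^{2}}{p\left(p+\frac{1}{\pi}\right)+\lambda_{n}^{2}}\cdot\frac{\lambda_{n}^{-1}}{\sigma\!\left(\frac{\lambda_{n}\alpha}{\pi}\right)e^{2\alpha\lambda_{n}}-1},
\]
the identity (\ref{Dedekind generalized final version}) reads, for all $\alpha,\beta>0$ with $\alpha\beta=\pi^{2}$,
\[
F(\alpha)-F(\beta)=\frac{(\beta-\alpha)\bigl(1+\frac{3}{\pi p}(1+\frac{1}{\pi p})\bigr)}{12\left(1+\frac{1}{\pi p}\right)^{3}}+\frac{1}{4}\log\!\left(\frac{\alpha}{\beta}\right).
\]

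Next I would divide by $\beta-\alpha$ and let $\alpha\to\beta$. Since the constraint $\alpha\beta=\pi^{2}$ forces $\alpha,\beta\to\pi$, I set $\beta=\pi^{2}/\alpha$ and note that $h(\alpha):=F(\alpha)-F(\pi^{2}/\alpha)$ and $k(\alpha):=\pi^{2}/\alpha-\alpha$ both vanish at $\alpha=\pi$; by l'H\^opital's rule, $\lim_{\alpha\to\pi}h(\alpha)/k(\alpha)=h'(\pi)/k'(\pi)=2F'(\pi)/(-2)=-F'(\pi)$. On the right, the first term is unchanged after division, while for the logarithmic term one uses $\log(\alpha/\beta)=2\log(\alpha/\pi)$ and $\beta-\alpha=\pi^{2}/\alpha-\alpha$ to obtain $\log(\alpha/\beta)/(\beta-\alpha)\to-1/\pi$ as $\alpha\to\pi$. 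This gives the constant identity
\[
-F'(\pi)=\frac{1+\frac{3}{\pi p}(1+\frac{1}{\pi p})}{12\left(1+\frac{1}{\pi p}\right)^{3}}-\frac{1}{4\pi}.
\]

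It then remains to evaluate $F'(\pi)$ by differentiating $F$ term by term. Writing $D_{n}(\alpha):=\sigma\!\bigl(\tfrac{\lambda_{n}\alpha}{\pi}\bigr)e^{2\alpha\lambda_{n}}-1$, so that $\tfrac{d}{d\alpha}\tfrac1{D_{n}}=-D_{n}'/D_{n}^{2}$, and using $\tfrac{d}{dt}\tfrac{p+t}{p-t}=\tfrac{2p}{(p-t)^{2}}$, one computes $D_{n}'(\pi)=2\lambda_{n}e^{2\pi\lambda_{n}}\bigl(\sigma(\lambda_{n})+\tfrac{p}{\pi(p-\lambda_{n})^{2}}\bigr)$. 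Substituting and pulling out the factor $\pi$ yields
\[
-F'(\pi)=\sum_{n=1}^{\infty}\frac{p^{2}+\lambda_{n}^{2}}{p\left(p+\frac{1}{\pi}\right)+\lambda_{n}^{2}}\cdot\frac{\lambda_{n}^{-1}D_{n}'(\pi)}{D_{n}(\pi)^{2}}=\frac{2}{\pi}\sum_{n=1}^{\infty}\frac{p^{2}+\lambda_{n}^{2}}{p\left(p+\frac{1}{\pi}\right)+\lambda_{n}^{2}}\cdot\frac{e^{2\pi\lambda_{n}}\bigl(\pi\sigma(\lambda_{n})+\frac{p}{(p-\lambda_{n})^{2}}\bigr)}{\bigl(\sigma(\lambda_{n})e^{2\pi\lambda_{n}}-1\bigr)^{2}}.
\]
Combining the last two displays and multiplying through by $\pi/2$ produces exactly the asserted formula.

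The two points requiring care are the following. First, the term-by-term differentiation of $F$ near $\alpha=\pi$ must be justified; this is routine because the factor $e^{2\alpha\lambda_{n}}$ in $D_{n}(\alpha)$ makes the series of derivatives converge locally uniformly, and (writing $1/(\sigma(t)e^{2\alpha\lambda_n}-1)=(p-t)/((p+t)e^{2\alpha\lambda_n}-(p-t))$) one sees that each summand is in fact analytic in $\alpha$ near $\alpha=\pi$, so no genuine singularity arises even when $p$ happens to coincide with some $\lambda_{n}$. Second, the limit $\alpha\to\beta$ must be taken along the hyperbola $\alpha\beta=\pi^{2}$, which is precisely the l'H\^opital computation above; this is where the extra $-1/\pi$ — and hence the $-\tfrac18$ in the final answer — enters. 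As a consistency check, letting $p\to\infty$ collapses $F(\alpha)$ to $\sum_{n\ge1}n^{-1}(e^{2\alpha n}-1)^{-1}=\sum_{n\ge1}\sigma_{-1}(n)e^{-2n\alpha}$ and the resulting identity is the classical Schl\"omilch formula referenced in the statement.
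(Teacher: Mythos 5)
Your proposal is correct and is precisely the argument the paper has in mind: the authors explicitly state that the proof "consists in using (\ref{Dedekind generalized final version}) with $p=p^{\prime}$, dividing by $\beta-\alpha$ and then let $\alpha\rightarrow\beta$," and you have carried out exactly that computation, with the l'H\^opital limit along $\alpha\beta=\pi^{2}$ correctly producing the $-1/(4\pi)$ (hence the $-\tfrac18$) and the term-by-term differentiation yielding the stated summand. No gaps.
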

Taking $p\rightarrow\infty$ gives
Schl{\"o}mlich's formula. By taking the limit $p\rightarrow0^{+}$
we obtain a companion of it, which seems to be new. We state them in the following Corollary.

\begin{corollary}

The following identities hold: 
\begin{equation}
\sum_{n=1}^{\infty}\frac{e^{2\pi n}}{\left(e^{2\pi n}-1\right)^{2}}=\frac{1}{24}-\frac{1}{8\pi},\,\,\,\,\,\,\,\,\,\,\,\,\sum_{n=1}^{\infty}\frac{e^{\pi(2n-1)}}{\left(e^{\pi(2n-1)}+1\right)^{2}}=\frac{1}{8\pi}.\label{Schlomlich and companion}
\end{equation}
\end{corollary}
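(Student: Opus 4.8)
The plan is to obtain both identities in (\ref{Schlomlich and companion}) by letting $p\rightarrow\infty$ and then $p\rightarrow0^{+}$ in the identity of the previous corollary, passing the limit inside the infinite series. Write the general summand on its left-hand side as
\[
a_{n}(p):=\frac{p^{2}+\lambda_{n}^{2}}{p\left(p+\frac{1}{\pi}\right)+\lambda_{n}^{2}}\cdot\frac{e^{2\pi\lambda_{n}}}{\left(\sigma(\lambda_{n})e^{2\pi\lambda_{n}}-1\right)^{2}}\left\{\pi\,\sigma(\lambda_{n})+\frac{p}{(p-\lambda_{n})^{2}}\right\}.
\]
First I would record the pointwise limits. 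Since $n-\tfrac12<\lambda_{n}<n$ for every $p>0$, the sequence $\lambda_{n}=\lambda_{n}(p)$ stays bounded, so the transcendental equation (\ref{Transcendental equation}) forces $\tan(\pi\lambda_{n})\rightarrow0$ as $p\rightarrow\infty$ and $\tan(\pi\lambda_{n})\rightarrow-\infty$ as $p\rightarrow0^{+}$; on the interval $(n-\tfrac12,n)$ this means $\lambda_{n}\rightarrow n$ in the first regime and $\lambda_{n}\rightarrow n-\tfrac12$ in the second (the same mechanism that gives the limits $\zeta_{p}(s)\rightarrow\zeta(s)$ and $\zeta_{p}(s)\rightarrow(2^{s}-1)\zeta(s)$ recorded in the Introduction). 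Using $\sigma(\lambda_{n})=\frac{p+\lambda_{n}}{p-\lambda_{n}}$ one then finds, termwise,
\[
a_{n}(p)\rightarrow\pi\,\frac{e^{2\pi n}}{(e^{2\pi n}-1)^{2}}\ \ (p\rightarrow\infty),\qquad a_{n}(p)\rightarrow-\pi\,\frac{e^{\pi(2n-1)}}{(e^{\pi(2n-1)}+1)^{2}}\ \ (p\rightarrow0^{+}).
\]

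The hard part is justifying the interchange of $\lim_{p}$ with $\sum_{n}$, for which I would exhibit a single summable majorant valid uniformly in $p$. Here I would use the geometric series (\ref{power series def}): with $r:=\frac{p-\lambda_{n}}{p+\lambda_{n}}$ and $q:=e^{-2\pi\lambda_{n}}$ one has $|r|<1$, $0<q<1$, and $\sigma(\lambda_{n})e^{2\pi\lambda_{n}}=(rq)^{-1}$, so that $\pi\,\sigma(\lambda_{n})\,\frac{e^{2\pi\lambda_{n}}}{(\sigma(\lambda_{n})e^{2\pi\lambda_{n}}-1)^{2}}=\pi\sum_{m\geq1}m(rq)^{m}$ and $\frac{p}{(p-\lambda_{n})^{2}}\cdot\frac{e^{2\pi\lambda_{n}}}{(\sigma(\lambda_{n})e^{2\pi\lambda_{n}}-1)^{2}}=\frac{p}{(p+\lambda_{n})^{2}}\sum_{m\geq1}m(rq)^{m-1}q$. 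Bounding $|rq|\leq q$, $\frac{p}{(p+\lambda_{n})^{2}}\leq\frac{1}{4\lambda_{n}}\leq\tfrac12$ (as $\lambda_{n}>\tfrac12$), and $0<\frac{p^{2}+\lambda_{n}^{2}}{p(p+\frac1\pi)+\lambda_{n}^{2}}<1$, this yields, for all $p>0$,
\[
|a_{n}(p)|\ \leq\ \Bigl(\pi+\tfrac12\Bigr)\frac{e^{-2\pi\lambda_{n}}}{(1-e^{-2\pi\lambda_{n}})^{2}}\ \leq\ \Bigl(\pi+\tfrac12\Bigr)\frac{e^{-\pi(2n-1)}}{(1-e^{-\pi(2n-1)})^{2}},
\]
the last step using $\lambda_{n}>n-\tfrac12$ and the monotonicity of $t\mapsto e^{-2\pi t}(1-e^{-2\pi t})^{-2}$. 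The right-hand side is $O(e^{-2\pi n})$, hence summable, and the dominated convergence theorem applies. (For the finitely many $p$ with $\lambda_{n}=p$ for some $n$ the corresponding term extends continuously and the bound persists, so this causes no difficulty.)

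It then remains to take the limits of the right-hand side of the previous corollary. Setting $u:=\tfrac{1}{\pi p}$ one has $\frac{1+\frac{3}{\pi p}(1+\frac{1}{\pi p})}{(1+\frac{1}{\pi p})^{3}}=\frac{1+3u+3u^{2}}{(1+u)^{3}}$, which tends to $1$ as $p\rightarrow\infty$ and to $0$ as $p\rightarrow0^{+}$; thus the right-hand side tends to $\frac{\pi}{24}-\frac{1}{8}$ and to $-\frac{1}{8}$, respectively. Combining this with the termwise limits above, the limit $p\rightarrow\infty$ of the identity gives $\pi\sum_{n\geq1}\frac{e^{2\pi n}}{(e^{2\pi n}-1)^{2}}=\frac{\pi}{24}-\frac{1}{8}$, and the limit $p\rightarrow0^{+}$ gives $-\pi\sum_{n\geq1}\frac{e^{\pi(2n-1)}}{(e^{\pi(2n-1)}+1)^{2}}=-\frac{1}{8}$; dividing by $\pi$ and by $-\pi$ respectively produces exactly the two identities in (\ref{Schlomlich and companion}). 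The only genuinely delicate point is the uniform majorization of the second paragraph; everything else is a direct computation, so the write-up should be brief.
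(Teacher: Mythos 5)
Your proposal is correct and follows exactly the route the paper intends: taking $p\rightarrow\infty$ and $p\rightarrow0^{+}$ in the preceding corollary, using $\lambda_{n}\rightarrow n$ (resp.\ $\lambda_{n}\rightarrow n-\tfrac{1}{2}$) and $\sigma(\lambda_{n})\rightarrow 1$ (resp.\ $-1$). The paper simply asserts these limits without justification, so your uniform majorant via the geometric series (\ref{power series def}) and dominated convergence supplies the interchange argument that the paper leaves implicit; the termwise limits and the evaluation of the right-hand side all check out.
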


\bigskip{}
\bigskip{}
\bigskip{}
\bigskip{}
\textit{Disclosure Statement:} The authors declare that they have no conflict of interest. 

\bigskip{}

\textit{Acknowledgements:} The authors would like to thank the editors for the invitation for a contribution to the memorial publication in honour of our late friend, Professor Jos\'e Carlos Petronilho. 

The first author would also like to thank to Professor Atul Dixit and to Rajat Gupta for their outstanding and engaging talks at the conference OPSFA16, which motivated the search for Epstein zeta functions attached to the Koshliakov sequence. This was the first instance when the first author learned about Koshliakov's beautiful generalization of the Riemann zeta function. 

Both authors were partially supported by CMUP, member of LASI, which is financed by national funds through FCT - Fundação para a Ciência e a Tecnologia, I.P., under the projects with reference UIDB/00144/2020 and UIDP/00144/2020.

The first author also acknowledges the support from FCT (Portugal) through the PhD scholarship 2020.07359.BD.  
\newpage{}

\footnotesize

\end{document}